\numberwithin{equation}{section}
\renewcommand{\Bar}{\overline}
\newcommand{\R}{\mathbb{R}}
\newcommand{\N}{\mathbb{N}}
\newcommand{\Q}{\mathbb{Q}}
\newcommand{\C}{\mathbb{C}}
\newcommand{\imp}{\;\Rightarrow\;}
\newcommand{\m}{\mathrm}
\newcommand{\lv}{\lVert}
\newcommand{\rv}{\rVert}
\newcommand{\J}{\boldsymbol{J}}
\newcommand{\al}{\alpha}
\newcommand{\be}{\beta}
\newcommand{\ep}{\varepsilon}
\newcommand{\f}{\frac}
\newcommand{\sig}{\sigma}
\newcommand{\gam}{\gamma}
\newcommand{\del}{\delta}
\newcommand{\oo}{^\circ}
\newcommand{\pd}{\partial}
\newcommand{\grad}{\nabla}
\newcommand{\bpm}{\begin{pmatrix}}
\newcommand{\epm}{\end{pmatrix}}
\newcommand{\loc}{\m{loc}}
\renewcommand{\bar}{\overline}
\newcommand{\Le}{\mathfrak{L}}
\newcommand{\emb}{\hookrightarrow}
\newcommand{\res}{\restriction}
\renewcommand{\le}{\leqslant}
\renewcommand{\ge}{\geqslant}
\newcommand{\jump}[1]{\left\llbracket#1\right\rrbracket}
\newcommand{\tjump}[1]{\llbracket#1\rrbracket}
\newcommand{\bjump}[1]{\Big\llbracket#1\Big\rrbracket}
\newcommand{\norm}[1]{\left\lv#1\right\rv}
\newcommand{\bnorm}[1]{\Big\lv#1\Big\rv}
\newcommand{\snorm}[1]{\big\lv#1\big\rv}
\newcommand{\tnorm}[1]{\lv#1\rv}
\newcommand{\p}[1]{\left(#1\right)}
\newcommand{\bp}[1]{\Big(#1\Big)}
\renewcommand{\sp}[1]{\big(#1\big)}
\newcommand{\tp}[1]{(#1)}
\newcommand{\abs}[1]{\left|#1\right|}
\newcommand{\babs}[1]{\Big|#1\Big|}
\newcommand{\tabs}[1]{|#1|}
\renewcommand{\sb}[1]{\left[{#1}\right]}
\newcommand{\bsb}[1]{\Big[{#1}\Big]}
\newcommand{\ssb}[1]{\big[{#1}\big]}
\newcommand{\tsb}[1]{[{#1}]}
\newcommand{\cb}[1]{\left\{{#1}\right\}}
\newcommand{\bcb}[1]{\Big\{{#1}\Big\}}
\newcommand{\tcb}[1]{\{{#1}\}}
\newcommand{\br}[1]{\left\langle #1 \right\rangle}
\providecommand{\tbr}[1]{\langle #1 \rangle}
\newcommand{\ssum}[2]{{\textstyle\sum\limits_{#1}^{#2}}}
\renewcommand{\bf}[1]{\mathbf{#1}}
\newcommand{\ii}{\m{i}}
\newtheorem{prop}{Proposition}[section]
\newtheorem{thm}[prop]{Theorem}
\newtheorem{defn}[prop]{Definition}
\newtheorem{lem}[prop]{Lemma}
\newtheorem{rmk}[prop]{Remark}
\newenvironment{customthm}[1]
  {\innercustomthm}
  {\endinnercustomthm}
\def\XXint#1#2#3{{\setbox0=\hbox{$#1{#2#3}{\int}$ }
\vcenter{\hbox{$#2#3$ }}\kern-.6\wd0}}
\title[Traveling waves: incompressible, multilayer case]{
Traveling wave solutions to the multilayer free boundary incompressible Navier-Stokes equations
}
\author{Noah Stevenson}
\address{
Department of Mathematical Sciences\\
Carnegie Mellon University\\
Pittsburgh, PA 15213, USA
}
\email[N. Stevenson]{nwsteven@andrew.cmu.edu}
\author{Ian Tice}
\address{
Department of Mathematical Sciences\\
Carnegie Mellon University\\
Pittsburgh, PA 15213, USA
}
\email[I. Tice]{iantice@andrew.cmu.edu}
\thanks{I. Tice was supported by an NSF CAREER Grant (DMS \#1653161). N. Stevenson was supported by the summer research support provided by this grant. }
\subjclass[2010]{Primary 35Q30, 35R35, 35C07; Secondary 35N25, 76D33, 76D45}
\keywords{Free boundary Navier-Stokes, traveling waves, internal waves}
\begin{document}
\begin{abstract}
For a natural number $m \ge 2$, we study $m$
layers of finite depth, horizontally infinite, viscous, and incompressible fluid bounded below by a flat rigid bottom. Adjacent layers meet at free interface regions, and the top layer is bounded above by a free boundary as well. A uniform gravitational field, normal to the rigid bottom, acts on the fluid. We assume that the fluid mass densities are strictly decreasing from bottom to top and consider the cases with and without surface tension acting on the free surfaces. In addition to these gravity-capillary effects, we allow a force to act on the bulk and external stress tensors to act on the free interface regions.  Both of these additional forces are posited to be in traveling wave form: time-independent when  viewed in a coordinate system moving at a constant, nontrivial velocity parallel to the lower rigid boundary. Without surface tension in the case of two dimensional fluids and with all positive surface tensions in the higher dimensional cases, we prove that for each sufficiently small force and stress tuple there exists a traveling wave solution. The existence of traveling wave solutions to the one layer configuration ($m=1$) was recently established and, to the best of our knowledge, this paper is the first construction of traveling wave solutions to the incompressible Navier-Stokes equations in the $m$-layer arrangement.
\end{abstract}
\maketitle
\section{Introduction}
 
\subsection{Eulerian coordinate formulation}\label{section on Eulerian coordinate formulation}

In this paper we study traveling wave solutions to the viscous surface-internal wave problem, which describes the evolution of a finite number of layers of incompressible and viscous fluid.  We posit that the fluid layers contiguously occupy horizontally-infinite, finite-depth, and time-evolving slabs sitting atop a rigid hyperplane in ambient Euclidean space of dimension $n\in\N\setminus\cb{0,1}$ (the physically relevant dimensions are $2$ and $3$, but our analysis works more generally).  Within each layer the fluid dynamics are described by the incompressible Navier-Stokes equations, and jump conditions couple the dynamics between layers.  The multiple layers serve as a model of stratified fluids.  These occur, for example, when salinity or temperature change rapidly with respect to depth.

In order to properly state the PDEs considered in this analysis, we first set the necessary notation.  Fix the number of layers of fluid $m\in\N\setminus\cb{0,1}$. Let $\upalpha=\cb{a_\ell}_{\ell=1}^m$ be a strictly increasing sequence of positive real numbers, i.e. $0<a_1<\dots <a_m$. We refer to $\upalpha$ as the depth parameters. We associate to $\upalpha$ the collection of admissible graph interfaces, which is the subset of $m$-tuples of continuous and bounded functions
\begin{equation}
    \mathscr{A}\p{\upalpha}=\cb{\tp{\eta_\ell}_{\ell=1}^m\subset C^0_b\p{\R^{n-1}}\;:\;0<a_1+\eta_1<\cdots<a_m+\eta_m\text{ on }\R^{n-1}}.
\end{equation}
If $\upeta=\tp{\eta_\ell}_{\ell=1}^m\in\mathscr{A}\p{\upalpha}$, then for $\ell\in\cb{1,\dots,m}$ we define the slab-like domain
\begin{equation}
    \Omega_\ell[\upeta]=\begin{cases}
    \cb{\p{x,y}\in\R^{n-1}\times\R\;:\;\eta_{\ell-1}\p{x}+a_{\ell-1}<y<\eta_{\ell}\p{x}+a_\ell}&\text{when }2\le\ell\le m\\
    \cb{\p{x,y}\in\R^{n-1}\times\R\;:\;0<y<\eta_1\p{x}+a_1}&\text{when }\ell=1
    \end{cases}
\end{equation}
and the free boundaries 
\begin{equation}
    \Sigma_\ell[\upeta]=\cb{\p{x,y}\in\R^{n-1}\times\R\;:\;y=a_\ell+\eta_\ell\p{x}}.
\end{equation}
We also define the union of the slabs, entire domain, and rigid lower boundary, respectively, as
\begin{multline}\label{omega_eta notation}
    \Omega[\upeta]=\Omega_1[\upeta]\cup\cdots\cup\Omega_m[\upeta],\;\Omega^{\m{e}}[\upeta]=\cb{\p{x,y}\in\R^{n-1}\times\R\;:\;0<y<a_m+\eta_m\p{x}},\\\text{and}\;\Sigma_0 = \cb{\p{x,y}\in\R^{n-1}\times\R\;:\;y=0}.
\end{multline}
Observe that $(\Bar{\Omega[\upeta]})\oo=\Omega^{\m{e}}[\upeta]$.  We will often need to distinguish between derivatives that are parallel to $\Sigma_0$ and vertical derivatives, so we write $\nabla = (\nabla_\|, \partial_n)$.  Note that the operator $\nabla_\|$ is the full spatial gradient for the free surface functions, which have the spatial domain $\R^{n-1}$. 

Suppose that $\upeta=\tp{\eta_\ell}_{\ell=1}^m\in\mathscr{A}\p{\upalpha}$ is given and each $\eta_\ell$ is Lipschitz.  Then for $X \in H^1(\Omega[\upeta];\R^d)$, for some $d \in \N^+$, the restriction of $X$ to each $\Omega_\ell[\upeta]$ belongs to $H^1(\Omega_\ell[\upeta];\R^d)$, and so from standard trace theory we have trace operators onto the upper and lower boundaries, $\Sigma_{\ell}[\upeta]$ and $\Sigma_{\ell-1}[\upeta]$, which we denote by $\m{Tr}_{\Sigma_\ell[\upeta]}^{\uparrow}X$ and $\m{Tr}_{\Sigma_{\ell-1}[\upeta]}^{\downarrow}X$, respectively.  In turn,  for $\ell\in\cb{1,\dots,m}$ this allows us to define the interfacial jumps via 
\begin{equation}
    \jump{X}_\ell=
    \begin{cases}
    \m{Tr}_{\Sigma_\ell[\upeta]}^{\downarrow}X-\m{Tr}_{\Sigma_\ell[\upeta]}^{\uparrow}X&\text{when }1\le\ell\le m-1\\
    -\m{Tr}^{\uparrow}_{\Sigma_m[\upeta]}X&\text{when }\ell=m.
    \end{cases}
\end{equation}
Note that $\jump{X}_m$ is not really a jump, but we will employ this notation for brevity in writing PDEs throughout the paper.  If $u$ is a weakly differentiable vector field we define its symmetrized gradient as the matrix field $\mathbb{D}u=\grad u+\grad u^{\m{t}}$, where the superscript `$\m{t}$' denotes the matrix transpose. If $\upmu=\cb{\mu_\ell}_{\ell=1}^m\subset\R^+$ is a sequence of positive fluid viscosity parameters, we define the associated stress tensor as the mapping
\begin{equation}
\textstyle S^{\upmu}:L^2\tp{\Omega[\upeta]}\times H^1\tp{\Omega[\upeta];\R^n}\to L^2\tp{\Omega[\upeta];\R^{n\times n}_{\m{sym}}}\text{ via }S^{\upmu}\p{p,u}=\sum_{\ell=1}^m\mathbbm{1}_{\Omega_\ell[\upeta]}\p{pI_{n\times n}-\mu_\ell\mathbb{D}u},
\end{equation}
where $\R^{n\times n}_{\m{sym}}$ denotes the set of $n\times n$ real symmetric matrices. 

With the notation established, we are now equipped to describe the model and the equations of motion in time. At equilibrium, we posit that the layers of fluid occupy the domain $\Omega[0]$ with the $\ell^{\m{th}}$ layer occupying the region $\Omega_\ell[0]$; furthermore, when perturbed from equilibrium there are free surface functions $\upzeta\p{t,\cdot}=\tp{\zeta_\ell\p{t,\cdot}}_{\ell=1}^m\in\mathscr{A}\p{\upalpha}$ for time $t\ge0$ describing the evolution of the fluid layer domains in such a way that $\Omega[\upzeta\p{t,\cdot}]$ is the region occupied by the union of all the layers, and the $\ell^{\m{th}}$ layer occupies $\Omega_\ell[\upzeta\p{t,\cdot}]$. The fluid velocity and pressure are described by the functions $w\p{t,\cdot}:\Omega[\upzeta(t,\cdot)]\to\R^n$, $r\p{t,\cdot}:\Omega[\upzeta(t,\cdot)]\to\R$.  The density of the fluid occupying the region $\Omega_\ell[\upzeta(t,\cdot)]$ is the constant $\rho_\ell\in\R^+$, and the viscosity of this portion of the fluid is the constant $\mu_\ell \in \R^+$.  

We assume that the fluid is acted upon by the following forces. The bulk (the region of fluid occupying $\Omega[\upzeta\p{t,\cdot}]$) is acted on by: a uniform external gravitational field $-\mathfrak{g}e_n\in\R^n$, for acceleration of gravity $\mathfrak{g} \in\R^+$; and by a generic force $F\p{t,\cdot}:\Omega[\upzeta(t,\cdot)]\to\R^n$. The $\ell^{\m{th}}$ free surface is acted upon by: a force generated by an externally applied stress tensor $T_\ell:\Sigma_\ell[\upzeta(t,\cdot)]\to\R^{n\times n}_{\m{sym}}$; and a force generated by the surface itself, which is modeled in the standard way by $-\sig_\ell\m{H}\p{\zeta_\ell\p{t,\cdot}}$ for $\sig_\ell\ge0$ a surface tension coefficient and 
\begin{equation}
\m{H}(\eta_\ell) = \nabla_\| \cdot \sp{  \grad_\|\eta_\ell (1+|\grad_\|\eta_\ell|^2)^{-1/2}}    \end{equation}
the mean curvature operator. In addition, the  upper surface (the $m^{\m{th}}$ one)  is acted on by a constant external pressure $P_{\m{ext}}\in\R$.

The equations of motion are:
\begin{equation}\label{equations of motion written in Eulerian coordinates}
    \begin{cases}
        \rho_\ell\p{\pd_t+w\cdot\grad}w+\grad\cdot S^{\upmu}\p{r,w}=-\mathfrak{g}\rho_\ell e_n+F&\text{in }\Omega_\ell[\upzeta(t,\cdot)],\;\ell\in\cb{1,\dots,m}\\
        \grad\cdot u=0&\text{in }\Omega[\upzeta\p{t,\cdot}]\\
        P_\m{ext}\nu_m-S^{\upmu}\p{r,w}\nu_m-\sig_m\m{H}\p{\zeta_m}\nu_m=T_m\nu_m&\text{on }\Sigma_m[\upzeta\p{t,\cdot}]\\
        \jump{S^{\upmu}\p{r,w}}_\ell\nu_\ell-\sig_\ell\m{H}\p{\zeta_\ell}\nu_\ell=T_\ell\nu_\ell&\text{on }\Sigma_\ell[\upzeta\p{t,\cdot}],\;\ell\in\cb{1,\dots,m-1}\\
        \pd_t\zeta_\ell+w\cdot\p{\grad_\|\zeta_\ell,0}=w\cdot e_n&\text{on }\Sigma_\ell[\upzeta(t,\cdot)],\;\ell\in\cb{1,\dots,m}\\
        \jump{w}_\ell=0&\text{on }\Sigma_\ell[\upzeta(t,\cdot)],\;\ell\in\cb{1,\dots,m-1}\\
        w=0&\text{on }\Sigma_0.
    \end{cases}
\end{equation}
Here the upward pointing unit normal to the surface $\Sigma_\ell[\upzeta(t,\cdot)]$ is
\begin{equation}
    \nu_\ell=(1+|\grad_\|\zeta_\ell|^2)^{-1/2}(-\grad_\|\zeta_\ell,1),
\end{equation}
and we write $\grad\cdot S^{\upmu}(r,w)$ to mean the $n-$vector with $i^{\m{th}}$ component equal to the divergence of the $i^{\m{th}}$ row of $S^{\upmu}(r,w)$.

We briefly comment on the physics of the above system of PDEs. The first two equations of~\eqref{equations of motion written in Eulerian coordinates} are the incompressible Navier-Stokes equations. The first asserts a Newtonian balance of forces, while the second enforces that the associated flow is locally volume preserving and hence, because the density is constant in the slab domains, mass is conserved. The third and fourth equations are the dynamic boundary conditions, which are understood as force balance on the interfaces, and the fifth equation is the kinematic boundary condition, which dictates the surfaces' motion with the fluid.  The final two equations are the no-slip conditions: the Eulerian velocity vanishes on the lower rigid boundary and is continuous across the free interface regions. For a more physical description of these equations and boundary conditions we refer to Wehausen-Laitone~\cite{MR0119656}.

In this paper we construct traveling wave solutions to the system~\eqref{equations of motion written in Eulerian coordinates}. These are solutions that are time-independent when viewed in an inertial coordinate system obtained from the above Eulerian coordinates through a Galilean transformation. In order for the stationary condition to hold, the moving coordinate system must be traveling at a constant velocity parallel to $\Sigma_0$.  Up to a rigid rotation fixing the vector $e_n$, we may assume that the traveling coordinate system is moving at a constant velocity $\gam e_1$ for a signed wave speed $\gam\in\R\setminus\cb{0}$. 

In the new coordinates the stationary free surface functions are described by the unknowns $\upeta=\tp{\eta_\ell}_{\ell=1}^m\in\mathscr{A}\p{\al}$; these are related to $\upzeta$ via $\upeta\p{x-\gam te_1}=\upzeta\p{t,x}$. Next we posit that $v\p{x-t\gam e_1,y}=w\p{t,x,y}$,
\begin{equation}
    \textstyle q\p{x-t\gam e_1,y}=r\p{t,x,y}-P_{\m{ext}}-\mathfrak{g}\sum_{\ell=1}^m\mathbbm{1}_{\p{a_{\ell-1},a_\ell}}\p{y}\sb{\rho_\ell\p{a_\ell-y}+\sum_{k=\ell+1}^m\rho_k\p{a_k-a_{k-1}}},
\end{equation}
$\mathcal{F}\p{x-t\gam e_1,y}=F\p{t,x,y}$, and $\mathcal{T}_\ell\p{x-t\gam e_1}=T_\ell\p{t,x,a_\ell+\zeta_\ell\p{t,x}}$, for $t\ge0$ and  $\p{x,y}\in\R^{n-1}\times\p{0,a_m}$, where $v:\Omega[\upeta]\to\R^n$, $q:\Omega[\upeta]\to\R$, $\mathcal{F}:\Omega[\upeta]\to\R^n$, and $\mathcal{T}_\ell:\R^{n-1}\to\R^{n\times n}_{\m{sym}}$ are the stationary velocity field, renormalized pressure, external force, and external stresses, respectively. In the traveling coordinate system the PDE satisfied by the unknowns $\p{q,v,\tp{\eta_\ell}_{\ell=1}^m}$ with forcing $\p{\mathcal{F},\tp{\mathcal{T}_\ell}_{\ell=1}^m}$ is the following system:
\begin{equation}\label{equations of motion written in traveling Eulerian coordinates}
    \begin{cases}
        \rho_\ell\tsb{\p{v-\gam e_1}\cdot\grad} v+\grad\cdot S^{\upmu}\p{q,v}=\mathcal{F}&\text{in }\Omega_\ell[\upeta],\;\ell\in\cb{1,\dots,m}\\
        \grad\cdot v=0&\text{in }\Omega[\upeta]\\
        \jump{S^{\upmu}\p{q,v}}_\ell\mathcal{N}_\ell=\p{\mathfrak{g}\jump{\uprho}_\ell\eta_\ell+\sig_\ell\m{H}\p{\eta_\ell}}\mathcal{N}_\ell+\mathcal{T}_\ell\mathcal{N}_\ell&\text{on }\Sigma_\ell[\upeta],\;\ell\in\cb{1,\dots,m}\\
        -\gam\pd_1\eta_\ell+v\cdot\p{\grad_\|\eta_\ell,0}=v\cdot e_n&\text{on }\Sigma_\ell[\upeta],\;\ell\in\cb{1,\dots,m}\\
        \jump{v}_\ell=0&\text{on }\Sigma_\ell[\upeta],\;\ell\in\cb{1,\dots,m-1}\\
        u=0&\text{on }\Sigma_0.
    \end{cases}
\end{equation}
In the above we write $\mathcal{N}_\ell=\tp{-\grad_\|\eta_\ell,1}$ and $\uprho=\sum_{\ell=1}^m\mathbbm{1}_{\Omega_\ell[0]}\rho_\ell$.  Note that renormalizing the pressure in this way has the effect of shifting the gravitational term from the bulk to the interfaces.

We conclude our discussion of the model with a comment about the role of the forcing and interfacial stresses, $\p{\mathcal{F},\tp{\mathcal{T}_\ell}_{\ell=1}^m}$, appearing in \eqref{equations of motion written in traveling Eulerian coordinates}.  The simplest configuration occurs when $\mathcal{F} =0$ and $\mathcal{T}_\ell =0$ for $1\le \ell \le m-1$, but $\mathcal{T}_m = -\varphi I_{n \times n}$ for a given scalar function $\varphi: \R^{n-1} \to \R$.  In this configuration, $\varphi$ can be viewed as a spatially localized source of pressure moving with velocity $\gamma e_1$ above the fluid.  We have chosen to study the more general framework with $\p{\mathcal{F},\tp{\mathcal{T}_\ell}_{\ell=1}^m}$ in order to allow for more sources of external force and stress.

\subsection{Remarks on previous work}\label{section on previous work}

Traveling wave solutions to the equations of fluid dynamics have been a subject of intense mathematical study for more than a century, so a complete review of the literature is well beyond the scope of this paper.  The vast majority of this work has focused on inviscid models, in which the Navier-Stokes equations in \eqref{equations of motion written in traveling Eulerian coordinates} are replaced by the Euler equations.  For a thorough review of the inviscid literature, we refer to the works of Toland \cite{Toland_1996}, Groves \cite{Groves_2004}, and Strauss \cite{Strauss_2010}.

In the viscous literature there are various results on stationary solutions to the free boundary problems, which correspond to traveling waves with vanishing velocity, $\gamma =0$. For works on stationary solutions in layer geometries, we refer to Jean \cite{Jean_1980}, Pileckas \cite{Pileckas_1983,Pileckas_1984,Pileckas_2002}, Gellrich \cite{Gellrich_1993}, Nazarov-Pileckas \cite{NP_1999,NP_1999_2},  Pileckas-Zaleskis \cite{PL_2003}, and Bae-Cho \cite{BC_2000}.  Traveling wave solutions without a free boundary were constructed by Chae-Dubovski\u{\i} \cite{CD_1996} in full space and Kagei-Nishida \cite{KN_2019} as bifurcations from Poiseuille flow in rigid channels.  

To the best of our knowledge, the first construction of traveling wave solutions to the free boundary incompressible Navier-Stokes equations (system~\eqref{equations of motion written in Eulerian coordinates} for $m=1$) was only accomplished recently in the work of Leoni-Tice in~\cite{leoni2019traveling}, and there are no known results involving multiple layers.  The multilayer problem is an important variant that appears in the study of internal waves in stratified fluids.  This stratification can occur, for instance, due to changes in salinity or temperature.

As mentioned above, the system \eqref{equations of motion written in traveling Eulerian coordinates} can be used to model a source of spatially localized pressure translating above the fluid.  This configuration has been studied in recent experiments with a tube of air, translating uniformly above a wave tank, blowing onto a single layer of viscous fluid and resulting in traveling waves.  For details,  we refer to the works of Akylas-Cho-Diorio-Duncan \cite{DCDA_2011,CDAD_2011}, Masnadi-Duncan \cite{MD_2017}, and Park-Cho \cite{PC_2016,PC_2018}.

\subsection{Reformulation in an independent domain}\label{section on reformulation in a fixed domain}

The domain itself is one of the unknowns in the system~\eqref{equations of motion written in traveling Eulerian coordinates}, which presents a fundamental difficulty in producing solutions.  Following the strategy of the single-layer case from \cite{leoni2019traveling}, we overcome this obstacle with another change of coordinates and unknowns. We flatten to a domain that is independent of both time and the free surface functions, which comes at the expense of worsening the nonlinearities of the system.

We begin by defining the following family of flattening maps. Set $a_0=0$ and $\eta_0=0$. For $\ell\in\cb{1,\dots,m}$ we define the mapping $\mathfrak{F}_\ell:\Bar{\Omega_\ell[0]}\to\Bar{\Omega_\ell[\upeta]}$ with the assignment
\begin{equation}\label{calcium}
    \textstyle\mathfrak{F}_\ell\p{x,y}=\bp{x,\f{a_\ell-y}{a_\ell-a_{\ell-1}}(a_{\ell-1}+\eta_{\ell-1}\p{x})+\f{y-a_{\ell-1}}{a_{\ell}-a_{\ell-1}}(a_\ell+\eta_\ell\p{x})}
\end{equation}
for $(x,y)\in\R^{n-1}\times[a_{\ell-1},a_\ell]=\Bar{\Omega_\ell[0]}$.  First we observe that each $\mathfrak{F}_\ell$ is bijective with inverse given via
\begin{equation}\label{sodium}
    \textstyle{\mathfrak{F}_\ell}^{-1}\p{x,y}=\p{x,\f{a_\ell+\eta_\ell\p{x}-y}{a_\ell+\eta_\ell\p{x}-a_{\ell-1}-\eta_{\ell-1}\p{x}}a_{\ell-1}+\f{y-a_{\ell-1}-\eta_{\ell-1}\p{x}}{a_\ell+\eta_\ell\p{x}-a_{\ell-1}-\eta_{\ell-1}\p{x}}a_\ell}  
\end{equation}
for $\p{x,y}\in\Bar{\Omega_\ell[\upeta]}$, whenever $a_\ell-a_{\ell-1}\neq\eta_{\ell-1}-\eta_{\ell}$ pointwise. If this inequality holds, then $\mathfrak{F}_\ell$ is a homeomorphism inheriting the regularity of the tuple $\upeta$. We propose to paste these functions together to build our sought-after flattening map. That is, we define $\mathfrak{F}:\Bar{\Omega^{\m{e}}[\upeta]}\to\Bar{\Omega^{\m{e}}[0]}$ via $\mathfrak{F}=\mathfrak{F}_\ell$ on $\Bar{\Omega_\ell[0]}$. This assignment defines a homeomorphism because $\mathfrak{F}_\ell=\mathfrak{F}_{\ell-1}$ on $\Sigma_{\ell-1}[0]$ for $\ell\in\cb{2,\dots,m}$.

Provided that $\upeta$ is differentiable, for $\p{x,y}\in\Omega_\ell[0]$ we can compute the gradient 
\begin{equation}
    \grad\mathfrak{F}_\ell\p{x,y}=\bpm
    I_{(n-1)\times(n-1)}&0_{(n-1)\times 1}\\
    \f{a_\ell-y}{a_\ell-a_{\ell-1}}\grad_\|\eta_{\ell-1}\p{x}+\f{y-a_{\ell-1}}{a_\ell-a_{\ell-1}}\grad_\|\eta_\ell\p{x}&\f{a_\ell+\eta_\ell\p{x}-a_{\ell-1}-\eta_{\ell-1}\p{x}}{a_\ell-a_{\ell-1}}
    \epm,
\end{equation}
the Jacobian
\begin{equation}
    J_\ell\p{x,y}=\det\grad\mathfrak{F}_\ell(x,y)=\f{a_\ell+\eta_\ell\p{x}-a_{\ell-1}-\eta_{\ell-1}\p{x}}{a_{\ell}-a_{\ell-1}},
\end{equation}
and the geometry matrices
\begin{equation}
    \mathcal{A}_\ell\p{x,y}=\grad\mathfrak{F}\p{x,y}^{-\m{t}}=\bpm I_{(n-1)\times(n-1)}&-\f{\p{a_\ell-y}\grad_\|\eta_{\ell-1}\p{x}+\p{y-a_{\ell-1}}\grad_\|\eta_\ell\p{x}}{a_\ell+\eta_\ell\p{x}-a_{\ell-1}-\eta_{\ell-1}\p{x}}\\0_{1\times\p{n-1}}&\f{a_\ell-a_{\ell-1}}{a_\ell+\eta_\ell\p{x}-a_{\ell-1}-\eta_{\ell-1}\p{x}}\epm.
\end{equation}
We then set $\mathcal{A}:\Omega[0]\to\R^{n\times n}$ via $\mathcal{A}=\mathcal{A}_\ell$ in $\Omega_\ell[0]$, and $J:\Omega[0]\to\R$ via $J=J_\ell$ in $\Omega_\ell[0]$:
We may now reformulate~\eqref{equations of motion written in traveling Eulerian coordinates} as a quasilinear system in the fixed domain $\Omega[0]$.
\begin{equation}\label{flattened problem at the nonlinear level}
    \begin{cases}
        \rho_\ell[\p{u-\gam e_1}\cdot\mathcal{A}\grad] u+(\mathcal{A}\grad)\cdot S^{\upmu}_{\mathcal{A}}\p{p,u}=f&\text{in }\Omega_\ell[0],\;\ell\in\cb{1,\dots,m}\\
        (\mathcal{A}\grad)\cdot u=0&\text{in }\Omega[0]\\
        \jump{S_{\mathcal{A}}^{\upmu}\p{p,u}}_\ell\mathcal{N}_\ell=\p{\mathfrak{g}\jump{\uprho}_\ell\eta_\ell+\sig_\ell\m{H}\p{\eta_\ell}}\mathcal{N}_\ell+\mathcal{T}_\ell\mathcal{N}_\ell&\text{on }\Sigma_\ell[0],\;\ell\in\cb{1,\dots,m}\\
        \gam\pd_1\eta_\ell+u\cdot\mathcal{N}_\ell=0&\text{on }\Sigma_\ell[0],\;\ell\in\cb{1,\dots,m}\\
        \jump{u}_\ell=0&\text{on }\Sigma_\ell[0],\;\ell\in\cb{1,\dots,m-1}\\
        u=0&\text{on }\Sigma_0,
    \end{cases}
\end{equation}
for the flattened velocity field and pressure $u=v\circ\mathfrak{F}$ and $p=q\circ\mathfrak{F}$. In the above we have also set $f=\mathcal{F}\circ\mathfrak{F}$, allowed $\mathcal{A}$ to act on the `vector' $\grad$ by standard matrix multiplication, and introduced the operator
\begin{equation}
    \textstyle S^{\upmu}_{\mathcal{A}}\p{p,u}=\sum_{\ell=1}^m\mathbbm{1}_{\Omega_\ell[0]}\sb{pI_{n\times n}-\mu_\ell(\grad u)\mathcal{A}^{\m{t}}-\mu_\ell\mathcal{A}({\grad u}^{\m{t}})}.
\end{equation}
The $n$-vector $(\mathcal{A}\grad)\cdot S^{\upmu}_{\mathcal{A}}(p,u)$ has $i^{\m{th}}$ component equal to the $\mathcal{A}$-divergence of the $i^{\m{th}}$ row of $S^{\upmu}_{\mathcal{A}}(p,u)$.
\subsection{Statement of main results and discussion}\label{section on statement of main results and discussion}

We now give the two main results obtained from the analysis in this paper. We provide somewhat informal and abbreviated statements in order to avoid the need to introduce here some nonstandard function spaces we employ in our analysis. The proper statements are found later in the paper at the indicated theorems. The definitions of the function sets $C^k$, $C^k_b$, and $C^k_0$. can be found in Section~\ref{section on conventions of notation}.

Our first result regards the solvability of the flattened problem in~\eqref{flattened problem at the nonlinear level}: it tells us that if the strict Rayleigh-Taylor condition, $0<\rho_m<\cdots<\rho_1$,  is satisfied along with certain conditions on the dimension $n$ and the surface tensions $\tcb{\sigma_\ell}_{\ell=1}^m$, then the multilayer flattened free boundary problem~\eqref{flattened problem at the nonlinear level} is well-posed for all nontrivial wave speeds and small forcing and applied stresses.

\begin{customthm}{1}[Proved in Theorem~\ref{theorem on the solvability of the flattened problem}]\label{custom theorem 1}
Suppose that either $n=2$ and $\tcb{\sig_\ell}_{\ell=1}^m=0$ or else $n\ge 2$ and $\tcb{\sig_\ell}_{\ell=1}^m\subset\R^+$.  Let $\R\ni s>n/2$, $0<\rho_m<\cdots<\rho_1$, and $\N\ni r< s-n/2$.  Then there exist Banach spaces
\begin{multline}
        \textstyle\mathcal{X}^s\emb C^{1+r}_b(\Omega[0])\times \left[ C_b^0(\Omega^{\m{e}}[0];\R^n)\cap C^{2+r}_b(\Omega[0];\R^n) \right] \times(C_0^{3+r}(\R^{n-1}))^m\\
        \textstyle\text{and}\quad\mathcal{Z}^s\emb\R\times(C^{1+r}_0(\R^{n-1};\R^{n\times n}_{\m{sym}}))^m\times C^r_b(\Omega[0];\R^n)
\end{multline}
and open sets $\mathcal{V}_s\subset\mathcal{X}^s$ and $\mathcal{U}_s\subset\mathcal{Z}^s$ such that the following hold.
\begin{enumerate}
    \item $\tp{0,0,\tp{0}_{\ell=1}^m}\in\mathcal{V}_s$ and $(\R\setminus\cb{0})\times\cb{\tp{0}_{\ell=1}^m}\times\cb{0}\subset\mathcal{U}_s$.
    \item For each $\p{\gam,\tp{\mathcal{T}_{\ell}}_{\ell=1}^m,f}\in\mathcal{U}_s$ there exists a unique $\p{p,u,\tp{\eta_{\ell}}_{\ell=1}^m}\in\mathcal{V}_s$ that is a classical solution to~\eqref{flattened problem at the nonlinear level} with the former tuple as data. Moreover, the free surface functions obey the bound
    \begin{equation}
    \textstyle\max\{\norm{\eta_1}_{C^0_b},\dots,\norm{\eta_m}_{C^0_b}\}\le\f14\min\cb{a_1,a_2-a_1,\dots,a_m-a_{m-1}}.
    \end{equation}
    \item The mapping $\mathcal{U}_s\ni(\gam,\p{\mathcal{T}_\ell}_{\ell=1}^m,f)\mapsto\p{p,u,\p{\eta_\ell}_{\ell=1}^m}\in\mathcal{V}_s$ is smooth.
\end{enumerate}
\end{customthm}
Next, we take the solutions constructed by the previous theorem and build their associated inverse flattening maps. This process results in traveling wave solutions to the Eulerian formulation of the free boundary problem~\eqref{equations of motion written in traveling Eulerian coordinates}. 

\begin{customthm}{2}[Proved in Proposition~\ref{properties of the flatteneing map and its inverse} and Theorem~\ref{solvability of the free boundary and free interface problem}]\label{custom theorem 2}
Let $\N\ni k>n/2$, $0<\rho_m<\cdots<\rho_1$, and $\N\ni r< k-n/2$. Suppose the dimension $n\in\N \backslash \{0,1\}$ and the surface tension coefficients $\cb{\sig_\ell}_{\ell=1}^m$ be related as in Theorem~\ref{custom theorem 1}, and let  $\mathcal{U}_k$ be as in the theorem.  Then for each $(\gam,\tp{\mathcal{T}_\ell}_{\ell=1}^m,f)\in\mathcal{U}_k$ the solution $(p,u,\tp{\eta_\ell}_{\ell=1}^m)\in\mathcal{V}_k$ to~\eqref{flattened problem at the nonlinear level} provided by Theorem~\ref{custom theorem 1} satisfies the following.
\begin{enumerate}
    \item When defining the flattening map $\mathfrak{F}$ from the tuple $\upeta=\tp{\eta_\ell}_{\ell=1}^m$  as in Section~\ref{section on reformulation in a fixed domain}, the result is a bi-Lipschitz homeomorphism $\mathfrak{F}:\Bar{\Omega^{\m{e}}[0]}\to\Bar{\Omega^{\m{e}}[\upeta]}$ that is a $C^{3+r}$-diffeomorphism on the $m$ slab domains. In other words, $\mathfrak{F}$ and $\mathfrak{F}^{-1}$ are Lipschitz and satisfy the inclusions
    \begin{equation}
        \textstyle\mathfrak{F}\in C^{3+r}({\Omega[0]};{\Omega[\upeta]})\text{ and }\mathfrak{F}^{-1}\in C^{3+r}({\Omega[\upeta]};{\Omega[0]}).
    \end{equation}
    \item Setting 
    \begin{multline}
        \textstyle(q,v,\tp{\eta_\ell}_{\ell=1}^{m})=(p\circ\mathfrak{F}^{-1},u\circ\mathfrak{F}^{-1},\tp{\eta_\ell}_{\ell=1}^m) \\
        \in \textstyle C^{1+r}_b(\Omega^{\m{e}}[\upeta]) \times [C^0_b(\Omega[\upeta];\R^n)\cap C^{2+r}_b(\Omega[\upeta];\R^n)] \times(C^{3+r}_0(\R^{n-1}))^m
    \end{multline}
    gives a classical solution to the free boundary problem~\eqref{equations of motion written in traveling Eulerian coordinates} with signed wave speed $\gam\in\R\setminus\cb{0}$, applied surface stresses $\tp{\mathcal{T}_\ell}_{\ell=1}^m\subset C^{1+r}_0(\R^{n-1};\R^{n\times n}_{\m{sym}})$, and external forcing $\mathcal{F}=f\circ\mathfrak{F}^{-1}\in C^r_b(\Omega[\upeta];\R^n)$.
\end{enumerate}
\end{customthm}

Following the lead of the single layer analysis in~\cite{leoni2019traveling}, our strategy for proving Theorems~\ref{custom theorem 1} and~\ref{custom theorem 2} can be succinctly described as follows: we find appropriate Banach spaces such that the locally defined mapping associated to the flattened problem in system~\eqref{flattened problem at the nonlinear level} is well-defined, smooth, and satisfies the hypotheses of the implicit function theorem around the zero solution. This grants us the small data solution operator described in the first theorem. From these solutions to the flattened problem, we use the free surface functions to build the flattening map and its inverse to undo the reformulation described in Section~\ref{section on reformulation in a fixed domain}. This then yields the second theorem.

The only serious difficulties in progressing from Theorem~\ref{custom theorem 1} to Theorem~\ref{custom theorem 2} lie in verifying that the flattening map $\mathfrak{F}$ and its inverse preserve not only the standard Sobolev spaces, but the specialized ones we employ in our analysis.  Fortunately, these difficulties were already overcome in the single layer analysis of~\cite{leoni2019traveling}, and the solution is readily ported to the multilayer context of the present paper.  As such, the main thrust of this paper is proving Theorem~\ref{custom theorem 1}, which presents a number of nontrivial difficulties not encountered in the single layer analysis.  The remainder of this discussion describes the path to this theorem in greater detail.

To invoke the implicit function theorem, we are led to study the linearization of system~\eqref{flattened problem at the nonlinear level}, which is recorded in~\eqref{multilayer traveling Stokes with gravity-capillary boundary and jump conditions}. Even though this is a linear PDE, there are several obstacles that make solving it both an interesting and nontrivial endeavor. The first of these is the selection of appropriate Banach spaces for data and solutions for the linearized flattened problem. These spaces need be chosen so that: 1) the nonlinear operator associated to the flattened system is locally well-defined near the zero solution and is at least continuously differentiable, 2) they embed within subspaces of the classical scales measuring differentiability, and 3) the linearized problem induces a Banach isomorphism. The first and last point ensure that the hypothesis of the implicit function theorem are satisfied, and the second point guarantees that our notion of solution to the nonlinear flattened problem will be the classical one.

Unfortunately, for data belonging to subspaces of standard $L^2$-based Sobolev spaces, the natural a priori estimates associated to the linearized PDE~\eqref{multilayer traveling Stokes with gravity-capillary boundary and jump conditions} are too weak to force the solution tuple $\p{p,u,\p{\eta_\ell}_{\ell=1}^m}$ to belong to standard Sobolev spaces. The same problem was encountered in the single layer problem; to circumvent the issue, in Section 5 of~\cite{leoni2019traveling} novel scales of specialized Sobolev spaces were introduced, which satisfy the three requirements mentioned above.  Fortunately for us, we find that the appropriate Banach spaces for the multilayer problem are natural modifications of the single-layer problem's spaces: see Definitions \ref{space for the free interface and free surface functions} and \ref{container space for the pressure}.  It is worth pointing out that, while these spaces arise naturally as the spaces that contain the solutions to~\eqref{multilayer traveling Stokes with gravity-capillary boundary and jump conditions}, they have rather odd properties.  For instance, in the most physically important case of $n=3$ the space for the free surface functions is strongly anisotropic in the sense that it is not closed under composition with rotations (see Remark 5.4 in \cite{leoni2019traveling}).

We now turn to the question of how to solve the problem~\eqref{multilayer traveling Stokes with gravity-capillary boundary and jump conditions}, which is not a standard elliptic boundary value system (i.e. not in the form studied in the classic paper of Agmon, Douglis, and Nirenberg \cite{ADN_1964}) due to the fact that some of the unknowns, namely $\p{\eta_\ell}_{\ell=1}^m$, appear only on the boundary.  Building on the strategy of \cite{leoni2019traveling}, we attack this problem with the help of the normal stress to normal Dirichlet map (see Definition~\ref{definition of the normal stress to normal dirichlet psido}), which is $\p{\psi_\ell}_{\ell=1}^m \mapsto \upnu_\gamma\tp{\psi_\ell}_{\ell=1}^m=\tp{\m{Tr}_{\Sigma_\ell}v \cdot e_n}_{\ell=1}^m$, where $(q,v)$ solve~\eqref{normal_stress pde}.  Then a solution $(p,u,\p{\eta_\ell}_{\ell=1}^m)$ will take the form $p =- \mathfrak{g}\sum_{\ell=1}^m\jump{\uprho}_\ell\eta_\ell\mathbbm{1}_{\p{0,a_\ell}} + q + r$ and $u = v + w$ for $(q,v)$ solving~\eqref{normal_stress pde} with data  $\p{\psi_\ell}_{\ell=1}^m = \tp{\sigma_\ell \Delta_\| \eta_\ell}_{\ell=1}^m$ and $(r,w)$ solving 
\begin{equation}\label{intro wr reformulate}
    \begin{cases}
        \grad\cdot S^{\upmu}\p{r,w}-\gam\rho_\ell\pd_1 w= f+\mathfrak{g}\sum_{\ell=1}^m\jump{\uprho}_\ell\grad\eta_\ell\mathbbm{1}_{\p{0,a_\ell}} &\text{in }\Omega_\ell,\;\ell\in\cb{1,\dots,m}\\
        \grad\cdot w=g&\text{in }\Omega\\
        \jump{S^{\upmu}\p{r,w}e_n}_\ell=k_\ell &\text{on }\Sigma_\ell,\;\ell\in\cb{1,\dots,m}\\
        w\cdot e_n=h_\ell-\gam\pd_1\eta_\ell - [\upnu_{-\gamma}\tp{\sig_k\Delta_\|\eta_k }_{k=1}^m]_\ell  &\text{on }\Sigma_\ell,\;\ell\in\cb{1,\dots,m}\\
        \jump{w}_{\ell}=0&\text{on }\Sigma_\ell,\;\ell\in\cb{1,\dots,m-1}\\
        w=0&\text{on }\Sigma_0.
    \end{cases}
\end{equation}
At first glance this seems no better than~\eqref{multilayer traveling Stokes with gravity-capillary boundary and jump conditions}, but the advantage of this form is that even for given  $\upeta = \p{\eta_\ell}_{\ell=1}^m$ belonging to the specialized Sobolev spaces, the right sides of this system belong to standard Sobolev spaces (see Proposition~\ref{linear topological properties of container for free surface and free interface functions}).  However, if we think of $\upeta$ as given, then this system is overdetermined in the sense that $n+1$ scalar boundary conditions are specified at each $\Sigma_\ell$ rather than the $n$ needed to uniquely determine solutions.  This leads us to study the overdetermined problem~\eqref{overdetermined multilayer traveling stokes}.

The problem~\eqref{overdetermined multilayer traveling stokes} cannot be solved for arbitrary data tuples $\p{g,f,\tp{k_\ell}_{\ell=1}^m,\tp{h_\ell}_{\ell=1}^m}$.  Indeed, data for which a solution exists must satisfy certain compatibility conditions, which we identify in Section~\ref{section on data compatibility and associated isomorphism}.  Remarkably, this then yields a mechanism for solving~\eqref{intro wr reformulate} for general data $\p{g,f,\tp{k_\ell}_{\ell=1}^m,\tp{h_\ell}_{\ell=1}^m}$: we solve for $\upeta$ so that the modified data tuple 
\begin{equation}\label{modified data tuple}
    \bp{g,f+\mathfrak{g}\ssum{\ell=1}{m}\jump{\uprho}_\ell\grad\eta_\ell\mathbbm{1}_{\p{0,a_\ell}},\p{k_\ell}_{\ell=1}^m, \tp{h_\ell-\gam\pd_1\eta_\ell}_{\ell=1}^m -\upnu_{-\gamma}\tp{\sig_\ell\Delta_\|\eta_\ell }_{\ell=1}^m}
\end{equation}
satisfies the compatibility conditions, and then we solve for $(r,w)$ using the solvability theory for the overdetermined problem~\eqref{overdetermined multilayer traveling stokes}, which we also develop in Section~\ref{section on data compatibility and associated isomorphism}. 

In following this strategy for determining $\upeta$ in terms of the data, we uncover another remarkable fact: after horizontal Fourier transformation, the bulk term $\mathfrak{g}\sum_{\ell=1}^{m}\jump{\uprho}_\ell\grad\eta_\ell\mathbbm{1}_{\p{0,a_\ell}}$ in the compatibility condition shifts back to a boundary term involving the symbol of the pseudodifferential operator $(\Psi$DO) $\upnu_{-\gamma}$, which allows us to show that the compatibility condition for the modified data tuple~\eqref{modified data tuple} is equivalent to a system of pseudodifferential equations ($\Psi$DEs) on $\R^{n-1}$.  More precisely, we show in Proposition~\ref{diagonalization of normal stress to normal dirichlet} that $\upnu_\gamma$ has associated symbol $\bf{n}_\gam:\R^{n-1}\to\C^{m\times m}$, and we prove that the compatibility conditions are equivalent to the  $\Psi$DEs 
\begin{equation}\label{intro psiDE}
\bf{p}_\gam(\xi) \mathscr{F}[\upeta](\xi)= \mathscr{F}[\upvarphi](\xi) \text{ for } \xi \in \R^{n-1},   
\end{equation}
where $\mathscr{F}$ denotes the Fourier transform on $\R^{n-1}$ (acting on each component of the tuple $\upeta$ in the obvious way), 
\begin{equation}
\bf{p}_\gam\tp{\xi}=\bf{n}_{-\gam}\p{\xi} \m{diag}\tp{-\mathfrak{g}\jump{\uprho}_1  + 4\pi^2\abs{\xi}^2 \sigma_1,\dots, -\mathfrak{g}\jump{\uprho}_m  + 4\pi^2\abs{\xi}^2 \sigma_m }
  - 2\pi\ii\gam\xi_1I_{m\times m} \in \mathbb{C}^{m \times m},
\end{equation}
and  $\upvarphi:\R^{n-1}\to\mathbb{C}^m$ is a particular tuple depending on the data $\p{g,f,\tp{k_\ell}_{\ell=1}^m,\tp{h_\ell}_{\ell=1}^m}$.  Note that the symbol $\bf{p}_\gam$ is a synthesis of the symbols for the differential operator $\gamma \partial_1$, the normal stress to normal Dirichlet operator $\upnu_{-\gamma}$, and the elliptic capillary operators $\mathfrak{g}\jump{\uprho}_\ell+\sig_\ell\Delta_\|$.

Provided that $\bf{p}_\gam$ is almost everywhere invertible, we then have the determination $\upeta = \mathscr{F}^{-1}[{\bf{p}_\gam}^{-1}\mathscr{F}[\upvarphi]]$.  However, given the complicated form of $\bf{p}_\gamma$, it is far from obvious that this holds or that, if it is true, the resulting formula for $\upeta$ produces free surfaces that are both physically sensible and mathematically useful in our implicit function theorem scheme.  In order to prove these, we need to know two crucial pieces of information: detailed facts about the regularity of $\upvarphi$, and precise asymptotic developments of $\bf{n}_\gam(\xi)$ as $\abs{\xi}\to0$ and $\abs{\xi}\to\infty$. 

It is here where the present paper seriously deviates from the strategy employed for a single layer in \cite{leoni2019traveling}, which involved brute forcing the asymptotics of the symbol from an explicit expression given by the solution to the ODE system resulting from applying $\mathscr{F}$ to~\eqref{normal_stress pde}.  Due to essential singularities in the symbol at $\abs{\xi}= \infty$, this approach is rather delicate and involves numerous tedious calculations for which computer algebra systems are of little assistance.  If we were to attempt to port this brute force  approach to the $m$-layer problem, the number of these tedious asymptotic developments that we would need to compute by hand would be on the order of $m^2$, which is already disagreeable when $m=2$ and is outright impossible in the general case $m \ge 2$.

In the present paper we thus abandon the brute force strategy and develop a more elegant and flexible method for deriving the asymptotic developments of the symbol $\bf{n}_\gamma$.  Our technique is based on a synthesis of novel energy estimates for solutions of the multilayer traveling Stokes system~\eqref{multilayer traveling stokes with stress boundary conditions}, a duality-based formulation of the compatibility conditions for~\eqref{overdetermined multilayer traveling stokes}, and estimates for solutions to certain prescribed divergence equations.  The key observation is the energy equivalence of Theorem~\ref{energy estimates for the normal stress problem} for solutions to the applied normal stress problem~\eqref{normal_stress pde}, which characterizes the data space for solenoidal weak formulations (only employing solenoidal test functions to avoid introducing the pressure), and may thus be of independent interest in the study of the Stokes system.

The symbol ${\bf{p}_\gamma}^{-1}$, together with the properties of $\upvarphi$ (see Section~\ref{section on measuring data compatibility}), ultimately determine the nonstandard Sobolev spaces employed in our analysis.  Thus, by employing this strategy, we can indeed solve for the free surface functions and then solve the linearized problem~\eqref{multilayer traveling Stokes with gravity-capillary boundary and jump conditions}.  This leads us to the isomorphism theorems Theorems~\ref{isomorphism associated to multilayer traveling Stokes with gravity-capillary boundary and jump conditions in the surface tension case} and~\ref{isomorphism associated to multilayer traveling Stokes with gravity-capillary boundary and jump conditions in the zero surface tension case}, which then form the backbone of the implicit function scheme discussed above.

\subsection{Outline of paper}

We begin our linear analysis in Section~\ref{section on multilayer traveling Stokes with stress boundary and jump conditions}, where we study the multilayer traveling Stokes equations subject to stress boundary conditions, as well as the specified divergence and normal trace problem. These are systems~\eqref{multilayer traveling stokes with stress boundary conditions} and~\eqref{multi-normal trace-divergence problem}, respectively. The analysis of the latter PDE in Section~\ref{section on divergence problem} explores a necessary and sufficient compatibility condition for the data. The result is a solution operator and important technical estimates.

Section~\ref{subsection on isomorphism associated to multilayer traveling Stokes} dives into the analysis of the system~\eqref{multilayer traveling stokes with stress boundary conditions}.  This system is elliptic, and the well-posedness theory is straightforward.  However, the solution operator for this system plays a foundational role in our subsequent analysis, as they allow us to define the normal stress to normal Dirichlet $\Psi$DO, $\upnu_{\gamma}$, as well as build more complicated solution operators to other PDE systems.

Section~\ref{section on analysis of the normal stress to normal Dirichlet pseudodifferential operator} next studies the normal stress to normal Dirichlet operator. The asymptotic developments of its symbol are computed using the energy structure of the multilayer traveling Stokes system and estimates from the specified divergence and normal trace problem.

In Section~\ref{section on overdetermined multilayer traveling Stokes} we analyze the overdetermined  variant of the multilayer traveling Stokes system.  In Section~\ref{section on data compatibility and associated isomorphism} we characterize spaces of compatible data for which this PDE admits solutions. Then, in Section~\ref{section on measuring data compatibility}, we examine more closely what it means for data to be compatible and develop a particular measurement of compatibility for general data, which leads us to the tuple $\upvarphi$ appearing in the $\Psi$DEs~\eqref{intro psiDE}.  We prove estimates for $\upvarphi$ in frequency space that aid in the solving of these $\Psi$DEs.

Section~\ref{section on linearized flattened problem} synthesizes the previous two sections and draws from the specialized Sobolev space analysis of~\cite{leoni2019traveling} to build the Banach isomorphism solution operator associated with linearized flattened problem. Section~\ref{section on specialized Sobolev space interlude, well-definedness, and injectivity} proves that the proposed solution operator is well-defined and injective. Sections~\ref{section on isomorphism in the case with surface tension} and~\ref{section on isomorphism in the case without surface tension} prove surjectivity in the cases $n\ge 2$ and strictly positive surface tensions and $n=2$ and vanishing surface tensions, respectively. 

Section~\ref{section on nonlinear analysis} contains the  nonlinear analysis and the proofs of the main theorems. We combine the linear analysis from Section~\ref{section on linearized flattened problem} with more results on specialized Sobolev spaces in order to satisfy the hypothesis of the implicit function theorem. Theorems~\ref{custom theorem 1} and~\ref{custom theorem 2} then follow.

 Finally, in Appendix~\ref{section on tools from analysis}, we record some useful facts from analysis used throughout the paper. These include notions of real valued tempered distributions, (anti-)duality and the Lax-Milgram lemma, tangential Fourier multipliers, and Korn's inequality.

\subsection{Conventions of notation}\label{section on conventions of notation}
The standard Lebesgue measure on the Euclidean space $\R^d$ is $\Le^d$.  The symbol $\mathbb{K}$ will be used in situations in which both $\mathbb{K} = \R$ and $\mathbb{K} = \C$ are valid.

Whenever the expression $a \lesssim b$ appears in a proof of a result, it means that there is a constant $C\in\R^+$, depending only on the parameters implicitly and explicitly quantified in the statement of the result, such that $a\le Cb$. We also write $a \asymp b$ to mean $a \lesssim b$ and $b \lesssim a$.

Given complex vector spaces $X$, $Y$, and $Z$ we say that a mapping $B:X\times Y\to Z$ is sesquilinear if it is linear in the left argument and antilinear in the right argument. The dot product $\cdot$ denotes the standard sesquilinear Euclidean inner product on $\C^d$, and we write $:$ for the sesquilinear Frobenius inner product on $\C^{d\times d}$.  We denote the divergence and tangential divergence operators with
\begin{equation}
    \grad\cdot f=\ssum{j=1}{n}\pd_j(f\cdot e_j)\text{ and }(\grad_\|,0)\cdot f=\ssum{k=1}{n-1}\pd_k(f\cdot e_k)
\end{equation}
for appropriate $\C^n$-valued functions $f$.  Note that this does not violate our sesquilinearity rule because the arguments of $\grad\cdot f$ and $(\grad_\|,0)\cdot f$ are outside of the domain of the dot product.

If $\mathcal{H}$ is a complex Hilbert space, then $\mathcal{H}^{\Bar{\ast}}$ denotes the set of continuous and antilinear functionals on $\mathcal{H}$, i.e. the antidual. Sometimes we will need to simultaneously refer to complex and real Hilbert spaces. When doing so we will use $\mathcal{H}^{\Bar{\ast}}$ to refer to the usual dual space when the base field is $\R$ and the antidual when it's $\C$. Given a complex Hilbert $\mathcal{H}$, the antidual pairing is the sesquilinear form $\br{\cdot,\cdot}_{\mathcal{H}^{\Bar{\ast}},\mathcal{H}}:\mathcal{H}^{\Bar{\ast}}\times\mathcal{H}\to\C$ defined via $\br{F,v}_{\mathcal{H}^{\Bar{\ast}},\mathcal{H}}=F\p{v}$. The Fourier transform is denoted $\mathscr{F}[\cdot]$.

Finally, we set the following function space notation. If $U\subset\R^{d_1}$ and $V\subset\R^{d_2}$ are open subsets of Euclidean space and $r\in\N$ we define
\begin{equation}
    \begin{cases}
    C^r(U;V)=\tcb{f:U\to V\;|\;f\text{ is continuous along with its derivatives of order $k$,}\;\forall\;k\in\N^+,\;k\le r}\\
    C^r_b(U;V)=\tcb{f\in C^r(U;V)\;|\;\max_{0\le k\le r}\sup_{z\in U}|D^kf(z)|<\infty}\\
    C^r_0(\R^{d_1};\R^{d_2})=\tcb{f\in C^r_b(\R^{d_1};\R^{d_2})\;|\;\lim_{\abs{z}\to\infty}\max_{0\le k\le r}\abs{D^kf(z)}=0}.
    \end{cases}
\end{equation}
Let $\upeta\in\mathscr{A}(\upalpha)$, $s\ge0$, and $\mathbb{K}\in\tcb{\R,\C}$. If $t,a\in\R$ we identify
\begin{equation}
    H^t(\R^{n-1}\times\tcb{a};\mathbb{K}^d)\simeq H^t(\R^{n-1};\mathbb{K}^d)
\end{equation}
in the obvious way. We say that a vector field is solenoidal if its distributional divergence vanishes. The closed subspace of $H^1$ consisting of solenoidal fields on $\Omega^{\m{e}}[\upeta]$ vanishing identically on the lower boundary is denoted
\begin{equation}
    {_0}H^1_{\sig}(\Omega[\upeta];\mathbb{K}^n)=\tcb{f\in H^1(\Omega^{\m{e}}[\upeta];\mathbb{K}^n)\;:\;\grad\cdot f=0\text{ and }\m{Tr}_{\Sigma_0}f=0}.
\end{equation}
Note that functions in this space are required to be in $H^1$ on the entire domain $\Omega^{\m{e}}[\upeta]$ (see \eqref{omega_eta notation}).  For $s \in \R^+ \cup \{0\}$ we also define
\begin{equation}
    {_0}H^{1+s}(\Omega[\upeta];\mathbb{K}^d)=\tcb{f\in H^1(\Omega^{\m{e}}[\upeta];\mathbb{K}^d)\;:\;f\res\Omega[\upeta]\in H^s(\Omega[\upeta];\mathbb{K}^d)\text{ and }\m{Tr}_{\Sigma_0}f=0}.
\end{equation}
Note that functions in this space are required to be in $H^1$ of the entire domain but only  $H^{1+s}$ on each subdomain $\Omega_\ell[\upeta]$.  A norm that makes the above vector space Banach is given by
\begin{equation}
    \norm{f}_{{_0}H^{1+s}(\Omega[\upeta])}^2=\ssum{\ell=1}{m}\norm{f\res\Omega_\ell[\upeta]}_{H^{1+s}(\Omega_\ell[\upeta])}^2.
\end{equation}
Observe in particular that taking $s=0$ implies that we will also denote ${_0}H^1(\Omega^{\m{e}}[\upeta];\mathbb{K}^d)$ with ${_0}H^1(\Omega[\upeta];\mathbb{K}^d)$.

\section{Multilayer traveling Stokes with stress boundary and jump conditions}\label{section on multilayer traveling Stokes with stress boundary and jump conditions}
In this section and the two succeeding we analyze linear systems of PDEs in the fixed domains $\Omega_\ell[0]$ with boundary conditions prescribed on $\Sigma_j[0]$ for $\ell\in\cb{1,\dots,m}$ and $j\in\cb{0,1,\dots,m}$. In the interest of concision we make the following change of notation: $\Omega_\ell[0]\mapsto\Omega_\ell$, $\Sigma_j[0]\mapsto\Sigma_j$, and $\Omega[0]\mapsto\Omega$.

Specific to this section of the paper is analysis of the following system of PDEs:
\begin{equation}\label{multilayer traveling stokes with stress boundary conditions}
    \begin{cases}
    \grad\cdot S^{\upmu}\p{p,u}-\gamma\rho_\ell\pd_1u=f&\text{in }\Omega_\ell,\;\ell\in\cb{1,\dots,m}\\
    \grad\cdot u=g&\text{in }\Omega\\
    \jump{S^{\upmu}\p{p,u}e_n}_\ell=k_\ell&\text{on }\Sigma_\ell,\;\ell\in\cb{1,\dots,m}\\
    \jump{u}_{\ell}=0&\text{on }\Sigma_\ell,\;\ell\in\cb{1,\dots,m-1}\\
    u=0&\text{on }\Sigma_0,
    \end{cases}
\end{equation}
with unknown velocity $u$ and pressure $p$, and with prescribed data $f$, $g$, and $\tp{k_\ell}_{\ell=1}^m$. The viscosity parameters are $\upmu=\cb{\mu_\ell}_{\ell=1}^m\subset\R^+$, $\cb{\rho_\ell}_{\ell=1}^m\subset\R^+$ are the density parameters, and $\gam\in\R$ is the signed wave speed.  Out of necessity, in this section we will work with real and complex valued solutions.  We recall from Section \ref{section on conventions of notation} that $\mathbb{K} \in \{\R,\C\}$ and
in the complex case the symbols $\cdot$ and $:$ are sesquilinear, which allows us to suppress writing complex conjugates in many expressions.

\subsection{Specified divergence and multi-normal trace problem}\label{section on divergence problem}

Before we dive into the analysis of~\eqref{multilayer traveling stokes with stress boundary conditions}, we first develop a few auxiliary results concerning the following multi-normal trace-divergence problem. That is, given a collection of normal traces $\tp{g_\ell}_{\ell=1}^m$, with $g_\ell$ defined on $\Sigma_\ell$, and given $f:\Omega\to\mathbb{K}$ one asks for $u:\Omega\to\mathbb{K}^n$ satisfying:
\begin{equation}\label{multi-normal trace-divergence problem}
    \begin{cases}
    \grad\cdot u=f&\text{in }\Omega\\
    u\cdot e_n=g_\ell&\text{on }\Sigma_\ell,\;\ell\in\cb{1,\dots,m}\\
    u=0&\text{on }\Sigma_0.
    \end{cases}
\end{equation}
In general this problem is overdetermined in the sense that if $u$ is a solution belonging to a appropriate function space then a nontrivial compatibility condition must hold among the data $f$ and $\tp{g_\ell}_{\ell=1}^m$. We codify this precisely in the following result.
\begin{prop}[Divergence compatibility estimate]\label{divergence_divergence}
    Let $u\in{_0}H^1\p{\Omega;\mathbb{K}^n}$ and set $f=\grad\cdot u\in L^2\p{\Omega;\mathbb{K}}$ and $g_\ell=\m{Tr}_{\Sigma_\ell}u\cdot e_n\in H^{1/2}\p{\Sigma_\ell;\mathbb{K}}$. Then for each $\ell\in\cb{1,\dots,m}$ we have the inclusion
\begin{equation}
    g_\ell\p{\cdot,a_\ell}-\int_{\p{0,a_\ell}}f\p{\cdot,y}\;\m{d}y\in\dot{H}^{-1}\p{\R^{n-1};\mathbb{K}}.
\end{equation}
Moreover, we have the bound
\begin{equation}
    \ssum{\ell=1}{m}\bsb{g_\ell-\int_{\p{0,a_\ell}}f}_{\dot{H}^{-1}}\le2\pi\bp{\ssum{\ell=1}{m}\sqrt{a_\ell}}\norm{u}_{L^2}.
\end{equation}
\end{prop}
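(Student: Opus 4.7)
The plan is a duality argument: for each $\ell$, we realize $g_\ell - \int_{(0,a_\ell)} f \,\m dy$ as a distribution on $\R^{n-1}$ and estimate its action on smooth test functions $\varphi \in C_c^\infty(\R^{n-1};\mathbb{K})$, which determine the homogeneous Sobolev norm by duality, $\sbr{T}_{\dot H^{-1}} = \sup \cb{\tabs{T(\varphi)} : \varphi \in C_c^\infty(\R^{n-1};\mathbb{K}),\; \sbr{\varphi}_{\dot H^1} \le 1}$.

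First I would fix $\ell\in\cb{1,\dots,m}$ and observe that since $u\in{_0}H^1(\Omega;\mathbb{K}^n)$, in particular $u$ belongs to $H^1(\R^{n-1}\times(0,a_\ell);\mathbb{K}^n)$ (this slab is contained in $\Omega^{\m{e}}[0]$), with $\m{Tr}_{\Sigma_0}u=0$ and $\m{Tr}_{\Sigma_\ell}u\cdot e_n=g_\ell$.  For a test function $\varphi\in C_c^\infty(\R^{n-1};\mathbb{K})$, apply the divergence theorem to the vector field $\varphi u$ on $\R^{n-1}\times(0,a_\ell)$.  Because $\varphi$ has compact horizontal support the lateral boundary contributes nothing; on $\Sigma_0$ the integrand vanishes by the no-slip condition; on $\Sigma_\ell$ only the $e_n$-component of $u$ survives in the pairing with $\varphi$.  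Expanding $\grad\cdot(\varphi u)=\varphi f + u\cdot(\grad_\|\varphi,0)$ yields the identity
\begin{equation}
\int_{\R^{n-1}}\varphi\, g_\ell \;-\; \int_{\R^{n-1}\times(0,a_\ell)}\varphi\, f \;=\; \int_{\R^{n-1}\times(0,a_\ell)}u\cdot(\grad_\|\varphi,0).
\end{equation}
The left-hand side is precisely the antidual pairing of the candidate distribution $g_\ell - \int_{(0,a_\ell)}f\,\m dy$ with $\varphi$.

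Next I would estimate the right-hand side.  Cauchy--Schwarz gives
\begin{equation}
\babs{\int_{\R^{n-1}\times(0,a_\ell)}u\cdot(\grad_\|\varphi,0)} \;\le\; \norm{u}_{L^2(\R^{n-1}\times(0,a_\ell))}\,\norm{\grad_\|\varphi}_{L^2(\R^{n-1}\times(0,a_\ell))} \;=\; \sqrt{a_\ell}\,\norm{u}_{L^2(\R^{n-1}\times(0,a_\ell))}\,\norm{\grad_\|\varphi}_{L^2(\R^{n-1})},
\end{equation}
using that $\varphi$ is independent of $y$.  Plancherel together with the paper's Fourier convention gives $\norm{\grad_\|\varphi}_{L^2(\R^{n-1})}=2\pi\sbr{\varphi}_{\dot H^1(\R^{n-1})}$, so dualizing and noting $\norm{u}_{L^2(\R^{n-1}\times(0,a_\ell))}\le\norm{u}_{L^2(\Omega)}$ produces
\begin{equation}
\bsb{g_\ell - \ssum{}{}\int_{(0,a_\ell)} f}_{\dot H^{-1}(\R^{n-1})} \;\le\; 2\pi\sqrt{a_\ell}\,\norm{u}_{L^2(\Omega)}.
\end{equation}
In particular this gives membership in $\dot H^{-1}$.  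Summing over $\ell \in \{1,\dots,m\}$ completes the bound.

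This proof is essentially routine integration by parts plus duality, so I do not anticipate a serious obstacle; the only thing to verify carefully is that $u$ has enough regularity on each flat slab $\R^{n-1}\times(0,a_\ell)$ to justify the divergence identity (which follows from $u\in H^1(\Omega^{\m{e}}[0];\mathbb{K}^n)$ and standard trace theory), and that the constant $2\pi$ indeed arises from the Fourier conventions adopted in the paper.
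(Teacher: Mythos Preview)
Your argument is correct and is essentially the same as the paper's, just phrased dually. The paper integrates $\nabla\cdot u=f$ in the vertical variable and uses the fundamental theorem of calculus to obtain the pointwise identity
\[
g_\ell-\int_{(0,a_\ell)}f=-(\nabla_\|,0)\cdot\int_{(0,a_\ell)}u,
\]
and then bounds $\bigl[(\nabla_\|,0)\cdot\int_{(0,a_\ell)}u\bigr]_{\dot H^{-1}}^2\le 4\pi^2\int_{\R^{n-1}}\bigl|\int_{(0,a_\ell)}u\bigr|^2\le 4\pi^2 a_\ell\|u\|_{L^2}^2$ directly in Fourier variables. Your divergence-theorem identity tested against $\varphi$ is exactly the weak form of that same pointwise identity, and your Cauchy--Schwarz step recovers the same constant $2\pi\sqrt{a_\ell}$.
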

\begin{proof}
As justified by the absolute continuity on lines characterization of ${_0}H^1\p{\Omega;\mathbb{K}^n}$, we may integrate the equation $\grad\cdot u=f$ in the vertical variable over $\p{0,a_\ell}$ and employ the second fundamental theorem of calculus. This results in the identity
\begin{equation}
    \int_{\p{0,a_\ell}}f=g_\ell+(\grad_{\|},0)\cdot \int_{\p{0,a_\ell}}u.
\end{equation}
Therefore, by H\"older's inequality and Tonelli's theorem,
\begin{equation}
\bsb{g_\ell-\int_{\p{0,a_\ell}}f}_{\dot{H}^{-1}}^2\le4\pi^2\int_{\R^{n-1}}\babs{\int_{\p{0,a_\ell}}u}^2\le 4\pi^2a_\ell\norm{u}^2_{L^2}.
\end{equation}
The stated estimate follows.
\end{proof}
The remainder of this subsection is devoted to the converse of the previous lemma: the satisfaction of this compatibility condition is also sufficient in guaranteeing the solvability of the PDE~\eqref{multi-normal trace-divergence problem}. The first ingredient we require is some right inverse to the divergence operator that enforces the vanishing trace on the lower boundary $\Sigma_0$.
\begin{lem}[A right inverse to the divergence]\label{simple right inverse to the divergence}
Let $a,b\in\R$ with $a<b$ and set $U=\R^{n-1}\times\p{a,b}$. There exists a linear and continuous  mapping $\Pi_U:L^2\p{U;\mathbb{K}}\to{_0}H^1\p{U;\mathbb{K}^n}$ such that $\grad\cdot \Pi_Uf=f$ for all $f\in L^2\p{U;\mathbb{K}}$. 
\end{lem}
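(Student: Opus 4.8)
The plan is to construct $\Pi_U$ explicitly by a standard Bogovskii-type formula, but adapted to produce the vanishing trace only on the \emph{lower} boundary $\Sigma_0 = \R^{n-1}\times\{a\}$ (and with no constraint imposed on the upper boundary). The cleanest route is to build the $n$-th component of $\Pi_U f$ by vertical integration and the tangential components by a tangential solution operator applied to the resulting horizontal divergence, so that the divergence telescopes to $f$. Concretely, first I would set $w_n(x,y) = \int_a^y f(x,z)\,\mathrm{d}z$, which is well-defined in $L^2$ of the slab (indeed in $H^1_y$-in-the-vertical-variable with $\partial_n w_n = f$), vanishes on $\Sigma_0$ by construction, and has $w_n(x,b) = \int_a^b f(x,z)\,\mathrm{d}z =: \bar f(x) \in L^2(\R^{n-1})$ by Cauchy--Schwarz and Tonelli. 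Then $\partial_n w_n = f$ already, so I need to add a correction $w_\| = (w_1,\dots,w_{n-1},0)$ with $(\grad_\|,0)\cdot w_\| = 0$; that is not quite what is wanted, so instead I adjust: I only use the vertical integral to capture $f$, and I allow the tangential part to also carry a piece.

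The more robust construction: use a fixed cutoff $\chi \in C_c^\infty((a,b])$ with $\chi \equiv 1$ near $b$ and $\chi \equiv 0$ near $a$, or more simply a smooth function $\theta:(a,b)\to\R$ with $\theta(a)=0$, $\theta'\in L^\infty$, $\int_a^b \theta' = 1$ — take $\theta(y) = (y-a)/(b-a)$. Set the $n$-th component $w_n(x,y) = \int_a^y f(x,z)\,\mathrm{d}z - \theta(y)\,\bar f(x)$, so that $w_n(x,a) = 0$, $w_n(x,b) = 0$, and $\partial_n w_n = f(x,y) - \tfrac{1}{b-a}\bar f(x)$. The remaining error $g(x,y) := \tfrac{1}{b-a}\bar f(x)$ has the special feature of being independent of $y$ and of having \emph{zero mean over $\R^{n-1}$ in each $y$-slice only if $f$ does}, which we cannot assume. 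So instead of this, I would absorb $g$ using a tangential right inverse to $(\grad_\|,0)\cdot$: since $g \in L^2$ is a function of $x$ alone, I apply the classical Bogovskii operator (or Fourier-multiplier right inverse $\grad_\| \Delta_\|^{-1}$ localized appropriately) on $\R^{n-1}$? This fails because $g$ need not have compact support or zero mean on $\R^{n-1}$.

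Here is the fix that actually works and that I expect the authors use: do \emph{not} try to kill the horizontal divergence with a horizontal operator; instead let the vertical variable do all the work. Take $w_\|(x,y)$ to be a compactly-in-$y$-supported field whose tangential divergence, integrated in $y$, cancels the excess. The real point is that we only need $\grad\cdot w = f$ on the slab with $w|_{\Sigma_0}=0$ and $w\in {}_0H^1$ — and $w$ need not vanish on the top. So set $w = (0,\dots,0,w_n)$ with $w_n(x,y)=\int_a^y f(x,z)\,\mathrm{d}z$. Then $\grad\cdot w = \partial_n w_n = f$, $\mathrm{Tr}_{\Sigma_0} w = 0$, and the $H^1(U)$ bound follows from $\|w_n\|_{L^2}\le (b-a)\|f\|_{L^2}$ (Cauchy--Schwarz in $y$, Tonelli), $\|\partial_n w_n\|_{L^2}=\|f\|_{L^2}$, and $\|\grad_\| w_n\|_{L^2}$ — but this last term is \emph{not controlled by $\|f\|_{L^2}$}, only by $\|\grad_\| f\|$. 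That is the genuine obstruction: naive vertical integration loses tangential regularity.

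Thus the main step, and the main obstacle, is to build a right inverse bounded $L^2 \to H^1$, which forces using a genuine Bogovskii operator on the slab. I would therefore: (1) reduce to the case $\int_U f = 0$ is \emph{not} needed — instead, split $f = f_0 + f_1$ where I extend the slab construction; concretely, realize $U = \R^{n-1}\times(a,b)$ as a Lipschitz (indeed smooth) domain that is star-shaped with respect to no bounded set, so the classical Bogovskii theorem does not directly apply. The correct tool is the \emph{Bogovskii operator on bounded star-shaped-with-respect-to-a-ball pieces combined with a partition of unity adapted to the translation structure in $x$}, or, more efficiently, the explicit Fourier-analytic right inverse: in the tangential Fourier variable $\xi$, solve the $1$D ODE problem $\partial_y \hat w_n + 2\pi i \xi\cdot \hat w_\| = \hat f$, $\hat w_n|_{y=a}=0$, choosing $\hat w_\|(\xi,y) = \tfrac{2\pi i\xi}{|2\pi\xi|^2}\,m(\xi,y)$ and $\hat w_n$ by integration, with $m$ chosen so that the $y$-integral of the RHS is reproduced; the homogeneity of the multiplier $\xi/|\xi|^2$ combined with $\partial_y$ gives exactly the $L^2\to H^1$ bound after checking the low-frequency behavior is harmless because of the averaging in $y$. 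I would carry this out by: writing $\hat f(\xi,\cdot)\in L^2(a,b)$, decomposing $\hat f = \langle \hat f\rangle + \hat f^\perp$ into its $y$-average and mean-zero-in-$y$ part, handling $\hat f^\perp$ by pure vertical integration (which is mean-zero so stays bounded), and handling $\langle\hat f\rangle(\xi)$ — a function of $\xi$ only — by $\hat w_\|(\xi,y) = \tfrac{2\pi i \xi}{4\pi^2|\xi|^2}\langle\hat f\rangle(\xi)\,\psi(y)$ with $\psi$ smooth, $\int_a^b\psi = (b-a)$, giving $\partial_y$-free contribution to $w_n$ and a tangential-divergence contribution $2\pi i\xi\cdot\hat w_\| = \langle\hat f\rangle\psi(y)$ whose $y$-integral is $(b-a)\langle\hat f\rangle$, matched against $\int_a^y\hat f$. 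The multiplier $\xi/|\xi|^2$ is unbounded at $\xi=0$, but it acts on $\langle\hat f\rangle$ which, crucially, is the $y$-average and the only place $\xi$ appears positively is in $\grad_\| w_n \sim \xi\cdot\hat w_\| \sim \langle\hat f\rangle$ — bounded — so low frequencies are fine, and high frequencies are fine by homogeneity. This is the delicate bookkeeping I expect to be the crux; everything else (continuity, linearity, the trace on $\Sigma_0$) is immediate from the construction.
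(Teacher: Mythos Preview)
The paper's own proof is a one-liner: it cites Proposition~2.1 of \cite{leoni2019traveling} for $\mathbb{K}=\R$ and handles $\mathbb{K}=\C$ by letting the real operator act on real and imaginary parts separately. Your proposal attempts substantially more, but the Fourier-side construction you sketch has a genuine gap at both ends of the frequency axis. For the mean-zero-in-$y$ piece $\hat f^\perp$, pure vertical integration $\hat w_n^\perp(\xi,y)=\int_a^y\hat f^\perp(\xi,z)\,dz$ does \emph{not} produce an $H^1$ field: the term $|\xi|^2\|\hat w_n^\perp(\xi,\cdot)\|_{L^2(a,b)}^2$ coming from $\grad_\| w_n^\perp$ is not bounded by $\|\hat f^\perp(\xi,\cdot)\|_{L^2(a,b)}^2$ uniformly in $\xi$ (take $\hat f^\perp(\xi,y)=g(\xi)\cos\!\bigl(2\pi(y-a)/(b-a)\bigr)$ to see $\|\hat w_n^\perp\|_{L^2_y}\asymp\|\hat f^\perp\|_{L^2_y}$ with no $|\xi|$-decay). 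The mean-zero property only buys $\hat w_n^\perp(\xi,b)=0$, which is irrelevant since ${_0}H^1$ imposes nothing at the top. The same high-frequency failure afflicts whatever $\hat w_n$ you carry for the average part. At low frequencies, your tangential piece $\hat w_\|\sim|\xi|^{-1}\langle\hat f\rangle(\xi)\psi(y)$ is not in $L^2$ near $\xi=0$; your remark that ``low frequencies are fine'' accounts for $\grad_\| w_\|$ but overlooks the $L^2$ and $\partial_y$ norms of $w_\|$ itself, both of which carry the bad $|\xi|^{-1}$.

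If you want an explicit Fourier-side construction that does work: for $|\xi|\le(b-a)^{-1}$ use pure vertical integration $\hat w=(0,\dots,0,\int_a^y\hat f)$, which is now fine because $|\xi|$ is bounded; for $|\xi|>(b-a)^{-1}$ set
\[
\hat w_n(\xi,y)=\int_a^y e^{-2\pi|\xi|(y-z)}\hat f(\xi,z)\,dz,\qquad \hat w_\|(\xi,y)=-\tfrac{i\xi}{|\xi|}\,\hat w_n(\xi,y),
\]
so that $\partial_y\hat w_n+2\pi i\xi\cdot\hat w_\|=\hat f$, $\hat w(\xi,a)=0$, and Young's inequality for the convolution against $e^{-2\pi|\xi|\cdot}\mathbbm{1}_{(0,\infty)}$ yields the crucial high-frequency decay $|\xi|\,\|\hat w_n(\xi,\cdot)\|_{L^2_y}\lesssim\|\hat f(\xi,\cdot)\|_{L^2_y}$.
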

\begin{proof}
The existence of such an operator in the case $\mathbb{K}=\R$ is well-known. See, for instance, Proposition 2.1 in~\cite{leoni2019traveling}. In the instance that $\mathbb{K}=\C$ one may simply take the real valued operator to act on real and imaginary parts of the data separately in the obvious way.
\end{proof}
Next, we may explicitly construct a solution operator to~\eqref{multi-normal trace-divergence problem} in the special case of $f=0$ and $m=1$.
\begin{lem}[Solenoidal extension operator]\label{simple solution operator to theone layer solenoidal problem}
Let $W=\R^{n-1}\times\p{a,b}$ and $\Sigma=\R^{n-1}\times\cb{b}$ for $a,b\in\R$, $a<b$. There exists a bounded linear operator $P_W:H^{1/2}\p{\Sigma;\mathbb{K}}\cap\dot{H}^{-1}\p{\Sigma}\to{_0}H^1\p{W;\mathbb{K}^n}$ such that $\grad\cdot P_Wg=0$ and $\m{Tr}_{\Sigma}P_Wg\cdot e_n=g$ for all $g\in H^{1/2}\p{\Sigma;\mathbb{K}}\cap\dot{H}^{-1}\p{\Sigma;\mathbb{K}}$.
\end{lem}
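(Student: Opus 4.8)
The plan is to construct $P_W$ explicitly as a tangential Fourier multiplier in the horizontal variable. Write points of $W$ as $\p{x,y}\in\R^{n-1}\times\p{a,b}$, let $\mathscr{F}$ denote the Fourier transform in $x$, put $\hat g=\mathscr{F}[g]$, and set $\tbr{\xi}=\p{1+\abs{\xi}^2}^{1/2}$. I would look for $P_Wg$ in the ``horizontally irrotational'' form
\[
\mathscr{F}[P_Wg]\p{\xi,y}=\hat g(\xi)\sp{\tfrac{\m{i}\xi}{2\pi\abs{\xi}^2}\pd_y\Phi(\xi,y),\;\Phi(\xi,y)},
\]
for a profile $\Phi(\xi,\cdot)\colon[a,b]\to\C$ to be chosen. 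With this ansatz the Fourier symbol of $\grad\cdot P_Wg$ equals $2\pi\m{i}\xi\cdot\hat g\tfrac{\m{i}\xi}{2\pi\abs{\xi}^2}\pd_y\Phi+\pd_y(\hat g\Phi)=-\hat g\,\pd_y\Phi+\hat g\,\pd_y\Phi=0$, so $P_Wg$ is automatically solenoidal; the boundary conditions then become the requirements $\Phi(\xi,b)=1$ (so that $\m{Tr}_\Sigma P_Wg\cdot e_n=g$) and $\Phi(\xi,a)=\pd_y\Phi(\xi,a)=0$ (so that \emph{every} component of $P_Wg$, not merely its normal component, vanishes on the lower boundary $\R^{n-1}\times\cb{a}$ of $W$; this last constraint on $\pd_y\Phi$ is exactly what rules out the naive $\sinh$-type profile).

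The profile I would take is $\Phi(\xi,y)=\chi(y)\,e^{-\tbr{\xi}(b-y)}$, where $\chi\in C^\infty(\R)$ satisfies $\chi\equiv0$ near $a$ and $\chi\equiv1$ near $b$: then $\Phi(\xi,b)=1$ and $\Phi$ together with all of its $y$-derivatives vanishes near $y=a$, so all three requirements hold and $P_W$ is a linear operator producing solenoidal fields that vanish identically near $\R^{n-1}\times\cb{a}$ and have normal trace $g$ on $\Sigma$. The remaining content is the $H^1$ bound, which by Plancherel reduces to the pointwise-in-$\xi$ estimate
\[
\int_a^b\tbr{\xi}^2\bp{\sabs{\mathscr{F}[(P_Wg)_\|]}^2+\sabs{\mathscr{F}[(P_Wg)_n]}^2}+\sabs{\pd_y\mathscr{F}[(P_Wg)_\|]}^2+\sabs{\pd_y\mathscr{F}[(P_Wg)_n]}^2\,\m{d}y\lesssim\bp{\tbr{\xi}+\f1{\abs{\xi}^2}}\abs{\hat g(\xi)}^2;
\]
integrating this in $\xi$ then gives exactly $\norm{P_Wg}_{{_0}H^1(W)}^2\lesssim\norm{g}_{H^{1/2}(\Sigma)}^2+\sb{g}_{\dot{H}^{-1}(\Sigma)}^2$, and in particular shows that $\mathscr{F}[P_Wg]\in L^2$, so $P_Wg$ is a genuine element of $H^1(W)$. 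The pointwise estimate is elementary: the exponential factor yields $\int_a^b\tbr{\xi}^j e^{-\tbr{\xi}(b-y)}\,\m{d}y\lesssim\tbr{\xi}^{j-1}$ for every $j\ge0$, and each of the finitely many terms obtained by differentiating $\chi(y)e^{-\tbr{\xi}(b-y)}$ up to twice in $y$ is controlled this way; the contributions from $(P_Wg)_n$ and, for $\abs{\xi}\gtrsim1$, from $(P_Wg)_\|$ are absorbed by $\tbr{\xi}\abs{\hat g}^2$, while the only place the extra factor $\abs{\xi}^{-2}$ appears is the low-frequency regime $\abs{\xi}\to0$ of $(P_Wg)_\|=\tfrac{\m{i}\xi}{2\pi\abs{\xi}^2}\pd_y\Phi$, where $\pd_y\Phi$ stays bounded but $\abs{\xi}^{-2}$ does not --- which is precisely why the hypothesis $g\in\dot{H}^{-1}(\Sigma)$ is both natural and necessary.

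I do not expect any genuine obstacle here; the argument is essentially the above multiplier computation plus routine estimates, and the only thing to keep track of is which term forces which of the two norms on $g$. Two cosmetic remarks to record at the end: first, linearity together with the displayed bound makes $P_W$ bounded as claimed; second, in the case $\mathbb{K}=\R$ one observes that the multiplier commutes with complex conjugation --- it sends Hermitian-symmetric $\hat g$, i.e. with $\hat g(-\xi)=\overline{\hat g(\xi)}$, to Hermitian-symmetric vector fields, since $e^{-\tbr{\xi}(b-y)}$ is even in $\xi$ and $\m{i}\xi/\abs{\xi}^2$ is purely imaginary and odd --- so $P_W$ restricts to real data, exactly as in Lemma~\ref{simple right inverse to the divergence}. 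A conceivable alternative, namely to extend $g$ crudely as $(0,\dots,0,\chi(y)Eg)$ with $E$ a standard bounded extension $H^{1/2}(\Sigma)\to H^1(W)$ and then fix the divergence using the operator $\Pi_W$ of Lemma~\ref{simple right inverse to the divergence}, is less convenient because one cannot easily arrange the divergence-corrector to have vanishing normal trace on $\Sigma$; the direct construction above sidesteps this.
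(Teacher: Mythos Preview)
Your construction is correct and follows the same ``horizontally irrotational'' ansatz as the paper: write $\mathscr{F}[P_Wg]=(\hat g\,\tfrac{\ii\xi}{2\pi\abs{\xi}^2}\pd_y\Phi,\;\hat g\,\Phi)$ for a scalar profile $\Phi(\xi,y)$ with $\Phi=\pd_y\Phi=0$ at $y=a$ and $\Phi=1$ at $y=b$, then estimate. The difference is entirely in the choice of profile. The paper (after translating so $a=0$) takes
\[
\Phi(\xi,y)=\f{\cosh(y\abs{\xi})-1}{\cosh(b\abs{\xi})-1},
\]
which satisfies the same three boundary constraints (note $\pd_y\Phi\vert_{y=0}\propto\sinh(0)=0$) and solves $\pd_y^2\Phi=\abs{\xi}^2\Phi$; the paper then verifies the $H^1$ bound by four explicit Parseval computations, each of which requires tracking the low-frequency behavior of ratios like $\tfrac{\sinh(2b\abs{\xi})-2b\abs{\xi}}{(\cosh(b\abs{\xi})-1)^2}$.

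Your profile $\Phi(\xi,y)=\chi(y)e^{-\tbr{\xi}(b-y)}$ trades that for a smooth cutoff. This is arguably more elementary: the Leibniz expansion of $\pd_y^k\Phi$ produces only boundedly many terms, each of the form $\chi^{(j)}(y)\tbr{\xi}^{k-j}e^{-\tbr{\xi}(b-y)}$, and the estimate $\int_a^b\tbr{\xi}^{2r}e^{-2\tbr{\xi}(b-y)}\,\m{d}y\lesssim\tbr{\xi}^{2r-1}$ is immediate, so the bookkeeping is lighter and the $\abs{\xi}^{-2}$ low-frequency contribution to the tangential components is isolated transparently. The paper's choice, by contrast, is the harmonic one (no cutoff, profile globally analytic in $y$), which is more canonical but costs the hyperbolic asymptotics. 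Either route proves the lemma; your remark about real-valuedness via the evenness of $\tbr{\xi}$ and the odd-imaginary structure of $\ii\xi/\abs{\xi}^2$ matches the paper's closing observation.
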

\begin{proof}
It is sufficient to consider the case that $a=0$ and $b\in\R^+$. We explicitly construct the solution operator with the horizontal Fourier transform. Given $g\in\dot{H}^{-1}\p{\Sigma;\mathbb{K}}\cap H^{1/2}\p{\Sigma;\mathbb{K}}$ we define the auxiliary functions $v:\R^{n-1}\times\p{0,b}\to\C^{n-1}$ and $w:\R^{n-1}\times\p{0,b}\to\C$ via 
\begin{equation}
    v\p{\xi,t}=\hat{g}\p{\xi}\f{\ii\xi\sinh\p{t\abs{\xi}}}{2\pi\abs{\xi}\p{\cosh\p{b\abs{\xi}}-1}}\text{ and }w\p{\xi,t}=\hat{g}\p{\xi}\f{\cosh\p{t\abs{\xi}}-1}{\cosh\p{b\abs{\xi}}-1}.
\end{equation}
We propose that setting $P_Wg=\mathscr{F}^{-1}\p{v,w}$ gives the desired solution operator. In order to check that this is well-defined and continuous, it is sufficient to use Parseval's and Tonelli's theorems and observe the following four computations.  First:
\begin{multline}
\norm{\mathscr{F}^{-1}v}_{L^2\dot{H}^1}^2=\int_{\p{0,b}}\int_{\R^{n-1}}\abs{\xi}^2\abs{\hat{g}\p{\xi}}^2\f{\sinh\p{t\abs{\xi}}^2}{\p{\cosh\p{b\abs{\xi}}-1}^2}\;\m{d}\xi\;\m{d}t\\=\f{1}{4}\int_{\R^{n-1}}\max\{\abs{\xi},\abs{\xi}^{-2}\}\abs{\hat{g}\p{\xi}}^2\min\{1,\abs{\xi}^{3}\}\f{-2b\abs{\xi}+\sinh\p{2b\abs{\xi}}}{\p{\cosh\p{b\abs{\xi}}-1}^2}\;\m{d}\xi\le c_0\p{b}\norm{g}^2_{\dot{H}^{-1}\cap H^{1/2}}.
\end{multline}
Second:
\begin{multline}
    \norm{\mathscr{F}^{-1}v}^2_{{_0}H^1L^2}=\f{1}{4\pi^2}\int_{\p{0,b}}\int_{\R^{n-1}}\abs{\xi}^2\abs{\hat{g}\p{\xi}}^2\f{\cosh\p{t\abs{\xi}}^2}{\p{\cosh\p{b\abs{\xi}}-1}^2}\;\m{d}\xi\;\m{d}t\\=\f{1}{16\pi^2}\int_{\R^{n-1}}\max\{\abs{\xi},\abs{\xi}^{-2}\}\abs{\hat{g}\p{\xi}}^2\min\{1,\abs{\xi}^3\}\f{2b\abs{\xi}+\sinh\p{2b\abs{\xi}}}{\p{\cosh\p{b\abs{\xi}}-1}^2}\;\m{d}\xi\le c_1\p{b}\norm{g}^2_{\dot{H}^{-1}\cap H^{1/2}}.
\end{multline}
Third:
\begin{multline}
    \norm{\mathscr{F}^{-1}w}^2_{L^2\dot{H}^1}=4\pi^2\int_{\p{0,b}}\int_{\R^{n-1}}\abs{\xi}^2\abs{\hat{g}\p{\xi}}^2\f{\p{\cosh\p{t\abs{\xi}}-1}^2}{\p{\cosh\p{b\abs{\xi}}-1}^2}\;\m{d}\xi\;\m{d}t\\
    =\pi^2\int_{\R^{n-1}}\max\{\abs{\xi},\abs{\xi}^{-2}\}\abs{\hat{g}\p{\xi}}^2\min\{1,\abs{\xi}^3\}\f{6b\abs{\xi}-8\sinh\p{b\abs{\xi}}+\sinh\p{b\abs{\xi}}}{\p{\cosh\p{b\abs{\xi}}-1}^2}\;\m{d}\xi\le c_2\p{b}\norm{g}_{\dot{H}^{-1}\cap H^{1/2}}^2.
\end{multline}
Fourth:
\begin{equation}
    \norm{\mathscr{F}^{-1}w}^2_{{_0}H^1L^2}=\int_{\p{0,b}}\int_{\R^{n-1}}\abs{\xi}^2\abs{\hat{g}\p{\xi}}^2\f{\sinh\p{t\abs{\xi}}^2}{\p{\cosh\p{b\abs{\xi}}-1}^2}\;\m{d}\xi\;\m{d}t\le c_0\p{b}\norm{g}_{\dot{H}^{-1}\cap H^{1/2}}^2.
\end{equation}
It is straightforward to check that $\grad\cdot P_Wg=0$, $\m{Tr}_{\Sigma}P_Wg\cdot e_n=g$, and $\m{Tr}_{\R^{n-1}\times\tcb{0}}P_Wg=0$. By Proposition~\ref{characterizations of real-valued tempered distruibutions} and Remark~\ref{real valued functions and real valued tempered distributions} it is also ensured that $P_Wg$ is real-valued whenever $g$ is real-valued. This completes the proof.
\end{proof}
We may piece together the operators from Lemmas~\ref{simple solution operator to theone layer solenoidal problem} and~\ref{simple right inverse to the divergence} to solve problem~\eqref{multi-normal trace-divergence problem} in the single prescribed normal trace case, $m=1$.
\begin{prop}[Solution operator to~\eqref{multi-normal trace-divergence problem}: single layer case]\label{solution operator for the base case}
Let $a,b\in\R$ with $a<b$. Define the Hilbert space \begin{equation}
    \mathfrak{W}\p{a,b}=\big\{\tp{f,g}\in L^2\tp{\R^{n-1}\times\p{a,b};\mathbb{K}}\times H^{1/2}\tp{\R^{n-1}\times\cb{b};\mathbb{K}}\;:\;\norm{\p{f,g}}_{\mathfrak{W}}<\infty\big\}
\end{equation}
for the norm
\begin{equation}
    \norm{\p{f,g}}_{\mathfrak{W}}^2=\norm{f}_{L^2}^2+\norm{g}_{H^{1/2}}^2+\bsb{g-\int_{\p{a,b}}f}_{\dot{H}^{-1}}^2.
\end{equation}
There exists a bounded linear $Q^{a,b}:\mathfrak{W}\p{a,b}\to{_0}H^1\tp{\R^{n-1}\times\p{a,b};\mathbb{K}^n}$ such that $\grad\cdot Q^{a,b}\p{f,g}=f$ and $\m{Tr}_{\R^{n-1}\times\cb{b}}Q^{a,b}\p{f,g}\cdot e_n=g$.
\end{prop}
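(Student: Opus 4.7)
The plan is to construct $Q^{a,b}$ by composing the two building blocks from the preceding lemmas: first use $\Pi_U$ from Lemma~\ref{simple right inverse to the divergence} to absorb the bulk datum $f$, then correct the resulting top trace using the solenoidal extension $P_W$ from Lemma~\ref{simple solution operator to theone layer solenoidal problem}. Specifically, set $U = W = \R^{n-1}\times(a,b)$, $\Sigma = \R^{n-1}\times\cb{b}$, and for $(f,g)\in\mathfrak{W}(a,b)$ define
\begin{equation}
    u_1 = \Pi_U f \in {_0}H^1(U;\mathbb{K}^n), \qquad \tilde g = g - \m{Tr}_{\Sigma}u_1\cdot e_n.
\end{equation}
Then $\grad\cdot u_1 = f$, $\m{Tr}_{\R^{n-1}\times\cb{a}} u_1 = 0$, and I propose
\begin{equation}
    Q^{a,b}(f,g) = u_1 + P_W \tilde g,
\end{equation}
provided $\tilde g$ lies in the domain of $P_W$.

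The core task is therefore to verify that $\tilde g \in H^{1/2}(\Sigma;\mathbb{K})\cap \dot H^{-1}(\Sigma;\mathbb{K})$ with norm controlled by $\norm{(f,g)}_{\mathfrak{W}}$. The $H^{1/2}$ bound is immediate from the continuity of $\Pi_U$ and the standard trace theorem. For the $\dot H^{-1}$ bound, I would split
\begin{equation}
    \tilde g = \bp{g - \int_{(a,b)} f} - \bp{\m{Tr}_{\Sigma}u_1\cdot e_n - \int_{(a,b)} f}.
\end{equation}
The first term lies in $\dot H^{-1}$ by hypothesis (it is precisely the third summand in $\norm{(f,g)}_{\mathfrak{W}}^2$). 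For the second term, I would apply Proposition~\ref{divergence_divergence} to $u_1 \in {_0}H^1(U;\mathbb{K}^n)$, which yields $\m{Tr}_{\Sigma}u_1\cdot e_n - \int_{(a,b)} f \in \dot H^{-1}$ together with the estimate
\begin{equation}
    \bsb{\m{Tr}_{\Sigma}u_1\cdot e_n - \int_{(a,b)} f}_{\dot H^{-1}} \lesssim \norm{u_1}_{L^2} \lesssim \norm{\Pi_U}\, \norm{f}_{L^2}.
\end{equation}
Combining these gives $\tilde g \in H^{1/2}\cap \dot H^{-1}$ with the desired continuous dependence on $(f,g)$.

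With $\tilde g$ in the correct space, Lemma~\ref{simple solution operator to theone layer solenoidal problem} produces $P_W\tilde g \in {_0}H^1(U;\mathbb{K}^n)$ with $\grad\cdot P_W\tilde g = 0$, $\m{Tr}_{\Sigma} P_W\tilde g \cdot e_n = \tilde g$, and vanishing trace on $\R^{n-1}\times\cb{a}$. Adding up, $Q^{a,b}(f,g) = u_1 + P_W\tilde g$ then satisfies $\grad\cdot Q^{a,b}(f,g) = f$, $\m{Tr}_{\Sigma}Q^{a,b}(f,g)\cdot e_n = \m{Tr}_{\Sigma}u_1\cdot e_n + \tilde g = g$, and $\m{Tr}_{\R^{n-1}\times\cb{a}}Q^{a,b}(f,g) = 0$. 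Linearity of $Q^{a,b}$ is inherited from $\Pi_U$ and $P_W$, and continuity from the estimates above. The only mild subtlety is the bookkeeping involved in the $\dot H^{-1}$ decomposition, but this is exactly the purpose for which the compatibility norm on $\mathfrak{W}(a,b)$ was designed; no further obstacle is anticipated.
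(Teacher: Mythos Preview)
Your proposal is correct and essentially identical to the paper's proof: the paper defines $Q^{a,b}(f,g)=\Pi_W f + P_W[g-\m{Tr}_\Sigma\Pi_W f\cdot e_n]$ and verifies the $\dot H^{-1}$ bound via the same triangle-inequality splitting against $\int_{(a,b)}f$ followed by the divergence compatibility estimate of Proposition~\ref{divergence_divergence}.
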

\begin{proof}
Set $W=\R^{n-1}\times\p{a,b}$ and $\Sigma=\R^{n-1}\times\cb{b}$. We propose that the assignment
    \begin{equation}
        Q^{a,b}\p{f,g}=\Pi_{W}f+P_{W}\sb{g-\m{Tr}_{\Sigma}\Pi_{W}f\cdot e_n} \text{ for } \p{f,g}\in\mathfrak{W}\p{a,b}
    \end{equation}
has the desired properties. Well-definedness and continuity of $Q^{a,b}$ are assured as soon as one observes the bound
\begin{multline}
    \sb{g-e_n\cdot\m{Tr}_{\Sigma}\Pi_Wf}_{\dot{H}^{-1}}\le\bsb{g-\int_{\p{a,b}}f}_{\dot{H}^{-1}}+\bsb{e_n\cdot\m{Tr}_{\Sigma}\Pi_Wf-\int_{\p{a,b}}f}_{\dot{H}^{-1}}\\\le\norm{\p{f,g}}_{\mathfrak{W}}+2\pi\sqrt{b-a}\norm{\Pi_Wf}_{L^2}\lesssim\norm{\p{f,g}}_{\mathfrak{W}}.
\end{multline}
Note that in the second to last inequality above we have employed the divergence compatibility estimate from Proposition~\ref{divergence_divergence}.
\end{proof}
We need one final lemma before we solve the general case of problem~\eqref{multi-normal trace-divergence problem}.
\begin{lem}\label{simple extension lemma}
    Let $a,b\in\R$ with $0<a<b$. There exists a bounded linear extension operator
    \begin{equation}
        E^{a,b}:{_0}H^1\tp{\R^{n-1}\times\p{0,a};\mathbb{K}^n}\to H^1_0\tp{\R^{n-1}\times\p{0,b};\mathbb{K}^{n}}.
    \end{equation}
    That is, $E^{a,b}f=f$ on $\R^{n-1}\times\p{0,a}$ for all $f\in{_0}H^1\p{\R^{n-1};\mathbb{K}^n}$.
\end{lem}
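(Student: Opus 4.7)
The plan is to construct $E^{a,b}$ by combining an even reflection across $y=a$ with multiplication by a smooth cutoff, extended by zero to reach the top boundary $y=b$. Fix some $\epsilon \in (0,\min(a,b-a))$ and choose $\chi\in C^\infty(\R)$ with $\chi=1$ on $(-\infty,a+\epsilon/3]$ and $\chi=0$ on $[a+2\epsilon/3,\infty)$. Given $f\in {_0}H^1(\R^{n-1}\times(0,a);\mathbb{K}^n)$, first define the reflected field
\begin{equation}
\tilde{f}(x,y)=\begin{cases} f(x,y) & y\in(0,a),\\ f(x,2a-y) & y\in[a,2a).\end{cases}
\end{equation}
Since the upward trace of $f$ on $\Sigma=\R^{n-1}\times\{a\}$ matches the downward trace of its reflection and the reflection $y\mapsto 2a-y$ is a smooth diffeomorphism of $(a,2a)$ onto $(0,a)$, standard gluing for Sobolev functions implies $\tilde f \in H^1(\R^{n-1}\times(0,2a);\mathbb{K}^n)$ with a norm controlled by $\|f\|_{H^1}$, and its trace on $\R^{n-1}\times\{0\}$ is still zero.

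Next I will set
\begin{equation}
E^{a,b}f(x,y)=\begin{cases} \chi(y)\tilde f(x,y) & y\in(0,2a),\\ 0 & y\in[2a,b).\end{cases}
\end{equation}
By the choice of $\epsilon$, the support of $\chi$ is contained in $(-\infty,a+2\epsilon/3]\subset(-\infty,2a)$, and $a+2\epsilon/3<a+(b-a)=b$, so that $\chi(y)\tilde f(x,y)$ vanishes in a neighborhood of $y=2a$ as well as in a neighborhood of $y=b$. This ensures that pasting with the zero extension on $[2a,b)$ produces a bona fide element of $H^1(\R^{n-1}\times(0,b);\mathbb{K}^n)$, with the trace on $\R^{n-1}\times\{b\}$ equal to zero because $E^{a,b}f$ vanishes identically near $y=b$, and the trace on $\R^{n-1}\times\{0\}$ equal to $\chi(0)\,\mathrm{Tr}_{\Sigma_0}f = 0$ since $f\in{_0}H^1$. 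Hence $E^{a,b}f\in H^1_0(\R^{n-1}\times(0,b);\mathbb{K}^n)$, and $E^{a,b}f=f$ on $(0,a)$ because $\chi\equiv 1$ there.

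For linearity and continuity I will use that reflection is a bounded linear operation in $H^1$, multiplication by the fixed smooth cutoff $\chi$ is bounded on $H^1$ (with norm depending only on $\|\chi\|_{C^1}$, hence on $a$ and $b$), and extension by zero across a line on which the function already vanishes in a neighborhood preserves the $H^1$ norm. Composing these three bounded linear operations, one obtains a bounded linear operator $E^{a,b}:{_0}H^1(\R^{n-1}\times(0,a);\mathbb{K}^n)\to H^1_0(\R^{n-1}\times(0,b);\mathbb{K}^n)$ with $\|E^{a,b}f\|_{H^1}\lesssim \|f\|_{H^1}$, as desired. There is no serious obstacle here; the only care needed is in verifying that the parameters $\epsilon$, $\chi$ can be chosen simultaneously so that the reflection remains inside $(0,2a)$, the cutoff kills the function before the top boundary, and enough smoothness is retained at $y=a$ and $y=2a$ for the pieces to glue in $H^1$.
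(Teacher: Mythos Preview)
Your proof is correct, but it takes a slightly different route from the paper. You use an even reflection across $y=a$ onto $(a,2a)$, multiply by a smooth cutoff supported in $(-\infty,a+2\epsilon/3]$, and then extend by zero to reach $y=b$. The paper instead does everything in one step: it sets
\[
E^{a,b}f(x,y)=f\bigl(x,\,a-a(y-a)/(b-a)\bigr)\quad\text{for }y\in(a,b),
\]
which is a reflection combined with a linear rescaling sending $(a,b)$ onto $(0,a)$ in reverse. At $y=a$ this matches $f(x,a)$, and at $y=b$ it gives $f(x,0)=0$ by the vanishing lower trace, so the $H^1_0$ condition is automatic without any cutoff. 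The paper's approach is a bit cleaner (a single bi-Lipschitz change of variables, no auxiliary $\chi$ or $\epsilon$), whereas yours is more modular, composing three standard operations. One small wrinkle in your writeup: when $2a\ge b$ the interval $[2a,b)$ is empty and your case split should be read as simply restricting $\chi\tilde f$ to $(0,b)$; your argument still goes through because you correctly verified $a+2\epsilon/3<b$ so the function vanishes near $y=b$ regardless.
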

\begin{proof}
We construct $E^{a,b}$ via a simple reflection. Given $f\in{_0}H^1\tp{\R^{n-1}\times\p{0,a};\mathbb{K}^n}$ we define
\begin{equation}
    E^{a,b}f\p{x,y}=
    \begin{cases}
    f\p{x,y}&\text{when }\p{x,y}\in\R^{n-1}\times\p{0,a}\\
    f\p{x,y}=f\p{x,a-a\p{y-a}/(b-a)}&\text{when }\p{x,y}\in\R^{n-1}\times\p{a,b}.
    \end{cases}
\end{equation}
Thanks to the absolute continuity on lines characterization of $H^1$, we observe that $E^{a,b}$ takes values within the claimed target. This extension is also continuous since bi-Lipschitz change of coordinates boundedly preserve $H^1$ inclusion.
\end{proof}

We now have tools that are sufficient in solving the general case of problem~\eqref{multi-normal trace-divergence problem}.

\begin{thm}[Solution operator to~\eqref{multi-normal trace-divergence problem}: multilayer case]\label{solution operator to the multi-normal trace-divergence problem}
We define the appropriate Hilbert space for data in problem~\eqref{multi-normal trace-divergence problem}. For $\upalpha=\cb{a_\ell}_{\ell=1}^m\subset\R^+$ with $0<a_1<\cdots<a_m$ we define
\begin{equation}
    \textstyle\mathfrak{X}^m\p{\upalpha}=\{\p{f,\cb{g_\ell}_{\ell=1}^m}\in L^2\p{\Omega;\mathbb{K}}\times\prod_{\ell=1}^mH^{1/2}\p{\Sigma_\ell;\mathbb{K}}\;:\;\norm{\p{f,\tp{g_\ell}_{\ell=1}^m}}_{\mathfrak{X}^m\p{\upalpha}}<\infty\}
\end{equation}
for the norm
\begin{equation}
    \tnorm{\p{f,\tp{g_\ell}_{\ell=1}^m}}^2_{\mathfrak{X}^m\p{\upalpha}}=\norm{f}_{L^2}^2+\ssum{\ell=1}{m}\bp{\norm{g_\ell}_{H^{1/2}}^2+\bsb{g_\ell-\int_{\p{0,a_\ell}}f}_{\dot{H}^{-1}}^2}.
\end{equation}
There exists a linear and continuous mapping $Q_m:\mathfrak{X}^m\p{\upalpha}\to{_0}H^1\p{\Omega;\mathbb{K}^n}$ such that $\grad\cdot Q_m\p{f,\tp{g_\ell}_{\ell=1}^m}=f$ and for each $\ell\in\cb{1,\dots,m}$ one has $\m{Tr}_{\Sigma_\ell}Q_m\p{f,\tp{g_\ell}_{\ell=1}^m}\cdot e_n=g_\ell$, for all data $\p{f,\tp{g_\ell}_{\ell=1}^m}\in\mathfrak{X}^m\p{\upalpha}$.
\end{thm}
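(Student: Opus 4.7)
\medskip
\noindent\textbf{Proof proposal.}  The plan is to proceed by induction on $m$, the base case $m=1$ being handled by Proposition~\ref{solution operator for the base case} (with $a=0$, $b=a_1$).  For the inductive step, suppose $Q_{m-1}$ has been constructed for the depth parameters $\{a_1,\dots,a_{m-1}\}$, and let data $(f,(g_\ell)_{\ell=1}^m) \in \mathfrak{X}^m(\upalpha)$ be given.  First, apply the induction hypothesis to the restricted data $(f|_{\Omega_1\cup\cdots\cup\Omega_{m-1}},(g_\ell)_{\ell=1}^{m-1})$, which clearly belongs to $\mathfrak{X}^{m-1}(\{a_1,\dots,a_{m-1}\})$; this yields $u^- \in {_0}H^1(\R^{n-1}\times(0,a_{m-1});\mathbb{K}^n)$ with the correct divergence and normal traces on $\Sigma_1,\dots,\Sigma_{m-1}$.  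Next, apply the extension operator $E^{a_{m-1},a_m}$ from Lemma~\ref{simple extension lemma} to produce $U=E^{a_{m-1},a_m}u^-\in H^1_0(\R^{n-1}\times(0,a_m);\mathbb{K}^n)$, which agrees with $u^-$ on the bottom $m-1$ layers and, crucially, has vanishing trace on $\Sigma_m$.

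On the top slab $\Omega_m=\R^{n-1}\times(a_{m-1},a_m)$ set $\tilde f = f|_{\Omega_m}-\nabla\cdot U|_{\Omega_m}$ and $\tilde g = g_m$, and look for $v\in{_0}H^1(\Omega_m;\mathbb{K}^n)$ (with vanishing trace on the lower face $\Sigma_{m-1}$) satisfying $\nabla\cdot v=\tilde f$ and $\mathrm{Tr}_{\Sigma_m}v\cdot e_n=\tilde g$.  Then $Q_m(f,(g_\ell)_{\ell=1}^m)$ is defined by stitching $U|_{\Omega_\ell}$ for $\ell\le m-1$ with $U|_{\Omega_m}+v$ on $\Omega_m$; by construction the divergence, normal traces at each interface, and vanishing at $\Sigma_0$ all come out correctly, and the jump conditions hold because $U\in H^1$ across all interior interfaces and $v$ vanishes at $\Sigma_{m-1}$.

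The only real obstacle is verifying the compatibility hypothesis of Proposition~\ref{solution operator for the base case} for the pair $(\tilde f,\tilde g)$, namely
\begin{equation}
\tilde g - \int_{(a_{m-1},a_m)} \tilde f \in \dot H^{-1}(\R^{n-1};\mathbb{K}).
\end{equation}
To establish this, invoke the absolute continuity on lines representation of $U$ and the identity $\nabla\cdot U = (\nabla_\|,0)\cdot U + \partial_n U^n$ to compute
\begin{equation}
\int_{(a_{m-1},a_m)} \nabla\cdot U = (\nabla_\|,0)\cdot \int_{(a_{m-1},a_m)} U - \mathrm{Tr}_{\Sigma_{m-1}}U\cdot e_n = (\nabla_\|,0)\cdot \int_{(a_{m-1},a_m)} U - g_{m-1},
\end{equation}
using that $\mathrm{Tr}_{\Sigma_m}U=0$ and that $\mathrm{Tr}_{\Sigma_{m-1}}U\cdot e_n = \mathrm{Tr}_{\Sigma_{m-1}}u^-\cdot e_n=g_{m-1}$ by construction.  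Substituting gives
\begin{equation}
\tilde g - \int_{(a_{m-1},a_m)}\tilde f = \Bigl(g_m-\int_{(0,a_m)}f\Bigr) - \Bigl(g_{m-1}-\int_{(0,a_{m-1})}f\Bigr) + (\nabla_\|,0)\cdot \int_{(a_{m-1},a_m)} U,
\end{equation}
and each of the three summands lies in $\dot H^{-1}(\R^{n-1};\mathbb{K})$: the first two by the definition of the norm on $\mathfrak{X}^m(\upalpha)$, and the last because $U\in L^2$ implies $\int_{(a_{m-1},a_m)}U\in L^2(\R^{n-1};\mathbb{K}^n)$, whose tangential divergence is in $\dot H^{-1}$.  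Continuity of $Q_m$ follows by tracing through the continuity estimates for $Q_{m-1}$, $E^{a_{m-1},a_m}$, and $Q^{a_{m-1},a_m}$, together with the continuity of the above displayed identity.
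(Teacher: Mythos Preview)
Your proof is correct and follows essentially the same inductive scheme as the paper: apply $Q_{m-1}$ to the lower layers, extend via Lemma~\ref{simple extension lemma}, and correct on the top slab using Proposition~\ref{solution operator for the base case}. The only cosmetic difference is in the compatibility check: the paper bounds $\big[g_m+\int_{(a_{m-1},a_m)}\nabla\cdot U\big]_{\dot H^{-1}}$ by invoking Proposition~\ref{divergence_divergence} as a black box, whereas you unpack the fundamental-theorem-of-calculus identity explicitly to rewrite $\tilde g-\int\tilde f$ as a sum of three manifestly $\dot H^{-1}$ terms; these are the same computation.
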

\begin{proof}
We construct the desired solution operator by way of mathematical induction on the number of specified normal traces. The precise statement to be proved, which we denote by  \emph{statement}$(m)$ for $m\in\N^+$, is as follows: for all strictly increasing sequences $\upalpha=\cb{a_\ell}_{\ell=1}^m\subset\R^+$ there exists a bounded linear mapping $Q_m:\mathfrak{X}^m\p{\upalpha}\to{_0}H^1\p{\Omega;\mathbb{K}^n}$ that is a solution operator to problem~\eqref{multi-normal trace-divergence problem}.

The case $m=1$ is handled by Proposition~\ref{solution operator for the base case}. Now suppose that $m\in\N^+$ is such that \emph{statement}$(m)$ holds true. We will prove that  \emph{statement}$(m+1)$ is true.

Let $\upbeta=\cb{b_\ell}_{\ell=1}^{m+1}\subset\R^+$ be any sequence such that $0<b_1<\dots<b_m<b_{m+1}$ and set $\upalpha=\cb{b_\ell}_{\ell=1}^m$. By hypothesis, there is a solution operator to problem~\eqref{multi-normal trace-divergence problem}, $Q_m$, for the domain $\Omega_m=\R^{n-1}\times\p{0,b_m}$ and boundary regions $\Sigma_\ell=\R^{n-1}\times\cb{b_\ell}$, for $\ell\in\cb{1,\dots,m}$. Set $\Omega_{m+1}=\R^{n-1}\times\p{0,b_{m+1}}$ and $\Sigma_{m+1}=\R^{n-1}\times\cb{b_{m+1}}$.

We propose to define for $(f,\tp{g_\ell}_{\ell=1}^{m+1})\in\mathfrak{X}^{m+1}\p{\upbeta}$
\begin{multline}\label{what's my line}
    Q_{m+1}(f,\tp{g_\ell}_{\ell=1}^{m+1})=E^{b_m,b_{m+1}}Q_m(f\mathbbm{1}_{\Omega_m},\tp{g_{\ell}}_{\ell=1}^m)\\+Q^{b_m,b_{m+1}}(f\mathbbm{1}_{\Omega_{m+1}\setminus\Omega_m}-\grad\cdot E^{b_m,b_{m+1}}Q_m(f\mathbbm{1}_{\Omega_m},\tp{g_{\ell}}_{\ell=1}^m),g_{m+1}),
\end{multline}
where $E^{b_m,b_{m+1}}$ is the extension operator from Lemma~\ref{simple extension lemma} and $Q^{b_m,b_{m+1}}$ is the solution operator from the single layer problem from Lemma~\eqref{solution operator for the base case}. First, we check that this assignment is well-defined. It is clear that $\p{f\mathbbm{1}_{\Omega_m},\tp{g_\ell}_{\ell=1}^m}\in\mathfrak{X}^m\p{\upalpha}$ with $\norm{\p{f\mathbbm{1}_{\Omega_m},\tp{g_\ell}_{\ell=1}^m}}_{\mathfrak{X}^m}\le\snorm{(f,\tp{g_\ell}_{\ell=1}^{m+1})}_{\mathfrak{X}^{m+1}}$. Hence the first term appearing on the right hand side of the equality in~\eqref{what's my line} is a well-defined and continuous assignment. To check that these same properties hold for the second term too, we observe the following compatibility estimate:
\begin{multline}
    \bsb{g_{m+1}-\int_{\tp{b_m,b_{m+1}}}f+\int_{\tp{b_m,b_{m+1}}}\grad\cdot E^{b_{m},b_{m+1}}Q_m(f\mathbbm{1}_{\Omega_m},\tp{g_\ell}_{\ell=1}^m)}_{\dot{H}^{-1}}\le\bsb{g_{m+1}-g_m-\int_{\tp{b_m,b_{m+1}}}f}_{\dot{H}^{-1}}\\
    +\bsb{g_m+\int_{\tp{b_m,b_{m+1}}}\grad\cdot E^{b_{m},b_{m+1}}Q_m(f\mathbbm{1}_{\Omega_m},\tp{g_\ell}_{\ell=1}^m)}_{\dot{H}^{-1}}\le2\tnorm{(f,\tp{g_\ell}_{\ell=1}^{m+1})}_{\mathfrak{X}^{m+1}}\\+2\pi\sqrt{b_{m+1}-b_m}\tnorm{E^{b_m,b_{m+1}}Q_m\tp{f\mathbbm{1}_{\Omega_m},\tp{g_\ell}_{\ell=1}^m}}_{L^2}.
\end{multline}
In the above we have employed the divergence compatibility estimate from Proposition~\ref{divergence_divergence} and the boundedness of the extension operator $E^{b_m,b_{m+1}}$ from Lemma~\ref{simple extension lemma}. 
Hence $Q_{m+1}$ is well-defined and continuous. In the set $\Omega_m$ we have the equality $Q_{m}=Q_{m+1}$. The second term in the definition of $Q_{m+1}$, equation~\eqref{what's my line}, vanishes on $\Sigma_m$ and the first term in the definition vanishes on $\Sigma_{m+1}$; therefore, $Q_{m+1}$ is a solution operator to the problem~\eqref{multi-normal trace-divergence problem} in the $m+1$-prescribed normal trace case. This completes the induction.
\end{proof}

\subsection{Isomorphism associated to multilayer traveling Stokes}\label{subsection on isomorphism associated to multilayer traveling Stokes}

In this subsection we construct a solution operator to the multilayer traveling Stokes problem with stress boundary conditions in equation~\eqref{multilayer traveling stokes with stress boundary conditions}. The validity of this section's results over the fields $\R$ and $\C$ is integral to the proof of Theorem~\ref{finer asymptotic development of the multiplier of the normal stress to normal Dirichlet pseudodifferential operator} in the next subsection. We remind the reader that the Euclidean inner product is sesquilinear with the left argument the linear one and that essential information regarding anti-duality can be found in Appendix~\ref{appendix on (anti-)duality and Lax-Milgram}.

We begin by studying the weak formulation and showing the existence of weak solutions. First we focus on existence of a pressure with the following result.
\begin{lem}[Image of the gradient is the annihilator of solenoidal fields]\label{the pressure lemma}
Suppose that $F\in({_0}H^1\p{\Omega;\mathbb{K}^n})^{\Bar{\ast}}$ vanishes on solenoidal fields. Then, there exists $p\in L^2\p{\Omega;\mathbb{K}}$ such that for all $u\in{_0}H^1\p{\Omega;\mathbb{K}^n}$
\begin{equation}
    \br{F,u}_{({_0}H^1)^{\bar{\ast}},{_0}H^1}=\int_{\Omega}p\cdot(\grad\cdot u).
\end{equation}
\end{lem}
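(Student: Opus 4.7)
The plan is to produce $p$ by dualizing a continuous right inverse of the divergence operator and then invoking the Riesz representation theorem in $L^2$.

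The key input is the existence of a bounded right inverse $\Pi = \Pi_{\Omega^{\m{e}}[0]} : L^2(\Omega;\mathbb{K}) \to {_0}H^1(\Omega;\mathbb{K}^n)$ to $\grad\cdot$, furnished by Lemma~\ref{simple right inverse to the divergence} applied to the slab $U = \R^{n-1}\times(0,a_m) = \Omega^{\m{e}}[0]$; note that since $\Omega$ and $\Omega^{\m{e}}[0]$ differ by a set of zero Lebesgue measure, $L^2(\Omega;\mathbb{K}) = L^2(\Omega^{\m{e}}[0];\mathbb{K})$, and by the conventions of Section~\ref{section on conventions of notation} we identify ${_0}H^1(\Omega;\mathbb{K}^n)$ with ${_0}H^1(\Omega^{\m{e}}[0];\mathbb{K}^n)$.

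With $\Pi$ in hand, I would carry out the argument in three steps. First, given any $u \in {_0}H^1(\Omega;\mathbb{K}^n)$, set $v = u - \Pi(\grad\cdot u)$ and observe that $v \in {_0}H^1(\Omega;\mathbb{K}^n)$ with $\grad\cdot v = \grad\cdot u - \grad\cdot u = 0$ and $\m{Tr}_{\Sigma_0} v = 0$; hence $v \in {_0}H^1_\sig(\Omega;\mathbb{K}^n)$, and the hypothesis on $F$ yields
\begin{equation}
\br{F,u}_{({_0}H^1)^{\bar{\ast}},{_0}H^1} = \br{F,\Pi(\grad\cdot u)}_{({_0}H^1)^{\bar{\ast}},{_0}H^1}.
\end{equation}
Second, define $G : L^2(\Omega;\mathbb{K}) \to \mathbb{K}$ via $G(f) = \br{F,\Pi f}_{({_0}H^1)^{\bar{\ast}},{_0}H^1}$. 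This is antilinear (since $\Pi$ is linear and the antidual pairing is antilinear in its right argument) and satisfies $\abs{G(f)} \le \norm{F}_{({_0}H^1)^{\bar{\ast}}}\norm{\Pi}_{L^2 \to {_0}H^1}\norm{f}_{L^2}$, so $G \in L^2(\Omega;\mathbb{K})^{\bar{\ast}}$. Third, invoke the Riesz representation theorem on the Hilbert space $L^2(\Omega;\mathbb{K})$ to secure $p \in L^2(\Omega;\mathbb{K})$ such that $G(f) = \int_\Omega p\cdot f$ for every $f \in L^2(\Omega;\mathbb{K})$. Combining steps one and three with the choice $f = \grad\cdot u$ delivers the stated identity.

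There is no serious obstacle here: the only nontrivial ingredient, the right inverse $\Pi$, is already provided by Lemma~\ref{simple right inverse to the divergence}. The one point requiring mild care is to confirm that the residual $v = u - \Pi(\grad\cdot u)$ indeed belongs to ${_0}H^1_\sig$, which hinges on the fact that $\Pi$ maps into ${_0}H^1$ (hence preserves the vanishing trace on $\Sigma_0$) and exactly inverts $\grad\cdot$.
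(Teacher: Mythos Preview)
Your proof is correct and takes a genuinely different route from the paper's. The paper quotes the real-valued case as a black box (Corollary 2.3 in~\cite{leoni2019traveling}) and then reduces the complex case to the real one by splitting $F$ into its real and imaginary parts $F_{\m{Re}},F_{\m{Im}}\in({_0}H^1(\Omega;\R^n))^\ast$, obtaining $q,r\in L^2(\Omega;\R)$ from the real case, and setting $p=q+\ii r$. You instead give a direct argument, uniform in $\mathbb{K}$, by dualizing the right inverse $\Pi$ of Lemma~\ref{simple right inverse to the divergence} and invoking Riesz representation on $L^2(\Omega;\mathbb{K})$. Your approach is more self-contained within this paper (it only uses Lemma~\ref{simple right inverse to the divergence}, already proved here) and avoids the real/complex case split entirely; the paper's approach has the modest advantage of recording explicitly how the complex result follows from the real one, which is the organizational theme of that subsection. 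Both rely, ultimately, on a right inverse to the divergence: the paper pushes that dependence into the cited corollary, while you expose it directly.
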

\begin{proof}
The case $\mathbb{K}=\R$ is handled by Corollary 2.3 in~\cite{leoni2019traveling}. We will show this is sufficient to justify the case $\mathbb{K}=\C$ as well. Given an antilinear functional $F\in({_0}H^1\p{\Omega;\C^n})^{\Bar{\ast}}$ we define the $\R$-linear functionals $F_{\m{Re}},F_{\m{Im}}\in({_0}H^1\p{\Omega;\R^n})^\ast$ via 
\begin{equation}
    \br{F_{\m{Re}},v}=\m{Re}\sb{\br{F,v}},\;\br{F_{\m{Im}},v}=\m{Re}\sb{\br{F,\ii v}}\text{ for }v\in {_0}H^1\p{\Omega;\R^n}.
\end{equation}
Observe that if $F$ annihilates solenoidal fields then $F_{\m{Re}}$ and $F_{\m{Im}}$ satisfy the hypothesis of the lemma for the $\R$-valued case. Therefore there are $q,r\in L^2\p{\Omega;\R}$ such that for all $v,w\in{_0}H^1\p{\Omega;\R^n}$
\begin{equation}
    \m{Re}\sb{\br{F,v+\ii w}}=\br{F_{\m{Re}},v}+\br{F_{\m{Im}},w}=\int_{\Omega}q\grad\cdot v+r\grad\cdot w=\m{Re}\bsb{\int_{\Omega}\p{q+\ii r}\cdot(\grad\cdot \p{v+\ii w})}.
\end{equation}
This suggests that we set $p\in L^2\p{\Omega;\C}$ via $p=q+\ii r$. It remains to check that $G\in({_0}H^1\p{\Omega;\C^n})^{\Bar{\ast}}$ defined via $\br{G,u}= \br{F,u}-\int_{\Omega}p\cdot(\grad\cdot u)\in\C$ vanishes identically. The above computation shows that $\m{Re}\sb{\br{G,u}}=0$ for all $u$. By antilinearity, the real part of $G$ determines $G$ entirely; i.e. for all $u\in{_0}H^1\p{\Omega;\C^n}$ it holds $\br{G,u}=\m{Re}\sb{\br{G,u}}+\ii\m{Re}\sb{\br{G,\ii u}}$. Thus $G=0$ and the proof is complete.
\end{proof}

The truth of the two subsequent results in the $\R$-valued case is a consequence of Theorem 2.4 in~\cite{leoni2019traveling}. We include a proof here in the $\mathbb{K}$-valued case for $\mathbb{K}\in\cb{\R,\C}$.

\begin{lem}\label{lemma on the coercivity of the bilinear form}
For $\gam\in\R^+$ We define the sesquilinear (bilinear if $\mathbb{K}=\R$) mapping $B_\gam:{_0}H^1(\Omega;\mathbb{K}^n)\times{_0}H^1(\Omega;\mathbb{K}^n)\to\C$ via
\begin{equation}
    B_\gam(w,v)=\ssum{\ell=1}{m}\int_{\Omega_\ell}\f{\mu_\ell}{2}\mathbb{D}w:\mathbb{D}v-\gam\rho_\ell\pd_1w\cdot v.
\end{equation}
Then we have the identity
\begin{equation}\label{nowhere man}
    \m{Re}\sb{B_\gam(w,w)}=\ssum{\ell=1}{m}\int_{\Omega_\ell}\f{\mu_\ell}{2}\abs{\mathbb{D}w}^2,\text{ for all }w\in{_0}H^1(\Omega;\mathbb{K}^n).
\end{equation}
In particular, $B_\gam$ is coercive over the $H^1$-norm.
\end{lem}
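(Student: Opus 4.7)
The strategy is to split $B_\gam(w,w)$ into a symmetric viscous piece and an antisymmetric transport piece, show that the transport piece is purely imaginary, and then invoke Korn's inequality from the appendix.

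First, I would observe that the Frobenius product is sesquilinear in the convention fixed in Section~\ref{section on conventions of notation}, so $\mathbb{D}w:\mathbb{D}w=\abs{\mathbb{D}w}^2$ is automatically real and nonnegative. Thus the first summand in $B_\gam(w,w)$ contributes precisely the right-hand side of~\eqref{nowhere man}, and what remains is to show that
\begin{equation}
\m{Re}\int_{\Omega_\ell}\pd_1w\cdot w=0
\end{equation}
for each $\ell\in\cb{1,\dots,m}$. Again by sesquilinearity, $w\cdot w=\abs{w}^2$ and $w\cdot\pd_1w=\overline{\pd_1w\cdot w}$, so the distributional product rule yields the pointwise identity $\pd_1(\abs{w}^2)=2\m{Re}(\pd_1w\cdot w)$. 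Hence it suffices to establish that $\int_{\Omega_\ell}\pd_1\abs{w}^2=0$.

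The integration-by-parts step is the only delicate point, since $\Omega_\ell$ is horizontally infinite and $w$ is not compactly supported. I would justify it by slicing: since $w\in H^1(\Omega_\ell;\mathbb{K}^n)$, one has $\abs{w}^2\in L^1(\Omega_\ell)$, and Cauchy--Schwarz gives $\pd_1\abs{w}^2=2\m{Re}(\pd_1w\cdot w)\in L^1(\Omega_\ell)$. By Fubini, for almost every $(x_2,\dots,x_{n-1},y)$, the map $x_1\mapsto\abs{w(x_1,x_2,\dots,x_{n-1},y)}^2$ belongs to $W^{1,1}(\R)$, hence admits an absolutely continuous representative that vanishes at $\pm\infty$. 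Therefore $\int_\R \pd_1\abs{w}^2\,\m{d}x_1=0$ slice-by-slice, and reassembling by Fubini gives $\int_{\Omega_\ell}\pd_1\abs{w}^2=0$, which establishes~\eqref{nowhere man}.

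Finally, the coercivity claim follows by combining~\eqref{nowhere man} with Korn's inequality in the form recorded in the appendix, which asserts that $\norm{w}_{H^1(\Omega^{\m{e}})}^2\lesssim\ssum{\ell=1}{m}\norm{\mathbb{D}w}_{L^2(\Omega_\ell)}^2$ for all $w\in{_0}H^1(\Omega;\mathbb{K}^n)$ (the vanishing trace on $\Sigma_0$ is what enables the full $H^1$ bound from only the symmetric gradient). Setting $\mu_\ast=\min_\ell\mu_\ell>0$, this yields $\m{Re}\,B_\gam(w,w)\ge(\mu_\ast/2)\ssum{\ell=1}{m}\norm{\mathbb{D}w}_{L^2(\Omega_\ell)}^2\gtrsim\norm{w}_{H^1}^2$. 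The main obstacle in the whole argument is the rigorous justification of the vanishing-at-infinity step in the transport term, but the Fubini/$W^{1,1}(\R)$ reduction circumvents the need for any density or cutoff argument.
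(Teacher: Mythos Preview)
Your proof is correct and follows essentially the same route as the paper: show the transport term $\sum_\ell\gam\rho_\ell\int_{\Omega_\ell}\pd_1w\cdot w$ is purely imaginary, then invoke Korn's inequality from the appendix. The paper's version is terser---it simply writes the integration-by-parts identity $-\int_{\Omega_\ell}\pd_1w\cdot w=\int_{\Omega_\ell}w\cdot\pd_1w=\overline{\int_{\Omega_\ell}\pd_1w\cdot w}$ without justification---so your Fubini/$W^{1,1}(\R)$ slicing argument is a welcome addition rather than a deviation.
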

\begin{proof}
    We observe that
    \begin{equation}
    -\gam\ssum{\ell=1}{m}\int_{\Omega_\ell}\rho_\ell\pd_1w\cdot w=\gam\ssum{\ell=1}{m}\int_{\Omega_\ell}\rho_\ell w\cdot\pd_1w=\Bar{\gam\ssum{\ell=1}{m}\int_{\Omega_\ell}\rho_\ell\pd_1w\cdot w}\imp\gam\ssum{\ell=1}{m}\int_{\Omega_\ell}\rho_\ell\pd_1w\cdot w\in\ii\R.
    \end{equation}
Equation~\eqref{nowhere man} follows. We now deduce that $B_\gam$ is $H^1$-coercive by Korn's inequality (see Appendix~\ref{appendix on Korn's inequality}).
\end{proof}
We now use the preceding lemmas to construct weak solutions to~\eqref{multilayer traveling stokes with stress boundary conditions}.
\begin{prop}[Existence and uniqueness of weak solutions to~\eqref{multilayer traveling stokes with stress boundary conditions}]\label{weak solutions to the traveling stokes with stress boundary problem}
For $\gam\in\R$ we define the mapping
\begin{equation}
\upchi_\gam:L^2\tp{\Omega;\mathbb{K}}\times{_0}H^1\tp{\Omega;\mathbb{K}^n}\to L^2\tp{\Omega;\mathbb{K}}\times({_0}H^1\tp{\Omega;\mathbb{K}^n})^{\Bar{\ast}}\text{ via }\upchi_\gam\p{p,u}=\p{\grad\cdot u,\mathfrak{E}_{\gamma}(p,u)},
\end{equation}
where the antilinear functional $\mathfrak{E}_{\gamma}(p,u) \in({_0}H^1\tp{\Omega;\mathbb{K}^n})^{\Bar{\ast}}$ is defined via
\begin{multline}\label{weak formulation of the traveling stokes problem}
\br{\mathfrak{E}_{\gamma}(p,u),v}_{({_0}H^1)^{\Bar{\ast}},{_0}H^1}=-\int_{\Omega}p\cdot (\grad\cdot v)+\ssum{\ell=1}{m}\int_{\Omega_\ell}\f{\mu_\ell}{2}\mathbb{D}u:\mathbb{D}v-\gam\rho_\ell\pd_1u\cdot v\\=-\int_{\Omega}S^{\upmu}\p{p,u}:\grad v-\gam\ssum{\ell=1}{m}\int_{\Omega_\ell}\rho_\ell\pd_1u\cdot v
\end{multline}
for $v\in{_0}H^1\p{\Omega;\mathbb{K}^n}$. Then $\upchi_\gam$ is a Hilbert isomorphism for all $\gam\in\R$.
\end{prop}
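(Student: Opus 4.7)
The plan is to prove bijectivity of $\upchi_\gam$ and then invoke the open mapping theorem for boundedness of the inverse. Given data $(g,F) \in L^2(\Omega;\mathbb{K}) \times ({_0}H^1(\Omega;\mathbb{K}^n))^{\bar{\ast}}$, the strategy is to first reduce to the solenoidal case, then solve via Lax-Milgram, then recover the pressure.

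For the reduction, invoke Lemma~\ref{simple right inverse to the divergence} on $U = \Omega^{\m{e}}$ to produce $u_0 = \Pi_{\Omega^{\m{e}}} g \in {_0}H^1(\Omega;\mathbb{K}^n)$ satisfying $\grad \cdot u_0 = g$, and seek $u$ in the form $u = u_0 + w$ with $w \in {_0}H^1_{\sig}(\Omega;\mathbb{K}^n)$. Then $\grad \cdot u = g$ automatically, and the remaining requirement $\mathfrak{E}_\gamma(p,u) = F$ becomes $\mathfrak{E}_\gamma(p,w) = \tilde F := F - \mathfrak{E}_\gamma(0,u_0)$, where $\tilde F \in ({_0}H^1(\Omega;\mathbb{K}^n))^{\bar{\ast}}$ is bounded by $\|g\|_{L^2} + \|F\|_{({_0}H^1)^{\bar\ast}}$ times a constant. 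Testing against $v \in {_0}H^1_{\sig}(\Omega;\mathbb{K}^n)$ eliminates the pressure and reduces the problem to finding $w \in {_0}H^1_{\sig}$ with $B_\gam(w,v) = \br{\tilde F, v}_{({_0}H^1)^{\bar{\ast}},{_0}H^1}$ for all such $v$. By Lemma~\ref{lemma on the coercivity of the bilinear form} the form $B_\gam$ is bounded and $H^1$-coercive on the closed subspace ${_0}H^1_{\sig}$, so the (complex) Lax-Milgram theorem supplies a unique such $w$, with a bound $\|w\|_{H^1} \lesssim \|\tilde F\|_{({_0}H^1)^{\bar\ast}}$.

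To recover the pressure, observe that the antilinear functional $v \mapsto \br{\tilde F,v} - B_\gam(w,v)$ is bounded on ${_0}H^1(\Omega;\mathbb{K}^n)$ and, by construction, vanishes on ${_0}H^1_{\sig}$. Lemma~\ref{the pressure lemma} thus produces $q \in L^2(\Omega;\mathbb{K})$ with $\br{\tilde F,v} - B_\gam(w,v) = \int_\Omega q \cdot (\grad \cdot v)$ for every $v$; setting $p = -q$ then gives $\mathfrak{E}_\gamma(p,w) = \tilde F$, so that $(p, u_0 + w)$ is the desired preimage.

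For injectivity, assume $\upchi_\gam(p,u) = (0,0)$. Then $u \in {_0}H^1_{\sig}$, and testing $\mathfrak{E}_\gamma(p,u) = 0$ against $u$ itself makes the pressure term vanish, leaving $B_\gam(u,u) = 0$. Taking real parts and using identity~\eqref{nowhere man} yields $\mathbb{D}u = 0$ in each slab, and Korn's inequality (Appendix~\ref{appendix on Korn's inequality}) combined with the vanishing trace on $\Sigma_0$ forces $u = 0$. The condition then reduces to $-\int_\Omega p \cdot (\grad \cdot v) = 0$ for every $v \in {_0}H^1$; testing against $v = \Pi_{\Omega^{\m{e}}} p$ gives $\int_\Omega |p|^2 = 0$, so $p = 0$. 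Bijectivity being established, the open mapping theorem delivers continuity of $\upchi_\gam^{-1}$. The main subtleties to watch are the sesquilinear conventions in the complex case (so that the coercivity identity~\eqref{nowhere man} goes through) and verifying that $\mathfrak{E}_\gamma(0,u_0)$ genuinely lies in the antidual with the claimed bound, which is routine.
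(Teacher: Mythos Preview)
Your proof is correct and follows essentially the same route as the paper's: reduce to the solenoidal case via $\Pi_\Omega$, apply Lax--Milgram on ${_0}H^1_\sigma$ using the coercivity of $B_\gam$, and recover the pressure via Lemma~\ref{the pressure lemma}. The only difference is that for injectivity the paper derives explicit a priori estimates $\tnorm{u}_{{_0}H^1} + \tnorm{p}_{L^2} \lesssim \tnorm{g}_{L^2} + \tnorm{F}_{({_0}H^1)^{\bar\ast}}$ (by testing against $u - \Pi_\Omega g$ and then $\Pi_\Omega p$), which simultaneously give injectivity and boundedness of the inverse, whereas you argue injectivity directly from the kernel and then invoke the open mapping theorem; this is a purely stylistic difference.
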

\begin{proof}
We begin by showing that $\upchi_\gam$ is a surjection. Thanks to the observation that the sesquilinear (bilinear when $\mathbb{K}=\R$) form $B_\gam$ is bounded paired with the coercive estimate of Lemma~\ref{lemma on the coercivity of the bilinear form}, we are free to invoke the Lax-Milgram lemma (see Proposition~\ref{lax-milgram lemma}) for $B_\gam$ on any closed subspace of ${_0}H^1\p{\Omega;\mathbb{K}^n}$.

Let $\p{g,F}\in L^2\p{\Omega;\mathbb{K}}\times({_0}H^1\p{\Omega;\mathbb{K}^n})^{\Bar{\ast}}$ be any data pair.  Then Lax-Milgram implies that there exists a unique $w\in{_0}H^1_{\sigma}\p{\Omega;\mathbb{K}^n}$ such that for all $v\in{_0}H^1_{\sigma}\p{\Omega;\mathbb{K}^n}$ 
\begin{equation}\label{decomposed}
    B_\gam\p{w,v}=-B_\gam\p{\Pi_\Omega g,v}+\br{F,v}_{({_0}H^1)^{\Bar{\ast}},{_0}H^1},
\end{equation}
where $\Pi_\Omega$ is the bounded right inverse to the divergence granted by Lemma~\ref{simple right inverse to the divergence}. Next, we apply Lemma~\ref{the pressure lemma} to find that there is a pressure $p\in L^2\p{\Omega;\mathbb{K}}$ such that for all $v\in{_0}H^1\p{\Omega;\mathbb{K}^n}$
\begin{equation}
     B_\gam\p{w,v}=-B_\gam\p{\Pi_\Omega g,v}+\br{F,v}_{({_0}H^1)^{\Bar{\ast}},{_0}H^1}+\int_{\Omega}p\cdot(\grad\cdot v).
\end{equation}
We may now conclude that $\upchi_\gam\p{p,w+\Pi_\Omega g}=\p{g,F}$, showing this mapping to be a surjection.

On the other hand, suppose $\p{p,u}\in L^2\p{\Omega;\mathbb{K}}\times{_0}H^1\p{\Omega;\mathbb{K}^n}$ satisfy $\upchi_\gam\p{p,u} = \p{g,F}.$  Then we decompose $u=w+\Pi_\Omega g$ and take $v=w\in{_0}H^1_{\sigma}\p{\Omega;\mathbb{K}^n}$ in identity~\eqref{weak formulation of the traveling stokes problem} to see that 
\begin{equation}
\br{F,w}_{({_0}H^1)^{\Bar{\ast}},{_0}H^1} = \br{\mathfrak{E}_{\gamma}(p,u),w}_{({_0}H^1)^{\Bar{\ast}},{_0}H^1} = B_\gam(u,w) = B_\gam(u,u) - B_\gam(u,\Pi_\Omega g).    
\end{equation}
Again employing Lemma~\ref{simple right inverse to the divergence},  we deduce from this that
\begin{equation}\label{velocity field estimate}
  \norm{u}_{{_0}H^1}^2  \lesssim B_\gam(u,u) \lesssim \tp{ \norm{g}_{L^2} +\norm{F}_{({_0}H^1)^{\Bar{\ast}}} }  \norm{u}_{{_0}H^1},
 \text{ and so }
  \norm{u}_{{_0}H^1} \lesssim  \norm{g}_{L^2} +\norm{F}_{({_0}H^1)^{\Bar{\ast}}}. 
\end{equation}
We then take $v=\Pi_\Omega p$ in~\eqref{weak formulation of the traveling stokes problem} and use~\eqref{velocity field estimate} to deduce that
\begin{equation}\label{pressure estimate}
\norm{p}_{L^2}\lesssim\norm{u}_{{_0}H^1}+\norm{F}_{({_0}H^1)^{\Bar{\ast}}}\lesssim\norm{g}_{L^2}+\norm{F}_{({_0}H^1)^{\Bar{\ast}}}.
\end{equation}
Estimates~\eqref{velocity field estimate} and~\eqref{pressure estimate} show that $\upchi_\gam$ is also an injection.
\end{proof}

The following lemma will be useful in the next sections where it will be important to know that ${\upchi_\gam}^{-1}$ commutes with tangential multipliers (see Appendix~\ref{appendix on (tangential) multipliers}).

\begin{lem}\label{commutes with tangential multipliers 1}
Let $\gam\in\R$ and $\omega\in L^\infty\tp{\R^{n-1};\C}$, and consider the tangential multiplier $M_\omega$ is defined in Definitions~\ref{definition of tangential multipliers} and~\ref{definition of tangential fourier multipliers 2}.  If $\p{g,F}\in L^2\p{\Omega;\C}\times({_0}H^1\p{\Omega;\C^n})^{\Bar{\ast}}$ and $\p{p,u}={\upchi_\gam}^{-1}\p{g,F}$, then $\p{M_\omega p,M_\omega u}={\upchi_\gam}^{-1}\p{M_\omega g,M_\omega F}$. 
\end{lem}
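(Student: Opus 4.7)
The plan is to exploit the uniqueness part of Proposition~\ref{weak solutions to the traveling stokes with stress boundary problem}: it suffices to verify that the pair $(M_\omega p, M_\omega u)$ lies in $L^2(\Omega;\C)\times{_0}H^1(\Omega;\C^n)$ and satisfies $\upchi_\gam(M_\omega p, M_\omega u) = (M_\omega g, M_\omega F)$, after which the conclusion follows from the invertibility of $\upchi_\gam$.

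First, I would record the (essentially bookkeeping) facts about tangential multipliers that will be needed, all of which should be available in Appendix~\ref{appendix on (tangential) multipliers}. Namely: (i) $M_\omega$ is bounded on $L^2(\Omega;\C^d)$ with $\norm{M_\omega}\le\norm{\omega}_{L^\infty}$, by Plancherel; (ii) $M_\omega$ commutes with every vertical derivative $\pd_n$ and every tangential derivative $\pd_j$ for $1\le j\le n-1$, because on the Fourier side both correspond to multiplication operators in independent variables; (iii) $M_\omega$ commutes with the trace operator $\m{Tr}_{\Sigma_0}$ for the same reason. Facts (i)--(iii) immediately imply that $M_\omega$ sends $L^2(\Omega;\C)$ into itself and $ {_0}H^1(\Omega;\C^n)$ into itself, and that it commutes with the symmetrized gradient $\mathbb{D}$, the divergence $\grad\cdot$, and $\pd_1$. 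By duality (with sesquilinear convention so that the $L^2$-adjoint of $M_\omega$ is $M_{\bar\omega}$), the action of $M_\omega$ on $({_0}H^1(\Omega;\C^n))^{\Bar{\ast}}$ is defined by $\br{M_\omega F, v} = \br{F, M_{\bar\omega} v}$, and this is likewise bounded.

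Now, given $(p,u) = \upchi_\gam^{-1}(g,F)$, I compute the two components of $\upchi_\gam(M_\omega p, M_\omega u)$ separately. The divergence component is immediate: $\grad\cdot(M_\omega u) = M_\omega(\grad\cdot u) = M_\omega g$. For the antilinear functional component, let $v\in {_0}H^1(\Omega;\C^n)$ be arbitrary and use the commutation identities together with the adjoint relation to compute
\begin{align}
\br{\mathfrak{E}_{\gamma}(M_\omega p, M_\omega u), v} &= -\int_{\Omega}M_\omega p\cdot(\grad\cdot v) + \sum_{\ell=1}^m\int_{\Omega_\ell}\f{\mu_\ell}{2}\mathbb{D}(M_\omega u):\mathbb{D}v - \gam\rho_\ell\pd_1(M_\omega u)\cdot v \\
&= -\int_{\Omega}p\cdot(\grad\cdot M_{\bar\omega}v) + \sum_{\ell=1}^m\int_{\Omega_\ell}\f{\mu_\ell}{2}\mathbb{D}u:\mathbb{D}(M_{\bar\omega}v) - \gam\rho_\ell\pd_1 u\cdot M_{\bar\omega}v \\
&= \br{\mathfrak{E}_{\gamma}(p, u), M_{\bar\omega}v} = \br{F, M_{\bar\omega}v} = \br{M_\omega F, v}.
\end{align}
Hence $\mathfrak{E}_{\gamma}(M_\omega p, M_\omega u) = M_\omega F$ in $({_0}H^1(\Omega;\C^n))^{\Bar{\ast}}$, and combined with the divergence identity we conclude $\upchi_\gam(M_\omega p, M_\omega u) = (M_\omega g, M_\omega F)$.

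There is no real obstacle in this argument; the only subtlety is keeping track of the sesquilinear conventions so that the adjoint of $M_\omega$ is correctly identified as $M_{\bar\omega}$ and the antidual action is consistent. Once that is pinned down, the verification is a direct commutation calculation and the conclusion follows from the fact that $\upchi_\gam$ is a Hilbert isomorphism.
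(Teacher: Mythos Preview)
Your proof is correct and follows essentially the same approach as the paper: verify directly that $\upchi_\gam(M_\omega p, M_\omega u) = (M_\omega g, M_\omega F)$ by using that $M_\omega$ commutes with $\grad\cdot$, $\mathbb{D}$, and $\pd_1$, together with the adjoint identity $\br{M_\omega F, v} = \br{F, M_{\bar\omega} v}$, and then invoke the invertibility of $\upchi_\gam$. The paper's version is terser and uses the $S^{\upmu}(p,u):\grad v$ form of the weak formulation rather than the $\mathbb{D}u:\mathbb{D}v$ form you chose, but these are equivalent.
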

\begin{proof}
We simply check that $M_\omega g=M_{\omega}\grad\cdot u=\grad\cdot M_{\omega}u$ and note that if $v\in({_0}H^1\p{\Omega;\C^n})^\ast$, then
\begin{equation}
    \tbr{M_{\omega}F,v}_{({_0}H^1)^{\Bar{\ast}},{_0}H^1}=\tbr{F,M_{\Bar{\omega}}v}_{({_0}H^1)^{\Bar{\ast}},{_0}H^1}=\int_{\Omega}S^{\upmu}\p{M_\omega p,M_\omega u}:\grad v-\gam\ssum{\ell=1}{m}\int_{\Omega_\ell}\rho_\ell\pd_1M_\omega u\cdot v.
\end{equation}
Therefore, $\upchi_\gam\p{M_\omega p,M_\omega u}=\p{M_\omega g,M_\omega F}$.
\end{proof}

Next, we examine the regularity of weak solutions. We make the following notation.

\begin{defn}\label{strong to weak conversion notation}
For $s\in\R^+\cup\cb{0}$ we define the continuous and linear maps 
\begin{equation}
 \textstyle\mathscr{O}:\prod_{\ell=1}^mH^{-1/2}(\Sigma_\ell;\mathbb{K}^n)\to({_0}H^1(\Omega;\mathbb{K}^n))^{\Bar{\ast}},\;\mathscr{P}:H^s(\Omega;\mathbb{K}^n)\times\prod_{\ell=1}^mH^{1/2+s}(\Sigma_\ell;\mathbb{K}^n)\to({_0}H^1(\Omega;\mathbb{K}^n)^{\Bar{\ast}}
\end{equation}
with actions on $v\in{_0}H^1(\R^n;\mathbb{K}^n)$ given by
\begin{equation}
    \tbr{\mathscr{O}\tp{\phi_{\ell}}_{\ell=1}^m,v}_{{{_0}H^1}^{\Bar{\ast}},{_0}H^1}=\ssum{\ell=1}{m}\tbr{\phi_\ell,\m{Tr}_{\Sigma_\ell}v}_{H^{-1/2},H^{1/2}}  
\end{equation}
for $\tp{\phi_\ell}_{\ell=1}^m\in\prod_{\ell=1}^mH^{-1/2}(\Sigma_\ell;\mathbb{K}^n)$, and
\begin{equation}
    \tbr{\mathscr{P}(f,\tp{k_\ell}_{\ell=1}^m),v}_{({_0}H^1)^{\Bar{\ast}},{_0}H^1}=\int_{\Omega}f\cdot v+\ssum{\ell=1}{m}\int_{\Sigma_\ell}k_\ell\cdot v,
\end{equation}
for $f\in H^{s}(\Omega;\mathbb{K}^n)$ and $\tp{k_\ell}_{\ell=1}^m\in\prod_{\ell=1}^mH^{1/2+s}(\Sigma_\ell;\mathbb{K}^n)$. 
\end{defn}

We can now state our regularity result.

\begin{prop}[Regularity of weak solutions to~\eqref{multilayer traveling stokes with stress boundary conditions}]\label{regularity of the multilayer traveling stokes with stress boundary conditions}
Let $s\in\R^+\cup\cb{0}$ and
\begin{equation}
    \textstyle(g,f,\tp{k_\ell}_{\ell=1}^m)\in H^{1+s}\p{\Omega;\mathbb{K}}\times H^s\p{\Omega;\mathbb{K}^n}\times\prod_{\ell=1}^mH^{1/2+s}\p{\Sigma_\ell;\mathbb{K}^n}.
\end{equation}
Suppose that $\p{p,u}\in L^2\p{\Omega;\mathbb{K}}\times{_0}H^1\p{\Omega;\mathbb{K}^n}$ are such that $\upchi_\gam\p{p,u}=\p{g,\mathscr{P}(f,\tp{k_\ell}_{\ell=1}^m)}$. Then, in fact, we also have the inclusions $p\in H^{1+s}\p{\Omega;\mathbb{K}}$ and $u\in {_0}H^{2+s}\p{\Omega;\mathbb{K}^n}$, as well as the universal estimate
\begin{equation}
    \ssum{\ell=1}{m}\ssb{\norm{p}_{H^{1+s}\p{\Omega_\ell}}+\norm{u}_{H^{2+s}\p{\Omega_\ell}}}\lesssim\ssum{\ell=1}{m}\ssb{\norm{g}_{H^{1+s}\p{\Omega_\ell}}+\norm{f}_{H^s\p{\Omega_\ell}}+\norm{k_\ell}_{H^{1/2+s}}}.
\end{equation}
Finally the pair $\p{p,u}$ are a strong solution to the multilayer traveling stokes problem with stress boundary conditions in equation~\eqref{multilayer traveling stokes with stress boundary conditions} with data tuple $\p{g,f,\tp{k_\ell}_{\ell=1}^m}$.
\end{prop}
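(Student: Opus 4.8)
The plan is to read the strong (pointwise) form of the system off the weak identity defining $\upchi_\gamma$, then to upgrade the regularity of $\p{p,u}$ by a localization-and-bootstrapping argument resting on the elliptic transmission structure of the multilayer Stokes operator, and finally to reinsert the gained regularity into the equations to conclude that $\p{p,u}$ is a classical solution.

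First I would record the differential equations. The first slot of $\upchi_\gamma\p{p,u}=\p{g,\mathscr{P}\tp{f,\tp{k_\ell}_{\ell=1}^m}}$ is exactly $\grad\cdot u=g$ in $\Omega$. Testing the identity $\br{\mathfrak{E}_{\gamma}(p,u),v}=\br{\mathscr{P}\tp{f,\tp{k_\ell}_{\ell=1}^m},v}$ against $v\in C_c^\infty\p{\Omega_\ell;\mathbb{K}^n}$ and integrating by parts yields $\grad\cdot S^{\upmu}\p{p,u}-\gamma\rho_\ell\pd_1 u=f$ distributionally in each $\Omega_\ell$, and then testing against $v$ supported in a neighborhood of a single $\Sigma_\ell$ disjoint from $\Sigma_0$, and subtracting the contributions of the bulk equations, produces the interfacial identity $\jump{S^{\upmu}\p{p,u}e_n}_\ell=k_\ell$ in $H^{-1/2}\p{\Sigma_\ell;\mathbb{K}^n}$ for $\ell\in\cb{1,\dots,m}$ (for $\ell=m$ this is the one-sided statement $-\m{Tr}^{\uparrow}_{\Sigma_m}\sp{S^{\upmu}\p{p,u}e_n}=k_m$). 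The conditions $u=0$ on $\Sigma_0$ and $\jump{u}_\ell=0$ on the interior interfaces are built into membership in ${_0}H^1\p{\Omega;\mathbb{K}^n}$.

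For the regularity I would localize and bootstrap. The map $\p{p,u}\mapsto\p{\grad\cdot u,\grad\cdot S^{\upmu}\p{p,u}}$ is the Stokes operator with piecewise-constant viscosity, $-\gamma\rho_\ell\pd_1 u$ is a first-order perturbation, and the boundary operators — Dirichlet on $\Sigma_0$, prescribed normal stress on $\Sigma_m$, and continuity of the velocity together with prescribed jump of the normal stress on each interior $\Sigma_\ell$ — each satisfy the Lopatinski--Shapiro complementing condition. Since every $\Omega_\ell$ is a flat slab and every $\Sigma_\ell$ a flat hyperplane, a partition of unity reduces the desired inequality to four flat model problems: the interior one, half-space Dirichlet, half-space normal stress, and the two-phase transmission problem across $\cb{y=a_\ell}$. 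The local $H^{2+s}\times H^{1+s}$ a priori estimates for the first three are classical Stokes regularity (already used in the single-layer analysis of~\cite{leoni2019traveling}), and the estimate for the transmission model is the standard regularity theory for elliptic transmission problems. Beginning from $u\in{_0}H^1\p{\Omega;\mathbb{K}^n}$, so that $\gamma\rho_\ell\pd_1 u\in L^2$, these estimates first give $\p{p,u}\in H^1\p{\Omega;\mathbb{K}}\times{_0}H^2\p{\Omega;\mathbb{K}^n}$; reinserting $\gamma\rho_\ell\pd_1 u$ into the forcing and iterating in steps of size at most $1$ absorbs the drift and climbs from $H^1$ up to the level permitted by $f\in H^s$ (the data $g$ and $k_\ell$ being already smooth enough for every intermediate step), delivering $p\in H^{1+s}\p{\Omega;\mathbb{K}}$, $u\in{_0}H^{2+s}\p{\Omega;\mathbb{K}^n}$, together with the summed estimate. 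As an alternative to quoting transmission regularity wholesale, I would instead first extract full tangential regularity by applying the uniform bounds~\eqref{velocity field estimate}--\eqref{pressure estimate} to ${\upchi_\gamma}^{-1}$ composed with bounded tangential Fourier multipliers — legitimate by Lemma~\ref{commutes with tangential multipliers 1}, after a frequency-truncation and monotone-convergence limit — and then recover the two missing vertical derivatives of $u$ and the single missing vertical derivative of $p$ algebraically from $\grad\cdot u=g$ and $\grad\cdot S^{\upmu}\p{p,u}-\gamma\rho_\ell\pd_1 u=f$ in each slab.

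With $p\in H^{1+s}\p{\Omega;\mathbb{K}}$ and $u\in{_0}H^{2+s}\p{\Omega;\mathbb{K}^n}$ in hand, all the traces in~\eqref{multilayer traveling stokes with stress boundary conditions} are classical and the distributional identities of the first step hold almost everywhere, so $\p{p,u}$ is a strong solution with data $\tp{g,f,\tp{k_\ell}_{\ell=1}^m}$. I expect the genuinely new difficulty — the one absent from the single-layer case — to be the interior interfaces: one must know that continuity of $u$ together with a prescribed jump of the normal stress is a complementing boundary operator for the Stokes system with jump-discontinuous viscosity, so that the local transmission estimate holds. This is classical but requires care; to avoid invoking the general theory one can solve the two-phase model on $\R^{n-1}\times(-1,1)$ with matching at $\cb{y=0}$ directly by tangential Fourier transform, reducing in each frequency to a piecewise-constant linear ODE system whose solution is pinned down by the interface and outer-boundary conditions and estimated with constants of controlled polynomial growth in $\abs{\xi}$.
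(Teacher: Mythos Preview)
Your proposal is correct, and your \emph{alternative} route --- extracting full tangential regularity via ${\upchi_\gamma}^{-1}$ composed with bounded tangential Fourier multipliers (licensed by Lemma~\ref{commutes with tangential multipliers 1}), then recovering the vertical derivatives algebraically from $\grad\cdot u=g$ and the bulk equation --- is precisely the argument the paper sketches, with horizontal difference quotients in place of multipliers (these are interchangeable in flat geometry). The paper's own proof is a one-sentence pointer to this standard induction/interpolation scheme.

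Your \emph{first} approach via Lopatinski--Shapiro and model transmission problems is also valid but is heavier than what the flat geometry requires, and this is worth noting in connection with your closing remark: the interior interfaces that you flag as the ``genuinely new difficulty'' in fact disappear entirely in the tangential-first argument. Horizontal difference quotients (or tangential multipliers) commute with the jump operators across the flat $\Sigma_\ell$ just as they commute with everything else, so tangential regularity is no harder in the multilayer setting than in the single-layer one; the vertical derivatives are then recovered layer-by-layer from $\partial_n u_n=g-(\grad_\|,0)\cdot u$ and $\mu_\ell\partial_n^2 u=\partial_n p\,e_n-f-\gamma\rho_\ell\partial_1u+\text{(tangential derivatives of }u\text{)}$ and $\partial_n p=f_n+\mu_\ell\Delta u_n+\gamma\rho_\ell\partial_1u_n$, which involve only data internal to each $\Omega_\ell$. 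So the transmission analysis you outline at the end (solving the two-phase model by Fourier ODE) is a sound backup but is not needed.
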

\begin{proof}
This is a standard induction and interpolation argument based on applying horizontal difference quotients to derive control of horizontal derivatives and then exploiting the elliptic structure of the Stokes operator to control the vertical derivatives. For a sketch we refer the reader to the real valued one layer case in Theorem 2.5 of~\cite{leoni2019traveling}.
\end{proof}

We may now synthesize the previous two propositions to state the isomorphism of Hilbert spaces associated to problem~\eqref{multilayer traveling stokes with stress boundary conditions}.

\begin{thm}[Existence and uniqueness of strong and classical solutions to~\eqref{multilayer traveling stokes with stress boundary conditions}]\label{isomorphism associated to multilayer traveling stokes with stress boundary conditions}
Let $s\in\R^+\cup\cb{0}$ and $\gam\in\R$. Define the bounded linear operator
\begin{equation}
    \textstyle
    \Upphi_\gam:H^{1+s}\p{\Omega;\mathbb{K}}\times{_0}H^{2+s}\p{\Omega;\mathbb{K}^n}\to H^{1+s}\p{\Omega;\mathbb{K}}\times H^{s}\p{\Omega;\mathbb{K}^n}\times\prod_{\ell=1}^mH^{1/2+s}\p{\Sigma_\ell;\mathbb{K}^n}
\end{equation}
with the assignment
\begin{equation}
    \textstyle\Upphi_\gam\p{p,u}=\p{\grad\cdot u,\sum_{\ell=1}^m\mathbbm{1}_{\Omega_\ell}\sb{\grad\cdot S^{\upmu}\p{p,u}-\gam\rho_\ell\pd_1u},\tp{\jump{S^{\upmu}\p{p,u}e_n}_\ell}_{\ell=1}^m}.
\end{equation}
Then $\Upphi_\gam$ is a Hilbert isomorphism.
\end{thm}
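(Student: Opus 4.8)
The plan is to recognize $\Upphi_\gam$ as the strong-data incarnation of the weak-solution isomorphism $\upchi_\gam$ from Proposition~\ref{weak solutions to the traveling stokes with stress boundary problem}, with the conversion operator $\mathscr{P}$ of Definition~\ref{strong to weak conversion notation} mediating between the two data formats. First I would check that $\Upphi_\gam$ is a bounded linear map between the stated spaces; this is routine, since in this section $S^{\upmu}(p,u)$ depends linearly on $(p,u)$, the bulk term $\sum_\ell\mathbbm{1}_{\Omega_\ell}[\grad\cdot S^{\upmu}(p,u)-\gam\rho_\ell\pd_1u]$ is a first-order constant-coefficient operator applied to $(p,u)$, and the interfacial residuals $\jump{S^{\upmu}(p,u)e_n}_\ell$ are controlled by the trace theorem on each slab $\Omega_\ell$. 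The substance is then to show bijectivity together with boundedness of the inverse.

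For surjectivity, given $(g,f,\tp{k_\ell}_{\ell=1}^m)$ in the target space I would set $F=\mathscr{P}(f,\tp{k_\ell}_{\ell=1}^m)\in({_0}H^1(\Omega;\mathbb{K}^n))^{\Bar{\ast}}$, so that $(g,F)$ lies in the data space of $\upchi_\gam$, and define $(p,u)={\upchi_\gam}^{-1}(g,F)$; Proposition~\ref{weak solutions to the traveling stokes with stress boundary problem} makes this legitimate and gives $\norm{p}_{L^2}+\norm{u}_{{_0}H^1}\lesssim\norm{g}_{L^2}+\norm{F}_{({_0}H^1)^{\Bar{\ast}}}$. Proposition~\ref{regularity of the multilayer traveling stokes with stress boundary conditions} then upgrades the pair to $(p,u)\in H^{1+s}(\Omega;\mathbb{K})\times{_0}H^{2+s}(\Omega;\mathbb{K}^n)$, supplies the corresponding a priori estimate, and --- the point that closes the loop --- asserts that $(p,u)$ is a strong solution of~\eqref{multilayer traveling stokes with stress boundary conditions} with data tuple $(g,f,\tp{k_\ell}_{\ell=1}^m)$, i.e. $\Upphi_\gam(p,u)=(g,f,\tp{k_\ell}_{\ell=1}^m)$. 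Chaining the two estimates and the boundedness of $\mathscr{P}$ bounds the resulting right inverse. For injectivity, suppose $(p,u)\in H^{1+s}(\Omega;\mathbb{K})\times{_0}H^{2+s}(\Omega;\mathbb{K}^n)$ has $\Upphi_\gam(p,u)=0$; integrating by parts slab by slab in the weak form~\eqref{weak formulation of the traveling stokes problem}, and using the vanishing of the bulk residual, of the interfacial jumps $\jump{S^{\upmu}(p,u)e_n}_\ell$, and of $\m{Tr}_{\Sigma_0}u$, yields $\upchi_\gam(p,u)=(0,0)$, whence $(p,u)=0$ by the injectivity half of Proposition~\ref{weak solutions to the traveling stokes with stress boundary problem}.

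The only step requiring any care is the weak/strong dictionary underlying both halves: that for $(p,u)$ in the strong class one has $\Upphi_\gam(p,u)=(g,f,\tp{k_\ell}_{\ell=1}^m)$ if and only if $\upchi_\gam(p,u)=(g,\mathscr{P}(f,\tp{k_\ell}_{\ell=1}^m))$. This is precisely the Green's identity redistributing the single antidual functional $\mathfrak{E}_\gamma(p,u)$ into a bulk piece on $\Omega$ and boundary pieces on the $\Sigma_\ell$, with the usual bookkeeping that $\jump{\,\cdot\,}_m$ abbreviates $-\m{Tr}^{\uparrow}_{\Sigma_m}$ and that the $\Sigma_0$ boundary term is killed by $\m{Tr}_{\Sigma_0}u=0$; the ``weak $\Rightarrow$ strong'' direction is exactly the last assertion of Proposition~\ref{regularity of the multilayer traveling stokes with stress boundary conditions}, while the reverse direction is the elementary integration by parts just indicated. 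Granting this, the theorem is a direct repackaging of Propositions~\ref{weak solutions to the traveling stokes with stress boundary problem} and~\ref{regularity of the multilayer traveling stokes with stress boundary conditions}; no genuinely new obstacle arises here, the real work having been spent on those two results and on the coercivity identity of Lemma~\ref{lemma on the coercivity of the bilinear form}.
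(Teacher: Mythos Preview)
Your proposal is correct and matches the paper's approach exactly: the paper's own proof is a two-line sketch citing Proposition~\ref{weak solutions to the traveling stokes with stress boundary problem} for injectivity and Propositions~\ref{weak solutions to the traveling stokes with stress boundary problem} and~\ref{regularity of the multilayer traveling stokes with stress boundary conditions} together for surjectivity, which is precisely the content you have unpacked in more detail. Your explicit discussion of the weak/strong dictionary via $\mathscr{P}$ and the slab-by-slab integration by parts is exactly what underlies those citations.
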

\begin{proof}
Proposition~\ref{weak solutions to the traveling stokes with stress boundary problem} shows $\Upphi_\gam$ to be injective. Propositions~\ref{weak solutions to the traveling stokes with stress boundary problem} and~\ref{regularity of the multilayer traveling stokes with stress boundary conditions} show that $\Upphi_\gam$ is a surjection.
\end{proof}

\subsection{Analysis of the normal stress to normal Dirichlet pseudodifferential operator}\label{section on analysis of the normal stress to normal Dirichlet pseudodifferential operator}

In this subsection we study a $\Psi$DO built from $\Upphi_\gam$. We make the following definition.

\begin{defn}[Normal stress to normal Dirichlet $\Psi$DO]\label{definition of the normal stress to normal dirichlet psido}Let $\gam\in\R$ and $s\in\R^+\cup\cb{-1,0}$. We define the normal stress to normal Dirichlet pseudodifferential operator to be  the bounded linear mapping $\upnu_\gamma:\prod_{\ell=1}^mH^{1/2+s}\p{\Sigma_\ell;\mathbb{K}}\to\prod_{\ell=1}^mH^{3/2+s}\p{\Sigma_\ell;\mathbb{K}}$ given by
\begin{equation}
    \textstyle \upnu_\gamma\tp{\psi_\ell}_{\ell=1}^m=\tp{\m{Tr}_{\Sigma_\ell}u\cdot e_n}_{\ell=1}^m,
\end{equation}
for $\p{p,u}\in H^{1+s}\p{\Omega;\mathbb{K}}\times{_0}H^{2+s}\p{\Omega;\mathbb{K}^n}$ the unique solution to the normal stress problem:
\begin{equation}\label{normal_stress pde}
    \begin{cases}
        \grad\cdot S^{\upmu}\p{p,u}+\gam\rho_\ell\pd_1u=0&\text{in }\Omega_\ell,\;\ell\in\cb{1,\dots,m}\\
        \grad\cdot u=0&\text{in }\Omega\\
        \jump{S^{\upmu}\p{p,u}e_n}_\ell=\psi_\ell e_n&\text{on }\Sigma_\ell,\;\ell\in\cb{1,\dots,m}\\
        \jump{u}_{\ell}=0&\text{on }\Sigma_\ell,\;\ell\in\cb{1,\dots,m-1}\\
        u=0&\text{on }\Sigma_0.
    \end{cases}
\end{equation}
In other words, $\p{p,u}={\upchi_{-\gam}}^{-1}(0,\mathscr{O}\tp{\psi_{\ell}e_n}_{\ell=1}^m)$ for the operators $\mathscr{O}$ and ${\upchi_{-\gam}}^{-1}$ from Definition~\ref{strong to weak conversion notation} and Proposition~\ref{weak solutions to the traveling stokes with stress boundary problem}, respectively (note the minus sign preceding $\gam$).
\end{defn}
\begin{rmk}\label{remark about the normal stress to normal dirchlet mapping}
The boundedness of $\upnu_\gam$ is a consequence of the boundedness of $\mathscr{O}$, Proposition~\ref{weak solutions to the traveling stokes with stress boundary problem}, and Theorem~\ref{isomorphism associated to multilayer traveling stokes with stress boundary conditions}. The restriction $s\in\R^{+}\cup\cb{-1,0}$ is not important. By interpolation theory we are free to take any $s\in[-1,\infty)$ in the previous definition statement.
\end{rmk}
We begin by proving that the Fourier transform diagonalizes $\upnu_\gam$. This gives us a representation of this linear mapping as a frequency space multiplication operator.
\begin{prop}[Diagonalization of $\upnu_\gamma$]\label{diagonalization of normal stress to normal dirichlet}
There exists a bounded and measurable matrix field $\bf{n}_\gam:\R^{n-1}\to\C^{m\times m}$ such that $\Bar{\bf{n}_\gam\p{\xi}}=\bf{n}_\gam\p{-\xi}$ for a.e. $\xi\in\R^{n-1}$ and for all $s\in[-1,\infty)$ and all $\tp{\psi_\ell}_{\ell=1}^m\in\prod_{\ell=1}^mH^{1/2+s}\p{\Sigma_\ell;\mathbb{K}}$ we have the equality
\begin{equation}
    \mathscr{F}\sb{\upnu_\gam\tp{\psi_\ell}_{\ell=1}^m}\p{\xi}=\bf{n}_\gam\p{\xi}\mathscr{F}\sb{\tp{\psi_\ell}_{\ell=1}^m}\p{\xi}\text{ for a.e. }\xi\in\R^{n-1}.
\end{equation}
Moreover, there exists a constant $c\in\R^+$, depending only on the physical parameters, such that for a.e. $\xi\in\R^{n-1}$ we have $\abs{\bf{n}_\gam\p{\xi}}\le c(1+\abs{\xi}^2)^{-1/2}$.
\end{prop}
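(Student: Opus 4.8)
The plan is to deduce the diagonalization from the fact that $\upnu_\gam$ commutes with all tangential Fourier multipliers, combined with the standard characterization of such operators as frequency-space multiplications; the decay estimate will then fall out of the mapping properties already in hand, and the conjugation symmetry of $\bf{n}_\gam$ from uniqueness for~\eqref{normal_stress pde}. First I would record the commutation identity. Fix $\omega\in L^\infty\tp{\R^{n-1};\C}$ and let $M_\omega$ be the associated tangential multiplier. Since $M_\omega$ commutes with the vertical trace operators $\m{Tr}_{\Sigma_\ell}$ and interacts with the antidual pairing in the expected way (cf.\ Appendix~\ref{appendix on (tangential) multipliers}), one checks directly from Definition~\ref{strong to weak conversion notation} that $M_\omega\mathscr{O}\tp{\psi_\ell e_n}_{\ell=1}^m=\mathscr{O}\tp{\tp{M_\omega\psi_\ell}e_n}_{\ell=1}^m$. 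Applying Lemma~\ref{commutes with tangential multipliers 1} to $\upchi_{-\gam}^{-1}$ — with the fixed divergence datum $g=0$, which $M_\omega$ leaves invariant — and then taking normal traces as in Definition~\ref{definition of the normal stress to normal dirichlet psido}, we obtain $\upnu_\gam\tp{M_\omega\psi_\ell}_{\ell=1}^m=M_\omega\upnu_\gam\tp{\psi_\ell}_{\ell=1}^m$ for every tuple.

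Next I would extract the symbol. Compose $\upnu_\gam$ with the inclusion $\prod_{\ell=1}^mH^{3/2}(\Sigma_\ell;\C)\emb\prod_{\ell=1}^mH^{1/2}(\Sigma_\ell;\C)$ and conjugate by the Bessel-potential isometry $\prod_{\ell=1}^mH^{1/2}(\Sigma_\ell;\C)\simeq L^2(\R^{n-1};\C^m)$, which is itself a Fourier multiplier and therefore does not disturb the commutation with the $M_\omega$. The result is a bounded operator on $L^2(\R^{n-1};\C^m)$ commuting with multiplication by every scalar $L^\infty(\R^{n-1};\C)$ symbol; by the standard fact that such an operator is itself a (matrix) multiplication, it equals $M_{\bf{n}_\gam}$ for some $\bf{n}_\gam\in L^\infty(\R^{n-1};\C^{m\times m})$. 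Unwinding the conjugations, the order-$1/2$ weights cancel and we get $\mathscr{F}[\upnu_\gam\tp{\psi_\ell}_{\ell=1}^m](\xi)=\bf{n}_\gam(\xi)\mathscr{F}[\tp{\psi_\ell}_{\ell=1}^m](\xi)$; evaluating on Schwartz tuples shows $\bf{n}_\gam$ does not depend on the Sobolev index, so the identity holds for all $s\in[-1,\infty)$. The bound $\abs{\bf{n}_\gam(\xi)}\le c(1+\abs{\xi}^2)^{-1/2}$ is then immediate: the boundedness of $\upnu_\gam:\prod_{\ell=1}^mH^{1/2}(\Sigma_\ell)\to\prod_{\ell=1}^mH^{3/2}(\Sigma_\ell)$ from Remark~\ref{remark about the normal stress to normal dirchlet mapping}, read in frequency space after cancelling the weight on the data side, says precisely that $\sup_\xi(1+\abs{\xi}^2)^{1/2}\abs{\bf{n}_\gam(\xi)}<\infty$. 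Finally, for the conjugation symmetry: if $\tp{\psi_\ell}_{\ell=1}^m$ is real-valued then $\Bar{(p,u)}$ solves the same real-coefficient system~\eqref{normal_stress pde}, so it equals $(p,u)$ by the uniqueness in Proposition~\ref{weak solutions to the traveling stokes with stress boundary problem}; hence $\upnu_\gam$ sends real tuples to real tuples, and for a Fourier multiplier this is exactly the relation $\Bar{\bf{n}_\gam(\xi)}=\bf{n}_\gam(-\xi)$.

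The genuine difficulty of this subsection lies not here but downstream: there is no serious obstacle in the present proposition, whose only delicate points are bookkeeping ones — confirming that $M_\omega$ passes cleanly through $\mathscr{O}$ and the traces so that Lemma~\ref{commutes with tangential multipliers 1} can be invoked with the data written in weak form, and checking that the symbol produced is the same for every admissible $s$. The substantive work with $\bf{n}_\gam$, namely its asymptotic developments as $\abs{\xi}\to0$ and $\abs{\xi}\to\infty$ via the energy structure of the multilayer traveling Stokes system, is taken up in the subsequent results of this section.
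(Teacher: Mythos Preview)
Your proof is correct and follows essentially the same route as the paper: both show that $\upnu_\gam$ commutes with tangential Fourier multipliers via Lemma~\ref{commutes with tangential multipliers 1} and then extract $\bf{n}_\gam$ from the standard multiplier characterization. The only cosmetic difference is that the paper works componentwise---applying Proposition~\ref{tilo-fourier multiplier Sobolev space characterization} to each scalar map $\upnu_\gam^{j,k}:H^{-1/2}(\Sigma_k)\to H^{1/2}(\Sigma_j)$, which also delivers the conjugation symmetry and the decay bound directly---whereas you package the same argument at the vector level and recover the symmetry from PDE uniqueness.
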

\begin{proof}
    Let $j,k\in\cb{1,\dots,m}$. Define $\upnu_\gam^{j,k}:H^{-1/2}\p{\Sigma_k;\mathbb{K}}\to H^{1/2}\p{\Sigma_j;\mathbb{K}}$ via $\upnu_\gam^{j,k}\psi=\m{Tr}_{\Sigma_j}u\cdot e_n$ for $\p{p,u}$ a solution pair to the normal stress PDE~\eqref{normal_stress pde} with normal stress $\psi$ on the surface $\Sigma_k$. In other words,
    \begin{equation}
        (p,u)={\upchi_{-\gam}}^{-1}(0,\mathscr{O}\tp{\psi\del_{k,\ell}e_n}_{\ell=1}^m),\text{ for }\del_{\cdot,\cdot}\text{ the Kronecker delta}.
    \end{equation}
    
    It is clear that this assignment is bounded and, by Lemma~\ref{commutes with tangential multipliers 1}, translation invariant. We are in a position to apply Proposition~\ref{tilo-fourier multiplier Sobolev space characterization} to obtain a measurable function $\bf{n}_\gam^{j,k}:\R^{n-1}\to\C$ such that $\Bar{\bf{n}_\gam^{j,k}\p{\xi}}=\bf{n}_\gam^{j,k}\p{-\xi}$ for a.e. $\xi\in\R^{n-1}$, that obeys the estimate
    \begin{equation}
        \m{esssup}\tcb{(1+\abs{\xi}^2)^{-1/2}\tabs{\bf{n}^{j,k}_\gam(\xi)}\;:\;\xi\in\R^{n-1}}\le 2\tnorm{\upnu_\gam^{j,k}}_{\mathcal{L}(H^{-1/2};H^{1/2})},
    \end{equation}
    and satisfies the identity $\mathscr{F}\tsb{\upnu_{\gam}^{j,k}\psi}=\bf{n}_\gam^{j,k}\mathscr{F}\tsb{\psi}$ for all $\psi\in H^{-1/2}(\Sigma_k;\mathbb{K})$.
    
    We set $\bf{n}_\gam:\R^{n-1}\to\C^{m\times m}$ via ${\bf{n}_\gam(\xi)}_{j,k}=\bf{n}_\gam^{j,k}(\xi)$ for $\xi\in\R^{n-1}$ and $j,k\in\tcb{1,\dots,m}$. The following computation verifies that this matrix field is the sought after spectral representation:
    \begin{multline}
        \mathscr{F}\tsb{\upnu_\gam\tp{\psi_\ell}_{\ell=1}^m}=\ssum{j,k=1}{m}(\mathscr{F}\tsb{\upnu_\gam(\psi_\ell\del_{\ell,k})_{\ell=1}^m}\cdot e_j)e_j=\ssum{j,k=1}{m}(\mathscr{F}\tsb{\upnu^{j,k}_\gam\psi_k}\cdot e_j)e_j\\=\ssum{j,k=1}{m}(\bf{n}_\gam^{j,k}\mathscr{F}\tsb{\psi_k}\cdot e_j)e_j=\ssum{j,k=1}{m}({\bf{n}_\gam}_{j,k}\mathscr{F}\tsb{\psi_k}\cdot e_j)e_j=\bf{n}_\gam\mathscr{F}\tsb{\tp{\psi_\ell}_{\ell=1}^m}.
    \end{multline}
\end{proof}
We next observe how the multiplier $\bf{n}_\gam$ changes under the map $\gamma\mapsto-\gamma$.
\begin{prop}[Adjoint of the normal stress to normal Dirichlet multiplier]\label{adjoint of the normal stress to normal Dirichlet multiplier}
If $\gam\in\R^+$ then for a.e. $\xi\in\R^{n-1}$ we have the adjoint identity ${\bf{n}_\gam(\xi)}^\ast=\bf{n}_{-\gam}(\xi)$.
\end{prop}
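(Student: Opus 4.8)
The plan is to first show that $\upnu_{-\gam}$ is the formal $L^2$-adjoint of $\upnu_\gam$, and then transfer this fact to the Fourier multipliers via Plancherel's theorem together with the diagonalization of Proposition~\ref{diagonalization of normal stress to normal dirichlet}. The key algebraic input is a conjugate-symmetry of the traveling Stokes form: writing $B_\gam\p{w,v}=\ssum{\ell=1}{m}\int_{\Omega_\ell}\f{\mu_\ell}{2}\mathbb{D}w:\mathbb{D}v-\gam\rho_\ell\pd_1w\cdot v$ for $\gam\in\R$ (as in Lemma~\ref{lemma on the coercivity of the bilinear form}, whose defining formula makes sense for every $\gam\in\R$), I claim $B_\gam\p{w,v}=\Bar{B_{-\gam}\p{v,w}}$ for all $w,v\in{_0}H^1\p{\Omega;\C^n}$. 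This follows because the Frobenius and Euclidean inner products are sesquilinear, so $\Bar{\mathbb{D}v:\mathbb{D}w}=\mathbb{D}w:\mathbb{D}v$ and $\Bar{\pd_1v\cdot w}=w\cdot\pd_1v$, together with an integration by parts in $x_1$ in the transport term; the latter produces no boundary contribution because, for a.e. line parallel to $e_1$, the product of components $w_i\Bar{v_i}$ lies in $W^{1,1}\p{\R}$ by the absolute continuity on lines characterization of ${_0}H^1$.

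Next, I would fix Schwartz tuples $\upphi=\tp{\phi_\ell}_{\ell=1}^m$ and $\uppsi=\tp{\psi_\ell}_{\ell=1}^m$ on $\R^{n-1}$ and set $\p{p,u}={\upchi_{-\gam}}^{-1}\bp{0,\mathscr{O}\tp{\phi_\ell e_n}_{\ell=1}^m}$ and $\p{\tilde p,\tilde u}={\upchi_{\gam}}^{-1}\bp{0,\mathscr{O}\tp{\psi_\ell e_n}_{\ell=1}^m}$, so that $\upnu_\gam\upphi=\tp{\m{Tr}_{\Sigma_\ell}u\cdot e_n}_{\ell=1}^m$ and $\upnu_{-\gam}\uppsi=\tp{\m{Tr}_{\Sigma_\ell}\tilde u\cdot e_n}_{\ell=1}^m$ by Definition~\ref{definition of the normal stress to normal dirichlet psido}. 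Testing the weak formulation~\eqref{weak formulation of the traveling stokes problem} for $\p{p,u}$ against $v=\tilde u$, and the one for $\p{\tilde p,\tilde u}$ against $v=u$, the pressure terms drop because $u$ and $\tilde u$ are solenoidal, and one is left with
\begin{equation}
    B_{-\gam}\p{u,\tilde u}=\br{\mathscr{O}\tp{\phi_\ell e_n}_{\ell=1}^m,\tilde u}\quad\text{and}\quad B_\gam\p{\tilde u,u}=\br{\mathscr{O}\tp{\psi_\ell e_n}_{\ell=1}^m,u}
\end{equation}
in the antidual pairing of ${_0}H^1\p{\Omega;\C^n}$. The conjugate-symmetry identity makes the two left sides complex conjugates of one another, so unwinding the definition of $\mathscr{O}$ (Definition~\ref{strong to weak conversion notation}) and of the antidual pairing yields
\begin{equation}
    \ssum{\ell=1}{m}\int_{\Sigma_\ell}\tp{\upnu_\gam\upphi}_\ell\Bar{\psi_\ell}=\ssum{\ell=1}{m}\int_{\Sigma_\ell}\phi_\ell\Bar{\tp{\upnu_{-\gam}\uppsi}_\ell},
\end{equation}
that is, $\br{\upnu_\gam\upphi,\uppsi}_{L^2}=\br{\upphi,\upnu_{-\gam}\uppsi}_{L^2}$ for all Schwartz $\upphi,\uppsi$.

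Finally, I would pass to frequency space: by Plancherel's theorem and Proposition~\ref{diagonalization of normal stress to normal dirichlet}, the previous identity becomes
\begin{equation}
    \int_{\R^{n-1}}\p{\bf{n}_\gam(\xi)\mathscr{F}[\upphi](\xi)}\cdot\mathscr{F}[\uppsi](\xi)\,\m{d}\xi=\int_{\R^{n-1}}\mathscr{F}[\upphi](\xi)\cdot\p{\bf{n}_{-\gam}(\xi)\mathscr{F}[\uppsi](\xi)}\,\m{d}\xi,
\end{equation}
and rewriting the left integrand as $\mathscr{F}[\upphi](\xi)\cdot\p{\bf{n}_\gam(\xi)^\ast\mathscr{F}[\uppsi](\xi)}$ shows $\int_{\R^{n-1}}\mathscr{F}[\upphi]\cdot\bsb{\p{\bf{n}_\gam^\ast-\bf{n}_{-\gam}}\mathscr{F}[\uppsi]}=0$ for all Schwartz $\upphi,\uppsi$. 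Since $\mathscr{F}$ carries these tuples onto a dense subset of $L^2\p{\R^{n-1};\C^m}$, a routine Lebesgue-point argument forces $\bf{n}_\gam(\xi)^\ast=\bf{n}_{-\gam}(\xi)$ for a.e. $\xi\in\R^{n-1}$, which is the claim. The only points demanding some care are the $x_1$-integration by parts in the symmetry identity and this concluding density step; I do not expect either to be a genuine obstacle, so the main content of the proof is the clean bookkeeping of the sesquilinear conventions in the duality computation.
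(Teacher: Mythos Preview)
Your proof is correct and follows essentially the same approach as the paper: both establish the duality $\br{\upnu_\gam\upphi,\uppsi}=\br{\upphi,\upnu_{-\gam}\uppsi}$ by testing the weak formulations of the two normal stress problems against each other, exploiting the conjugate symmetry $B_\gam(w,v)=\Bar{B_{-\gam}(v,w)}$ (which the paper computes inline rather than stating separately), and then pass to the multipliers via Parseval and a density argument. The only cosmetic differences are that the paper works directly with $H^{-1/2}$ data rather than Schwartz tuples and concludes by specializing to $\tp{a_\ell\psi}_{\ell=1}^m$, $\tp{b_\ell\phi}_{\ell=1}^m$ with $a,b$ ranging over the standard basis of $\C^m$.
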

\begin{proof}
Let $(\psi_\ell)_{\ell=1}^m,(\phi_\ell)_{\ell=1}^m\in\prod_{\ell=1}^mH^{-1/2}(\Sigma_\ell;\C)$ and denote $(p,u)={\upchi_{-\gam}}^{-1}(0,\mathscr{O}(\psi_\ell e_n)_{\ell=1}^m)$, $(q,v)={\upchi_\gam}^{-1}(0,\mathscr{O}(\phi_\ell e_n)_{\ell=1}^m)$, where we recall that $\upchi_{\gam}$ and $\upchi_{-\gam}$ are defined in Proposition~\ref{weak solutions to the traveling stokes with stress boundary problem} and $\mathscr{O}$ is from Definition~\ref{strong to weak conversion notation}. By testing $v$ in the weak formulation for $(p,u)$, recalling that $\grad\cdot v=\grad\cdot u=0$, and integrating by parts, we obtain the identities
\begin{multline}
    \tbr{(\psi_\ell)_{\ell=1}^m,(\m{Tr}_{\Sigma_\ell}v\cdot e_n)_{\ell=1}^m}_{H^{-1/2},H^{1/2}}=\ssum{\ell=1}{m}\int_{\Omega_\ell}\f{\mu_\ell}{2}\mathbb{D}u:\mathbb{D}v+\gam\rho_\ell\pd_1u\cdot v=\Bar{\ssum{\ell=1}{m}\int_{\Omega_\ell}\f{\mu_\ell}{2}\mathbb{D}v:\mathbb{D}u-\gam\rho_\ell\pd_1v\cdot u}\\=\Bar{\tbr{(\phi_\ell)_{\ell=1}^m,(\m{Tr}_{\Sigma_\ell}u\cdot e_n)_{\ell=1}^m}_{H^{-1/2},H^{1/2}}}=\tbr{(\m{Tr}_{\Sigma_\ell}u\cdot e_n)_{\ell=1}^m,(\phi_\ell)_{\ell=1}^m}_{H^{1/2},H^{-1/2}}.
\end{multline}
Hence, we may apply Propositions~\ref{(anti-)dual representation of Sobolev spaces} and~\ref{diagonalization of normal stress to normal dirichlet} along with Definition~\ref{definition of the normal stress to normal dirichlet psido} to deduce  that
\begin{equation}\label{floor still needs sweeping}
    \int_{\R^{n-1}}\mathscr{F}\tsb{(\psi_\ell)_{\ell=1}^m}\cdot\bf{n}_{-\gam}\mathscr{F}\tsb{(\phi_\ell)_{\ell=1}^m}=\int_{\R^{n-1}}\bf{n}_\gam\mathscr{F}[(\psi_\ell)_{\ell=1}^m]\cdot\mathscr{F}\tsb{(\phi_\ell)_{\ell=1}^m}.
\end{equation}
Let $a,b\in\C^{m}$ and $\psi,\phi\in L^2(\R^{n-1};\C)$. In~\eqref{floor still needs sweeping} we are free to take $(\psi_\ell)_{\ell=1}^m=(a_\ell\psi)_{\ell=1}^m$ and $(\phi_\ell)_{\ell=1}^m=(b_\ell\phi)_{\ell=1}^m$ to see that
\begin{equation}
  \int_{\R^{n-1}}\tp{a\cdot \bf{n}_{-\gam}b-\bf{n}_{\gam}a\cdot b}\mathscr{F}[\psi]\cdot\mathscr{F}[\phi]=0.
\end{equation}
As $\phi$ and $\psi$ are arbitrary, we deduce that, up to a null set depending on $a$ and $b$, we may equate $a\cdot\bf{n}_{-\gam}b=\bf{n}_\gam a\cdot b$. By letting $a$ and $b$ range over the members of the standard basis of $\C^m$ and recalling that a countable union of null sets is, again, a null set we conclude that ${\bf{n}_\gam(\xi)}^\ast=\bf{n}_{-\gam}(\xi)$ for a.e. $\xi\in\R^{n-1}$.
\end{proof}

The remainder of this subsection is devoted to a more precise asymptotic development of the matrix field $\bf{n}_\gam$ as $\xi\to0$ and $\xi\to\infty$, which we achieve with energy estimates.  Recall that from  Proposition~\ref{weak solutions to the traveling stokes with stress boundary problem} and trace theory we have the equivalence
\begin{equation}\label{do do de da}
    \tnorm{(\psi_\ell)_{\ell=1}^m}_{H^{-1/2}}\asymp\norm{u}_{{_0}H^1}+\norm{p}_{L^2}
\end{equation}
for $(\psi_\ell)_{\ell=1}^m\in\prod_{\ell=1}^mH^{-1/2}(\Sigma_\ell;\mathbb{K})$ and $(p,u)={\upchi_{-\gamma}}^{-1}(0,\mathscr{O}(\psi_\ell e_n)_{\ell=1}^m)$.  Our next result shows that if we weaken the control of $(\psi_\ell)_{\ell=1}^m$ at low frequencies on the Fourier side, then we can remove $p$ from the right.  The resulting equivalence will play a key role in our asymptotic developments of $\bf{n}_\gam$.  

\begin{thm}[Normal stress and velocity energy equivalence]\label{energy estimates for the normal stress problem}
For $\gam\in\R$ there exists a constant $c\in\R^+$, depending also on the remaining physical parameters, for which we have the equivalence
\begin{equation}\label{blackbird fly}
    c^{-1}\norm{u}_{{_0}H^1}\le\bp{\int_{\R^{n-1}}\min\tcb{\abs{\xi}^2,\abs{\xi}^{-1}}\tabs{\mathscr{F}\tsb{\tp{\psi_\ell}_{\ell=1}^m}\p{\xi}}^2\;\m{d}\xi}^{1/2}\le c\norm{u}_{{_0}H^1}
\end{equation}
for all data $\tp{\psi_{\ell}}_{\ell=1}^m\in\prod_{\ell=1}^mH^{-1/2}\p{\Sigma_\ell;\mathbb{K}}$, where $\p{p,u}\in L^2\p{\Omega;\mathbb{K}}\times{_0}H^1\p{\Omega;\mathbb{K}^n}$ is the uniquely determined weak solution to the normal stress problem in equation~\eqref{normal_stress pde}, i.e. $\p{p,u}={\upchi_{-\gam}}^{-1}(0,\mathscr{O}\tp{\psi_{\ell}e_n}_{\ell=1}^m)$ for the operators $\mathscr{O}$ and ${\upchi_{-\gam}}^{-1}$from Definition~\ref{strong to weak conversion notation} and Proposition~\ref{weak solutions to the traveling stokes with stress boundary problem}.
\end{thm}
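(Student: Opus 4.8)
The plan rests on two structural observations about the normal stress problem~\eqref{normal_stress pde}. First, since the pair $\tp{p,u}$ equals ${\upchi_{-\gam}}^{-1}\tp{0,\mathscr{O}\tp{\psi_\ell e_n}_{\ell=1}^m}$, unwinding the weak formulation (Proposition~\ref{weak solutions to the traveling stokes with stress boundary problem} and Definition~\ref{strong to weak conversion notation}) and restricting to \emph{solenoidal} test fields eliminates the pressure, so that $u$ is characterized by
\[
B_{-\gam}(u,v):=\ssum{\ell=1}{m}\int_{\Omega_\ell}\f{\mu_\ell}{2}\mathbb{D}u:\mathbb{D}v+\gam\rho_\ell\pd_1u\cdot v=\ssum{\ell=1}{m}\tbr{\psi_\ell,\m{Tr}_{\Sigma_\ell}v\cdot e_n}_{H^{-1/2},H^{1/2}}\quad\text{for all }v\in{_0}H^1_\sig\tp{\Omega;\mathbb{K}^n},
\]
where $B$ is the form of Lemma~\ref{lemma on the coercivity of the bilinear form}. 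Second, writing $w(\xi)=\min\tcb{\abs{\xi}^2,\abs{\xi}^{-1}}$, one has the elementary identity $w(\xi)^{-1}=\max\tcb{\abs{\xi}^{-2},\abs{\xi}}\asymp(1+\abs{\xi}^2)^{1/2}+\abs{\xi}^{-2}$, so the $w^{-1}$-weighted $L^2$ space on $\R^{n-1}$ is precisely the Fourier image of $H^{1/2}\cap\dot H^{-1}$, i.e. $\norm{\mathscr{F}[\chi]}_{L^2(w^{-1})}^2\asymp\norm{\chi}_{H^{1/2}}^2+\ssb{\chi}_{\dot H^{-1}}^2$; moreover $w\lesssim(1+\abs{\xi}^2)^{-1/2}$, which shows the middle quantity in~\eqref{blackbird fly} is always finite, being dominated by $\ssum{\ell=1}{m}\norm{\psi_\ell}_{H^{-1/2}}$. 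The key point to recognize — and the step I expect to require the most thought — is that $w^{-1}$ is exactly the weight in which, by the divergence compatibility estimate of Proposition~\ref{divergence_divergence} together with the solution operator $Q_m$ of Theorem~\ref{solution operator to the multi-normal trace-divergence problem}, the normal traces on $\tp{\Sigma_\ell}_{\ell=1}^m$ of solenoidal $H^1$ fields vanishing on $\Sigma_0$ are measured; once this is in place the two inequalities fall out of testing the pressure-free identity against such fields.

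For the lower bound I would test the displayed identity with $v=u$ and abbreviate $\chi_\ell=\m{Tr}_{\Sigma_\ell}u\cdot e_n$. The coercivity of Lemma~\ref{lemma on the coercivity of the bilinear form} gives $\norm{u}_{{_0}H^1}^2\lesssim\m{Re}\,B_{-\gam}(u,u)=\m{Re}\ssum{\ell=1}{m}\tbr{\psi_\ell,\chi_\ell}\le\babs{\ssum{\ell=1}{m}\tbr{\psi_\ell,\chi_\ell}}$, and by Parseval and the weighted Cauchy--Schwarz inequality the right-hand side is at most $\norm{\mathscr{F}[\tp{\psi_\ell}_{\ell=1}^m]}_{L^2(w)}\,\norm{\mathscr{F}[\tp{\chi_\ell}_{\ell=1}^m]}_{L^2(w^{-1})}$. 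Now the trace theorem bounds $\norm{\chi_\ell}_{H^{1/2}}\lesssim\norm{u}_{{_0}H^1}$, and since $\grad\cdot u=0$, Proposition~\ref{divergence_divergence} bounds $\ssb{\chi_\ell}_{\dot H^{-1}}\lesssim\norm{u}_{L^2}$; by the frequency equivalence above, $\norm{\mathscr{F}[\tp{\chi_\ell}_{\ell=1}^m]}_{L^2(w^{-1})}\lesssim\norm{u}_{{_0}H^1}$. Cancelling one factor of $\norm{u}_{{_0}H^1}$ yields the left inequality of~\eqref{blackbird fly}.

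For the upper bound I would argue by duality. Given any $\tp{\varphi_\ell}_{\ell=1}^m\in\prod_{\ell=1}^m\tp{H^{1/2}(\Sigma_\ell;\mathbb{K})\cap\dot H^{-1}(\Sigma_\ell)}$, the pair $\tp{0,\tp{\varphi_\ell}_{\ell=1}^m}$ lies in the data space $\mathfrak{X}^m(\upalpha)$ of Theorem~\ref{solution operator to the multi-normal trace-divergence problem} with $\norm{\tp{0,\tp{\varphi_\ell}_{\ell=1}^m}}_{\mathfrak{X}^m}\asymp\norm{\mathscr{F}[\tp{\varphi_\ell}_{\ell=1}^m]}_{L^2(w^{-1})}$, so $v:=Q_m\tp{0,\tp{\varphi_\ell}_{\ell=1}^m}$ is a solenoidal member of ${_0}H^1\tp{\Omega;\mathbb{K}^n}$ with $\m{Tr}_{\Sigma_\ell}v\cdot e_n=\varphi_\ell$ and $\norm{v}_{{_0}H^1}\lesssim\norm{\mathscr{F}[\tp{\varphi_\ell}_{\ell=1}^m]}_{L^2(w^{-1})}$. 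Inserting this $v$ into the weak identity and using the boundedness of $B_{-\gam}$,
\[
\babs{\ssum{\ell=1}{m}\tbr{\psi_\ell,\varphi_\ell}_{H^{-1/2},H^{1/2}}}=\abs{B_{-\gam}(u,v)}\lesssim\norm{u}_{{_0}H^1}\,\norm{\mathscr{F}[\tp{\varphi_\ell}_{\ell=1}^m]}_{L^2(w^{-1})}.
\]
Since $\tbr{\psi_\ell,\varphi_\ell}=\int_{\R^{n-1}}\mathscr{F}[\psi_\ell]\cdot\overline{\mathscr{F}[\varphi_\ell]}$ by Parseval and $L^2(w^{-1};\mathbb{K}^m)$ is the predual of $L^2(w;\mathbb{K}^m)$ (one may phrase the duality over $\C$ and restrict, since $Q_m$ and the weak formulation are available for both $\mathbb{K}=\R$ and $\mathbb{K}=\C$), taking the supremum over admissible $\tp{\varphi_\ell}_{\ell=1}^m$ gives the right inequality of~\eqref{blackbird fly}. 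All remaining ingredients — coercivity, the trace and compatibility bounds, boundedness of $B_{-\gam}$, and the mapping properties of $Q_m$ — are by now routine.
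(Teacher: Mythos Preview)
Your proof is correct and follows essentially the same approach as the paper's. The left inequality is identical: test $v=u$, apply coercivity, split via weighted Cauchy--Schwarz, and control $\tp{\m{Tr}_{\Sigma_\ell}u\cdot e_n}_{\ell=1}^m$ in $H^{1/2}\cap\dot H^{-1}$ using trace theory and Proposition~\ref{divergence_divergence}. For the right inequality the paper makes the single explicit choice $\varphi_\ell=\mathscr{F}^{-1}\big[w\,\mathscr{F}[\psi_\ell]\big]$, which renders the pairing equal to the weighted $L^2(w)$ norm squared directly, whereas you argue by duality over all $\tp{\varphi_\ell}_{\ell=1}^m$; since the paper's choice is precisely the maximizer in your duality, the two arguments are equivalent and rely on the same key input, namely the solenoidal lifting $Q_m$ from Theorem~\ref{solution operator to the multi-normal trace-divergence problem}.
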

\begin{proof}
We begin by proving the left inequality of~\eqref{blackbird fly}. By the definition of $\p{p,u}={\upchi_{-\gam}}^{-1}(0,\mathscr{O}\tp{\psi_{\ell}e_n}_{\ell=1}^m)$ we have that for all $v\in{_0}H^1(\Omega;\mathbb{K}^n)$
\begin{equation}\label{revolution 1}
    \ssum{\ell=1}{m}\int_{\Omega_\ell}\f{\mu_\ell}{2}\mathbb{D}u:\mathbb{D}v+\gam\rho_\ell\pd_1u\cdot v=\int_{\Omega}p\cdot(\grad\cdot v)+\ssum{\ell=1}{m}\tbr{\psi_\ell,\m{Tr}_{\Sigma_\ell}v\cdot e_n}_{H^{-1/2},H^{1/2}}.
\end{equation}
Taking $v=u$ in this equation and taking the real part (see Lemma~\ref{lemma on the coercivity of the bilinear form}) implies that
\begin{multline}\label{helter skelter}
    \norm{u}_{{_0}H^1}^2\lesssim\m{Re}\bsb{\ssum{\ell=1}{m}\tbr{\psi_\ell,\m{Tr}_{\Sigma_\ell}u\cdot e_n}_{H^{-1/2},H^{1/2}}}=\m{Re}\bsb{\int_{\R^{n-1}}\mathscr{F}\tsb{\tp{\psi_\ell}_{\ell=1}^m}\cdot\mathscr{F}\tsb{\tp{\m{Tr}_{\Sigma_\ell}u\cdot e_n}_{\ell=1}^m}}\\
    \le\bp{\int_{\R^{n-1}}\min\tcb{\abs{\xi}^2,\abs{\xi}^{-1}}\tabs{\mathscr{F}\tsb{\tp{\psi_\ell}_{\ell=1}^m}\p{\xi}}\;\m{d}\xi}^{1/2}\bp{\int_{\R^{n-1}}\max\tcb{\abs{\xi}^{-2},\abs{\xi}}\tabs{\mathscr{F}\tsb{\tp{\m{Tr}_{\Sigma_\ell}u\cdot e_n}_{\ell=1}^m}\p{\xi}}\;\m{d}\xi}^{1/2},
\end{multline}
where we have used that $u$ is solenoidal, Korn's inequality  (see Appendix~\ref{appendix on Korn's inequality}), and the (anti-)duality of Sobolev spaces (see Appendix~\ref{appendix on (anti-)duality and Lax-Milgram}). Next we use the boundedness of traces and the divergence compatibility estimate of Proposition~\ref{divergence_divergence} to further bound
\begin{equation}\label{long, long, long}
    \bp{\int_{\R^{n-1}}\max\tcb{\abs{\xi}^{-2},\abs{\xi}}\tabs{\mathscr{F}\tsb{\tp{\m{Tr}_{\Sigma_\ell}u\cdot e_n}_{\ell=1}^m}\p{\xi}}\;\m{d}\xi}^{1/2}\le\norm{\tp{\m{Tr}_{\Sigma_\ell}u\cdot e_n}_{\ell=1}^m}_{H^{1/2}\cap \dot{H}^{-1}}\lesssim\norm{u}_{{_0}H^1}.
\end{equation}
Combining estimates~\eqref{helter skelter} and~\eqref{long, long, long} gives the left hand inequality  of~\eqref{blackbird fly}.

We now prove the right inequality of~\eqref{blackbird fly}. For $\ell\in\cb{1,\dots,m}$ define $\phi_\ell\in H^{1/2}\p{\Sigma_\ell;\mathbb{K}}\cap\dot{H}^{-1}\p{\Sigma_\ell;\mathbb{K}}$ via
\begin{equation}
    \mathscr{F}\sb{\phi_\ell}\p{\xi}=\min\{\abs{\xi}^2,\abs{\xi}^{-1}\}\mathscr{F}\sb{\psi_\ell}\p{\xi}\text{ for }\xi\in\R^{n-1}.
\end{equation}
We bound the norm of $\phi_\ell$ as follows:
\begin{multline}
    \norm{\phi_\ell}^2_{H^{1/2}\cap\dot{H}^{-1}}\le 2\int_{\R^{n-1}}\max\{\abs{\xi}^{-2},\abs{\xi}\}|\min\{\abs{\xi}^2,\abs{\xi}^{-1}\}\mathscr{F}\sb{\psi_\ell}\p{\xi}|^2\;\m{d}\xi\\=2\int_{\R^{n-1}}\min\{\abs{\xi}^2,\abs{\xi}^{-1}\}\abs{\mathscr{F}\sb{\psi_\ell}\p{\xi}}^2\;\m{d}\xi.
\end{multline}
We are in a position to apply Theorem~\ref{solution operator to the multi-normal trace-divergence problem} to obtain $Q_m\p{0,\tp{\phi_\ell}_{\ell=1}}^m\in{_0}H^1\p{\Omega;\mathbb{K}^n}$ with the estimate
\begin{equation}\label{pear 11}
    \tnorm{Q_m\p{0,\tp{\phi_\ell}}_{\ell=1}^m}_{{_0}H^1}^2 
    \lesssim \ssum{\ell=1}{m}\norm{\varphi_\ell}^2_{H^{1/2}\cap\dot{H}^{-1}}
    \lesssim\int_{\R^{n-1}}\min\{\abs{\xi}^2,\abs{\xi}^{-1}\}\abs{\mathscr{F}\sb{\tp{\psi_\ell}_{\ell=1}^m}\p{\xi}}^2\;\m{d}\xi.
\end{equation}
Testing $v=Q_m\p{0,\tp{\phi_\ell}_{\ell=1}^m}$ in the weak formulation of the normal stress PDE in equation~\eqref{revolution 1} and using Proposition~\ref{(anti-)dual representation of Sobolev spaces} gives the identity
\begin{equation}\label{something}
    \ssum{\ell=1}{m}\int_{\R^{n-1}}\min\{\abs{\xi}^2,\abs{\xi}^{-1}\}\abs{\mathscr{F}\sb{\psi_\ell}\p{\xi}}^2\;\m{d}\xi=\ssum{\ell=1}{m}\int_{\Omega_\ell}\f{\mu_\ell}{2}\mathbb{D}u:\mathbb{D}Q_m\p{0,\tp{\phi_\ell}_{\ell=1}^m}+\gam\rho_\ell\pd_1u\cdot Q_m\p{0,\tp{\phi_\ell}_{\ell=1}^m}.
\end{equation}
The right inequality  of~\eqref{blackbird fly} now follows by applying the Cauchy-Schwarz inequality to the right hand side of~\eqref{something} and then utilizing estimate~\eqref{pear 11}.
\end{proof}
 
We are now in a position for finer asymptotic development of the symbol to the normal stress to normal Dirichlet $\Psi$DO.

\begin{thm}[Asymptotics of normal stress to normal Dirichlet multiplier]\label{finer asymptotic development of the multiplier of the normal stress to normal Dirichlet pseudodifferential operator}
For each $\gam\in\R$ there exists a constant $C\in\R^+$, depending only on the physical parameters, such that for a.e. $\xi\in\R^{n-1}$ the following hold.
\begin{enumerate}
    \item We have the estimate $\abs{\bf{n}_\gam\p{\xi}}\le C\min\{\abs{\xi}^2,\abs{\xi}^{-1}\}$.
    \item Letting $\pd B_{\C}\p{0,1}$ denote the unit sphere of $\C^{n-1}$, we have the bound 
\begin{equation}
\min_{a\in\pd B_{\C}\p{0,1}}\m{Re}\sb{\bf{n}_\gam\p{\xi}a\cdot a}\ge C^{-1}\min\{\abs{\xi}^{2},\abs{\xi}^{-1}\}.
\end{equation}
  \item The matrix $\bf{n}_\gam\p{\xi}$ is invertible, and $|\bf{n}_\gam\p{\xi}^{-1}|\le C\max\{\abs{\xi}^{-2},\abs{\xi}\}$.
\end{enumerate}
\end{thm}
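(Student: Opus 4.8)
The plan is to deduce all three assertions from the energy equivalence of Theorem~\ref{energy estimates for the normal stress problem}, the diagonalization $\mathscr{F}\sb{\upnu_\gam\tp{\psi_\ell}_{\ell=1}^m}=\bf{n}_\gam\,\mathscr{F}\sb{\tp{\psi_\ell}_{\ell=1}^m}$ of Proposition~\ref{diagonalization of normal stress to normal dirichlet}, a Lebesgue differentiation argument that turns global integral inequalities into pointwise bounds, and a short linear-algebra step. I would work throughout over $\mathbb{K}=\C$: this is legitimate since $\bf{n}_\gam$ is the same matrix field over both fields, and it is essential because I want to test the normal stress problem~\eqref{normal_stress pde} against complex, arbitrarily frequency-localized data, which has no real-valued analogue.

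First I would establish two global inequalities. Fix $\tp{\psi_\ell}_{\ell=1}^m\in\prod_{\ell=1}^mH^{-1/2}\p{\Sigma_\ell;\C}$, write $\widehat{\uppsi}=\mathscr{F}\sb{\tp{\psi_\ell}_{\ell=1}^m}$, and let $\p{p,u}={\upchi_{-\gam}}^{-1}(0,\mathscr{O}\tp{\psi_\ell e_n}_{\ell=1}^m)$. On one hand, trace boundedness together with the divergence compatibility estimate of Proposition~\ref{divergence_divergence} (applied with $f=\grad\cdot u=0$), exactly as in the first half of the proof of Theorem~\ref{energy estimates for the normal stress problem}, gives $\int_{\R^{n-1}}\max\{\abs{\xi}^{-2},\abs{\xi}\}\sabs{\bf{n}_\gam\p{\xi}\widehat{\uppsi}\p{\xi}}^2\;\m{d}\xi\lesssim\norm{u}_{{_0}H^1}^2$; combining this with the left inequality of~\eqref{blackbird fly} yields
\begin{equation}\label{plan ineq one}
    \int_{\R^{n-1}}\max\{\abs{\xi}^{-2},\abs{\xi}\}\babs{\bf{n}_\gam\p{\xi}\widehat{\uppsi}\p{\xi}}^2\;\m{d}\xi\lesssim\int_{\R^{n-1}}\min\{\abs{\xi}^2,\abs{\xi}^{-1}\}\babs{\widehat{\uppsi}\p{\xi}}^2\;\m{d}\xi.
\end{equation}
On the other hand, testing the weak formulation of~\eqref{normal_stress pde} against $v=u$, taking real parts (so the transport term vanishes by Lemma~\ref{lemma on the coercivity of the bilinear form}), and using Korn's inequality gives $\norm{u}_{{_0}H^1}^2\lesssim\int_{\R^{n-1}}\m{Re}\sb{\bf{n}_\gam\p{\xi}\widehat{\uppsi}\p{\xi}\cdot\widehat{\uppsi}\p{\xi}}\;\m{d}\xi$; combined with the right inequality of~\eqref{blackbird fly} this yields
\begin{equation}\label{plan ineq two}
    \int_{\R^{n-1}}\min\{\abs{\xi}^2,\abs{\xi}^{-1}\}\babs{\widehat{\uppsi}\p{\xi}}^2\;\m{d}\xi\lesssim\int_{\R^{n-1}}\m{Re}\sb{\bf{n}_\gam\p{\xi}\widehat{\uppsi}\p{\xi}\cdot\widehat{\uppsi}\p{\xi}}\;\m{d}\xi.
\end{equation}

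Next I would localize in frequency: for $a\in\C^m$ and $\xi_0\in\R^{n-1}\setminus\cb{0}$, apply~\eqref{plan ineq one} and~\eqref{plan ineq two} with the admissible datum $\widehat{\uppsi}=a\mathbbm{1}_{B\p{\xi_0,\ep}}$, divide by $\Le^{n-1}\p{B\p{\xi_0,\ep}}$, and let $\ep\to0$. Since $\xi\mapsto\min\{\abs{\xi}^2,\abs{\xi}^{-1}\}$ is continuous while $\xi\mapsto\max\{\abs{\xi}^{-2},\abs{\xi}\}\sabs{\bf{n}_\gam\p{\xi}a}^2$ and $\xi\mapsto\m{Re}\sb{\bf{n}_\gam\p{\xi}a\cdot a}$ are locally bounded off the origin, Lebesgue differentiation gives, for a.e.\ $\xi$,
\begin{equation}\label{plan ptwise}
    \max\{\abs{\xi}^{-2},\abs{\xi}\}\babs{\bf{n}_\gam\p{\xi}a}^2\lesssim\min\{\abs{\xi}^2,\abs{\xi}^{-1}\}\abs{a}^2\quad\text{and}\quad\min\{\abs{\xi}^2,\abs{\xi}^{-1}\}\abs{a}^2\lesssim\m{Re}\sb{\bf{n}_\gam\p{\xi}a\cdot a}.
\end{equation}
Running $a$ over a countable dense subset of $\C^m$ and using continuity of $a\mapsto\bf{n}_\gam\p{\xi}a$ upgrades this to: for a.e.\ $\xi$, both inequalities hold for all $a$. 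Because $\min\{\abs{\xi}^2,\abs{\xi}^{-1}\}/\max\{\abs{\xi}^{-2},\abs{\xi}\}=\min\{\abs{\xi}^2,\abs{\xi}^{-1}\}^2$, the first inequality in~\eqref{plan ptwise} is claim (1), and the second, restricted to $\abs{a}=1$, is claim (2). Finally, from the second inequality in~\eqref{plan ptwise} and Cauchy--Schwarz one gets $\min\{\abs{\xi}^2,\abs{\xi}^{-1}\}\abs{a}\lesssim\sabs{\bf{n}_\gam\p{\xi}a}$ for all $a$, so $\bf{n}_\gam\p{\xi}$ is injective, hence invertible, with smallest singular value $\gtrsim\min\{\abs{\xi}^2,\abs{\xi}^{-1}\}$; therefore $\sabs{{\bf{n}_\gam\p{\xi}}^{-1}}\lesssim\max\{\abs{\xi}^{-2},\abs{\xi}\}$, which is claim (3).

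I expect the main obstacle to be the bookkeeping of the localization step rather than any single estimate: one must confirm that the frequency cut-offs $a\mathbbm{1}_{B\p{\xi_0,\ep}}$ really lie in $\prod_{\ell=1}^mH^{-1/2}\p{\Sigma_\ell;\C}$ (which is what forces $\mathbb{K}=\C$), justify the Lebesgue-point passage from~\eqref{plan ineq one}--\eqref{plan ineq two} to~\eqref{plan ptwise} while discarding the origin as a null set, and carry out the quantifier exchange ``for all $a$ at a.e.\ $\xi$'' via separability and continuity. By contrast, the two global inequalities merely repackage computations already present in the proof of Theorem~\ref{energy estimates for the normal stress problem}, and the linear algebra yielding claim (3) is routine.
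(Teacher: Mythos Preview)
Your proposal is correct and follows essentially the same route as the paper. Both proofs derive the two global integral inequalities \eqref{plan ineq one}--\eqref{plan ineq two} from the divergence compatibility estimate, trace theory, the coercivity identity of Lemma~\ref{lemma on the coercivity of the bilinear form}, and the two-sided energy equivalence of Theorem~\ref{energy estimates for the normal stress problem}; both then localize in frequency by testing against data of the form $\widehat{\uppsi}=a\,\varphi$ with $a$ ranging over a countable dense subset of $\C^m$, extract pointwise bounds, and finish item~(3) with the same elementary linear-algebra step. The only cosmetic differences are that the paper uses arbitrary nonnegative compactly supported $\varphi$ (so the pointwise passage is immediate) whereas you use shrinking balls and Lebesgue differentiation, and the paper handles the high-frequency half of item~(1) by quoting the bound $\abs{\bf{n}_\gam(\xi)}\lesssim(1+\abs{\xi}^2)^{-1/2}$ from Proposition~\ref{diagonalization of normal stress to normal dirichlet} rather than folding it into \eqref{plan ineq one} as you do.
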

\begin{proof}
For item one, we first use the divergence compatibility estimates from Proposition~\ref{divergence_divergence}. If $\tp{\psi_\ell}_{\ell=1}^m\in\prod_{\ell=1}^mH^{-1/2}\p{\Sigma_\ell;\mathbb{C}}$ are normal stress data then their associated velocity field $u$ solving~\eqref{normal_stress pde} is solenoidal and vanishing on $\Sigma_0$. Hence, the normal traces on the hyperplanes $\Sigma_\ell$, $\ell\in\cb{1,\dots,m}$ belong to $H^{1/2}\cap\dot{H}^{-1}$. In fact, we may bound with the divergence compatibility estimate and then the left inequality of Theorem~\ref{energy estimates for the normal stress problem} to obtain the bound
\begin{equation}\label{this guy is not an equation}
    \tsb{\upnu_\gam\tp{\psi_\ell}_{\ell=1}^m}_{\dot{H}^{-1}}^2\lesssim\norm{u}_{L^2}^2\lesssim\norm{u}_{{_0}H^1}^2\lesssim\int_{\R^{n-1}}\min\{\abs{\xi}^2,\abs{\xi}^{-1}\}\abs{\mathscr{F}\sb{\tp{\psi_\ell}_{\ell=1}^m}\p{\xi}}^2\;\m{d}\xi
\end{equation}
for all $\tp{\psi_\ell}_{\ell=1}^m\in\prod_{\ell=1}^mH^{-1/2}\p{\Sigma_\ell;\C}$.

Let $b=\p{b_1,\dots,b_m}\in(\Q+\ii\Q)^m$ and $\varphi\in L^1\tp{\R^{n-1};\R}$ such that $\varphi\p{\xi}\ge0$ for a.e. $\xi\in\R^{n-1}$ and the support of $\varphi$ is compact. Set $\phi\in\bigcap_{s\in\R} H^s\tp{\R^{n-1};\C}$ via $\phi=\mathscr{F}^{-1}\sb{\sqrt{\varphi}}$. We take $\tp{\psi_\ell}_{\ell=1}^m=\tp{b_\ell\phi}_{\ell=1}^m$ in inequality~\eqref{this guy is not an equation} and use Proposition~\ref{diagonalization of normal stress to normal dirichlet} to see that
\begin{equation}\label{checkmark}
    \int_{\R^{n-1}}\abs{\xi}^{-2}\abs{\bf{n}_\gam\p{\xi}b}^2\varphi\p{\xi}\;\m{d}\xi\le c\int_{\R^{n-1}}\min\{\abs{\xi}^2,\abs{\xi}^{-1}\}\abs{b}^2\varphi\p{\xi}\;\m{d}\xi.
\end{equation}
This inequality holds for all $\varphi$ as above. Hence there exists $E_b\subseteq B\p{0,1}\subset\R^{n-1}$ with $\Le^{n-1}\p{B\p{0,1}\setminus E_b}=0$ and
\begin{equation}\label{me me me}
    \abs{\bf{n}_\gam\p{\xi}b}^2\le c\abs{\xi}^4\abs{b}^2,\;\forall\;\xi\in E_b,\;\forall\;b\in(\Q+\ii\Q)^m.
\end{equation}
Set $E=\bigcap_{b\in(\Q+\ii\Q)^m}E_b$ and note that since  $(\Q+\ii\Q)^m$ is countable, $\Le^{n-1}\p{B\p{0,1}\setminus E}=0$.  Then \eqref{me me me} implies that $\abs{\bf{n}_\gam\p{\xi}b}\le c\abs{\xi}^2\abs{b}$ for all $\xi \in E$ and all $b \in (\Q+\ii\Q)^m$, but then by the density of $(\Q+\ii\Q)^m$ in $\C^m$ we find that this estimate continues to hold for all $\xi \in E$ and $b \in \C^m$.  In turn, taking the supremum over $b \in \C^m$ with $\abs{b}=1$ and using the equivalence of the operator norm and Euclidean norm on $\C^{m \times m}$, we deduce that there is a constant $c>0$, depending only on the physical parameters, such that $\abs{\bf{n}_\gam\p{\xi}}\le c\abs{\xi}^2$ for a.e. $\xi \in B(0,1) \subset \R^{n-1}$.  Combining this with the estimate from Proposition~\ref{diagonalization of normal stress to normal dirichlet} then proves the first item.

We next prove the second item. Again we let $b=\p{b_1,\dots,b_m}\in(\Q+\ii\Q)^m$ and $\varphi\in L^1\tp{\R^{n-1};\R}$ such that $\varphi\p{\xi}\ge0$ for a.e. $\xi\in\R^{n-1}$ and the support of $\varphi$ is compact. Set $\phi\in \bigcap_{s\in\R}H^s\tp{\R^{n-1};\C}$ via $\phi=\mathscr{F}^{-1}\sb{\sqrt{\varphi}}$. Notice that $\tp{b_\ell\phi}_{\ell=1}^m\in\prod_{\ell=1}^m H^{-1/2}\p{\Sigma_\ell;\C}$. Thanks to Proposition~\ref{weak solutions to the traveling stokes with stress boundary problem},  there are $\p{p,u}\in L^2\p{\Omega;\C}\times{_0}H^1\p{\Omega;\mathbb{K}^n}$, a weak solution to~\eqref{normal_stress pde} with data $\tp{b_\ell\phi}_{\ell=1}^m$,  i.e $(p,u)={\upchi_{-\gamma}}^{-1}(0,\mathscr{O}(b_\ell\phi)_{\ell=1}^m)$ for the operators $\mathscr{O}$ and ${\upchi_{-\gam}}^{-1}$ from Definition~\ref{strong to weak conversion notation} and Proposition~\ref{weak solutions to the traveling stokes with stress boundary problem}, respectively. We test $u$ itself in the weak formulation and use equation~\eqref{nowhere man} to obtain the identity
\begin{equation}
    \m{Re}\tsb{\tbr{(b_\ell\phi)_{\ell=1}^m,\upnu_\gam(b_\ell\phi)_{\ell=1}^m}_{H^{-1/2},H^{1/2}}}=\ssum{\ell=1}{m}\int_{\Omega_\ell}\f{\mu_\ell}{2}\abs{\mathbb{D}u}^2.
\end{equation}
Next we use the diagonalization of $\upnu_\gam$ from Proposition~\ref{diagonalization of normal stress to normal dirichlet}, (anti-)duality (Proposition~\ref{(anti-)dual representation of Sobolev spaces}), and finally the right inequality of Theorem~\ref{energy estimates for the normal stress problem} to obtain the estimate
\begin{multline}
    \textstyle\int_{\R^{n-1}}\m{Re}\sb{b\cdot\bf{n}_\gam b}\varphi=\m{Re}\sb{\sum_{\ell=1}^m\br{b_\ell\phi,e_n\cdot\m{Tr}_{\Sigma_\ell}u\cdot e_n}}\\\textstyle=\sum_{\ell=1}^m\int_{\Omega_\ell}\f{\mu_\ell}{2}\abs{\mathbb{D}u}^2\ge \tilde{c}\int_{\R^{n-1}}\min\{\abs{\xi}^2,\abs{\xi}^{-1}\}\abs{b}^2\varphi\p{\xi}\;\m{d}\xi,
\end{multline}
where $\tilde{c}\in\R^+$ depends only on the physical parameters. Therefore, there exists $F_b\subseteq\R^{n-1}$ such that $\Le^{n-1}(\R^{n-1}\setminus F_b)=0$ and
\begin{equation}
    \m{Re}\sb{b\cdot\bf{n}_\gam\p{\xi} b}\ge\tilde{c}\min\{\abs{\xi}^{2},\abs{\xi}^{-1}\}\abs{b}^2,\;\forall\;\xi\in F_b,\;\forall\;b\in(\Q+\ii\Q)^m.
\end{equation}
Set $F=\bigcap_{b\in(\Q+\ii\Q)^m}F_b$ and note $\Le^{n-1}\tp{\R^{n-1}\setminus F}=0$. If $\xi\in F$ then by density of $(\Q+\ii\Q)^m$ in $\C^m$ we have
\begin{equation}
    \min_{b\in\pd B_\C\p{0,1}}\m{Re}\sb{b\cdot\bf{n}_\gam\p{\xi}b}\ge\tilde{c}\min\{\abs{\xi}^2,\abs{\xi}^{-1}\}.
\end{equation}
This proves the second item.  In particular, this also shows that $\bf{n}_\gam\p{\xi}$ has trivial kernel and thus is invertible.  

It remains only to estimate the inverse. If $d\in\pd B_{\C}\p{0,1}$ then there exists $b\in\C^m$ such that $\bf{n}_\gam\p{\xi}b=d$. Thus
\begin{equation}
    \tilde{c}\min\{\abs{\xi}^2,\abs{\xi}^{-1}\}\abs{b}^2\le\m{Re}\sb{b\cdot\bf{n}_\gam\p{\xi} b}=\m{Re}\sb{b\cdot d}\le\abs{b}\imp\big|{\bf{n}_\gam\p{\xi}}^{-1}d\big|\le\tilde{c}^{-1}\max\{\abs{\xi}^{-2},\abs{\xi}\}.
\end{equation}
Taking the supremum over such $d$ and again using the equivalence of the Euclidean and operator norms on $\C^{m \times m}$, we complete the proof of the third item.
\end{proof}

\section{Overdetermined multilayer traveling Stokes}\label{section on overdetermined multilayer traveling Stokes}

In this and the other remaining sections we exclusively study the $\R$-valued solvability for the PDE systems considered. We next turn our attention to the following variant of system~\eqref{multilayer traveling stokes with stress boundary conditions}:
\begin{equation}\label{overdetermined multilayer traveling stokes}
    \begin{cases}
    \grad\cdot S^{\upmu}\p{p,u}-\gamma\rho_\ell\pd_1u=f&\text{in }\Omega_\ell,\;\ell\in\cb{1,\dots,m}\\
    \grad\cdot u=g&\text{in }\Omega\\
    \jump{S^{\upmu}\p{p,u}e_n}_\ell=k_\ell&\text{on }\Sigma_\ell,\;\ell\in\cb{1,\dots,m}\\
    u\cdot e_n=h_\ell&\text{on }\Sigma_\ell,\;\ell\in\cb{1,\dots,m}\\
    \jump{u}_{\ell}=0&\text{on }\Sigma_\ell,\;\ell\in\cb{1,\dots,m-1}\\
    u=0&\text{on }\Sigma_0,
    \end{cases}
\end{equation}
with unknown velocity $u$, pressure $p$, and  prescribed data $f$, $g$, $\tp{k_\ell}_{\ell=1}^m$, and $\tp{h_\ell}_{\ell=1}^m$.  We recall that we are continuing to use the abbreviated notation for $\Omega$, $\Omega_\ell$, and $\Sigma_\ell$ discussed at the start of Section~\ref{section on multilayer traveling Stokes with stress boundary and jump conditions} and that $\upmu=\cb{\mu_\ell}_{\ell=1}^m\subset\R^+$ are the viscosity parameters, $\cb{\rho_\ell}_{\ell=1}^m\subset\R^+$ are the density parameters, and $\gam\in\R$ is the wave speed. Our analysis in the previous section shows that the data $f$, $g$, and $\tp{k_\ell}_{\ell=1}^m$ entirely determine the pressure and velocity field and hence the normal traces $\tp{h_\ell}_{\ell=1}^m$. In this sense problem~\eqref{overdetermined multilayer traveling stokes} is overdetermined, so we cannot expect to solve it for general data.

\subsection{Data compatibility and associated isomorphism}\label{section on data compatibility and associated isomorphism}

In this subsection we find the range of appropriate data for system~\eqref{overdetermined multilayer traveling stokes}. We begin introducing the following notation.

\begin{defn}\label{compatibility form and overdetermined data spaces}
For $\gam\in\R$ we define the $\R$-bilinear mapping
\begin{multline}
    \textstyle\mathscr{H}^\gam:\sb{L^2\p{\Omega}\times({_0}H^1\p{\Omega;\R^n})^\ast\times\prod_{\ell=1}^mH^{1/2}\p{\Sigma_\ell}}\times\sb{\prod_{\ell=1}^mH^{-1/2}\p{\Sigma_\ell}}\to\R\\\text{via }\mathscr{H}^\gam\tsb{\tp{g,F,\tp{h_\ell}_{\ell=1}^m},\tp{\psi_\ell}_{\ell=1}^m}=\br{F,v}_{({_0}H^1)^\ast,{_0}H^1}-\int_{\Omega}gq-\ssum{\ell=1}{m}\br{\psi_\ell,h_\ell}_{H^{-1/2},H^{1/2}},
\end{multline}
for $\p{q,v}\in L^2\p{\Omega}\times{_0}H^1\p{\Omega;\R^n}$ the unique weak solution to the normal stress problem in equation~\eqref{normal_stress pde} with data $\tp{\psi_\ell}_{\ell=1}^m$, i.e $(q,v)={\upchi_{-\gamma}}^{-1}(0,\mathscr{O}(\psi_\ell e_n)_{\ell=1}^m)$ for the operators $\mathscr{O}$ and ${\upchi_{-\gam}}^{-1}$from Definition~\ref{strong to weak conversion notation} and Proposition~\ref{weak solutions to the traveling stokes with stress boundary problem}, respectively. Thanks to boundedness of ${\upchi_{-\gamma}}^{-1}$ and $\mathscr{O}$, we have that $\mathscr{H}^\gamma$ is continuous.

The set of data for which $\mathscr{H}^\gam$ vanishes identically as a linear functional of its right argument will be denoted with
\begin{equation}
    \textstyle\overset{\leftarrow}{\m{ker}}\mathscr{H}^\gam=\cb{\p{g,F,\tp{h_\ell}_{\ell=1}^m}\;:\;\mathscr{H}^\gam\sb{\p{g,F,\tp{h_\ell}_{\ell=1}^m,\tp{\psi_\ell}_{\ell=1}^m}}=0,\;\forall\;\tp{\psi_\ell}_{\ell=1}^m}
\end{equation}
and called the left kernel of $\mathscr{H}^\gam$.
\end{defn}

The following result characterizes the appropriate range of data for the system~\eqref{overdetermined multilayer traveling stokes} as exactly the left kernel of $\mathscr{H}^\gam$. In what follows recall that $\upchi_\gam$ and $\upchi_{-\gam}$ are the mappings from Proposition~\ref{weak solutions to the traveling stokes with stress boundary problem}.

\begin{prop}[Range of compatible data for~\eqref{overdetermined multilayer traveling stokes}]\label{weak overdetermined}
For $\gam\in\R$ the mapping 
\begin{multline}
    \textstyle\tilde{\upchi}_\gam:L^2\p{\Omega}\times{_0}H^1\p{\Omega;\R^n}\to L^2\p{\Omega}\times({_0}H^1\p{\Omega;\R^n})^\ast\times\prod_{\ell=1}^mH^{1/2}\p{\Sigma_\ell}\\\textstyle\text{with assignment }\tilde{\upchi}_\gam\p{p,u}=\tp{\upchi_\gam\p{p,u},\tp{\m{Tr}_{\Sigma_\ell}u\cdot e_n}_{\ell=1}^m}
\end{multline}
is an injection with closed range  $\overset{\leftarrow}{\m{ker}}\mathscr{H}^\gam$.
\end{prop}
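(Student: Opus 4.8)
The plan is to reduce everything to the Hilbert isomorphism $\upchi_\gam$ of Proposition~\ref{weak solutions to the traveling stokes with stress boundary problem}, together with a symmetry identity exchanging the wave speeds $\gam$ and $-\gam$. Injectivity is immediate: if $\tilde{\upchi}_\gam\p{p,u}=0$ then in particular $\upchi_\gam\p{p,u}=0$, and $\upchi_\gam$ is injective, so $\p{p,u}=0$.

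The heart of the matter is the following identity. Fix $\tp{\psi_\ell}_{\ell=1}^m\in\prod_{\ell=1}^mH^{-1/2}\p{\Sigma_\ell}$ and let $\p{q,v}={\upchi_{-\gam}}^{-1}\p{0,\mathscr{O}\tp{\psi_\ell e_n}_{\ell=1}^m}$, so that $\grad\cdot v=0$ and $\mathfrak{E}_{-\gam}\p{q,v}=\mathscr{O}\tp{\psi_\ell e_n}_{\ell=1}^m$. Let $\p{p,u}\in L^2\p{\Omega}\times{_0}H^1\p{\Omega;\R^n}$ be arbitrary and write $g=\grad\cdot u$, $F=\mathfrak{E}_\gam\p{p,u}$. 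Pairing $F$ with $v$ in the weak formulation~\eqref{weak formulation of the traveling stokes problem} and using $\grad\cdot v=0$ gives $\br{F,v}=B_\gam\p{u,v}$, where $B_\gam$ is the form of Lemma~\ref{lemma on the coercivity of the bilinear form}. Pairing $\mathfrak{E}_{-\gam}\p{q,v}$ with $u$ and using $\grad\cdot u=g$ gives $\sum_{\ell=1}^m\br{\psi_\ell,\m{Tr}_{\Sigma_\ell}u\cdot e_n}_{H^{-1/2},H^{1/2}}=-\int_\Omega qg+B_{-\gam}\p{v,u}$. Because each interface $\Sigma_\ell$ is a horizontal hyperplane, $e_1$ is tangent to it, so integrating the transport term by parts in $x_1$ on each $\Omega_\ell$ produces no boundary flux and $\int_{\Omega_\ell}\pd_1 v\cdot u=-\int_{\Omega_\ell}\pd_1 u\cdot v$; combined with the symmetry of the Frobenius product $\mathbb{D}u:\mathbb{D}v$ this yields $B_{-\gam}\p{v,u}=B_\gam\p{u,v}$. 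Assembling these three facts gives $\sum_{\ell=1}^m\br{\psi_\ell,\m{Tr}_{\Sigma_\ell}u\cdot e_n}=\br{F,v}-\int_\Omega gq$, which is precisely the statement that $\mathscr{H}^\gam$ vanishes on $\tp{g,F,\tp{\m{Tr}_{\Sigma_\ell}u\cdot e_n}_{\ell=1}^m,\tp{\psi_\ell}_{\ell=1}^m}$.

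Applying this identity with $h_\ell=\m{Tr}_{\Sigma_\ell}u\cdot e_n$ immediately shows $\m{range}\,\tilde{\upchi}_\gam\subseteq\overset{\leftarrow}{\m{ker}}\mathscr{H}^\gam$. Conversely, given $\tp{g,F,\tp{h_\ell}_{\ell=1}^m}\in\overset{\leftarrow}{\m{ker}}\mathscr{H}^\gam$, set $\p{p,u}={\upchi_\gam}^{-1}\p{g,F}$; then $\grad\cdot u=g$ and $\mathfrak{E}_\gam\p{p,u}=F$, so the identity above yields $\sum_\ell\br{\psi_\ell,\m{Tr}_{\Sigma_\ell}u\cdot e_n}=\br{F,v}-\int_\Omega gq$ for every $\tp{\psi_\ell}_{\ell=1}^m$, while membership in the left kernel yields $\sum_\ell\br{\psi_\ell,h_\ell}=\br{F,v}-\int_\Omega gq$. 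Subtracting and taking $\tp{\psi_\ell}_{\ell=1}^m$ supported on a single interface, the non-degeneracy of the $H^{-1/2}$--$H^{1/2}$ pairing forces $\m{Tr}_{\Sigma_\ell}u\cdot e_n=h_\ell$ for each $\ell$, so $\tilde{\upchi}_\gam\p{p,u}=\tp{g,F,\tp{h_\ell}_{\ell=1}^m}$, proving $\m{range}\,\tilde{\upchi}_\gam=\overset{\leftarrow}{\m{ker}}\mathscr{H}^\gam$. Finally, since $\mathscr{H}^\gam$ is continuous (Definition~\ref{compatibility form and overdetermined data spaces}), $\overset{\leftarrow}{\m{ker}}\mathscr{H}^\gam=\bigcap_{\tp{\psi_\ell}_{\ell=1}^m}\ker\mathscr{H}^\gam\sb{\cdot,\tp{\psi_\ell}_{\ell=1}^m}$ is an intersection of closed hyperplanes, hence closed; so the range is closed.

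The only real care needed is bookkeeping of the sign of $\gam$ (it enters via $\upchi_{-\gam}$ in the normal stress solution but via $\upchi_\gam$ in $\tilde{\upchi}_\gam$), and the remark that the transport term integrates by parts with no interfacial contribution because the $\Sigma_\ell$ are flat and horizontal. There is no genuine analytic obstacle; the content is entirely the duality/symmetry structure already exploited in Proposition~\ref{adjoint of the normal stress to normal Dirichlet multiplier}.
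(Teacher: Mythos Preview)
Your proof is correct and follows essentially the same route as the paper's. Both arguments hinge on the identity $\sum_\ell\br{\psi_\ell,\m{Tr}_{\Sigma_\ell}u\cdot e_n}=\br{F,v}-\int_\Omega gq$, obtained by testing each weak formulation against the other's solution and using the symmetry $B_{-\gam}(v,u)=B_\gam(u,v)$; the paper carries out the two inclusions separately while you isolate the identity first, but the content is identical.
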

\begin{proof}
Proposition~\ref{weak solutions to the traveling stokes with stress boundary problem} tells us that $\tilde{\upchi}_\gam$ is injective, and $\overset{\leftarrow}{\m{ker}}\mathscr{H}^\gam$ is closed by inspection. It remains only to show that the range of this mapping is the left kernel of $\mathscr{H}^\gam$.

Suppose first that $\p{g,F,\tp{h_\ell}_{\ell=1}^m}\in\overset{\leftarrow}{\m{ker}}\mathscr{H}^\gam$ and define $\p{p,u}\in L^2\p{\Omega}\times{_0}H^1\p{\Omega;\R^n}$ through $\p{p,u}={\upchi_\gam}^{-1}\p{g,F}$. If $\tp{\psi_\ell}_{\ell=1}^m\in\prod_{\ell=1}^mH^{-1/2}\p{\Sigma_\ell}$ and $\p{q,v}\in L^2\p{\Omega}\times{_0}H^1\p{\Omega;\R^n}$ are the associated solution to the normal stress PDE in equation~\eqref{normal_stress pde}, i.e. ${\upchi_{-\gam}}^{-1}(0,\mathscr{O}(\psi_\ell e_n)_{\ell=1}^m)$, then the identity
\begin{equation}
    \mathscr{H}^\gam\sb{\p{g,F,\tp{h_\ell}_{\ell=1}^m},\tp{\psi_\ell}_{\ell=1}^m}=\br{F,v}_{({_0}H^1)^\ast,{_0}H^1}-\int_{\Omega}gq-\ssum{\ell=1}{m}\br{\psi_\ell,h_\ell}_{H^{-1/2},H^{1/2}}=0
\end{equation}
implies that
\begin{multline}
    \ssum{\ell=1}{m}\br{\psi_\ell,\m{Tr}_{\Sigma_\ell}u\cdot e_n}_{H^{-1/2},H^{1/2}}=\ssum{\ell=1}{m}\int_{\Omega_\ell}\f{\mu_\ell}{2}\mathbb{D}v:\mathbb{D}u-\gam\rho_\ell\pd_1v\cdot u-\int_{\Omega}qg\\
    \ssum{\ell=1}{m}\int_{\Omega_\ell}\f{\mu_\ell}{2}\mathbb{D}u:\mathbb{D}v+\gam\rho_\ell\pd_1u\cdot v-\int_{\Omega}gq
    =\br{F,v}_{({_0}H^1)^\ast,{_0}H^1}-\int_{\Omega}gq=\ssum{\ell=1}{m}\br{\psi_\ell,h_\ell}_{H^{-1/2},H^{1/2}}.
\end{multline}
As $\tp{\psi_\ell}_{\ell=1}^m$ is an arbitrary member of $\prod_{\ell=1}^mH^{-1/2}\p{\Sigma_\ell}$ we learn that $\m{Tr}_{\Sigma_\ell}u\cdot e_n=h_\ell$ for each $\ell\in\cb{1,\dots,m}$ Therefore, $\tilde{\upchi}_\gam\p{p,u}=\p{g,F,\tp{h_\ell}_{\ell=1}^m}$.

On the other hand, if $\p{p,u}\in L^2\p{\Omega}\times{_0}H^1\p{\Omega;\R^n}$ we let $(g,F,(h_\ell)_{\ell=1}^m)=\tilde{\upchi}_\gam(p,u)$ and $(q,v)={\upchi_{-\gam}}^{-1}(0,\mathscr{O}(\psi_\ell e_n)_{\ell=1}^m)$ and compute, for $(\psi_\ell)_{\ell=1}^m \in \prod_{\ell=1}^mH^{-1/2}\p{\Sigma_\ell}$,
\begin{multline}
    \mathscr{H}^{\gam}\tsb{(g,F,(h_\ell)_{\ell=1}^m),(\psi_\ell)_{\ell=1}^m}
    = \br{F,v}_{({_0}H^1)^\ast,{_0}H^1}-\int_{\Omega}gq-\ssum{\ell=1}{m}\br{\psi_\ell,h_\ell}_{H^{-1/2},H^{1/2}} \\
    =\br{F,v}_{({_0}H^1)^\ast,{_0}H^1}-\ssum{\ell=1}{m}\int_{\Omega_\ell}\f{\mu_\ell}{2}\mathbb{D}v:\mathbb{D}u+\gam\rho_\ell\pd_1 v\cdot u \\
    =\br{F,v}_{({_0}H^1)^\ast,{_0}H^1}-\ssum{\ell=1}{m}\int_{\Omega_\ell}\f{\mu_\ell}{2}\mathbb{D}u:\mathbb{D}v-\gam\rho_\ell\pd_1 u\cdot v=0.
\end{multline}
As this holds for all such $(\psi_\ell)_{\ell=1}^m$, we conclude that $\tilde{\upchi}_\gam\p{p,u}\in\overset{\leftarrow}{\m{ker}}\mathscr{H}^\gam$.
\end{proof}

We now arrive at the isomorphism of Hilbert spaces associated to problem~\eqref{overdetermined multilayer traveling stokes}.

\begin{thm}[Existence and uniqueness of solutions to~\eqref{overdetermined multilayer traveling stokes}]\label{isomorphism associated to overdetermined multilayer traveling stokes}
Let $\gam\in\R$ and $s\in\R^+\cup\cb{0}$. Consider the bounded linear injection
\begin{equation}
    \textstyle\Uppsi_\gam:H^{1+s}\p{\Omega}\times {_0}H^{2+s}\p{\Omega_\ell;\R^n}\to H^{1+s}\p{\Omega}\times H^s\p{\Omega;\R^n}\times\prod_{\ell=1}^mH^{1/2+s}\p{\Sigma_\ell;\R^n}\times\prod_{\ell=1}^mH^{3/2+s}\p{\Sigma_\ell}
\end{equation}
with assignment $\Uppsi_\gam\p{p,u}=\p{\Upphi_\gam\p{p,u},\tp{\m{Tr}_{\Sigma_\ell}u\cdot e_n}_{\ell=1}^m}$, where $\Upphi_\gam$ is from Theorem~\ref{isomorphism associated to multilayer traveling stokes with stress boundary conditions}. The following are equivalent for $\p{g,f,\tp{k_\ell}_{\ell=1}^m,\tp{h_\ell}_{\ell=1}^m}$ belonging to the codomain of $\Uppsi_\gam$.
\begin{enumerate}
    \item $\p{g,f,\tp{k_\ell}_{\ell=1}^m,\tp{h_\ell}_{\ell=1}^m}$ belongs to the range of $\Uppsi_\gam$.
    \item The data tuple $\p{g,\mathscr{P}(f,\tp{k_\ell}_{\ell=1}^m),\tp{h_\ell}_{\ell=1}^m}$ belongs to $\overset{\leftarrow}{\m{ker}}\mathscr{H}^\gam$, where $\mathscr{P}$ is from Definition~\ref{strong to weak conversion notation}.
\end{enumerate}
\end{thm}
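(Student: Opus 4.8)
The plan is to obtain this equivalence by combining the weak-level description of compatible data from Proposition~\ref{weak overdetermined} with the elliptic regularity theory of Proposition~\ref{regularity of the multilayer traveling stokes with stress boundary conditions}. The key preliminary observation I would record is that the strong-form operators $\Upphi_\gam$ and $\Uppsi_\gam$ are refinements of the weak-form operators $\upchi_\gam$ and $\tilde{\upchi}_\gam$: for a pair $\p{p,u}\in H^{1+s}\p{\Omega}\times{_0}H^{2+s}\p{\Omega;\R^n}$ and data of the regularity prescribed by the codomain of $\Uppsi_\gam$, integration by parts against $v\in{_0}H^1\p{\Omega;\R^n}$ shows that $\Upphi_\gam\p{p,u}=\p{g,f,\tp{k_\ell}_{\ell=1}^m}$ holds if and only if $\upchi_\gam\p{p,u}=\p{g,\mathscr{P}\tp{f,\tp{k_\ell}_{\ell=1}^m}}$ holds. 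One direction is literally the closing assertion of Proposition~\ref{regularity of the multilayer traveling stokes with stress boundary conditions}; the other is the routine computation converting the strong formulation of~\eqref{multilayer traveling stokes with stress boundary conditions} into its weak formulation. Since $\tilde{\upchi}_\gam$ and $\Uppsi_\gam$ differ from $\upchi_\gam$ and $\Upphi_\gam$ only by appending the common component $\tp{\m{Tr}_{\Sigma_\ell}u\cdot e_n}_{\ell=1}^m$, this bridge transfers to the relation between $\Uppsi_\gam$ and $\tilde{\upchi}_\gam$.

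For $(1)\Rightarrow(2)$, I would start from a solution $\p{p,u}\in H^{1+s}\p{\Omega}\times{_0}H^{2+s}\p{\Omega;\R^n}$ with $\Uppsi_\gam\p{p,u}=\p{g,f,\tp{k_\ell}_{\ell=1}^m,\tp{h_\ell}_{\ell=1}^m}$. By the bridge above it is also a weak solution, so $\tilde{\upchi}_\gam\p{p,u}=\p{g,\mathscr{P}\tp{f,\tp{k_\ell}_{\ell=1}^m},\tp{h_\ell}_{\ell=1}^m}$, and Proposition~\ref{weak overdetermined} places this tuple in $\overset{\leftarrow}{\m{ker}}\mathscr{H}^\gam$. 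For $(2)\Rightarrow(1)$, I would start from the hypothesis $\p{g,\mathscr{P}\tp{f,\tp{k_\ell}_{\ell=1}^m},\tp{h_\ell}_{\ell=1}^m}\in\overset{\leftarrow}{\m{ker}}\mathscr{H}^\gam$ and invoke Proposition~\ref{weak overdetermined} to produce $\p{p,u}\in L^2\p{\Omega}\times{_0}H^1\p{\Omega;\R^n}$ with $\tilde{\upchi}_\gam\p{p,u}=\p{g,\mathscr{P}\tp{f,\tp{k_\ell}_{\ell=1}^m},\tp{h_\ell}_{\ell=1}^m}$, hence $\upchi_\gam\p{p,u}=\p{g,\mathscr{P}\tp{f,\tp{k_\ell}_{\ell=1}^m}}$ and $\m{Tr}_{\Sigma_\ell}u\cdot e_n=h_\ell$ for each $\ell$. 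Since $\p{g,f,\tp{k_\ell}_{\ell=1}^m}$ has exactly the regularity hypothesized in Proposition~\ref{regularity of the multilayer traveling stokes with stress boundary conditions}, that proposition upgrades $\p{p,u}$ to $H^{1+s}\p{\Omega}\times{_0}H^{2+s}\p{\Omega;\R^n}$ and makes it a strong solution, i.e. $\Upphi_\gam\p{p,u}=\p{g,f,\tp{k_\ell}_{\ell=1}^m}$; together with $\m{Tr}_{\Sigma_\ell}u\cdot e_n=h_\ell$ this gives $\Uppsi_\gam\p{p,u}=\p{g,f,\tp{k_\ell}_{\ell=1}^m,\tp{h_\ell}_{\ell=1}^m}$, so the tuple is in the range of $\Uppsi_\gam$.

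I expect no serious obstacle here: the substantive work has already been carried out in Propositions~\ref{weak overdetermined} and~\ref{regularity of the multilayer traveling stokes with stress boundary conditions}, and all that remains is the careful matching of the weak and strong notions of solution described in the first paragraph. The one point I would be attentive to is keeping the data-space identifications consistent, in particular that $\mathscr{P}$ is applied to $\tp{f,\tp{k_\ell}_{\ell=1}^m}$ in exactly the same way in Proposition~\ref{weak overdetermined}, in Definition~\ref{strong to weak conversion notation}, and in Proposition~\ref{regularity of the multilayer traveling stokes with stress boundary conditions}, so that the three statements genuinely compose, and that injectivity of $\Uppsi_\gam$ (already asserted in the statement) is inherited from injectivity of $\Upphi_\gam$ in Theorem~\ref{isomorphism associated to multilayer traveling stokes with stress boundary conditions}.
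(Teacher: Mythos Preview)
Your proposal is correct and follows essentially the same route as the paper: both directions are obtained by combining Proposition~\ref{weak overdetermined} (which identifies the range of $\tilde{\upchi}_\gam$ with $\overset{\leftarrow}{\m{ker}}\mathscr{H}^\gam$) with Proposition~\ref{regularity of the multilayer traveling stokes with stress boundary conditions} (which upgrades weak solutions to strong ones). Your explicit discussion of the strong--weak bridge via integration by parts is a helpful clarification that the paper leaves implicit, but the logical structure is identical.
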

\begin{proof}
    Recall that $\tilde{\chi}_\gam$ is the mapping from Proposition~\ref{weak overdetermined}. If the first item holds then $\tilde{\upchi}_\gam\p{p,u}=\p{g,\mathscr{P}(f,(k_\ell)_{\ell=1}^m),\tp{h_\ell}_{\ell=1}^m}$ for the unique $\p{p,u}$ belonging to the domain of $\Uppsi_\gam$ such that $\Uppsi_\gam\p{p,u}=\p{g,f,\tp{k_\ell}_{\ell=1}^m,\tp{h_\ell}_{\ell=1}^m}$. Thus, by Proposition~\ref{weak overdetermined}, $\p{g,\mathscr{P}(f,(k_\ell)_{\ell=1}^m),\tp{h_\ell}_{\ell=1}^m}\in\overset{\leftarrow}{\m{ker}}\mathscr{H}^\gam$ and the second item follows.
    
    If the second item holds then, by Proposition~\ref{weak overdetermined} again, we learn that there are $\p{p,u}\in L^2\p{\Omega}\times{_0}H^1\p{\Omega;\R^n}$ such that $\tilde{\upchi}_\gam\p{p,u}=\p{g,\mathscr{P}(f,(k_\ell)_{\ell=1}^m),\tp{h_\ell}_{\ell=1}^m}$. In particular $\upchi_\gam\p{p,u}=\p{g,\mathscr{P}(f,(k_\ell)_{\ell=1}^m)}$ (for $\upchi_\gam$ from Proposition~\ref{weak solutions to the traveling stokes with stress boundary problem}) and hence we may apply Proposition~\ref{regularity of the multilayer traveling stokes with stress boundary conditions} to deduce that $\p{p,u}$ belongs to the domain of $\Uppsi_\gam$ and $\Uppsi_\gam\p{p,u}=\p{g,f,\tp{k_\ell}_{\ell=1}^m,\tp{h_\ell}_{\ell=1}^m}$. This shows the first item holds.
\end{proof}
\subsection{Measuring data compatibility}\label{section on measuring data compatibility}

The previous subsection showed us that a nontrivial compatibility condition must be satisfied by the data in order for a solution to~\eqref{overdetermined multilayer traveling stokes} to exist. In this subsection we further explore this compatibility condition. We associate to each data tuple a tuple of functions that quantify how `close' the data are to being compatible. We then study the dependence of the regularity and low Fourier mode behavior of the function tuple on the data.

The sense in which this association quantifies compatibility will be made clearer in the next section; however, the main idea is that the introduction of the free surface functions in the multilayer traveling Stokes with gravity-capillary boundary and jump conditions problem (equation~\eqref{multilayer traveling Stokes with gravity-capillary boundary and jump conditions}) modify the data in a way that results in inclusion within the range of $\Uppsi_\gam$. This is achieved by the free surface functions solving  certain $\Psi$DEs with forcing exactly this measurement of compatibility.

\begin{prop}\label{measurement of compatibility}
For $\gam\in\R$ there is a bounded linear mapping
\begin{equation}
    \textstyle\mathscr{I}^\gam: L^2\p{\Omega}\times({_0}H^1\p{\Omega;\R^n})^\ast\times\prod_{\ell=1}^mH^{1/2}\p{\Sigma_\ell}\to\prod_{\ell=1}^mH^{1/2}\p{\Sigma_\ell}
\end{equation}
such that for all data tuples $\p{g,F,\tp{h_\ell}_{\ell=1}^m}\in L^2\p{\Omega}\times({_0}H^1\p{\Omega;\R^n})^\ast\times\prod_{\ell=1}^mH^{1/2}\p{\Sigma_\ell}$ the following identity holds for all $\tp{\psi_\ell}_{\ell=1}^m\in\prod_{\ell=1}^mH^{-1/2}\p{\Sigma_\ell}$:
\begin{multline}\label{equation sensation}
   \mathscr{H}^\gam\tsb{\p{g,F,\tp{h_\ell}_{\ell=1}^m},\tp{\psi_\ell}_{\ell=1}^m}=\br{\tp{\psi_\ell}_{\ell=1}^m,\mathscr{I}^\gam\p{g,F,\tp{h_\ell}_{k=1}^m}}_{H^{-1/2},H^{1/2}}\\=\ssum{k=1}{m}\br{\psi_k,\mathscr{I}^\gam\p{g,F,\tp{h_\ell}_{\ell=1}^m}\cdot e_k}_{H^{-1/2},H^{1/2}},
\end{multline}
where $\mathscr{H}^\gam$ is the bilinear mapping from Definition~\ref{compatibility form and overdetermined data spaces}.
\end{prop}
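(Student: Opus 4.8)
The plan is to prove this by exhibiting $\mathscr{I}^\gam$ explicitly via a Riesz-type representation, taking advantage of the structure of $\mathscr{H}^\gam$ as a bilinear form that is, for fixed left argument, a bounded linear functional on the Hilbert space $\prod_{\ell=1}^m H^{-1/2}(\Sigma_\ell)$. First I would fix a data tuple $\p{g,F,\tp{h_\ell}_{\ell=1}^m}$ and observe that, by the continuity of $\mathscr{H}^\gam$ established in Definition~\ref{compatibility form and overdetermined data spaces}, the map $\tp{\psi_\ell}_{\ell=1}^m \mapsto \mathscr{H}^\gam\tsb{\p{g,F,\tp{h_\ell}_{\ell=1}^m},\tp{\psi_\ell}_{\ell=1}^m}$ is a bounded real-linear functional on $\prod_{\ell=1}^m H^{-1/2}(\Sigma_\ell)$, with operator norm controlled by the norm of the data tuple (with a constant depending only on the physical parameters and $\gam$, through the bound on ${\upchi_{-\gam}}^{-1}$ and $\mathscr{O}$).

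Next, I would invoke the (anti-)dual representation of Sobolev spaces recorded in the appendix (Proposition~\ref{(anti-)dual representation of Sobolev spaces}), which identifies the dual of $\prod_{\ell=1}^m H^{-1/2}(\Sigma_\ell)$ with $\prod_{\ell=1}^m H^{1/2}(\Sigma_\ell)$ via the $H^{-1/2}$--$H^{1/2}$ pairing; under this identification there is a unique element, which I name $\mathscr{I}^\gam\p{g,F,\tp{h_\ell}_{\ell=1}^m} \in \prod_{\ell=1}^m H^{1/2}(\Sigma_\ell)$, representing the functional, so that \eqref{equation sensation} holds. The componentwise form of \eqref{equation sensation} is then just the definition of the product pairing $\br{\tp{\psi_\ell}_{\ell=1}^m, \cdot}_{H^{-1/2},H^{1/2}} = \sum_{k=1}^m \br{\psi_k, \cdot \cdot e_k}_{H^{-1/2},H^{1/2}}$. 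Linearity of the assignment $\p{g,F,\tp{h_\ell}_{\ell=1}^m} \mapsto \mathscr{I}^\gam\p{g,F,\tp{h_\ell}_{\ell=1}^m}$ follows from the bilinearity of $\mathscr{H}^\gam$ together with the uniqueness in the representation theorem, and boundedness follows from the uniform operator-norm bound noted above combined with the isometry in the dual representation.

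There is no serious obstacle here; the result is essentially a packaging of the Riesz/dual-representation theorem applied to the already-constructed continuous bilinear form $\mathscr{H}^\gam$. The only point requiring minor care is to confirm that $\mathscr{H}^\gam$ really is separately continuous in its right argument with the correct uniformity in the left argument — but this was already built into Definition~\ref{compatibility form and overdetermined data spaces}, where continuity of $\mathscr{H}^\gam$ on the product space was recorded as a consequence of the boundedness of ${\upchi_{-\gam}}^{-1}$ and $\mathscr{O}$. The substantive content of this proposition is not its proof but the object $\mathscr{I}^\gam$ it introduces, which will serve as the forcing tuple $\upvarphi$ in the $\Psi$DEs of the next section; accordingly I would keep the proof short and defer to the appendix for the duality facts.
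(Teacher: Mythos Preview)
Your proposal is correct and follows essentially the same approach as the paper's own proof: fix the left argument, use continuity of $\mathscr{H}^\gam$ to view the result as a bounded linear functional on $\prod_{\ell=1}^m H^{-1/2}(\Sigma_\ell)$, then invoke the Sobolev duality of Proposition~\ref{(anti-)dual representation of Sobolev spaces} to produce the unique representative in $\prod_{\ell=1}^m H^{1/2}(\Sigma_\ell)$, with linearity and boundedness following immediately. The paper's proof is exactly this argument, written out only slightly more explicitly.
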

\begin{proof}
    Recall that $\mathscr{H}^\gam$ is continuous thanks to Proposition~\ref{weak solutions to the traveling stokes with stress boundary problem}. Thus there is a constant $c_0\in\R^+$, depending only on the physical parameters, such that
    \begin{equation}
        \tabs{\mathscr{H}^\gam\tsb{\p{g,F,\tp{h_\ell}_{\ell=1}^m},\tp{\psi_\ell}_{\ell=1}^m}}\le c_0\bp{\norm{g}_{L^2}+\norm{F}_{({_0}H^1)^\ast}+\ssum{\ell=1}{m}\norm{h_\ell}_{H^{1/2}}}\bp{\ssum{\ell=1}{m}\norm{\psi_\ell}_{H^{-1/2}}}
    \end{equation}
    for all $\p{g,F,\tp{\psi_\ell}_{\ell=1}^m}$ and $\tp{\psi_\ell}_{\ell=1}^m$ belonging to the domain of $\mathscr{H}^\gam$. In particular, for fixed $\p{g,F,\tp{h_\ell}_{\ell=1}^m}$ the assignment $\tp{\psi_\ell}_{\ell=1}^m\mapsto\mathscr{H}^\gam\sb{\p{g,F,\tp{h_\ell}_{\ell=1}^m},\tp{\psi_\ell}_{\ell=1}^m}$ is bounded and linear with operator norm at most $c_0(\norm{g}_{L^2}+\norm{F}_{({_0}H^1)^\ast}+\sum_{\ell=1}^m\norm{h_\ell}_{H^{1/2}})$. By duality ( Proposition~\ref{(anti-)dual representation of Sobolev spaces}) there is a unique $\mathscr{I}^\gam\p{g,F,\cb{h_\ell}_{\ell=1}^m}\in\prod_{\ell=1}^mH^{1/2}\p{\Sigma_\ell}$ such that~\eqref{equation sensation} holds for all $\tp{\psi_\ell}_{\ell=1}^m\in\prod_{\ell=1}^mH^{-1/2}\p{\Sigma_\ell}$; moreover,
    \begin{equation}
        \ssum{k=1}{m}\tnorm{\mathscr{I}^\gam\tp{g,F,\tp{h_\ell}_{\ell=1}^m}\cdot e_k}_{H^{1/2}}\le c_0\bp{\tnorm{g}_{L^2}+\tnorm{F}_{({_0}H^1)^\ast}+\ssum{\ell=1}{m}\tnorm{h_\ell}_{H^{1/2}}}.
    \end{equation}
   It is also clear that $\mathscr{I}^\gam$ is linear.
\end{proof}
We next show that $\mathscr{I}^\gam$ commutes with tangential Fourier multipliers, which are defined in Appendix~\ref{appendix on (tangential) multipliers}.

\begin{lem}\label{commutes with tangential multipliers 2}
If $\gam\in\R$, $\p{g,F,\tp{\psi_\ell}_{\ell=1}^m}\in L^2\p{\Omega}\times({_0}H^1\p{\Omega;\R^n})^\ast\times\prod_{\ell=1}^mH^{1/2}\p{\Sigma_\ell}$, and $\omega\in L^\infty\tp{\R^{n-1};\C}$ satisfies $\omega\p{-\xi}=\Bar{\omega\p{\xi}}$ for a.e. $\xi\in\R^{n-1}$, then
\begin{equation}
    M_\omega\mathscr{I}^\gam\p{g,F,\tp{h_\ell}_{\ell=1}^m}=\mathscr{F}^{-1}\tsb{\omega\mathscr{F}\tsb{\mathscr{I}^\gam\p{g,F,\tp{h_\ell}_{\ell=1}^m}}}=\mathscr{I}^\gam\p{M_\omega g,M_\omega F,\tp{M_\omega h_\ell}_{
    \ell=1}^m},
\end{equation}
where the above is understood in the sense of Definitions~\ref{definition of tangential multipliers} and~\ref{definition of tangential fourier multipliers 2} and is $\R$-valued by Proposition~\ref{characterizations of real-valued tempered distruibutions}.
\end{lem}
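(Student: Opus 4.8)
The plan is to verify the asserted identity by pairing both sides with an arbitrary $\R$-valued $\tp{\psi_\ell}_{\ell=1}^m\in\prod_{\ell=1}^mH^{-1/2}\p{\Sigma_\ell}$, using the defining property of $\mathscr{I}^\gam$ from Proposition~\ref{measurement of compatibility}, and reducing everything to the intertwining of ${\upchi_{-\gam}}^{-1}$ with tangential multipliers from Lemma~\ref{commutes with tangential multipliers 1}; this mirrors the strategy of that lemma. I would first record two elementary bookkeeping facts. \textbf{(i)} The hypothesis $\omega(-\xi)=\Bar{\omega(\xi)}$ forces $\Bar{\omega}$ to have the same Hermitian symmetry, so both $M_\omega$ and $M_{\Bar{\omega}}$ preserve $\R$-valuedness by Proposition~\ref{characterizations of real-valued tempered distruibutions}; in particular the middle expression $\mathscr{F}^{-1}[\omega\mathscr{F}[\cdots]]$ is just $M_\omega$ applied to an $\R$-valued object. \textbf{(ii)} Since $M_\omega$ acts only in the tangential variables, it commutes with each $\m{Tr}_{\Sigma_\ell}$, and with respect to each of the pairings $\int_\Omega$, $\br{\cdot,\cdot}_{H^{-1/2},H^{1/2}}$ on $\Sigma_\ell$, and $\br{\cdot,\cdot}_{({_0}H^1)^\ast,{_0}H^1}$, the adjoint of $M_\omega$ is $M_{\Bar{\omega}}$ — here I invoke the conventions of Appendix~\ref{appendix on (tangential) multipliers} together with Lemma~\ref{commutes with tangential multipliers 1}, using that under the standing symmetry the reflected symbol $\omega(-\cdot)$ coincides with $\Bar{\omega}$.

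Next I would show that $\mathscr{O}$ intertwines the multiplier, namely $M_{\Bar\omega}\mathscr{O}\tp{\psi_\ell e_n}_{\ell=1}^m=\mathscr{O}\tp{(M_{\Bar\omega}\psi_\ell)e_n}_{\ell=1}^m$ in $({_0}H^1(\Omega;\R^n))^\ast$; this is immediate from Definition~\ref{strong to weak conversion notation} by pushing $M_\omega$ through the trace on a test field and then using adjointness on each $\Sigma_\ell$ from (ii). Feeding this into Lemma~\ref{commutes with tangential multipliers 1}, applied with symbol $\Bar\omega$ and with $-\gam$ in place of $\gam$, I conclude that if $(q,v)={\upchi_{-\gam}}^{-1}(0,\mathscr{O}(\psi_\ell e_n)_{\ell=1}^m)$ is the weak solution of the normal stress problem attached to $\tp{\psi_\ell}_{\ell=1}^m$, then $(M_{\Bar\omega}q,M_{\Bar\omega}v)={\upchi_{-\gam}}^{-1}(0,\mathscr{O}((M_{\Bar\omega}\psi_\ell)e_n)_{\ell=1}^m)$ is the solution attached to $\tp{M_{\Bar\omega}\psi_\ell}_{\ell=1}^m$.

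Now I would expand $\mathscr{H}^\gam\tsb{\p{M_\omega g,M_\omega F,\tp{M_\omega h_\ell}_{\ell=1}^m},\tp{\psi_\ell}_{\ell=1}^m}$ term by term from Definition~\ref{compatibility form and overdetermined data spaces}, using (ii): $\br{M_\omega F,v}_{({_0}H^1)^\ast,{_0}H^1}=\br{F,M_{\Bar\omega}v}_{({_0}H^1)^\ast,{_0}H^1}$, $\int_\Omega (M_\omega g)\,q=\int_\Omega g\,(M_{\Bar\omega}q)$, and $\br{\psi_\ell,M_\omega h_\ell}_{H^{-1/2},H^{1/2}}=\br{M_{\Bar\omega}\psi_\ell,h_\ell}_{H^{-1/2},H^{1/2}}$. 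Comparing with the expansion of $\mathscr{H}^\gam\tsb{\p{g,F,\tp{h_\ell}_{\ell=1}^m},\tp{M_{\Bar\omega}\psi_\ell}_{\ell=1}^m}$, whose attached normal-stress solution is exactly $(M_{\Bar\omega}q,M_{\Bar\omega}v)$ by the previous paragraph, every term matches, giving
\[
\mathscr{H}^\gam\tsb{\p{M_\omega g,M_\omega F,\tp{M_\omega h_\ell}_{\ell=1}^m},\tp{\psi_\ell}_{\ell=1}^m}=\mathscr{H}^\gam\tsb{\p{g,F,\tp{h_\ell}_{\ell=1}^m},\tp{M_{\Bar\omega}\psi_\ell}_{\ell=1}^m}.
\]
By Proposition~\ref{measurement of compatibility} the left side is $\br{\tp{\psi_\ell}_{\ell=1}^m,\mathscr{I}^\gam(M_\omega g,M_\omega F,\tp{M_\omega h_\ell}_{\ell=1}^m)}_{H^{-1/2},H^{1/2}}$ and the right side is $\br{\tp{M_{\Bar\omega}\psi_\ell}_{\ell=1}^m,\mathscr{I}^\gam(g,F,\tp{h_\ell}_{\ell=1}^m)}_{H^{-1/2},H^{1/2}}=\br{\tp{\psi_\ell}_{\ell=1}^m,M_\omega\mathscr{I}^\gam(g,F,\tp{h_\ell}_{\ell=1}^m)}_{H^{-1/2},H^{1/2}}$, again by (ii). Since $\tp{\psi_\ell}_{\ell=1}^m$ is arbitrary, the two elements of $\prod_{\ell=1}^mH^{1/2}(\Sigma_\ell)$ agree, which is the claim; $\R$-valuedness is fact (i).

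I expect the only genuinely delicate point to be the bookkeeping of fact (ii): one must be careful that the adjoint of a tangential Fourier multiplier is the multiplier with the \emph{reflected} symbol, which under the hypothesis $\omega(-\xi)=\Bar{\omega(\xi)}$ happens to equal the conjugated symbol $\Bar\omega$ — this coincidence is precisely what makes the computation close up and is the reason the hypothesis is imposed. Everything else is a routine rerun of the argument in Lemma~\ref{commutes with tangential multipliers 1}.
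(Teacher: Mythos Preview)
Your proposal is correct and follows essentially the same approach as the paper: pair against arbitrary $\tp{\psi_\ell}_{\ell=1}^m$, use Lemma~\ref{commutes with tangential multipliers 1} to identify $(M_{\Bar\omega}q,M_{\Bar\omega}v)$ as the normal-stress solution for $\tp{M_{\Bar\omega}\psi_\ell}_{\ell=1}^m$, and then shuffle $M_\omega$ across each pairing via its adjoint $M_{\Bar\omega}$. The paper's proof is the same computation written as a single chain of equalities, with your facts (i), (ii) and the $\mathscr{O}$-intertwining used implicitly rather than spelled out.
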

\begin{proof}
Let $\tp{\psi_\ell}_{\ell=1}^m\in\prod_{\ell=1}^mH^{-1/2}\p{\Sigma_\ell}$ be normal stress data and denote the corresponding solution to~\eqref{normal_stress pde} with $\p{q,v}\in L^2\p{\Omega}\times{_0}H^1\p{\Omega;\R^n}$, i.e. $(q,v)={\upchi_{-\gam}(0,\mathscr{O}(\psi_\ell e_n)_{\ell=1}^m)}^{-1}$. Then by the definition of $\mathscr{I}^\gam$,  Proposition~\ref{measurement of compatibility}, and then Lemma~\ref{commutes with tangential multipliers 1} we have that
\begin{multline}
   \br{\tp{\psi_\ell}_{\ell=1}^m,M_\omega\mathscr{I}^\gam\p{g,F,\tp{h_\ell}_{\ell=1}^m}}_{H^{-1/2},H^{1/2}}=\br{\tp{M_{\Bar{\omega}}\psi_\ell}_{\ell=1}^m,\mathscr{I}^\gam\p{g,F,\tp{h_\ell}_{\ell=1}^m}}_{H^{-1/2},H^{1/2}}\\=\mathscr{H}^\gam\sb{\p{g,F,\tp{h_\ell}_{\ell=1}^m},\tp{M_{\Bar{\omega}}\psi_\ell}_{\ell=1}^m}=\br{F,M_{\Bar{\omega}}v}_{({_0}H^1)^\ast,{_0}H^1}-\int_{\Omega}gM_{\Bar{\omega}}q-\ssum{\ell=1}{m}\br{M_{\Bar{\omega}}\psi_\ell,h_\ell}_{H^{-1/2},H^{1/2}}\\
    =\br{M_{\omega}F,v}_{({_0}H^1)^\ast,{_0}H^1}-\int_{\Omega}M_{\omega}gq-\ssum{\ell=1}{m}\br{\psi_\ell,M_{\omega}h_\ell}_{H^{-1/2},H^{1/2}}=\mathscr{H}^\gam\sb{\p{M_\omega g,M_\omega F,\tp{M_\omega h_\ell}_{\ell=1}^m},\tp{\psi_\ell}_{\ell=1}^m}\\=\br{\tp{\psi_\ell}_{\ell=1}^m,\mathscr{I}^\gam\p{M_\omega g,M_\omega F,\tp{M_\omega h_\ell}_{\ell=1}^m}}_{H^{-1/2},H^{1/2}}.
\end{multline}
As this holds for all $\tp{\psi_\ell}_{\ell=1}^m\in\prod_{\ell=1}^mH^{-1/2}\p{\Sigma_\ell}$, the result follows.
\end{proof}

The previous lemma allows us to deduce the regularity properties of $\mathscr{I}^\gam$.  We record these now.

\begin{prop}\label{regularity of compatibility measure}
If $s\in\R^+\cup\cb{0}$, $\gam\in\R$, and
\begin{equation}
    \textstyle\p{g,f,\tp{k_\ell}_{\ell=1}^m,\tp{h_\ell}_{\ell=1}^m}\in H^{1+s}\p{\Omega}\times H^{s}\p{\Omega;\R^n}\times\prod_{\ell=1}^mH^{1/2+s}\p{\Sigma_\ell;\R^n}\times\prod_{\ell=1}^mH^{3/2+s}\p{\Sigma_\ell},
\end{equation}
then $\mathscr{I}^\gam\p{g,\mathscr{P}(f,(k_\ell)_{\ell=1}^m),\tp{h_\ell}_{\ell=1}^m}\in\prod_{\ell=1}^mH^{3/2+s}\p{\Sigma_\ell}$, where $\mathscr{P}$ is described in Definition~\ref{strong to weak conversion notation}. Moreover, we have the universal estimate
\begin{equation}
    \ssum{k=1}{m}\tnorm{\mathscr{I}^\gam\p{g,F,\tp{h_\ell}_{\ell=1}^m}\cdot e_k}_{H^{3/2+s}}\lesssim\ssum{\ell=1}{m}\ssb{\norm{g}_{H^{1+s}\p{\Omega_\ell}}+\norm{f}_{H^s\p{\Omega_\ell}}+\norm{k_\ell}_{H^{1/2+s}}+\norm{h_\ell}_{H^{3/2+s}}}.
\end{equation}
\end{prop}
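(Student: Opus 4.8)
The plan is to identify $\mathscr{I}^\gam$, applied to data of this type, \emph{exactly} with the normal traces of the velocity field solving the non‑overdetermined multilayer traveling Stokes system~\eqref{multilayer traveling stokes with stress boundary conditions} with the same bulk and stress data, corrected by the prescribed normal traces $\tp{h_\ell}_{\ell=1}^m$. Once that formula is in hand both the claimed inclusion and the estimate are immediate from Theorem~\ref{isomorphism associated to multilayer traveling stokes with stress boundary conditions} and trace theory. So: put $\p{\Bar{r},\Bar{w}}={\Upphi_\gam}^{-1}\tp{g,f,\tp{k_\ell}_{\ell=1}^m}$, which by Theorem~\ref{isomorphism associated to multilayer traveling stokes with stress boundary conditions} lies in $H^{1+s}\p{\Omega}\times{_0}H^{2+s}\p{\Omega;\R^n}$ and, by Proposition~\ref{regularity of the multilayer traveling stokes with stress boundary conditions}, is also the weak solution, i.e.\ $\upchi_\gam\p{\Bar{r},\Bar{w}}=\tp{g,\mathscr{P}\tp{f,\tp{k_\ell}_{\ell=1}^m}}$. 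I will show that
\[ \mathscr{I}^\gam\tp{g,\mathscr{P}\tp{f,\tp{k_\ell}_{\ell=1}^m},\tp{h_\ell}_{\ell=1}^m}=\tp{\m{Tr}_{\Sigma_\ell}\Bar{w}\cdot e_n-h_\ell}_{\ell=1}^m . \]

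To prove this, fix an arbitrary $\tp{\psi_\ell}_{\ell=1}^m\in\prod_{\ell=1}^mH^{-1/2}\p{\Sigma_\ell}$ and let $\p{q,v}={\upchi_{-\gam}}^{-1}\tp{0,\mathscr{O}\tp{\psi_\ell e_n}_{\ell=1}^m}$ be the normal stress solution appearing in Definition~\ref{compatibility form and overdetermined data spaces}. The heart of the argument is a cross‑testing identity. Inserting the test field $v\in{_0}H^1\p{\Omega;\R^n}$ into the weak formulation~\eqref{weak formulation of the traveling stokes problem} for $\p{\Bar{r},\Bar{w}}$ and using $\grad\cdot v=0$ gives $B_\gam\tp{\Bar{w},v}=\br{\mathscr{P}\tp{f,\tp{k_\ell}_{\ell=1}^m},v}_{({_0}H^1)^\ast,{_0}H^1}$, while inserting $\Bar{w}$ into the weak formulation for $\p{q,v}$ and using $\grad\cdot\Bar{w}=g$ gives $B_{-\gam}\tp{v,\Bar{w}}-\int_\Omega gq=\ssum{\ell=1}{m}\br{\psi_\ell,\m{Tr}_{\Sigma_\ell}\Bar{w}\cdot e_n}_{H^{-1/2},H^{1/2}}$. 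Since the Frobenius pairing $\mathbb{D}\,\cdot:\mathbb{D}\,\cdot$ is symmetric and $\int_{\Omega_\ell}\pd_1 v\cdot\Bar{w}=-\int_{\Omega_\ell}\pd_1\Bar{w}\cdot v$ by integration by parts in $x_1$ (exactly as in the proof of Lemma~\ref{lemma on the coercivity of the bilinear form}), one gets $B_{-\gam}\tp{v,\Bar{w}}=B_\gam\tp{\Bar{w},v}$; this is the single place where the opposite signs of $\gam$ in the two Stokes problems are used. Combining the three identities yields $\br{\mathscr{P}\tp{f,\tp{k_\ell}_{\ell=1}^m},v}-\int_\Omega gq=\ssum{\ell=1}{m}\br{\psi_\ell,\m{Tr}_{\Sigma_\ell}\Bar{w}\cdot e_n}_{H^{-1/2},H^{1/2}}$, so by the very definition of $\mathscr{H}^\gam$ in Definition~\ref{compatibility form and overdetermined data spaces},
\[ \mathscr{H}^\gam\tsb{\tp{g,\mathscr{P}\tp{f,\tp{k_\ell}_{\ell=1}^m},\tp{h_\ell}_{\ell=1}^m},\tp{\psi_\ell}_{\ell=1}^m}=\ssum{\ell=1}{m}\br{\psi_\ell,\m{Tr}_{\Sigma_\ell}\Bar{w}\cdot e_n-h_\ell}_{H^{-1/2},H^{1/2}} . \]
As $\tp{\psi_\ell}_{\ell=1}^m$ ranges over all of $\prod_{\ell=1}^mH^{-1/2}\p{\Sigma_\ell}$ and $\tp{\m{Tr}_{\Sigma_\ell}\Bar{w}\cdot e_n-h_\ell}_{\ell=1}^m\in\prod_{\ell=1}^mH^{1/2}\p{\Sigma_\ell}$, the uniqueness of the representative guaranteed by Proposition~\ref{measurement of compatibility} forces the displayed formula above.

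With that formula established, trace theory gives $\norm{\m{Tr}_{\Sigma_\ell}\Bar{w}\cdot e_n}_{H^{3/2+s}}\lesssim\norm{\Bar{w}}_{H^{2+s}\p{\Omega_\ell}}$, so $\mathscr{I}^\gam\tp{g,\mathscr{P}\tp{f,\tp{k_\ell}_{\ell=1}^m},\tp{h_\ell}_{\ell=1}^m}\in\prod_{\ell=1}^mH^{3/2+s}\p{\Sigma_\ell}$; adding $\ssum{\ell=1}{m}\norm{h_\ell}_{H^{3/2+s}}$ and invoking the universal bound of Theorem~\ref{isomorphism associated to multilayer traveling stokes with stress boundary conditions} (equivalently the estimate in Proposition~\ref{regularity of the multilayer traveling stokes with stress boundary conditions}) for $\Bar{w}$ produces the stated estimate. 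The main obstacle is just the care needed in the cross‑testing step: one should run everything through the weak formulations rather than integrating by parts directly, since the weak forms already absorb the stress‑jump and interface continuity conditions and thereby remove all interface boundary terms; and one must keep straight that the normal stress solution $\p{q,v}$ is built from ${\upchi_{-\gam}}^{-1}$ whereas $\p{\Bar{r},\Bar{w}}$ must be built from ${\upchi_\gam}^{-1}$, as this $\gam\mapsto-\gam$ pairing is precisely what makes the two bilinear forms adjoint and lets the cross‑testing close.
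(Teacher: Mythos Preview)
Your proof is correct and takes a genuinely different route from the paper's. The paper argues abstractly via tangential Fourier multipliers: it introduces the band-limited Bessel-type multipliers $\omega_k(\xi)=\mathbbm{1}_{B(0,k)}(\xi)(1+\abs{\xi}^2)^{(s+1)/2}$, uses Lemma~\ref{commutes with tangential multipliers 2} to commute $M_{\omega_k}$ past $\mathscr{I}^\gam$, bounds the resulting shifted data by Proposition~\ref{culmination of the multiplier appendix}, and finishes with Fatou's lemma as $k\to\infty$. Nowhere does it identify $\mathscr{I}^\gam$ explicitly.

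You instead prove the closed-form identity $\mathscr{I}^\gam\tp{g,\mathscr{P}(f,(k_\ell)_{\ell=1}^m),(h_\ell)_{\ell=1}^m}=\tp{\m{Tr}_{\Sigma_\ell}\Bar{w}\cdot e_n-h_\ell}_{\ell=1}^m$ by the cross-testing/adjointness computation (which is essentially the same computation appearing in the paper's proof of Proposition~\ref{weak overdetermined}, repurposed here), and then read off the regularity directly from Theorem~\ref{isomorphism associated to multilayer traveling stokes with stress boundary conditions} and trace theory. Your approach is more elementary and more informative, since it exhibits $\mathscr{I}^\gam$ concretely as ``normal trace of the Stokes velocity minus prescribed normal trace''; the paper's multiplier argument is more black-box but has the advantage that it applies verbatim to any operator commuting with tangential multipliers and does not require re-invoking the full Stokes isomorphism at regularity $s$. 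Both are valid; yours is shorter for this particular statement.
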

\begin{proof}
For $k\in\N^+$ we define the real valued radial function $\omega_k\in L^\infty\tp{\R^{n-1};\R}$ with the assignment $\omega_k\p{\xi}=\mathbbm{1}_{B\p{0,k}}\p{\xi}(1+\abs{\xi}^2)^{\f{s+1}{2}}$. Using first Lemma~\ref{commutes with tangential multipliers 2} and then continuity of $\mathscr{I}^\gam$, we arrive at the estimate
\begin{multline}\label{no no}
    \tnorm{M_{\omega_k}\mathscr{I}^\gam\p{g,F,\tp{h_\ell}_{\ell=1}^m}}_{H^{1/2}}=\tnorm{\mathscr{I}^\gam\p{M_{\omega_k}g,M_{\omega_k}F,\tp{M_{\omega_k}h_\ell}_{\ell=1}^m}}_{H^{1/2}}\\\le c_0\ssb{\tnorm{M_{\omega_k}g}_{L^2}+\tnorm{M_{\omega_k}F}_{({_0}H^{1})^\ast}+\tnorm{\tp{M_{\omega_k}h_\ell}_{\ell=1}^m}_{H^{1/2}}},
\end{multline}
for $c_0\in\R^+$ depending only on the physical parameters. By Proposition~\ref{culmination of the multiplier appendix} there is $c_1\in\R^+$ depending only $s$ and physical parameters such that
\begin{multline}\label{no no no}
    \tnorm{M_{\omega_k}g}_{L^2}+\norm{M_{\omega_k}F}_{({_0}H^{1})^\ast}+\tnorm{\tp{M_{\omega_k}h_\ell}_{\ell=1}^m}_{H^{1/2}}\\\le c_1\ssum{\ell=1}{m}\ssb{\tnorm{g}_{H^{1+s}\p{\Omega_\ell}}+\tnorm{f}_{H^s\p{\Omega_\ell}}+\tnorm{k_\ell}_{H^{1/2+s}}+\tnorm{h_\ell}_{H^{3/2+s}}}.
\end{multline}
Paring equations~\eqref{no no} and~\eqref{no no no} with Parseval's theorem and Fatou's lemma then yields the bound
\begin{multline}
    \tnorm{\mathscr{I}^\gam\p{g,F,\tp{h_\ell}_{\ell=1}^m}}_{H^{3/2+s}}\le\liminf_{k\to\infty}\tnorm{M_{\omega_k}\mathscr{I}^\gam\p{g,F,\tp{h_\ell}_{\ell=1}^m}}_{H^{1/2}}\\\le c_0c_1\ssum{\ell=1}{m}\ssb{\norm{g}_{H^{1+s}\p{\Omega_\ell}}+\norm{f}_{H^s\p{\Omega_\ell}}+\norm{k_\ell}_{H^{1/2+s}}+\norm{h_\ell}_{H^{3/2+s}}},
\end{multline}
which completes the proof.
\end{proof}

For technical reasons that will become clear in the next section, we want to restrict to a smaller subspace of the domain of $\mathscr{I}^\gam$ that guarantees an image whose members' low Fourier modes are more tame. We label this subspace as follows.

\begin{defn}\label{subspace with divergence compatibility condition enforced}
We define the Hilbert space
\begin{equation}
    \textstyle\mathcal{Y}\p{\Omega}=\cb{\p{g,F,\tp{h_\ell}_{\ell=1}^m}\in L^2\p{\Omega}\times({_0}H^1\p{\Omega;\R^n})^\ast\times\prod_{\ell=1}^mH^{1/2}\p{\Sigma_\ell}\;:\;\norm{\p{g,F,\tp{h_\ell}_{\ell=1}^m}}_{\mathcal{Y}}<\infty}
\end{equation}
for the norm
\begin{equation}
    \tnorm{\p{g,F,\tp{h_\ell}_{\ell=1}^m}}^2_{\mathcal{Y}}=\norm{g}_{L^2}^2+\norm{F}_{({_0}H^1)^\ast}^2+\ssum{\ell=1}{m}\bsb{\tnorm{h_\ell}_{H^{1/2}}^2+\bsb{h_\ell-\int_{\p{0,a_\ell}}g}_{\dot{H}^{-1}}^2}.
\end{equation}
\end{defn}

In our analysis of the action of $\mathscr{I}^\gam$ over $\mathcal{Y}\p{\Omega}$ we utilize the following energy estimates of the normal stress problem with band limited data.

\begin{lem}\label{energy estimates 2 for normal stress problem}
Let $\gam\in\R$. There exists $C\in\R^+$, depending only on the physical parameters, such that for all tuples $\tp{\psi_\ell}_{\ell=1}^m\in\prod_{\ell=1}^mH^{-1/2}\p{\Sigma_\ell}$ satisfying $\m{supp}\mathscr{F}\sb{\tp{\psi_\ell}_{\ell=1}^m}\subseteq\Bar{B\p{0,1}}\subset\R^{n-1}$ we may estimate
\begin{equation}
        \norm{v}_{{_0}H^1}^2\le C^2\int_{\R^{n-1}}\abs{\xi}^2\abs{\mathscr{F}\sb{\tp{\psi_\ell}_{\ell=1}^m}\p{\xi}}^2\;\m{d}\xi
\end{equation}
  and
\begin{equation}
    \int_{\Omega}\babs{q+\sum_{\ell=1}^m\psi_\ell\mathbbm{1}_{\p{0,a_\ell}}}^2\le C^2\int_{\R^{n-1}}\abs{\xi}^2\abs{\mathscr{F}\sb{\tp{\psi_\ell}_{\ell=1}^m}\p{\xi}}^2\;\m{d}\xi,
\end{equation}
where for each $\ell\in\cb{1,\dots,m}$ we have that $\psi_\ell\mathbbm{1}_{\p{0,a_\ell}}\in L^2\p{\Omega}$ is defined via $\R^{n-1}\times\p{0,a_m}\ni\p{x,y}\mapsto\psi_\ell\p{x}\mathbbm{1}_{\p{0,a_\ell}}\p{y}\in\R$, and $\p{q,v}= {\upchi_{-\gam}}^{-1}(0,\mathscr{O}(\psi_\ell e_n)_{\ell=1}^m) \in L^2\p{\Omega}\times{_0}H^1\p{\Omega;\R^n}$ are the solution to~\eqref{normal_stress pde} with data $\tp{\psi_\ell}_{\ell=1}^m$. 
\end{lem}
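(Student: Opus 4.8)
The plan is to prove the two estimates in sequence, the first as a direct consequence of Theorem~\ref{energy estimates for the normal stress problem} and the second as a pressure estimate in the spirit of Proposition~\ref{weak solutions to the traveling stokes with stress boundary problem}, once the pressure has been renormalized correctly. First I would record the observation that makes the statement sensible: since $\mathscr{F}[\tp{\psi_\ell}_{\ell=1}^m]$ is supported in $\Bar{B\p{0,1}}\subset\R^{n-1}$, each $\psi_\ell$ is band-limited and hence a genuine function lying in $L^2\p{\R^{n-1}}$ with $\norm{\psi_\ell}_{L^2}\asymp\norm{\psi_\ell}_{H^{-1/2}}$ and all higher Sobolev norms finite; in particular $\psi_\ell\mathbbm{1}_{\p{0,a_\ell}}\in L^2\p{\Omega}$ and $\tilde q:=q+\ssum{\ell=1}{m}\psi_\ell\mathbbm{1}_{\p{0,a_\ell}}\in L^2\p{\Omega}$. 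Since $\min\tcb{\abs{\xi}^2,\abs{\xi}^{-1}}=\abs{\xi}^2$ on $\Bar{B\p{0,1}}$, the central quantity in~\eqref{blackbird fly} is exactly $\p{\int_{\R^{n-1}}\abs{\xi}^2\abs{\mathscr{F}[\tp{\psi_\ell}_{\ell=1}^m]\p{\xi}}^2\,\m{d}\xi}^{1/2}$, and the left inequality of Theorem~\ref{energy estimates for the normal stress problem} gives the first bound of the lemma immediately.

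For the pressure bound the crucial structural point, which I would isolate as a short sublemma, is that on the slab $\Omega_j$ the added term $\ssum{\ell=1}{m}\psi_\ell\mathbbm{1}_{\p{0,a_\ell}}$ equals the $y$-independent function $\Psi_j:=\ssum{\ell=j}{m}\psi_\ell$, so that $\tilde q$ is the pressure of the multilayer traveling Stokes system~\eqref{multilayer traveling stokes with stress boundary conditions} (with wave speed $-\gam$) whose bulk force on $\Omega_\ell$ is the tangential gradient $\p{\grad_\|\Psi_\ell,0}$ and whose \emph{interface stress data vanish}: adding an $x$-only scalar on each slab alters $\grad\cdot S^{\upmu}$ only by $\p{\grad_\|\Psi_\ell,0}$, while the jumps telescope, $\jump{\ssum{\ell=1}{m}\psi_\ell\mathbbm{1}_{\p{0,a_\ell}}}_\ell=\Psi_{\ell+1}-\Psi_\ell=-\psi_\ell$ for $\ell<m$ and $-\Psi_m=-\psi_m$ at $\Sigma_m$, which exactly cancels the data $\psi_\ell e_n$. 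Concretely I would verify this through the layer-by-layer Gauss--Green identity
\[
\int_\Omega\Big(\ssum{\ell=1}{m}\psi_\ell\mathbbm{1}_{\p{0,a_\ell}}\Big)\p{\grad\cdot\phi}=-\ssum{j=1}{m}\int_{\Omega_j}\p{\grad_\|\Psi_j,0}\cdot\phi+\ssum{\ell=1}{m}\br{\psi_\ell,\m{Tr}_{\Sigma_\ell}\phi\cdot e_n}_{H^{-1/2},H^{1/2}},\qquad\phi\in{_0}H^1\p{\Omega;\R^n},
\]
whose interface contributions telescope to the displayed boundary sum precisely because $\phi\in H^1\p{\Omega^{\m{e}}}$ has single-valued traces across the $\Sigma_\ell$ and vanishes on $\Sigma_0$; band-limitedness of the data makes $\Psi_j$ smooth with bounded gradient, so $\Psi_j\phi\in W^{1,1}\p{\Omega_j}$ and the divergence theorem applies.

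With this identity the second estimate is obtained exactly as in Proposition~\ref{weak solutions to the traveling stokes with stress boundary problem}: I would test the weak formulation of~\eqref{normal_stress pde} for $\p{q,v}$ against $\phi=\Pi_\Omega\tilde q$, where $\Pi_\Omega$ is the right inverse to the divergence of Lemma~\ref{simple right inverse to the divergence}, so that $\grad\cdot\phi=\tilde q$ and $\norm{\phi}_{{_0}H^1}\lesssim\norm{\tilde q}_{L^2}$. Writing $\int_\Omega q\p{\grad\cdot\phi}=\norm{\tilde q}_{L^2}^2-\int_\Omega\p{\ssum{\ell=1}{m}\psi_\ell\mathbbm{1}_{\p{0,a_\ell}}}\tilde q$ and substituting the Gauss--Green identity, the $\br{\psi_\ell,\m{Tr}_{\Sigma_\ell}\phi\cdot e_n}$ terms cancel the normal stress data on the right side of the weak formulation, leaving
\[
\norm{\tilde q}_{L^2}^2=-\ssum{\ell=1}{m}\int_{\Omega_\ell}\p{\grad_\|\Psi_\ell,0}\cdot\phi+\ssum{\ell=1}{m}\int_{\Omega_\ell}\f{\mu_\ell}{2}\mathbb{D}v:\mathbb{D}\phi+\gam\ssum{\ell=1}{m}\int_{\Omega_\ell}\rho_\ell\pd_1v\cdot\phi.
\]
Using that each $\Omega_\ell$ has finite height and each $\Psi_\ell$ is band-limited gives $\ssum{\ell=1}{m}\norm{\p{\grad_\|\Psi_\ell,0}}_{L^2\p{\Omega_\ell}}\lesssim\p{\int_{\R^{n-1}}\abs{\xi}^2\abs{\mathscr{F}[\tp{\psi_\ell}_{\ell=1}^m]}^2}^{1/2}$, so estimating the right side by Cauchy--Schwarz yields $\norm{\tilde q}_{L^2}\lesssim\norm{v}_{{_0}H^1}+\p{\int_{\R^{n-1}}\abs{\xi}^2\abs{\mathscr{F}[\tp{\psi_\ell}_{\ell=1}^m]}^2}^{1/2}$, and inserting the already-established first estimate $\norm{v}_{{_0}H^1}\lesssim\p{\int_{\R^{n-1}}\abs{\xi}^2\abs{\mathscr{F}[\tp{\psi_\ell}_{\ell=1}^m]}^2}^{1/2}$ closes the argument.

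The main obstacle is the structural insight of the second paragraph — that the pressure of the normal stress problem is, up to an error controlled by $\norm{v}_{{_0}H^1}$, the explicit height-piecewise-constant function $-\ssum{\ell=1}{m}\psi_\ell\mathbbm{1}_{\p{0,a_\ell}}$ — together with the telescoping Gauss--Green identity whose interface terms must match the stress data exactly; once this is isolated the remainder is the standard energy-plus-pressure estimate already carried out for Proposition~\ref{weak solutions to the traveling stokes with stress boundary problem}. A minor technical point is the legitimacy of Gauss--Green at this low regularity, which (besides the reduction of $\min\tcb{\abs{\xi}^2,\abs{\xi}^{-1}}$ to $\abs{\xi}^2$) is where the band-limited hypothesis is genuinely used.
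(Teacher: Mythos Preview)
Your proof is correct and follows essentially the same route as the paper. The first estimate is identical; for the second, the paper also tests the weak formulation with $w=\Pi_\Omega\tilde q$ after the same pressure renormalization, the only cosmetic difference being that the paper leaves the residual boundary term in the form $\ssum{\ell=1}{m}\int_{\R^{n-1}}\psi_\ell\ssb{\m{Tr}_{\Sigma_\ell}w\cdot e_n-\int_{\p{0,a_\ell}}\grad\cdot w}$ and bounds it via the divergence compatibility estimate Proposition~\ref{divergence_divergence}, whereas you integrate by parts once more to turn it into the bulk term $\ssum{j=1}{m}\int_{\Omega_j}\p{\grad_\|\Psi_j,0}\cdot\phi$ --- which is precisely the computation underlying Proposition~\ref{divergence_divergence} itself, so the two presentations are equivalent.
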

\begin{proof}
The band limited assumption on the data, paired with the left hand inequality in the energy estimate of Theorem~\ref{energy estimates for the normal stress problem}, gives the first estimate.

For the second estimate we test $w\in{_0}H^1\p{\Omega;\R^n}$ in the weak formulation of~\eqref{normal_stress pde}, write $q=q+\sum_{\ell=1}^m\psi_\ell\mathbbm{1}_{\p{0,a_\ell}}-\sum_{\ell=1}^m\psi_\ell\mathbbm{1}_{\p{0,a_\ell}}$, and  rearrange to arrive at the identity
\begin{equation}
    \int_{\Omega}\grad\cdot w\bp{q+\ssum{\ell=1}{m}\psi_\ell\mathbbm{1}_{\p{0,a_\ell}}}=\ssum{\ell=1}{m}\int_{\Omega_\ell}\f{\mu_\ell}{2}\mathbb{D}v:\mathbb{D}w-\gam\rho_\ell\pd_1v\cdot w-\int_{\R^{n-1}}\ssum{\ell=1}{m}\psi_\ell\bsb{\m{Tr}_{\Sigma_\ell}w\cdot e_n-\int_{\p{0,a_\ell}}\grad\cdot w}.
\end{equation}
Then by the first estimate and the divergence compatibility estimate from Proposition~\ref{divergence_divergence} we may bound
\begin{equation}
    \babs{\int_{\Omega}\grad\cdot w\bp{q+\ssum{\ell=1}{m}\psi_\ell\mathbbm{1}_{\p{0,a_\ell}}}}\lesssim\tnorm{\tp{\psi_\ell}_{\ell=1}^m}_{\dot{H}^1}\tsb{\tnorm{w}_{{_0}H^1}+\tnorm{w}_{L^2}}.
\end{equation}
The second estimate now follows by taking $w=\Pi_\Omega\tp{q+\sum_{\ell=1}^m\psi_\ell\mathbbm{1}_{\p{0,a_\ell}}}$, where $\Pi_\Omega$ is the bounded right inverse of the divergence from Lemma~\ref{simple right inverse to the divergence}.
\end{proof}

We are now in a position to analyze the low frequency behavior of the image of $\mathcal{Y}\p{\Omega}$ under $\mathscr{I}^\gam$.

\begin{prop}\label{low mode behavior of compatibility measure}
If $\p{g,F,\tp{h_\ell}_{\ell=1}^m}\in\mathcal{Y}\p{\Omega}$ and $\gam\in\R$, then $\mathscr{I}^\gam\p{g,F,\tp{h_\ell}_{\ell=1}^m}\in\prod_{\ell=1}^m\dot{H}^{-1}\p{\Sigma_\ell}\cap H^{1/2}\p{\Sigma_\ell}$ with the universal estimate $\norm{\mathscr{I}^\gam\p{g,F,\tp{h_\ell}_{\ell=1}^m}}_{\dot{H}^{-1}\cap H^{1/2}}\lesssim\norm{\p{g,F,\tp{h_\ell}_{\ell=1}^m}}_{\mathcal{Y}}$.
\end{prop}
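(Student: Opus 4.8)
The plan is to bound the $H^{1/2}$ and the $\dot{H}^{-1}$ seminorms of $\mathscr{I}^\gam\p{g,F,\tp{h_\ell}_{\ell=1}^m}$ separately. For the former there is nothing new: the $\mathcal{Y}\p{\Omega}$-norm dominates the $L^2\p{\Omega}\times({_0}H^1\p{\Omega;\R^n})^\ast\times\prod_{\ell=1}^m H^{1/2}\p{\Sigma_\ell}$-norm, so Proposition~\ref{measurement of compatibility} gives at once $\ssum{\ell=1}{m}\tnorm{\mathscr{I}^\gam\p{g,F,\tp{h_\ell}_{\ell=1}^m}\cdot e_\ell}_{H^{1/2}}\lesssim\tnorm{\p{g,F,\tp{h_\ell}_{\ell=1}^m}}_{\mathcal{Y}}$. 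Set $f_k=\mathscr{I}^\gam\p{g,F,\tp{h_\ell}_{\ell=1}^m}\cdot e_k\in H^{1/2}\p{\Sigma_k}$. Since $\abs{\xi}^{-2}\le(1+\abs{\xi}^2)^{1/2}$ on $\cb{\abs{\xi}\ge1}$, the contribution of $\cb{\abs{\xi}\ge1}$ to $\tsb{f_k}_{\dot{H}^{-1}}^2$ is already controlled by $\norm{f_k}_{H^{1/2}}^2$, and it remains only to bound $\ssum{k=1}{m}\int_{\cb{\abs{\xi}\le1}}\abs{\xi}^{-2}\abs{\mathscr{F}\sb{f_k}\p{\xi}}^2\;\m{d}\xi$ by a constant times $\tnorm{\p{g,F,\tp{h_\ell}_{\ell=1}^m}}_{\mathcal{Y}}^2$.

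For this I would run a duality argument over band-limited tuples. Fix $\ep\in\p{0,1}$, let $\omega_\ep\p{\xi}=\mathbbm{1}_{\cb{\ep\le\abs{\xi}\le1}}\p{\xi}\abs{\xi}^{-2}$ — a real, even, bounded, compactly supported multiplier — and put $\psi^{(\ep)}=\tp{M_{\omega_\ep}f_\ell}_{\ell=1}^m$. By Proposition~\ref{characterizations of real-valued tempered distruibutions} each component is $\R$-valued, and since $f_\ell\in L^2\p{\Sigma_\ell}$ with $\omega_\ep$ bounded and compactly supported, $\psi^{(\ep)}\in\prod_{\ell=1}^m H^{-1/2}\p{\Sigma_\ell}$ with $\m{supp}\mathscr{F}\sb{\psi^{(\ep)}}\subseteq\Bar{B\p{0,1}}$. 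Writing $N\p{\psi}^2=\int_{\R^{n-1}}\abs{\xi}^2\abs{\mathscr{F}\sb{\psi}\p{\xi}}^2\;\m{d}\xi$ (the exact quantity appearing on the right of Lemma~\ref{energy estimates 2 for normal stress problem}), Parseval's theorem and the defining identity~\eqref{equation sensation} of $\mathscr{I}^\gam$ give $\mathscr{H}^\gam\tsb{\p{g,F,\tp{h_\ell}_{\ell=1}^m},\psi^{(\ep)}}=\br{\psi^{(\ep)},\mathscr{I}^\gam\p{g,F,\tp{h_\ell}_{\ell=1}^m}}_{H^{-1/2},H^{1/2}}=\ssum{k=1}{m}\int_{\cb{\ep\le\abs{\xi}\le1}}\abs{\xi}^{-2}\abs{\mathscr{F}\sb{f_k}\p{\xi}}^2\;\m{d}\xi=N(\psi^{(\ep)})^2$.

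The crux is to estimate the left-hand side through its defining formula in Definition~\ref{compatibility form and overdetermined data spaces}. Let $\p{q,v}={\upchi_{-\gam}}^{-1}\p{0,\mathscr{O}\tp{\psi_\ell^{(\ep)}e_n}_{\ell=1}^m}$, so that $\mathscr{H}^\gam\tsb{\p{g,F,\tp{h_\ell}_{\ell=1}^m},\psi^{(\ep)}}=\br{F,v}_{({_0}H^1)^\ast,{_0}H^1}-\int_\Omega gq-\ssum{\ell=1}{m}\br{\psi_\ell^{(\ep)},h_\ell}_{H^{-1/2},H^{1/2}}$. For the first term, $\babs{\br{F,v}_{({_0}H^1)^\ast,{_0}H^1}}\le\norm{F}_{({_0}H^1)^\ast}\norm{v}_{{_0}H^1}\lesssim\norm{F}_{({_0}H^1)^\ast}N(\psi^{(\ep)})$ by the first estimate of Lemma~\ref{energy estimates 2 for normal stress problem}, which applies because $\psi^{(\ep)}$ is band-limited to $\Bar{B\p{0,1}}$. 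For the remaining terms, split the pressure as $q=\bp{q+\ssum{\ell=1}{m}\psi_\ell^{(\ep)}\mathbbm{1}_{\p{0,a_\ell}}}-\ssum{\ell=1}{m}\psi_\ell^{(\ep)}\mathbbm{1}_{\p{0,a_\ell}}$; the second estimate of Lemma~\ref{energy estimates 2 for normal stress problem} gives $\norm{q+\sum_{\ell}\psi_\ell^{(\ep)}\mathbbm{1}_{\p{0,a_\ell}}}_{L^2\p{\Omega}}\lesssim N(\psi^{(\ep)})$, so that piece of $\int_\Omega gq$ contributes at most $\norm{g}_{L^2}N(\psi^{(\ep)})$, while Fubini yields $\int_\Omega g\,\psi_\ell^{(\ep)}\mathbbm{1}_{\p{0,a_\ell}}=\br{\psi_\ell^{(\ep)},\int_{\p{0,a_\ell}}g}$. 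Hence $-\int_\Omega gq-\ssum{\ell=1}{m}\br{\psi_\ell^{(\ep)},h_\ell}$ produces, up to the $\norm{g}_{L^2}N(\psi^{(\ep)})$ error, exactly $\ssum{\ell=1}{m}\br{\psi_\ell^{(\ep)},\int_{\p{0,a_\ell}}g-h_\ell}$, which by Cauchy--Schwarz on the Fourier side (using band-limitedness of $\psi^{(\ep)}$) is $\lesssim\big(\ssum{\ell=1}{m}\tsb{\psi_\ell^{(\ep)}}_{\dot{H}^1}^2\big)^{1/2}\big(\ssum{\ell=1}{m}\tsb{h_\ell-\int_{\p{0,a_\ell}}g}_{\dot{H}^{-1}}^2\big)^{1/2}\lesssim N(\psi^{(\ep)})\tnorm{\p{g,F,\tp{h_\ell}_{\ell=1}^m}}_{\mathcal{Y}}$. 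Collecting these bounds, $N(\psi^{(\ep)})^2=\mathscr{H}^\gam\tsb{\p{g,F,\tp{h_\ell}_{\ell=1}^m},\psi^{(\ep)}}\lesssim\tnorm{\p{g,F,\tp{h_\ell}_{\ell=1}^m}}_{\mathcal{Y}}N(\psi^{(\ep)})$, so $\ssum{k=1}{m}\int_{\cb{\ep\le\abs{\xi}\le1}}\abs{\xi}^{-2}\abs{\mathscr{F}\sb{f_k}}^2\lesssim\tnorm{\p{g,F,\tp{h_\ell}_{\ell=1}^m}}_{\mathcal{Y}}^2$ uniformly in $\ep$; letting $\ep\downarrow0$ by monotone convergence closes the estimate, and combining with the $H^{1/2}$ bound above finishes the proof.

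I expect the $\int_\Omega gq$ term to be the genuine obstacle: $q$ alone lies in $L^2\p{\Omega}$ only with norm comparable to $\tnorm{\psi^{(\ep)}}_{H^{-1/2}}$, which is far too strong to be absorbed by the weak quantity $N(\psi^{(\ep)})$. The resolution — and precisely the reason the space $\mathcal{Y}\p{\Omega}$ is defined with the $\tsb{h_\ell-\int_{\p{0,a_\ell}}g}_{\dot{H}^{-1}}$ term — is the pressure splitting of Lemma~\ref{energy estimates 2 for normal stress problem}, which trades the uncontrollable bulk contribution for a boundary pairing of $\psi^{(\ep)}$ against the vertical averages of $g$, matching exactly the divergence-compatibility correction carried by the $\mathcal{Y}$-norm.
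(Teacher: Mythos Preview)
Your proof is correct and follows essentially the same approach as the paper: reduce to the low-frequency $\dot{H}^{-1}$ estimate, expand $\mathscr{H}^\gam$ via its defining formula, invoke the pressure splitting of Lemma~\ref{energy estimates 2 for normal stress problem} to replace $q$ by $q+\sum_\ell\psi_\ell\mathbbm{1}_{(0,a_\ell)}$, and absorb the resulting boundary term using the $\tsb{h_\ell-\int_{(0,a_\ell)}g}_{\dot{H}^{-1}}$ component of the $\mathcal{Y}$-norm. The only difference is packaging: the paper localizes via $M_{\mathbbm{1}_{B(0,1)}}$ (appealing to Lemma~\ref{commutes with tangential multipliers 2}) and then bounds the pairing against \emph{arbitrary} band-limited $\tp{\psi_\ell}_{\ell=1}^m$ before taking a supremum over $\sum_\ell\tsb{M_{\mathbbm{1}_{B(0,1)}}\psi_\ell}_{\dot{H}^1}\le 1$, whereas you plug in the specific near-extremizer $\psi^{(\ep)}=M_{\omega_\ep}\mathscr{I}^\gam(g,F,(h_\ell)_\ell)$ to obtain the quadratic inequality $N(\psi^{(\ep)})^2\lesssim\tnorm{(g,F,(h_\ell)_\ell)}_{\mathcal{Y}}\,N(\psi^{(\ep)})$ directly. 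Both routes are equivalent and rest on the same key lemma.
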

\begin{proof}
Recall that Lemma~\ref{commutes with tangential multipliers 2} guarantees that $\mathscr{I}^\gam$ commutes with tangential multipliers.  We use this with the  continuity of $\mathscr{I}^\gam$ and the boundedness of tangential multipliers (Proposition~\ref{tilo-fourier multiplier L2 characterization}, Definitions~\ref{definition of tangential multipliers} and~\ref{definition of tangential fourier multipliers 2}, and Lemma~\ref{lemma on fractional sobolev boundedness of tangential multipliers in the stip}) to estimate
\begin{multline}
    \textstyle\norm{\mathscr{I}^\gam\p{g,F,\tp{h_\ell}_{\ell=1}^m}}_{\dot{H}^{-1}\cap H^{1/2}}\lesssim\ssb{M_{\mathbbm{1}_{B\p{0,1}}}\mathscr{I}^\gam\p{g,F,\tp{h_\ell}_{\ell=1}^m}}_{\dot{H}^{-1}}\\\textstyle+\snorm{\mathscr{I}^\gam\sp{M_{\mathbbm{1}_{\R^{n-1}\setminus B\p{0,1}}}g,M_{\mathbbm{1}_{\R^{n-1}\setminus B\p{0,1}}}F,\tp{M_{\mathbbm{1}_{\R^{n-1}\setminus B\p{0,1}}}h_\ell}_{\ell=1}^m}}_{H^{1/2}}\lesssim\ssb{M_{\mathbbm{1}_{B\p{0,1}}}\mathscr{I}^\gam\p{g,F,\tp{h_\ell}_{\ell=1}^m}}_{\dot{H}^{-1}}\\+\tnorm{M_{\mathbbm{1}_{\R^{n-1}\setminus B\p{0,1}}}g}_{L^2}+\tnorm{M_{\mathbbm{1}_{\R^{n-1}\setminus B\p{0,1}}}F}_{({_0}H^1)^\ast}+\tnorm{\tp{M_{\mathbbm{1}_{\R^{n-1}\setminus B\p{0,1}}}h_\ell}_{\ell=1}^m}_{H^{1/2}}\\
    +\ssb{M_{\mathbbm{1}_{B\p{0,1}}}\mathscr{I}^\gam\p{g,F,\tp{h_\ell}_{\ell=1}^m}}_{\dot{H}^{-1}}+\tnorm{g}_{L^2}+\tnorm{F}_{({_0}H^1)^{\ast}}+\tnorm{(h_{\ell})_{\ell=1}^m}_{H^{1/2}}.
\end{multline}
Thus, it sufficient to show that $M_{\mathbbm{1}_{B\p{0,1}}}\mathscr{I}^\gam\p{g,F,\tp{h_\ell}_{\ell=1}^m}\in\prod_{\ell=1}^m\dot{H}^{-1}\p{\Sigma_\ell}$ with a bounded estimate. We do this via duality. 

Let $\tp{\psi_\ell}_{\ell=1}^m\in\prod_{\ell=1}^mH^{-1/2}\p{\Sigma_\ell}$ and denote the corresponding solution to the normal stress problem~\eqref{normal_stress pde} via $\p{q,v} = {\upchi_{-\gamma}}^{-1}(0,\mathscr{O}(\psi_\ell e_n)_{\ell=1}^m)\in L^2\p{\Omega}\times{_0}H^1\p{\Omega;\R^n}$. We compute
\begin{multline}
    \int_{B\p{0,1}}\mathscr{F}\tsb{\tp{\psi_\ell}_{\ell=1}^m}\p{\xi}\cdot\mathscr{F}\tsb{\mathscr{I}^\gam\tp{g,F,\tp{h_\ell}_{\ell=1}^m}}\p{\xi}\;\m{d}\xi \\
    = \tbr{\tp{\psi_\ell}_{\ell=1}^m,M_{\mathbbm{1}_{B\p{0,1}}}\mathscr{I}^\gam\tp{g,F,\tp{h_\ell}_{\ell=1}^m}}_{H^{-1/2},H^{1/2}} 
    = \mathscr{H}^\gam\ssb{\p{g,F,\tp{h_\ell}_{\ell=1}^m},\tp{M_{\mathbbm{1}_{B\p{0,1}}}\psi_\ell}_{\ell=1}^m} \\ 
    = \tbr{F,M_{\mathbbm{1}_{B\p{0,1}}}v}_{({_0}H^1)^\ast,{_0}H^1}-\int_{\Omega}gM_{\mathbbm{1}_{B\p{0,1}}}q-\ssum{\ell=1}{m}\tbr{M_{\mathbbm{1}_{B\p{0,1}}}\psi_\ell,h_\ell}_{H^{-1/2},H^{1/2}} \\
    =
    \tbr{F,M_{\mathbbm{1}_{B\p{0,1}}}v}_{({_0}H^1)^\ast,{_0}H^1}-\int_{\Omega}gM_{\mathbbm{1}_{B\p{0,1}}}\bp{q+\ssum{\ell=1}{m}\psi_\ell\mathbbm{1}_{\p{0,a_\ell}}}+\int_{\R^{n-1}}\ssum{\ell=1}{m}M_{\mathbbm{1}_{B\p{0,1}}}\psi_\ell\bp{-h_\ell+\int_{\p{0,a_\ell}}g}.
\end{multline}
Hence,  
\begin{multline}\label{end of the line}
    \babs{\int_{B\p{0,1}}\mathscr{F}\tsb{\tp{\psi_\ell}_{\ell=1}^m}\p{\xi}\cdot\mathscr{F}\tsb{\mathscr{I}^\gam\tp{g,F,\tp{h_\ell}_{\ell=1}^m}}\p{\xi}\;\m{d}\xi}\le\tnorm{F}_{({_0}H^1)^\ast}\tnorm{M_{\mathbbm{1}_{B\p{0,1}}}v}_{{_0}H^1}\\+\tnorm{g}_{L^2}\tnorm{M_{\mathbbm{1}_{B\p{0,1}}}q+\ssum{\ell=1}{m}M_{\mathbbm{1}_{B\p{0,1}}}\psi_\ell\mathbbm{1}_{\p{0,a_\ell}}}_{L^2}+\ssum{\ell=1}{m}\tsb{M_{\mathbbm{1}_{B\p{0,1}}}\psi_\ell}_{\dot{H}^{1}}\bsb{h_\ell-\int_{\p{0,a_\ell}}g}_{\dot{H}^{-1}}.
\end{multline}
Lemma~\ref{commutes with tangential multipliers 1} ensures us that $(M_{\mathbbm{1}_{B(0,1)}}q,M_{\mathbbm{1}_{B(0,1)}}v)={\upchi_{-\gamma}}^{-1}(0,\mathscr{O}(M_{\mathbbm{1}_{B(0,1)}}\psi_\ell e_n)_{\ell=1}^m)$. As $(M_{\mathbbm{1}_{B(0,1)}}\psi_\ell)_{\ell=1}^m$ is admissible band limited data, we may apply the second estimate of Lemma~\ref{energy estimates 2 for normal stress problem} to  $(M_{\mathbbm{1}_{B(0,1)}}\psi_\ell)_{\ell=1}^m$ to bound
\begin{equation}\label{something to say}
    \bnorm{M_{\mathbbm{1}_{B\p{0,1}}}q+\ssum{\ell=1}{m}M_{\mathbbm{1}_{B\p{0,1}}}\psi_\ell\mathbbm{1}_{\p{0,a_\ell}}}_{L^2}\lesssim\tsb{(M_{\mathbbm{1}_{B(0,1)}}\psi_\ell)_{\ell=1}^m}_{\dot{H}^{1}}\le\ssum{\ell=1}{m}\tsb{M_{\mathbbm{1}_{B(0,1)}}\psi_\ell}_{\dot{H}^{1}}.
\end{equation}
Therefore, by~\eqref{end of the line} and~\eqref{something to say},
\begin{equation}
    \babs{\int_{B\p{0,1}}\mathscr{F}\tsb{\tp{\psi_\ell}_{\ell=1}^m}\p{\xi}\cdot\mathscr{F}\tsb{\mathscr{I}^\gam\tp{g,F,\tp{h_\ell}_{\ell=1}^m}}\p{\xi}\;\m{d}\xi}\lesssim\tnorm{(g,F,\tp{h_\ell}_{\ell=1}^m)}_{\mathcal{Y}}\ssum{\ell=1}{m}\tsb{M_{\mathbbm{1}_{B\p{0,1}}}\psi_\ell}_{\dot{H}^{1}},
\end{equation}
and so we conclude that
\begin{multline}
    \tsb{M_{\mathbbm{1}_{B\p{0,1}}}\mathscr{I}^\gam\tp{g,F,\tp{h_{\ell=1}^m}}}_{\dot{H}^{-1}}\\\lesssim\sup\bcb{\babs{\int_{B\p{0,1}}\mathscr{F}\tsb{\tp{\psi_\ell}_{\ell=1}^m}\p{\xi}\cdot\mathscr{F}\tsb{\mathscr{I}^\gam\tp{g,F,\tp{h_\ell}_{\ell=1}^m}}\p{\xi}\;\m{d}\xi}\::\;\ssum{\ell=1}{m}\tsb{M_{\mathbbm{1}_{B\p{0,1}}}\psi_\ell}_{\dot{H}^1}\le 1}\\
    \lesssim\tnorm{\tp{g,F,\tp{h_\ell}_{\ell=1}^m}}_{\mathcal{Y}}.
\end{multline}
This completes the proof.
\end{proof}

\begin{rmk}\label{inclusion remark}
For $s\in\R^+\cup\cb{0}$ we may view
\begin{multline}
    \textstyle H^{1+s}\p{\Omega}\times H^s\p{\Omega;\R^n}\times\prod_{\ell=1}^mH^{1/2+s}\p{\Sigma_\ell;\R^n}\times\prod_{\ell=1}^mH^{3/2+s}\p{\Sigma_\ell}\\\textstyle\emb L^2\p{\Omega}\times({_0}H^1\p{\Omega;\R^n})^\ast\times\prod_{\ell=1}^mH^{1/2}\p{\Sigma_\ell}
\end{multline}
through the inclusion mapping $\p{g,f,\tp{k_\ell}_{\ell=1}^m,\tp{h_\ell}_{\ell=1}^m}\mapsto\p{g,\mathscr{P}(f,(k_\ell)_{\ell=1}^m),\tp{h_\ell}_{\ell=1}^m}$, for $\mathscr{P}$ as in Definition~\ref{strong to weak conversion notation}.
\end{rmk}

We now synthesize the results of this subsection into a single result.

\begin{thm}\label{theorem on measurement of compatibility}
Let $\gam\in\R$ and $s\in\R^+\cup\cb{0}$. The linear mapping
\begin{multline}
    \textstyle\mathscr{K}^\gam:\mathcal{Y}\p{\Omega}\cap\sb{H^{1+s}\p{\Omega}\times H^{s}\p{\Omega;\R^n}\times\prod_{\ell=1}^mH^{1/2+s}\p{\Sigma_\ell;\R^n}\times\prod_{\ell=1}^mH^{3/2+s}\p{\Sigma_\ell}}\\\textstyle\to\prod_{\ell=1}^m\dot{H}^{-1}\p{\Sigma_\ell}\cap H^{3/2+s}\p{\Sigma_\ell} 
\end{multline}
given by $\mathscr{K}^\gam\p{g,f,\tp{k_\ell}_{\ell=1}^m,\tp{h_\ell}_{\ell=1}^m}=\mathscr{I}^\gam\p{g,\mathscr{P}(f,(k_\ell)_{\ell=1}^m),\tp{h_\ell}_{\ell=1}^m}$ is both well-defined and continuous.
\end{thm}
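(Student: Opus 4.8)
The plan is to obtain this statement as a direct synthesis of Propositions~\ref{regularity of compatibility measure} and~\ref{low mode behavior of compatibility measure}, which respectively control the high-frequency (differentiability) and low-frequency behavior of $\mathscr{I}^\gam$ on the data relevant here. No new analytic input beyond those two results and elementary bookkeeping is needed.

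First I would fix a tuple $\p{g,f,\tp{k_\ell}_{\ell=1}^m,\tp{h_\ell}_{\ell=1}^m}$ in the domain of $\mathscr{K}^\gam$, i.e.\ in $\mathcal{Y}\p{\Omega}\cap\sb{H^{1+s}\p{\Omega}\times H^s\p{\Omega;\R^n}\times\prod_{\ell=1}^mH^{1/2+s}\p{\Sigma_\ell;\R^n}\times\prod_{\ell=1}^mH^{3/2+s}\p{\Sigma_\ell}}$, where the intersection is understood via the inclusion mapping of Remark~\ref{inclusion remark}, and set $\tp{\zeta_\ell}_{\ell=1}^m=\mathscr{K}^\gam\p{g,f,\tp{k_\ell}_{\ell=1}^m,\tp{h_\ell}_{\ell=1}^m}=\mathscr{I}^\gam\p{g,\mathscr{P}(f,\tp{k_\ell}_{\ell=1}^m),\tp{h_\ell}_{\ell=1}^m}$. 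Since $\mathscr{I}^\gam$ and $\mathscr{P}$ are single, well-defined linear maps (Proposition~\ref{measurement of compatibility} and Definition~\ref{strong to weak conversion notation}), the tuple $\tp{\zeta_\ell}_{\ell=1}^m$ is unambiguously defined and $\mathscr{K}^\gam$ is automatically linear; it remains only to show that $\tp{\zeta_\ell}_{\ell=1}^m$ lies in $\prod_{\ell=1}^m\dot{H}^{-1}\p{\Sigma_\ell}\cap H^{3/2+s}\p{\Sigma_\ell}$ with norm controlled by the norm of the data in the domain space.

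Next I would apply Proposition~\ref{regularity of compatibility measure} --- whose hypotheses are exactly that the data belongs to $H^{1+s}\p{\Omega}\times H^s\p{\Omega;\R^n}\times\prod_{\ell=1}^mH^{1/2+s}\p{\Sigma_\ell;\R^n}\times\prod_{\ell=1}^mH^{3/2+s}\p{\Sigma_\ell}$ --- to conclude $\tp{\zeta_\ell}_{\ell=1}^m\in\prod_{\ell=1}^mH^{3/2+s}\p{\Sigma_\ell}$ together with $\sum_{k=1}^m\norm{\zeta_k}_{H^{3/2+s}}\lesssim\sum_{\ell=1}^m\sb{\norm{g}_{H^{1+s}\p{\Omega_\ell}}+\norm{f}_{H^s\p{\Omega_\ell}}+\norm{k_\ell}_{H^{1/2+s}}+\norm{h_\ell}_{H^{3/2+s}}}$; and I would apply Proposition~\ref{low mode behavior of compatibility measure} --- which applies precisely because the data tuple also lies in $\mathcal{Y}\p{\Omega}$ --- to conclude $\tp{\zeta_\ell}_{\ell=1}^m\in\prod_{\ell=1}^m\dot{H}^{-1}\p{\Sigma_\ell}\cap H^{1/2}\p{\Sigma_\ell}$ with $\norm{\tp{\zeta_\ell}_{\ell=1}^m}_{\dot{H}^{-1}\cap H^{1/2}}\lesssim\norm{\p{g,\mathscr{P}(f,\tp{k_\ell}_{\ell=1}^m),\tp{h_\ell}_{\ell=1}^m}}_{\mathcal{Y}}$. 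Since $s\ge0$, the Sobolev scale gives the continuous embedding $H^{3/2+s}\p{\Sigma_\ell}\emb H^{1/2}\p{\Sigma_\ell}$, so the two memberships combine to $\tp{\zeta_\ell}_{\ell=1}^m\in\prod_{\ell=1}^m\dot{H}^{-1}\p{\Sigma_\ell}\cap H^{3/2+s}\p{\Sigma_\ell}$, and adding the two displayed estimates --- while noting that the $\mathcal{Y}$-norm of $\p{g,\mathscr{P}(f,\tp{k_\ell}_{\ell=1}^m),\tp{h_\ell}_{\ell=1}^m}$ and the combination of strong norms on the right-hand side of the first estimate are each dominated by the norm on the domain of $\mathscr{K}^\gam$ --- produces the continuity bound.

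There is no serious obstacle here: the analytic content lies entirely in the two cited propositions, and the only care required is the routine checks that $\norm{\cdot}_{\dot{H}^{-1}\cap H^{3/2+s}}$ is equivalent to $\norm{\cdot}_{\dot{H}^{-1}}+\norm{\cdot}_{H^{3/2+s}}$, that the two contributions land in the \emph{same} tuple (immediate, since $\tp{\zeta_\ell}_{\ell=1}^m$ is one object appearing in both conclusions), and that the norm on the intersection domain of $\mathscr{K}^\gam$ simultaneously controls both quantities on the right of the two estimates above.
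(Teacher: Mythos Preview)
Your proposal is correct and follows essentially the same approach as the paper: the paper's proof simply cites Remark~\ref{inclusion remark}, Proposition~\ref{regularity of compatibility measure}, and Proposition~\ref{low mode behavior of compatibility measure}, which is exactly the synthesis you describe in detail.
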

\begin{proof}
The result follows from Remark~\ref{inclusion remark}, Proposition~\ref{regularity of compatibility measure}, and Proposition~\ref{low mode behavior of compatibility measure}.
\end{proof}
\section{Multilayer traveling Stokes with gravity-capillary boundary and jump conditions}\label{section on linearized flattened problem}
Our linear analysis culminates in this section with the study of the linearized flattened free boundary problem~\eqref{flattened problem at the nonlinear level}. More precisely, we study the system
\begin{equation}\label{multilayer traveling Stokes with gravity-capillary boundary and jump conditions}
    \begin{cases}
        \grad\cdot S^{\upmu}\p{p,u}-\gam\rho_\ell\pd_1u=f&\text{in }\Omega_\ell,\;\ell\in\cb{1,\dots,m}\\
        \grad\cdot u=g&\text{in }\Omega\\
        \jump{S^{\upmu}\p{p,u}e_n}_\ell=k_\ell+(\mathfrak{g}\jump{\uprho}_\ell+\sig_\ell\Delta_\|)\eta_\ell e_n&\text{on }\Sigma_\ell,\;\ell\in\cb{1,\dots,m}\\
        u\cdot e_n=h_\ell-\gam\pd_1\eta_\ell&\text{on }\Sigma_\ell,\;\ell\in\cb{1,\dots,m}\\
        \jump{u}_{\ell}=0&\text{on }\Sigma_\ell,\;\ell\in\cb{1,\dots,m-1}\\
        u=0&\text{on }\Sigma_0.
    \end{cases}
\end{equation}
We remind the reader that we are still using the abbreviated notation for $\Omega$, $\Omega_\ell$, and $\Sigma_\ell$ discussed at the start of Section~\ref{section on multilayer traveling Stokes with stress boundary and jump conditions} and that the unknown velocity is $u$, the pressure is $p$, and the free surface functions are in the tuple $\tp{\eta_\ell}_{\ell=1}^m$. The prescribed data are $f$, $g$, $\tp{k_\ell}_{\ell=1}^m$, and $\tp{h_\ell}_{\ell=1}^m$. The viscosity parameters are $\upmu=\cb{\mu_\ell}_{\ell=1}^m\subset\R^+$, the fluid densities are $\cb{\rho_\ell}_{\ell=1}^m\subset\R^+$, $\upsigma=\cb{\sig_\ell}_{\ell=1}^m\subset\R^+\cup\cb{0}$ are the surface tensions, $\gam\in\R$ is the signed wave speed, $\mathfrak{g}\in\R^+$ is the magnitude gravitational acceleration, and $\uprho=\sum_{\ell=1}^m\rho_\ell\mathbbm{1}_{\Omega_\ell}$. Unlike the previous two sections, we now assume that the wave speed is non-trivial, i.e. $\gam\in\R\setminus\cb{0}$ and that the density coefficients are strictly decreasing with layer number, i.e. $0<\rho_m<\cdots<\rho_1$ (this is consistent with the assumptions made in the introduction). Note that $\jump{\uprho}_\ell=\rho_{\ell+1}-\rho_\ell<0$ for $\ell\in\cb{1,\dots,m-1}$ and $\jump{\uprho}_m=-\rho_m<0$.

Our goal in this section is to prove that the above system induces a linear isomorphism between an appropriate pair of Banach spaces. It turns out that the estimates obtained from~\eqref{multilayer traveling Stokes with gravity-capillary boundary and jump conditions} are too weak to guarantee that the free surface functions and the pressure belong to standard $L^2$-based Sobolev spaces in dimensions three or higher. The resolution of this issue requires developing families of specialized Sobolev spaces to serve as the container spaces for the free surface functions and pressure. In this section and the next we establish and utilize variants of the specialized spaces developed in the single layer analysis of~\cite{leoni2019traveling} that are appropriate for the multilayer context.

\subsection{Specialized Sobolev space interlude, well-definedness, and injectivity}\label{section on specialized Sobolev space interlude, well-definedness, and injectivity}

We first label the space of data for which we will solve~\eqref{multilayer traveling Stokes with gravity-capillary boundary and jump conditions}.
\begin{defn}\label{data space for the linearized problem}
For $s\in\R^+\cup\cb{0}$ we define the space
\begin{equation}
    \textstyle\mathcal{Y}^s=\mathcal{Y}\p{\Omega}\cap\sb{H^{1+s}\p{\Omega}\times H^{s}\p{\Omega;\R^n}\times\prod_{\ell=1}^mH^{1/2+s}\p{\Sigma_\ell;\R^n}\times\prod_{\ell=1}^mH^{3/2+s}\p{\Sigma_\ell}},
\end{equation}
where $\mathcal{Y}\p{\Omega}$ is from Definition~\ref{subspace with divergence compatibility condition enforced} and the intersection is understood in the sense of the inclusion from Remark~\ref{inclusion remark}. This space is Hilbert and as an equivalent norm we set
\begin{multline}
    \tnorm{\p{g,f,\tp{k_\ell}_{\ell=1}^m,\tp{h_\ell}_{\ell=1}^m}}_{\mathcal{Y}^s}^2\\=\ssum{\ell=1}{m}\bsb{\tnorm{g}_{H^{1+s}\p{\Omega_\ell}}^2+\tnorm{f}_{H^s\p{\Omega_\ell}}^2+\tnorm{k_\ell}_{H^{1/2+s}}^2+\tnorm{h_\ell}_{H^{3/2+s}}^2+\bsb{h_\ell-\int_{\p{0,a_\ell}}g}^2_{\dot{H}^{-1}}}.
\end{multline}
\end{defn}

Next we define the container space for the free surface functions, which is an anisotropic Sobolev space introduced in \cite{leoni2019traveling}.  Note that for notational convenience we denote this space with a name different from the one used in \cite{leoni2019traveling}.

\begin{defn}\label{space for the free interface and free surface functions}
For $s\in\R^+\cup\cb{0}$ we define the normed space
\begin{equation}
    \textstyle\mathcal{H}^s\p{\R^{n-1}}=\{\zeta\in(\mathscr{S}\tp{\R^{n-1};\R})^\ast\;:\;\mathscr{F}\sb{\zeta}\in L^1_{\loc}\tp{\R^{n-1};\C}\text{ and }\norm{\zeta}_{\mathcal{H}^s}<\infty\}
\end{equation}
for
\begin{equation}
    \tnorm{\zeta}_{\mathcal{H}^s}^2=\int_{B\p{0,1}}\abs{\xi}^{-2}\sp{{\xi_1}^2+\abs{\xi}^4}\abs{\mathscr{F}\sb{\zeta}\p{\xi}}^2\;\m{d}\xi+\int_{\R^{n-1}\setminus B\p{0,1}}\abs{\xi}^{2s}\abs{\mathscr{F}\sb{\zeta}\p{\xi}}^2\;\m{d}\xi.
\end{equation}
\end{defn}

The following result summarizes the essential properties of this space.

\begin{prop}\label{linear topological properties of container for free surface and free interface functions}
Let $s\in\R^+\cup\cb{0}$.  Then the following hold for the space $\mathcal{H}^s\tp{\R^{n-1}}$. 
\begin{enumerate}
    \item $\mathcal{H}^s\tp{\R^{n-1}}$ is Hilbert.
    \item If $k\in\N$ then $\mathcal{H}^s\tp{\R^{n-1}}\emb C^k_0\tp{\R^{n-1}}+H^s\tp{\R^{n-1}}$; in particular if $(n-1)/2+k<s$ then we have the embedding $\mathcal{H}^s\tp{\R^{n-1}}\emb C^k_0\tp{\R^{n-1}}$. We remind the reader that $C^k_0$ is defined in Section~\ref{section on conventions of notation}.
    \item If $\eta\in\mathcal{H}^{5/2+s}\tp{\R^{n-1}}$ then $\pd_1\eta\in H^{3/2+s}\tp{\R^{n-1}}\cap\dot{H}^{-1}\tp{\R^{n-1}}$ and $\Delta\eta\in H^{1/2+s}\tp{\R^{n-1}}$; moreover, these mappings are continuous.
    \item (\emph{Fourier reconstruction}) If $\vartheta\in L^1_{\loc}\tp{\R^{n-1};\C}$ satisfies $\vartheta\p{-\xi}=\Bar{\vartheta\p{\xi}}$ for a.e. $\xi\in\R^{n-1}$ and
\begin{equation}
    \int_{B\p{0,1}}\abs{\xi}^{-2}\sp{{\xi_1}^2+\abs{\xi}^4}\abs{\vartheta\p{\xi}}^2\;\m{d}\xi+\int_{\R^{n-1}\setminus B\p{0,1}}\abs{\xi}^{2s}\abs{\vartheta\p{\xi}}^2\;\m{d}\xi<\infty
\end{equation}
then there exists $\zeta_\vartheta\in\mathcal{H}^{s}\tp{\R^{n-1}}$ with $\mathscr{F}\sb{\zeta_\vartheta}=\vartheta$.
\item In the case $n=2$ we have the equality of vector spaces with equivalence of norms: $\mathcal{H}^s\tp{\R^{n-1}}=H^s\tp{\R^{n-1}}$.
\item If $\zeta\in\mathcal{H}^s(\R^{n-1})$ satisfies $\m{supp}\mathscr{F}[\zeta]\subseteq\R^{n-1}\setminus B(0,\ep)$, for some $\ep\in\R^+$, then, in fact, we have the inclusion $\zeta\in H^s(\R^{n-1})$.
\end{enumerate}
\end{prop}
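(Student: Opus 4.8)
The common mechanism behind all six items is that, by construction, the map $\zeta\mapsto\mathscr{F}[\zeta]$ is an isometry of $\mathcal{H}^s(\R^{n-1})$ onto the real subspace (those $\vartheta$ with $\vartheta(-\xi)=\Bar{\vartheta(\xi)}$) of the weighted Lebesgue space $L^2(\R^{n-1};w_s\,\m{d}\xi)$, where $w_s(\xi)=\mathbbm{1}_{B(0,1)}(\xi)\,|\xi|^{-2}(\xi_1^2+|\xi|^4)+\mathbbm{1}_{\R^{n-1}\setminus B(0,1)}(\xi)\,|\xi|^{2s}$. So the plan is to prove everything by elementary pointwise comparisons of $w_s$ with other weights on the two regions $B(0,1)$ and its complement, combined with the Cauchy--Schwarz inequality and Riemann--Lebesgue; the appendix facts on real-valued tempered distributions (Proposition~\ref{characterizations of real-valued tempered distruibutions}, Remark~\ref{real valued functions and real valued tempered distributions}) will handle the reality bookkeeping each time. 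A preliminary observation I will isolate and use repeatedly is that $\int_{B(0,1)}w_s(\xi)^{-1}\,\m{d}\xi<\infty$: for $n\ge 4$ this follows from the crude bound $w_s^{-1}\le|\xi|^{-2}$, while for the physically relevant low dimensions $n\in\{2,3\}$ one exploits the \emph{anisotropic nondegeneracy} of $w_s=|\xi|^{-2}\xi_1^2+|\xi|^2$ (it degenerates only like $|\xi|^2$ on the hyperplane $\{\xi_1=0\}$, and is bounded below off a neighborhood of it) — writing $\xi=(\xi_1,\xi')$ and rescaling $\xi_1=|\xi'|t$ reduces the integral to $\lesssim\int_{B(0,1)}\m{d}\xi'<\infty$.

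For item (1), I would first note that finiteness of $\|\zeta\|_{\mathcal{H}^s}$ already forces $\mathscr{F}[\zeta]\in L^1_{\loc}$: on the complement of $B(0,1)$ the weight is $\ge1$, hence $\mathscr{F}[\zeta]\in L^2_{\loc}$ there, and on $B(0,1)$ Cauchy--Schwarz together with the preliminary estimate $\int_{B(0,1)}w_s^{-1}<\infty$ gives $\mathscr{F}[\zeta]\in L^1(B(0,1))$. Thus $\mathcal{H}^s$ is a genuine weighted $L^2$ space, so completeness is immediate: a Cauchy sequence $(\zeta_k)$ has $(\mathscr{F}[\zeta_k])$ Cauchy in $L^2(w_s\,\m{d}\xi)$, hence convergent to some $\vartheta$ there and — by the same Cauchy--Schwarz bound — in $L^1_{\loc}$; the reality relation passes to the a.e.\ limit, so $\zeta:=\mathscr{F}^{-1}[\vartheta]$ is a real tempered distribution lying in $\mathcal{H}^s$ with $\zeta_k\to\zeta$. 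The norm is induced by the evident sesquilinear/bilinear form, so $\mathcal{H}^s$ is Hilbert.

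For item (2), I would split $\zeta=\zeta^{\flat}+\zeta^{\sharp}$ with $\mathscr{F}[\zeta^{\flat}]=\mathbbm{1}_{B(0,1)}\mathscr{F}[\zeta]$ and $\mathscr{F}[\zeta^{\sharp}]=\mathbbm{1}_{\R^{n-1}\setminus B(0,1)}\mathscr{F}[\zeta]$. On the high-frequency region $|\xi|^{2s}\asymp\langle\xi\rangle^{2s}$, so $\zeta^{\sharp}\in H^s$ with $\|\zeta^{\sharp}\|_{H^s}\lesssim\|\zeta\|_{\mathcal{H}^s}$; on the low-frequency region $w_s\ge|\xi|^2$, so for $|\alpha|\le k$ Cauchy--Schwarz gives $\int_{B(0,1)}|\xi^\alpha\,\mathscr{F}[\zeta]|\,\m{d}\xi\le\|\zeta\|_{\mathcal{H}^s}(\int_{B(0,1)}|\xi|^{2|\alpha|}w_s^{-1})^{1/2}<\infty$ (bounding $|\xi|^{2|\alpha|}w_s^{-1}\le|\xi|^{2|\alpha|-2}$ when $|\alpha|\ge1$, and invoking the preliminary estimate when $\alpha=0$), whence $\zeta^{\flat}\in C^k_0$ by Riemann--Lebesgue, with $\|\zeta^{\flat}\|_{C^k_0}\lesssim\|\zeta\|_{\mathcal{H}^s}$. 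Adding these gives $\mathcal{H}^s\emb C^k_0+H^s$, and if $(n-1)/2+k<s$ then in addition $\zeta^{\sharp}\in H^s\emb C^k_0$. Item (3) is a pure weight comparison against the Fourier symbols $2\pi\ii\xi_1$ and $-4\pi^2|\xi|^2$: on $B(0,1)$ one has $\xi_1^2\le|\xi|^{-2}(\xi_1^2+|\xi|^4)$, $|\xi|^4\le|\xi|^2\le|\xi|^{-2}(\xi_1^2+|\xi|^4)$, and $|\xi|^{-2}\xi_1^2\le|\xi|^{-2}(\xi_1^2+|\xi|^4)$, while off $B(0,1)$ the quantities $\langle\xi\rangle^{3+2s}\xi_1^2$, $\langle\xi\rangle^{1+2s}|\xi|^4$, and $|\xi|^{-2}\xi_1^2$ are all $\lesssim|\xi|^{2(5/2+s)}$; these inequalities yield $\partial_1\eta\in H^{3/2+s}\cap\dot H^{-1}$ and $\Delta\eta\in H^{1/2+s}$, with continuity, for $\eta\in\mathcal{H}^{5/2+s}$.

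Items (4)--(6) are then short. For (4), the integrability hypothesis on $\vartheta$ is exactly $\vartheta\in L^2(w_s\,\m{d}\xi)$, which (by the $B(0,1)$ estimate and $\vartheta\in L^2$ off $B(0,1)$) puts $\vartheta\in L^1+L^2\subset\mathscr{S}'$; the reality condition makes $\zeta_\vartheta:=\mathscr{F}^{-1}[\vartheta]$ a real tempered distribution, and $\mathscr{F}[\zeta_\vartheta]=\vartheta\in L^1_{\loc}$ with $\|\zeta_\vartheta\|_{\mathcal{H}^s}^2$ equal to the assumed finite integral, so $\zeta_\vartheta\in\mathcal{H}^s$. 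For (5), when $n=2$ we have $\xi=\xi_1$, so $|\xi|^{-2}(\xi_1^2+|\xi|^4)=1+|\xi|^2\asymp\langle\xi\rangle^{2s}$ on $(-1,1)$ while $|\xi|^{2s}\asymp\langle\xi\rangle^{2s}$ on $|\xi|\ge1$, giving $\|\cdot\|_{\mathcal{H}^s}\asymp\|\cdot\|_{H^s}$ (the $L^1_{\loc}$ side condition being automatic for $H^s$), hence $\mathcal{H}^s(\R)=H^s(\R)$. For (6), on $B(0,1)\setminus B(0,\ep)$ one has $\langle\xi\rangle^{2s}\lesssim 1\le\ep^{-2}|\xi|^2\le\ep^{-2}|\xi|^{-2}(\xi_1^2+|\xi|^4)$, and off $B(0,1)$, $\langle\xi\rangle^{2s}\asymp|\xi|^{2s}$, so the support hypothesis gives $\|\zeta\|_{H^s}^2\lesssim(1+\ep^{-2})\|\zeta\|_{\mathcal{H}^s}^2<\infty$. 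The only genuine obstacle in the whole argument is the integrability of $w_s^{-1}$ near the origin in dimensions $n=2,3$, which is what the anisotropy of the low-frequency weight is there to supply; once that is in hand, all six items are routine two-region weight manipulations.
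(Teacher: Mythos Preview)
Your proposal is correct, and the underlying mechanism (viewing $\mathcal{H}^s$ as a weighted $L^2$ space on the Fourier side and then comparing the weight $w_s$ pointwise with the relevant Sobolev and $\dot H^{-1}$ weights on $B(0,1)$ and its complement) is exactly what the paper has in mind; the paper simply cites the single-layer analysis of \cite{leoni2019traveling} for items (1)--(3) and (5), and dismisses (4) and (6) as immediate from the definition and completeness. Your write-up is therefore a faithful unpacking of the cited arguments rather than a genuinely different route, with the one substantive extra ingredient being that you isolate and justify the integrability of $w_s^{-1}$ over $B(0,1)$ (the anisotropic estimate in low dimensions), which is indeed the only nontrivial step and is what makes the $C^k_0$ embedding in item (2) and the $L^1_{\loc}$ conclusion in item (1) go through.
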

\begin{proof}
Item $(1)$, $(2)$, $(3)$, and $(5)$ follow from Proposition 5.3 and Theorems 5.6 and 5.7 in~\cite{leoni2019traveling}. Item $(4)$ follows from the definition of $\mathcal{H}^s$ and completeness. Item $(6)$ is clear given the definitions of the norms on $\mathcal{H}^s$ and $H^s$.
\end{proof}
 
Now we are ready to define and study the container space for the pressure, which is a multilayer variant of the space introduced in \cite{leoni2019traveling}, again given a different name for notational convenience.  For the next definition and the subsequent proposition we switch back to the unabbreviated notation for multilayer domains as defined in Section~\ref{section on Eulerian coordinate formulation}.

 \begin{defn}\label{container space for the pressure}
Let $s\in\R^+\cup\cb{0}$ and $\upzeta=\cb{\zeta_\ell}_{\ell=1}^m\in\tp{C^0_b\tp{\R^{n-1}}}^m$ be a tuple of continuous functions satisfying
\begin{equation}\label{helium}
    \textstyle\max\{\norm{\zeta_1}_{C^0_b},\dots,\norm{\zeta_m}_{C^0_b}\}\le\f14\min\cb{a_1,a_2-a_1,\dots,a_m-a_{m-1}}.
\end{equation}
We define the normed vector space
\begin{multline}
    \textstyle\mathcal{P}^s\p{\Omega[\upzeta]}=\{q\in L^1_{\loc}\p{\Omega[\upzeta]}\;:\;\exists\;\p{p,\tp{\eta_\ell}_{\ell=1}^m}\in H^{s}\p{\Omega[\upzeta]}\times\tp{\mathcal{H}^{s}\tp{\R^{n-1}}}^m\\\textstyle\text{such that}\;q=p-\mathfrak{g}\sum_{\ell=1}^m\jump{\uprho}_\ell\eta_\ell\mathbbm{1}_{\Omega_1[\upzeta]\cup\cdots\cup\Omega_\ell[\upzeta]}\}
\end{multline}
equipped with the norm
\begin{equation}
    \textstyle\norm{q}_{\mathcal{P}^s}=\inf\bcb{\sum_{\ell=1}^{m}\ssb{\norm{p}_{H^{s}\p{\Omega_\ell[\upzeta]}}+\norm{\eta_\ell}_{\mathcal{H}^s}}\;:\;q=p-\mathfrak{g}\sum_{\ell=1}^{m}\jump{\uprho}_\ell\eta_\ell\mathbbm{1}_{\Omega_1[\upzeta]\cup\cdots\cup\Omega_\ell[\upzeta]}}.
\end{equation}
When $\upzeta=0$ we will sometimes write $\mathcal{P}^s\p{\Omega}$ in place of $\mathcal{P}^s\p{\Omega[0]}$.
\end{defn}

The following result records the essential properties of these spaces.

\begin{prop}\label{linear topological properties of container for pressure}
The following properties hold for the scale of spaces $\mathcal{P}^s\p{\Omega[\upzeta]}$ for  $s\in\R^+\cup\cb{0}$ and $\upzeta\in\tp{C^0_b\tp{\R^{n-1}}}^m$ satisfying~\eqref{helium}.
\begin{enumerate}
    \item $\mathcal{P}^s\p{\Omega[\upzeta]}$ is Banach.
    \item If $k\in\N$ then $\mathcal{P}^s\p{\Omega[\upzeta]}\emb C^k_b(\Omega[\upzeta])+H^s\p{\Omega[\upzeta]}$; in particular if $n/2+k<s$ then $\mathcal{P}^s\p{\Omega[\upzeta]}\emb C^k_b(\Omega_\ell[\upzeta])$.
    \item If $p\in\mathcal{P}^{1+s}\p{\Omega[\upzeta]}$ then $\sum_{\ell=1}^m\mathbbm{1}_{\Omega_\ell[\upzeta]}\grad p\in H^{s}\p{\Omega[\upzeta];\R^n}$ and this map is continuous.
    \item For $\ell\in\cb{1,\dots,m}$ there are bounded trace operators: $\m{Tr}_{\Sigma_\ell[0]}^{\uparrow,\downarrow}:\mathcal{P}^{1+s}\p{\Omega[0]}\to\mathcal{H}^{1/2+s}\tp{\R^{n-1}}$.
    \item In the case $n=2$ we have the equality of vector spaces with equivalence of norms: $\mathcal{P}^s\p{\Omega[\upzeta]}=H^s\p{\Omega_\ell[\upzeta]}$.
\end{enumerate}
\end{prop}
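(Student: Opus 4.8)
The plan is to reduce every item to the single-layer results of \cite{leoni2019traveling} — recorded for the free-surface space in Proposition~\ref{linear topological properties of container for free surface and free interface functions} — together with one structural observation: each cutoff $\mathbbm{1}_{\Omega_1[\upzeta]\cup\cdots\cup\Omega_\ell[\upzeta]}$ is constant on every slab $\Omega_{\ell'}[\upzeta]$, equal to $1$ if $\ell'\le\ell$ and to $0$ otherwise. Hence any representative $q=p-\mathfrak{g}\ssum{\ell=1}{m}\jump{\uprho}_\ell\eta_\ell\mathbbm{1}_{\Omega_1[\upzeta]\cup\cdots\cup\Omega_\ell[\upzeta]}$ reads on $\Omega_{\ell'}[\upzeta]$ simply as $p|_{\Omega_{\ell'}[\upzeta]}-\mathfrak{g}\sum_{\ell\ge\ell'}\jump{\uprho}_\ell\,\eta_\ell(x)$: a genuine one-layer representative shifted by a function independent of the vertical variable. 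I will use two elementary facts throughout, each stated once as a preliminary lemma: (a) for a bounded vertical interval $I$ the extension $\phi\mapsto\phi\otimes 1$ is bounded $H^\sigma(\R^{n-1})\to H^\sigma(\R^{n-1}\times I)$, so, composing with restriction to an open subset, a vertically-independent shift $\eta(x)$ with $\eta\in H^\sigma(\R^{n-1})$ restricts to an element of $H^\sigma(\Omega_{\ell'}[\upzeta])$; and (b) directly from the norms, $H^\sigma(\R^{n-1})\emb\mathcal{H}^\sigma(\R^{n-1})$ for $\sigma\ge0$ (since $\abs{\xi}^{-2}(\xi_1^2+\abs{\xi}^4)\le2$ on the unit ball) and $\mathcal{H}^\sigma(\R^{n-1})\emb\mathcal{H}^{\sigma'}(\R^{n-1})$ whenever $\sigma\ge\sigma'\ge0$ (the low-frequency weight is exponent-independent and $\abs{\xi}^{2\sigma'}\le\abs{\xi}^{2\sigma}$ for $\abs{\xi}\ge1$). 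For item $(1)$, I verify directly that $\norm{\cdot}_{\mathcal{P}^s}$ is a norm — positive-definiteness uses $\mathcal{H}^s\emb C^k_0+H^s\emb L^1_{\loc}(\R^{n-1})$ from Proposition~\ref{linear topological properties of container for free surface and free interface functions}$(2)$, so near-zero decompositions converge to $q$ in $L^1_{\loc}(\Omega[\upzeta])$ — and that $\mathcal{P}^s(\Omega[\upzeta])$ is complete, via the standard device: pass to a subsequence with consecutive $\mathcal{P}^s$-distances below $2^{-j}$, choose near-optimal decompositions of the successive differences, and sum their $H^s$- and $\mathcal{H}^s$-pieces using completeness of $H^s(\Omega[\upzeta])$ and of $\mathcal{H}^s(\R^{n-1})$ (Proposition~\ref{linear topological properties of container for free surface and free interface functions}$(1)$) to produce a limit carrying a valid $\mathcal{P}^s$-decomposition.

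For item $(2)$, I split $q=p-\mathfrak{g}\ssum{\ell=1}{m}\jump{\uprho}_\ell\eta_\ell\mathbbm{1}_{\Omega_1[\upzeta]\cup\cdots\cup\Omega_\ell[\upzeta]}$, retain $p\in H^s(\Omega[\upzeta])$, and write each $\eta_\ell=c_\ell+r_\ell$ with $c_\ell\in C^k_0(\R^{n-1})$ and $r_\ell\in H^s(\R^{n-1})$ via Proposition~\ref{linear topological properties of container for free surface and free interface functions}$(2)$; read as vertically-independent functions the $c_\ell$'s lie in $C^k_b(\Omega[\upzeta])$ outright (all vertical derivatives vanish), while the $r_\ell$'s land in $H^s(\Omega[\upzeta])$ by fact (a), producing the $C^k_b(\Omega[\upzeta])+H^s(\Omega[\upzeta])$ splitting with norm control. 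The ``in particular'' clause follows since $n/2+k<s$ forces $(n-1)/2+k<s$, whence $\mathcal{H}^s\emb C^k_0(\R^{n-1})$, while $H^s(\Omega_\ell[\upzeta])\emb C^k_b(\Omega_\ell[\upzeta])$ by Sobolev embedding on the slab; both pieces then sit in $C^k_b(\Omega_\ell[\upzeta])$. For item $(5)$, $n=2$ gives $\mathcal{H}^s(\R)=H^s(\R)$ with equivalent norms (Proposition~\ref{linear topological properties of container for free surface and free interface functions}$(5)$); then fact (a) yields $\mathcal{P}^s(\Omega[\upzeta])\subseteq\bigoplus_\ell H^s(\Omega_\ell[\upzeta])$, the trivial decomposition $p=q$, $\eta_\ell=0$ yields the reverse inclusion, and comparing norms along both (using $\abs{\jump{\uprho}_\ell}\lesssim1$ and (a)) gives the equivalence of norms.

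For item $(3)$, with $p=\tilde p-\mathfrak{g}\sum\jump{\uprho}_\ell\eta_\ell\mathbbm{1}_{\cdots}$, $\tilde p\in H^{1+s}(\Omega[\upzeta])$, $\eta_\ell\in\mathcal{H}^{1+s}(\R^{n-1})$, I compute on $\Omega_{\ell'}[\upzeta]$ that $\grad p=\grad\tilde p-\mathfrak{g}\sum_{\ell\ge\ell'}\jump{\uprho}_\ell(\grad_\|\eta_\ell,0)$; the first term is in $H^s(\Omega_{\ell'}[\upzeta];\R^n)$, and $\grad_\|\eta_\ell\in H^s(\R^{n-1})$ for $\eta_\ell\in\mathcal{H}^{1+s}$ by a one-line frequency-side estimate against Definition~\ref{space for the free interface and free surface functions} (the low-frequency weight dominates $\abs{\xi}^2$, and $\abs{\xi}^{2(1+s)}\asymp(1+\abs{\xi}^2)^s\abs{\xi}^2$ for $\abs{\xi}\ge1$), after which fact (a) places the shift in $H^s(\Omega_{\ell'}[\upzeta])$; summing over $\ell'$ and infimizing over decompositions gives the bound. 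For item $(4)$ I work on the flat domain $\Omega[0]$, where trace theory on $\Omega_j[0]=\R^{n-1}\times(a_{j-1},a_j)$ is classical: I define the trace of $q$ as the corresponding trace of $p$ minus the trace of the (piecewise-constant-in-$y$) shift read off the appropriate adjacent slab, i.e. of the form $\m{Tr}^{\uparrow,\downarrow}_{\Sigma_j[0]}p-\mathfrak{g}\sum_{\ell}\jump{\uprho}_\ell\eta_\ell$ over the relevant indices. This is independent of the decomposition because two representatives differ on the slab by a vertically-independent function whose trace from either side equals itself; the trace of $p\in H^{1+s}(\Omega_j[0])$ lies in $H^{1/2+s}(\R^{n-1})\emb\mathcal{H}^{1/2+s}(\R^{n-1})$ by fact (b), and the $\eta_\ell$-terms lie in $\mathcal{H}^{1+s}\emb\mathcal{H}^{1/2+s}$, so infimizing over decompositions bounds the trace by $\norm{q}_{\mathcal{P}^{1+s}}$.

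The main obstacle is, candidly, that there is no genuine analytic obstacle: all the hard work is already carried by the single-layer theory of \cite{leoni2019traveling} and by Proposition~\ref{linear topological properties of container for free surface and free interface functions}, and the multilayer proof is essentially organizational. The only points requiring honest (though elementary) effort are the Fourier computation in item $(3)$ and the ``extend a vertically-independent $H^\sigma$ function to a strip, then restrict to a slab'' step — a standard product-domain fact about Sobolev spaces — which it is cleanest to isolate as the preliminary Lemma (a) above so that the five verifications stay transparent; one should also take a little care in item $(1)$ to run the completeness argument through the subsequence device rather than attempting to show that the kernel of the defining quotient map is closed.
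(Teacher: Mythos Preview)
Your proposal is correct and takes essentially the same approach as the paper: the paper's proof consists of a single sentence deferring to ``simple multilayer adaptations of Theorems 5.9, 5.11, and 5.13 and Remark 5.10 of~\cite{leoni2019traveling}'', and what you have written is precisely a careful expansion of that adaptation, hinging on the same structural observation that the layer indicators are constant on each slab. Your write-up in fact supplies more detail than the paper itself provides.
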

\begin{proof}
The claims follow from simple multilayer adaptations of Theorems 5.9, 5.11, and 5.13 and Remark 5.10 of~\cite{leoni2019traveling}.
\end{proof}

We have all the tools we need to label the spaces which hold the velocity, pressure, and free surface tuple. 

\begin{defn}\label{domain of multilayer traveling Stokes with gravity-capillary boundary and jump conditions solution operator}
For $s\in\R^+\cup\cb{0}$ we define the Banach space
\begin{multline}
    \textstyle\mathcal{X}^s=\{\p{p,u,\tp{\eta_\ell}_{\ell=1}^m}\in\mathcal{P}^{1+s}\tp{\Omega}\times{_0}H^{2+s}\tp{\Omega;\R^n}\times\tp{\mathcal{H}^{5/2+s}\tp{\R^{n-1}}^m}\;:\\\textstyle p+\mathfrak{g}\sum_{\ell=1}^m\jump{\uprho}_\ell\eta_\ell\mathbbm{1}_{\p{0,a_\ell}}\in H^{1+s}\p{\Omega}\},
\end{multline}
which  we endow the norm
\begin{equation}
   \tnorm{\p{p,u,\tp{\eta_\ell}_{\ell=1}^m}}_{\mathcal{X}^s}=\tnorm{p}_{\mathcal{P}^{1+s}}+\ssum{\ell=1}{m}\ssb{\tnorm{u}_{H^{2+s}\p{\Omega_\ell}}+\tnorm{\eta_\ell}_{\mathcal{H}^{5/2+s}}+\tnorm{p+\mathfrak{g}{\textstyle\sum_{k=1}^m}\jump{\uprho}_k\eta_k\mathbbm{1}_{\p{0,a_k}}}_{H^{1+s}\p{\Omega_\ell}}}.
\end{equation}
\end{defn}

Next we introduce the linear map that will turn out to be the Banach isomorphism solution operator associated to the problem~\eqref{multilayer traveling Stokes with gravity-capillary boundary and jump conditions}.

\begin{prop}[Uniqueness of solutions to~\eqref{multilayer traveling Stokes with gravity-capillary boundary and jump conditions}]\label{well-definedness and injectivity}
For $\gam\in\R\setminus\cb{0}$, $\upsigma=\cb{\sig_\ell}_{\ell=1}^m\subset\R^+\cup\cb{0}$, and $s\in\R^+\cup\cb{0}$ the linear mapping $\Upupsilon_{\gam,\upsigma}:\mathcal{X}^s\to\mathcal{Y}^s$ with action
\begin{multline}\label{ready set action}
    \textstyle\Upupsilon_{\gam,\upsigma}\p{p,u,\tp{\eta_\ell}_{\ell=1}^m}=\big(\grad\cdot u,\sum_{\ell=1}^m\mathbbm{1}_{\Omega_\ell}\sb{\grad\cdot S^{\upmu}\p{p,u}-\gam\rho_\ell\pd_1u},\\\textstyle\tp{\jump{S^{\upmu}\p{p,u}e_n}_\ell-(\mathfrak{g}\jump{\uprho}_\ell+\sig_\ell\Delta_\|)\eta_\ell e_n}_{\ell=1}^m,\tp{\m{Tr}_{\Sigma_\ell}u\cdot e_n+\gam\pd_1\eta_\ell}_{\ell=1}^m\big)
\end{multline}
is well-defined, continuous, and injective.
\end{prop}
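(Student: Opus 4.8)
The plan is to verify the three claimed properties — well-definedness, continuity, and injectivity — in that order, leaning heavily on the topological properties of the specialized spaces $\mathcal{H}^s$ and $\mathcal{P}^s$ (Propositions~\ref{linear topological properties of container for free surface and free interface functions} and~\ref{linear topological properties of container for pressure}) and on the isomorphism theorem for the stress-boundary system (Theorem~\ref{isomorphism associated to multilayer traveling stokes with stress boundary conditions}) together with the compatibility analysis of Section~\ref{section on data compatibility and associated isomorphism}. For \textbf{well-definedness}, I would fix $(p,u,\tp{\eta_\ell}_{\ell=1}^m) \in \mathcal{X}^s$ and check that each of the four output components lands in the correct factor of $\mathcal{Y}^s$. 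Writing $\tilde p = p + \mathfrak{g}\sum_{\ell=1}^m\tjump{\uprho}_\ell\eta_\ell\mathbbm{1}_{(0,a_\ell)} \in H^{1+s}(\Omega)$, one has $\grad p = \grad \tilde p - \mathfrak{g}\sum_\ell \tjump{\uprho}_\ell\grad\eta_\ell\mathbbm{1}_{(0,a_\ell)}$ layer-by-layer; since $\eta_\ell \in \mathcal{H}^{5/2+s}$ gives $\grad_\|\eta_\ell \in H^{3/2+s} \hookrightarrow H^{s}$ (a tangential-derivative version of item (3) of Proposition~\ref{linear topological properties of container for free surface and free interface functions}), the bulk momentum term $\sum_\ell \mathbbm{1}_{\Omega_\ell}[\grad\cdot S^\upmu(p,u) - \gam\rho_\ell\pd_1 u]$ lies in $H^s(\Omega;\R^n)$. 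The divergence term $\grad\cdot u$ is in $H^{1+s}(\Omega)$ because $u \in {_0}H^{2+s}$. For the stress jump, $\tjump{S^\upmu(p,u)e_n}_\ell \in H^{1/2+s}(\Sigma_\ell;\R^n)$ by trace theory on each slab, and $(\mathfrak{g}\tjump{\uprho}_\ell + \sig_\ell\Delta_\|)\eta_\ell \in H^{1/2+s}$ again by item (3); for the kinematic condition, $\m{Tr}_{\Sigma_\ell}u\cdot e_n \in H^{3/2+s}$ by trace theory and $\gam\pd_1\eta_\ell \in H^{3/2+s}$ by item (3). The one genuinely nonstandard check is the $\dot H^{-1}$ membership encoded in $\mathcal{Y}(\Omega)$: one must verify that $(\m{Tr}_{\Sigma_\ell}u\cdot e_n + \gam\pd_1\eta_\ell) - \int_{(0,a_\ell)}\grad\cdot u \in \dot H^{-1}(\Sigma_\ell)$. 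Here the divergence compatibility estimate of Proposition~\ref{divergence_divergence} applied to $u \in {_0}H^1(\Omega;\R^n)$ handles the $\m{Tr}_{\Sigma_\ell}u\cdot e_n - \int_{(0,a_\ell)}\grad\cdot u$ piece, while $\gam\pd_1\eta_\ell \in \dot H^{-1}$ is exactly the second assertion of item (3) of Proposition~\ref{linear topological properties of container for free surface and free interface functions}.

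\textbf{Continuity} then comes essentially for free: each estimate above is quantitative, and assembling them yields $\tnorm{\Upupsilon_{\gam,\upsigma}(p,u,\tp{\eta_\ell}_{\ell=1}^m)}_{\mathcal{Y}^s} \lesssim \tnorm{(p,u,\tp{\eta_\ell}_{\ell=1}^m)}_{\mathcal{X}^s}$, using the definition of the $\mathcal{X}^s$ norm (which explicitly contains $\tnorm{\tilde p}_{H^{1+s}(\Omega_\ell)}$, $\tnorm{u}_{H^{2+s}(\Omega_\ell)}$, and $\tnorm{\eta_\ell}_{\mathcal{H}^{5/2+s}}$) together with the continuity of the trace operators, of $\grad\cdot$, of the derivative maps on $\mathcal{H}^{5/2+s}$, and of the multiplication by the fixed constants $\mathfrak{g}\tjump{\uprho}_\ell$, $\sig_\ell$, $\gam$, $\rho_\ell$, $\mu_\ell$.

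\textbf{Injectivity} is the crux. Suppose $\Upupsilon_{\gam,\upsigma}(p,u,\tp{\eta_\ell}_{\ell=1}^m) = 0$. Then $u$ solves the homogeneous traveling Stokes system with stress data $k_\ell = (\mathfrak{g}\tjump{\uprho}_\ell + \sig_\ell\Delta_\|)\eta_\ell e_n$, right-hand side $f = -\mathfrak{g}\sum_\ell\tjump{\uprho}_\ell\grad\eta_\ell\mathbbm{1}_{(0,a_\ell)}$ (equivalently, $\tilde p$ satisfies the clean homogeneous system), and with the \emph{additional} normal-trace data $h_\ell = \m{Tr}_{\Sigma_\ell}u\cdot e_n = -\gam\pd_1\eta_\ell$. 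Thus $(\tilde p, u)$ is a solution of the overdetermined problem~\eqref{overdetermined multilayer traveling stokes}, and by Theorem~\ref{isomorphism associated to overdetermined multilayer traveling stokes} the compatibility measurement $\mathscr{I}^\gam(g,\mathscr{P}(f,\tp{k_\ell}_{\ell=1}^m),\tp{h_\ell}_{\ell=1}^m)$ must vanish; taking horizontal Fourier transforms and using the diagonalization of $\upnu_\gamma$ (Proposition~\ref{diagonalization of normal stress to normal dirichlet}) reduces this to the pseudodifferential system $\bf{p}_\gamma(\xi)\mathscr{F}[\upeta](\xi) = 0$ for a.e. $\xi$, with $\bf{p}_\gamma(\xi) = \bf{n}_{-\gam}(\xi)\,\m{diag}(-\mathfrak{g}\tjump{\uprho}_\ell + 4\pi^2|\xi|^2\sigma_\ell)_{\ell} - 2\pi\ii\gam\xi_1 I$. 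The point is that $\bf{p}_\gamma(\xi)$ is invertible for a.e. $\xi$: the capillary symbols $-\mathfrak{g}\tjump{\uprho}_\ell + 4\pi^2|\xi|^2\sigma_\ell$ are strictly positive (using the strict Rayleigh–Taylor sign $\tjump{\uprho}_\ell < 0$, and either $\sigma_\ell > 0$ or $n=2$ where $\mathcal{H}^s = H^s$ and a separate low-frequency argument applies), so $\m{Re}[\bf{p}_\gamma(\xi) a \cdot a] = \m{Re}[\bf{n}_{-\gamma}(\xi)D(\xi)a\cdot a]$ where $D(\xi)$ is positive-definite diagonal; combined with the coercivity bound $\m{Re}[\bf{n}_\gamma(\xi)b\cdot b] \ge C^{-1}\min\{|\xi|^2,|\xi|^{-1}\}|b|^2$ from Theorem~\ref{finer asymptotic development of the multiplier of the normal stress to normal Dirichlet pseudodifferential operator}(2) and the skew-adjointness of the $-2\pi\ii\gam\xi_1 I$ term (which contributes nothing to the real part), one gets $\m{Re}[\bf{p}_\gamma(\xi)a\cdot a] \gtrsim \min\{|\xi|^2,|\xi|^{-1}\}|a|^2 > 0$ for $\xi \neq 0$. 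Hence $\mathscr{F}[\eta_\ell](\xi) = 0$ a.e., so $\tp{\eta_\ell}_{\ell=1}^m = 0$. With $\upeta = 0$, the data $(g,f,\tp{k_\ell},\tp{h_\ell})$ all vanish and $u$ solves the homogeneous stress-boundary Stokes system, so $(\tilde p, u) = 0$ by Theorem~\ref{isomorphism associated to multilayer traveling stokes with stress boundary conditions}; since $\eta_\ell = 0$ forces $p = \tilde p$, we conclude $(p,u,\tp{\eta_\ell}_{\ell=1}^m) = 0$.

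\textbf{Main obstacle.} The delicate point is the passage from "$\Upupsilon_{\gam,\upsigma}$ annihilates the triple" to "$\bf{p}_\gamma \mathscr{F}[\upeta] = 0$," and then the invertibility of $\bf{p}_\gamma(\xi)$ for \emph{every} nonzero $\xi$ rather than just a.e. one. The first requires correctly identifying that the overdetermined compatibility condition, when specialized to this data and Fourier-transformed, collapses precisely onto the symbol $\bf{p}_\gamma$ — this uses the "remarkable" shift of the bulk gravity term $\mathfrak{g}\sum_\ell\tjump{\uprho}_\ell\grad\eta_\ell\mathbbm{1}_{(0,a_\ell)}$ back to a boundary term via the symbol of $\upnu_{-\gamma}$, as advertised in the introduction and developed rigorously via $\mathscr{I}^\gamma$ and its diagonalization. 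The invertibility then rests squarely on the coercivity estimate Theorem~\ref{finer asymptotic development of the multiplier of the normal stress to normal Dirichlet pseudodifferential operator}(2) combined with the strict sign of the capillary symbols guaranteed by the strict Rayleigh–Taylor hypothesis $0<\rho_m<\cdots<\rho_1$; in the zero-surface-tension case ($n=2$) one additionally needs that the low-frequency weight in $\mathcal{H}^s = H^s$ is benign and that $\bf{p}_\gamma(\xi)$ remains invertible as $|\xi|\to 0$, which follows since the off-diagonal/skew part $-2\pi\ii\gam\xi_1$ together with $\m{Re}[\bf{p}_\gamma a\cdot a] \gtrsim |\xi|^2|a|^2$ still excludes a kernel for $\xi\neq 0$.
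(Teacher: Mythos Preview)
Your treatment of well-definedness and continuity is essentially the paper's. One quibble: the assertion ``$\tjump{S^\upmu(p,u)e_n}_\ell\in H^{1/2+s}$ by trace theory on each slab'' is not quite right, since $p\in\mathcal{P}^{1+s}$ only gives $\tjump{p}_\ell\in\mathcal{H}^{1/2+s}$ (Proposition~\ref{linear topological properties of container for pressure}(4)); what lands in $H^{1/2+s}$ is the full combination $\tjump{p}_\ell-\mathfrak{g}\tjump{\uprho}_\ell\eta_\ell=\tjump{\tilde p}_\ell$, as the paper computes via the jump identity $\tjump{\sum_k\tjump{\uprho}_k\eta_k\mathbbm{1}_{(0,a_k)}}_\ell=-\tjump{\uprho}_\ell\eta_\ell$.

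For injectivity your route is genuinely different from the paper's and contains a real gap. You argue that $\m{Re}[\bf{p}_\gam(\xi)a\cdot a]=\m{Re}[\bf{n}_{-\gam}(\xi)D(\xi)a\cdot a]\gtrsim\min\{|\xi|^2,|\xi|^{-1}\}|a|^2$ follows from the coercivity of $\bf{n}_\gam$ in Theorem~\ref{finer asymptotic development of the multiplier of the normal stress to normal Dirichlet pseudodifferential operator}(2). This inference fails: for a matrix $A$ with positive-definite Hermitian part and a positive diagonal $D$, the quantity $\m{Re}[Da\cdot Aa]$ need not be positive (the Hermitian part of $DA$ is $\tfrac12(DH+HD)+\tfrac12(DK-KD)$ with $H,K$ the Hermitian/skew parts of $A$, and $DH+HD$ can be indefinite when the diagonal entries of $D$ are disparate). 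The correct pairing is against $\bf{o}(\xi)a$ rather than $a$: since $\bf{o}$ is real symmetric, $2\pi\ii\gam\xi_1 a\cdot\bf{o}a$ is purely imaginary, while $\m{Re}[\bf{n}_\gam^\ast\bf{o}a\cdot\bf{o}a]=\m{Re}[\bf{n}_\gam\bf{o}a\cdot\bf{o}a]\gtrsim\min\{|\xi|^2,|\xi|^{-1}\}|\bf{o}a|^2\gtrsim|a|^2$ (using $-\mathfrak{g}\tjump{\uprho}_\ell>0$). This is precisely the estimate in~\eqref{deltadeltadelta} of Lemma~\ref{data determines free surface - free interface functions}, which comes after the Proposition, so your argument also carries a forward reference.

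The paper sidesteps all of this with a direct testing argument that never names $\bf{p}_\gam$ or $\mathscr{I}^\gam$. It tests the weak formulation against the normal-stress solution with the specific band-limited data $(\psi_\ell)_{\ell=1}^m=\tp{M_{\mathbbm{1}_{B(0,2^r)\setminus B(0,2^{-r})}}(\mathfrak{g}\tjump{\uprho}_\ell+\sig_\ell\Delta_\|)\eta_\ell}_{\ell=1}^m$. This choice makes the left side $-\gam\sum_\ell\int\psi_\ell\pd_1\eta_\ell$ vanish (total-derivative structure) and, after the same ``shift'' identity you invoke, turns the right side into $\int_{\text{annulus}}\m{Re}[\bf{n}_\gam\mathscr{F}[(\psi_\ell)]\cdot\mathscr{F}[(\psi_\ell)]]$ --- a pure $\m{Re}[\bf{n}_\gam b\cdot b]$ with $b=\mathscr{F}[\bf{o}\,\upeta]$, so coercivity applies cleanly. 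This works uniformly for all $\sig_\ell\ge 0$ and all $n\ge 2$, matching the generality of the Proposition, and is self-contained within Sections~\ref{section on multilayer traveling Stokes with stress boundary and jump conditions}--\ref{section on overdetermined multilayer traveling Stokes}.
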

\begin{proof}
We begin by checking that the mapping is well-defined and continuous. This is clear for the first component. The only possible point of contention in the second component is the expression with the pressure, $\sum_{\ell=1}^m\mathbbm{1}_{\Omega_\ell}\grad p$; however, we are in the clear thanks to item two of Proposition~\ref{linear topological properties of container for pressure}. For the third component we use that $p+\mathfrak{g}\sum_{\ell=1}^m\jump{\uprho}_\ell\eta_\ell\mathbbm{1}_{\p{0,a_\ell}}\in H^{1+s}\p{\Omega}$, paired with the usual trace theory and the jump calculation:
\begin{equation}
    \bjump{\ssum{k=1}{m}\tjump{\uprho}_k\eta_k\mathbbm{1}_{(0,a_k)}}_\ell=\ssum{j=\ell+1}{m}\jump{\uprho}_j\eta_j-\ssum{k=\ell}{m}\jump{\uprho}_k\eta_k=-\jump{\uprho}_\ell\eta_\ell
\end{equation}
to deduce the bounded inclusion
\begin{equation}
    \textstyle \jump{p}_\ell-\mathfrak{g}\jump{\uprho}_\ell\eta_\ell = \jump{p+\mathfrak{g}\sum_{k=1}^m\jump{\uprho}_k\eta_k\mathbbm{1}_{\p{0,a_k}}}_\ell   \in H^{1/2+s}\p{\Sigma_\ell}.
\end{equation}
Item two of Proposition~\ref{linear topological properties of container for free surface and free interface functions} tells us that $\sig_\ell\Delta\eta_\ell\in H^{1/2+s}\p{\Sigma_\ell}$ boundedly as well; hence the third component of $\Upupsilon_{\gam,\upsigma}$ is well-defined and continuous. Using again item two of Proposition~\ref{linear topological properties of container for free surface and free interface functions}, we learn that $\gam\pd_1\eta_\ell\in\dot{H}^{-1}\p{\Sigma_\ell}\cap H^{3/2+s}\p{\Sigma_\ell}$; moreover, thanks to Proposition~\ref{divergence_divergence} we have the bound
\begin{equation}
   \bsb{\m{Tr}_{\Sigma_\ell}u\cdot e_n+\gam\pd_1\eta-\int_{\p{0,a_\ell}}\grad\cdot u}_{\dot{H}^{-1}}\le\abs{\gam}\sb{\pd_1\eta_\ell}_{\dot{H}^{-1}}+2\pi\sqrt{a_\ell}\norm{u}_{L^2}.
\end{equation}
We thus conclude that $\Upupsilon_{\gam,\upsigma}\p{p,u,\tp{\eta_\ell}_{\ell=1}^m}\in\mathcal{Y}^s$ with $\norm{\Upupsilon_{\gam,\upsigma}\p{p,u,\tp{\eta_\ell}_{\ell=1}^m}}_{\mathcal{Y}^s}\lesssim\norm{\p{p,u,\tp{\eta_\ell}_{\ell=1}^m}}_{\mathcal{X}^s}$.

We next prove that $\Upupsilon_{\gam,\upsigma}$ is injective.  Suppose that $\p{p,u,\tp{\eta_\ell}_{\ell=1}^m} \in \m{ker}(\Upupsilon_{\gam,\upsigma})$. Fix $r\in\N^+$ and define $\tp{\psi_\ell}_{\ell=1}^m\in\bigcap_{t\in\R^+}\prod_{\ell=1}^m H^{-1/2+t}\p{\Sigma_\ell}$ via
\begin{equation}\label{hey hey hey and i do}
    \tp{\psi_\ell}_{\ell=1}^m=\tp{M_{\mathbbm{1}_{B\p{0,2^r}\setminus B\p{0,2^{-r}}}}(\mathfrak{g}\jump{\uprho}_{\ell}+\sig_\ell\Delta_\|)\eta_\ell}_{\ell=1}^m,
\end{equation}
where we recall that tangential Fourier multipliers are defined in Appendix~\ref{appendix on (tangential) multipliers}.  Now let  
\begin{equation}
    \textstyle\p{q,v} = {\upchi_{-\gamma}}^{-1}(0,\mathscr{O}(\psi_\ell e_n)_{\ell=1}^m) \in\bigcap_{t\in\R^+}\tsb{H^{1+t}(\Omega)\times{_0}H^{2+t}(\Omega;\mathbb{R}^n)}
\end{equation}
be the corresponding solution to the normal stress PDE in equation~\eqref{normal_stress pde}. Note that we have introduced the band-limited approximation, in part, so that the above application of ${\upchi_{-\gam}}^{-1}$, as defined in Proposition~\ref{weak solutions to the traveling stokes with stress boundary problem}, is well-defined.

Since $\Upupsilon_{\gam,\upsigma}\p{p,u,\tp{\eta_\ell}_{\ell=1}^m}=0$ we obtain the following string of identities by testing $u$ in the weak formulation for $q,v$ and integrating by parts (recall that $v$ has vanishing divergence): 
\begin{multline}\label{aaaa}
    -\gam\ssum{\ell=1}{m}\int_{\Sigma_\ell}\psi_\ell\pd_1\eta_\ell=\ssum{\ell=1}{m}\int_{\Omega_\ell}\f{\mu_\ell}{2}\mathbb{D}v:\mathbb{D}u+\gam\rho_\ell\pd_1v\cdot u=\ssum{\ell=1}{m}\int_{\Omega_\ell}\f{\mu_\ell}{2}\mathbb{D}u:\mathbb{D}v-\gam\rho_\ell\pd_1u\cdot v\\=\ssum{\ell=1}{m}\int_{\Omega_\ell}\f{\mu_\ell}{2}\mathbb{D}u:\mathbb{D}v-\gam\rho_\ell\pd_1u\cdot v-\int_{\Omega}\bp{p+\mathfrak{g}\ssum{k=1}{m}\jump{\uprho}_k\eta_k\mathbbm{1}_{\p{0,a_k}}}\grad\cdot v\\
    =-\ssum{\ell=1}{m}\int_{\Omega_\ell}S^{\upmu}\bp{p+\mathfrak{g}\ssum{k=1}{m}\jump{\uprho}_k\eta_k\mathbbm{1}_{\p{0,a_k}},u}:\grad v-\gam\rho_\ell\pd_1u\cdot v\\=\ssum{\ell=1}{m}\int_{\Omega_\ell}\bsb{\grad\cdot S^{\upmu}\bp{p+\mathfrak{g}\ssum{k=1}{m}\jump{\uprho}_k\eta_k\mathbbm{1}_{\p{0,a_k}},u}-\gam\rho_\ell\pd_1u}\cdot v\\+\ssum{\ell=1}{m}\int_{\Sigma_\ell}\bjump{S^{\upmu}\bp{p+\mathfrak{g}\ssum{k=1}{m}\jump{\uprho}_k\eta_k\mathbbm{1}_{\p{0,a_k}},u}e_n}_\ell\cdot v=\mathfrak{g}\ssum{\ell=1}{m}\int_{\Omega_\ell}\grad\bsb{\ssum{k=1}{m}\jump{\uprho}_k\eta_k\mathbbm{1}_{\p{0,a_k}}}\cdot v\\+\ssum{\ell=1}{m}\int_{\Sigma_\ell}\bjump{S^{\upmu}\bp{p+\mathfrak{g}\ssum{k=1}{m}\jump{\uprho}_k\eta_k\mathbbm{1}_{\p{0,a_k}},u}e_n}_\ell\cdot v.
\end{multline}
The above manipulations are justified by the fact that $p+\mathfrak{g}\sum_{k=1}^m\jump{\uprho}_k\eta_k\mathbbm{1}_{(0,a_k)}\in H^{1+s}(\Omega)$ (Definition~\ref{domain of multilayer traveling Stokes with gravity-capillary boundary and jump conditions solution operator}) and that $\grad\ssb{\sum_{k=1}^m\jump{\uprho}_k\eta_k\mathbbm{1}_{(0,a_k)}}\in H^s(\Omega;\R^n)$ (Proposition~\ref{linear topological properties of container for pressure}, item 3).

Proposition~\ref{commutes with tangential multipliers 1}, together with the fact that $(\psi_\ell)_{\ell=1}^m$, defined in~\eqref{hey hey hey and i do}, is band limited yields the implications:
\begin{multline}
    (\psi_\ell)_{\ell=1}^m=(M_{\mathbbm{1}_{B(0,2^r)\setminus B(0,2^{-r})}}\psi_\ell)_{\ell=1}^m\imp\m{Tr}_{\Sigma_\ell}v=M_{\mathbbm{1}_{B(0,2^r)\setminus B(0,2^{-r})}}\m{Tr}_{\Sigma_\ell}v\\\imp\m{supp}\mathscr{F}\tsb{\m{Tr}_{\Sigma_\ell}v}\subseteq\Bar{B(0,2^r)}\setminus B(0,2^{-r}).
\end{multline}
Hence, Proposition~\ref{linear topological properties of container for pressure}, item four, and Proposition~\ref{linear topological properties of container for free surface and free interface functions}, item six, may be invoked to see that
\begin{equation}\label{L1_inclusions}
    \mathscr{F}\tsb{\jump{S^{\upmu}(p,u)e_n}_{\ell}}\cdot\mathscr{F}\tsb{\m{Tr}_{\Sigma_\ell}v},\;\mathscr{F}\bsb{\bjump{\mathfrak{g}\ssum{k=1}{m}\jump{\uprho}_k\eta_k\mathbbm{1}_{\p{0,a_k}}}_\ell}\cdot\mathscr{F}\tsb{\m{Tr}_{\Sigma_\ell}v\cdot e_n}\in L^1(\R^{n-1};\C),
\end{equation}
where we recall that over $\C$ we take the inner-product $\cdot$ to be sesquilinear on vectors with linearity in the left argument.  With these inclusions in hand we can simplify the $\ell^{\m{th}}$ term in the final series of~\eqref{aaaa} Parseval's theorem:
\begin{multline}\label{bbbb}
    \int_{\Sigma_\ell}\bjump{S^{\upmu}\bp{p+\mathfrak{g}\ssum{k=1}{m}\jump{\uprho}_k\eta_k\mathbbm{1}_{\p{0,a_k}},u}e_n}_\ell\cdot v=\int_{\R^{n-1}}\mathscr{F}\bsb{\bjump{S^{\upmu}\bp{p+\mathfrak{g}\ssum{k=1}{m}\jump{\uprho}_k\eta_k\mathbbm{1}_{\p{0,a_k}},u}e_n}_\ell}\cdot\mathscr{F}\tsb{\m{Tr}_{\Sigma_\ell}v}\\
    =\int_{\R^{n-1}}\mathscr{F}\tsb{\tjump{{S}^{\upmu}\p{p,u}e_n}_\ell}\cdot\mathscr{F}\tsb{\m{Tr}_{\Sigma_\ell}v}+\int_{\R^{n-1}}\mathscr{F}\bsb{\bjump{\mathfrak{g}\ssum{k=1}{m}\jump{\uprho}_k\eta_k\mathbbm{1}_{\p{0,a_k}}}_\ell}\cdot\mathscr{F}\tsb{\m{Tr}_{\Sigma_\ell}v\cdot e_n}\\
    =\int_{\R^{n-1}}\mathscr{F}\tsb{(\mathfrak{g}\jump{\uprho}_\ell+\sig_\ell\Delta_\|)\eta_\ell}\cdot\mathscr{F}\tsb{\m{Tr}_{\Sigma_\ell}v\cdot e_n}-\mathfrak{g}\jump{\uprho_\ell}\int_{\R^{n-1}}\mathscr{F}\sb{\eta_\ell}\cdot\mathscr{F}\sb{\m{Tr}_{\Sigma_\ell}v\cdot e_n},
\end{multline}
where the decomposition into the two integrands in the middle line above is justified by~\eqref{L1_inclusions}.

In the final term in equation~\eqref{bbbb}, we would like to use the vanishing divergence of and $\Sigma_0$-trace of $v$ to simplify further. We first compute 
\begin{multline}\label{dig a pony}
    \mathscr{F}[\m{Tr}_{\Sigma_\ell}v\cdot e_n](\xi)=\mathscr{F}\bsb{\int_{(0,a_\ell)}\pd_nv(\cdot,y)\;\m{d}y}(\xi)\\=-\mathscr{F}\bsb{\int_{(0,a_\ell)}\ssum{j=1}{n-1}\pd_j(v(\cdot,y)\cdot e_j)\;\m{d}y}(\xi)=\int_{\p{0,a_\ell}}\mathscr{F}\sb{v\p{\cdot,y}}(\xi)\cdot 2\pi\ii\p{\xi,0}\;\m{d}y.
\end{multline}
Using~\eqref{dig a pony}, we may then rewrite
\begin{multline}\label{cccc}
    -\mathfrak{g}\jump{\uprho_\ell}\int_{\R^{n-1}}\mathscr{F}\sb{\eta_\ell}\cdot\mathscr{F}\sb{\m{Tr}_{\Sigma_\ell}v\cdot e_n}=-\mathfrak{g}\jump{\uprho_\ell}\int_{\R^{n-1}}\mathscr{F}\tsb{\eta_\ell}(\xi)\cdot\bsb{\int_{\p{0,a_\ell}}\mathscr{F}\sb{v\p{\cdot,y}}(\xi)\cdot 2\pi\ii\p{\xi,0}\;\m{d}y}\;\m{d}\xi\\=-\mathfrak{g}\jump{\uprho}_\ell\int_{\p{0,a_m}}\int_{\R^{n-1}}\mathbbm{1}_{\p{0,a_\ell}}\mathscr{F}\tsb{\grad_{\|}\eta_\ell}\cdot\mathscr{F}\sb{v}.
\end{multline}
Summing over $\ell\in\cb{1,\dots,m}$ in~\eqref{bbbb} and implementing~\eqref{cccc} yields the identity
\begin{multline}\label{dddd}
   \ssum{\ell=1}{m}\int_{\Sigma_\ell}\bjump{S^{\upmu}\bp{p+\mathfrak{g}\ssum{k=1}{m}\jump{\uprho}_k\eta_k\mathbbm{1}_{\p{0,a_k}},u}}_\ell\cdot v=\ssum{\ell=1}{m}\int_{\R^{n-1}}\mathscr{F}\tsb{(\mathfrak{g}\jump{\uprho}_\ell+\sig_\ell\Delta_\|)\eta_\ell}\cdot\mathscr{F}\tsb{\m{Tr}_{\Sigma_\ell}v\cdot e_n}\\
   -\mathfrak{g}\ssum{\ell=1}{m}\int_{\Omega_\ell}\grad\bsb{\ssum{k=1}{m}\jump{\uprho}_k\eta_k\mathbbm{1}_{\p{0,a_k}}}\cdot v,
\end{multline}
and upon substituting~\eqref{dddd} into~\eqref{aaaa} we deduce the equality
\begin{equation}\label{eeee}
    -\gam\ssum{\ell=1}{m}\int_{\Sigma_\ell}\psi_\ell\pd_1\eta_\ell=\ssum{\ell=1}{m}\int_{\R^{n-1}}\mathscr{F}\tsb{(\mathfrak{g}\jump{\uprho}_\ell+\sig_\ell\Delta_\|)\eta_\ell}\cdot\mathscr{F}\tsb{\m{Tr}_{\Sigma_\ell}v\cdot e_n}.
\end{equation}
Next we use the definition of $\tp{\psi_\ell}_{\ell=1}^m$ on the left hand side and observe that
\begin{multline}
    -\gam\ssum{\ell=1}{m}\int_{\Sigma_\ell}\psi_\ell\pd_1\eta_\ell=-\gam\ssum{\ell=1}{m}\int_{\Sigma_\ell}\mathfrak{g}\jump{\uprho}_\ell M_{\mathbbm{1}_{B\p{0,2^r}\setminus B\p{0,2^{-r}}}}\eta_\ell\pd_1M_{\mathbbm{1}_{B\p{0,2^r}\setminus B\p{0,2^{-r}}}}\eta_\ell\\+\gam\ssum{\ell=1}{m}\int_{\Sigma_\ell}\sig_\ell\grad M_{\mathbbm{1}_{B\p{0,2^r}\setminus B\p{0,2^{-r}}}}\eta_\ell\cdot\pd_1\grad M_{\mathbbm{1}_{B\p{0,2^r}\setminus B\p{0,2^{-r}}}}\eta_\ell=0.
\end{multline}
For the right hand side of~\eqref{eeee} we use first Proposition~\ref{diagonalization of normal stress to normal dirichlet} and then item two of Theorem~\ref{finer asymptotic development of the multiplier of the normal stress to normal Dirichlet pseudodifferential operator} to bound
\begin{multline}\label{ffff}
    0=\ssum{\ell=1}{m}\int_{\R^{n-1}}\mathscr{F}\tsb{(\mathfrak{g}\jump{\uprho}_\ell+\sig_\ell\Delta_\|)\eta_\ell}\cdot\mathscr{F}\tsb{\m{Tr}_{\Sigma_\ell}v\cdot e_n}=\int_{\R^{n-1}}\m{Re}\ssb{\mathscr{F}\tsb{((\mathfrak{g}\tjump{\uprho}_\ell+\sig_\ell\Delta_{\|})\eta_\ell)_{\ell=1}^m}\cdot\bf{n}_\gam\mathscr{F}\tsb{(\psi_\ell)_{\ell=1}^m}}\\\gtrsim\int_{B\p{0,2^r}\setminus B\p{0,2^{-r}}}\min\{\abs{\xi}^2,\abs{\xi}^{-1}\}\ssum{\ell=1}{m}\tabs{\mathscr{F}\tsb{(\mathfrak{g}\jump{\uprho}_\ell+\sig_\ell\Delta_\|)\eta_\ell}\p{\xi}}^2\;\m{d}\xi.
\end{multline}
Hence the right hand side above is zero for all $r\in\N^+$. This proves that for all $\ell\in\cb{1,\dots,m}$ we have $\eta_\ell=0$. Thus, we have the inclusion $\p{p,u}\in H^{1+s}\p{\Omega}\times {_0}H^{2+s}\p{\Omega;\R^n}$.  The space on the right is the domain for $\Upphi_\gam$.  Since $(\eta_\ell)_{\ell=1}^m=0$ and $\Upupsilon_{\gam,\upsigma}(p,u,(\eta_\ell)_{\ell=1}^m)=0$ we have that $\Upphi_\gam\p{p,u}=0$.  In Theorem~\ref{isomorphism associated to multilayer traveling stokes with stress boundary conditions} we showed that $\Upphi_\gam$ is an isomorphism, so $u=0$ and $p=0$.  Hence, $\Upupsilon_{\gam,\upsigma}$ is an injection.
\end{proof}

\subsection{Isomorphism in the case with surface tension}\label{section on isomorphism in the case with surface tension}

In  this subsection we characterize the solvability of~\eqref{multilayer traveling Stokes with gravity-capillary boundary and jump conditions} for data belonging to the space $\mathcal{Y}^s$ and positive surface tensions, i.e. $\cb{\sigma_\ell}_{\ell=1}^n\subset\R^+$. Before we state and prove the relevant isomorphism theorem, we show how the data determine the free surface functions.

\begin{lem}[Determination of free surface functions: surface tension case]\label{data determines free surface - free interface functions}
If $\gam\in\R\setminus\cb{0}$, $\cb{\sig_\ell}_{\ell=1}^m\subset\R^+$, and $\p{g,f,\tp{k_\ell}_{\ell=1}^m,\tp{h_\ell}_{\ell=1}^m}\in\mathcal{Y}^s$ for some $s\in\R^+\cup\cb{0}$ then there exists $\tp{\eta_\ell}_{\ell=1}^m\in\tp{\mathcal{H}^{5/2+s}\tp{\R^{n-1}}}^m$ such that the modified data tuple
\begin{multline}\label{modified data tuple 1}
    \textstyle\tp{g,f+\mathfrak{g}\sum_{\ell=1}^m\jump{\uprho}_\ell\grad\eta_\ell\mathbbm{1}_{\p{0,a_\ell}},\tp{k_\ell+\sig_\ell\Delta_\|\eta_\ell e_n}_{\ell=1}^m,\tp{h_\ell-\gam\pd_1\eta_\ell}_{\ell=1}^m}\\\textstyle\in H^{1+s}\p{\Omega}\times H^{s}\p{\Omega;\R^n}\times\prod_{\ell=1}^mH^{1/2+s}\p{\Sigma_\ell;\R^n}\times\prod_{\ell=1}^mH^{3/2+s}\p{\Sigma_\ell}    
\end{multline}
belongs to the range of $\Uppsi_\gam$, where this latter operator is from Theorem~\ref{isomorphism associated to overdetermined multilayer traveling stokes}. Moreover, we have the universal estimate:
\begin{equation}\label{across the universe}
    \tnorm{\tp{\eta_\ell}_{\ell=1}^m}_{\mathcal{H}^{5/2+s}}\lesssim\norm{\p{g,f,\tp{k_\ell}_{\ell=1}^m,\tp{h_\ell}_{\ell=1}^m}}_{\mathcal{Y}^s}.
\end{equation}
\end{lem}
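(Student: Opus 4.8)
Write $\upvarphi:=\mathscr{K}^\gam\p{g,f,\tp{k_\ell}_{\ell=1}^m,\tp{h_\ell}_{\ell=1}^m}$, so by Theorem~\ref{theorem on measurement of compatibility} we have $\upvarphi\in\prod_{\ell=1}^m\dot{H}^{-1}\p{\Sigma_\ell}\cap H^{3/2+s}\p{\Sigma_\ell}$ with $\tnorm{\upvarphi}_{\dot{H}^{-1}\cap H^{3/2+s}}\lesssim\tnorm{\p{g,f,\tp{k_\ell}_{\ell=1}^m,\tp{h_\ell}_{\ell=1}^m}}_{\mathcal{Y}^s}$.  The plan is to build $\upeta=\tp{\eta_\ell}_{\ell=1}^m$ by solving a pseudodifferential system on $\R^{n-1}$ forced by $\upvarphi$, and to read off~\eqref{across the universe} from the solution formula.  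By Theorem~\ref{isomorphism associated to overdetermined multilayer traveling stokes} the modified tuple~\eqref{modified data tuple 1} lies in the range of $\Uppsi_\gam$ iff its weak form lies in $\overset{\leftarrow}{\m{ker}}\mathscr{H}^\gam$, and by Proposition~\ref{measurement of compatibility} this happens iff $\mathscr{I}^\gam$ of that weak form vanishes.  By linearity of $\mathscr{I}^\gam$ and $\mathscr{P}$, this quantity equals $\upvarphi$ plus the $\upeta$-contribution.

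To identify the $\upeta$-contribution, fix normal-stress data $\tp{\psi_\ell}_{\ell=1}^m$ and put $\p{q,v}={\upchi_{-\gam}}^{-1}\p{0,\mathscr{O}\tp{\psi_\ell e_n}_{\ell=1}^m}$.  Pairing $v$ against $\mathfrak{g}\sum_\ell\jump{\uprho}_\ell\grad\eta_\ell\mathbbm{1}_{\p{0,a_\ell}}$ and integrating by parts first in the vertical and then in the horizontal variable---valid because $\grad\cdot v=0$ and $\m{Tr}_{\Sigma_0}v=0$, exactly as in the proof of Proposition~\ref{well-definedness and injectivity}---rewrites it as $\mathfrak{g}\sum_\ell\jump{\uprho}_\ell\tbr{\eta_\ell,\m{Tr}_{\Sigma_\ell}v\cdot e_n}$.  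Adding the capillary terms $\sum_\ell\sig_\ell\tbr{\Delta_\|\eta_\ell,\m{Tr}_{\Sigma_\ell}v\cdot e_n}$, using $\m{Tr}_{\Sigma_\ell}v\cdot e_n=\tsb{\upnu_\gam\tp{\psi_k}_{k=1}^m}_\ell$ (Definition~\ref{definition of the normal stress to normal dirichlet psido}), passing to Fourier variables via Proposition~\ref{diagonalization of normal stress to normal dirichlet}, and transferring the multiplier to the other factor with the adjoint identity $\bf{n}_\gam\p{\xi}^\ast=\bf{n}_{-\gam}\p{\xi}$ of Proposition~\ref{adjoint of the normal stress to normal Dirichlet multiplier}, one finds that the $\upeta$-contribution to $\mathscr{I}^\gam$ has Fourier transform $-\bf{p}_\gam\p{\xi}\mathscr{F}\sb{\upeta}\p{\xi}$, with $\bf{p}_\gam$ the symbol from the introduction.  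Thus the vanishing of $\mathscr{I}^\gam$ of the modified tuple is equivalent to the $\Psi$DEs~\eqref{intro psiDE} with this $\upvarphi$.

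It remains to solve~\eqref{intro psiDE} inside $\tp{\mathcal{H}^{5/2+s}\tp{\R^{n-1}}}^m$, which is the heart of the matter.  Set $d_\ell\p{\xi}=-\mathfrak{g}\jump{\uprho}_\ell+4\pi^2\abs{\xi}^2\sig_\ell$; since $\jump{\uprho}_\ell<0$ and $\sig_\ell>0$, $d_\ell$ is bounded above and below on $B\p{0,1}$, and $d_\ell\p{\xi}\asymp\abs{\xi}^2$ for $\abs{\xi}\ge1$.  Factor $\bf{p}_\gam\p{\xi}=\bf{m}_\gam\p{\xi}\,\m{diag}\p{d_1\p{\xi},\dots,d_m\p{\xi}}$ with $\bf{m}_\gam\p{\xi}=\bf{n}_{-\gam}\p{\xi}-2\pi\ii\gam\xi_1\,\m{diag}\p{d_1\p{\xi}^{-1},\dots,d_m\p{\xi}^{-1}}$.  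The correction term in $\bf{m}_\gam$ is purely imaginary, hence anti-Hermitian, so $\m{Re}\tsb{\bf{m}_\gam\p{\xi}a\cdot a}=\m{Re}\tsb{\bf{n}_{-\gam}\p{\xi}a\cdot a}\gtrsim\min\tcb{\abs{\xi}^2,\abs{\xi}^{-1}}\abs{a}^2$ by item (2) of Theorem~\ref{finer asymptotic development of the multiplier of the normal stress to normal Dirichlet pseudodifferential operator}; with item (1) this makes $\bf{m}_\gam\p{\xi}$, and hence $\bf{p}_\gam\p{\xi}$, invertible for a.e.\ $\xi$.  I then set $\mathscr{F}\sb{\upeta}:=\bf{p}_\gam^{-1}\mathscr{F}\sb{\upvarphi}$; the conjugation symmetry needed for the Fourier reconstruction property (item (4) of Proposition~\ref{linear topological properties of container for free surface and free interface functions}) follows from $\bf{n}_{-\gam}\p{-\xi}=\Bar{\bf{n}_{-\gam}\p{\xi}}$ and the $\R$-valuedness of $\upvarphi$.

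Finally one verifies $\upeta\in\tp{\mathcal{H}^{5/2+s}\tp{\R^{n-1}}}^m$ with the bound~\eqref{across the universe}.  For $\abs{\xi}\ge1$, $d_\ell^{-1}\lesssim\abs{\xi}^{-2}$ and $\babs{\bf{m}_\gam\p{\xi}^{-1}}\lesssim\abs{\xi}$, so $\babs{\mathscr{F}\sb{\eta_\ell}\p{\xi}}\lesssim\abs{\xi}^{-1}\babs{\mathscr{F}\sb{\upvarphi}\p{\xi}}$ and the high-frequency part of $\tnorm{\upeta}_{\mathcal{H}^{5/2+s}}$ is controlled by $\tnorm{\upvarphi}_{H^{3/2+s}}$.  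For $\abs{\xi}\le1$, $d_\ell\asymp1$, and besides the real-part bound $\gtrsim\abs{\xi}^2$ one also uses the imaginary part of $\bf{m}_\gam$: the correction contributes $\asymp\abs{\gam}\abs{\xi_1}\abs{a}^2$ while $\babs{\bf{n}_{-\gam}\p{\xi}}\lesssim\abs{\xi}^2$, so on $\abs{\xi_1}\gtrsim\abs{\xi}^2$ one gains a lower bound $\gtrsim\abs{\xi_1}$, yielding $\babs{\mathscr{F}\sb{\eta_\ell}\p{\xi}}\lesssim\p{\abs{\xi}^2+\abs{\xi_1}}^{-1}\babs{\mathscr{F}\sb{\upvarphi}\p{\xi}}$ for $\abs{\xi}\le1$---this is where $\gam\ne0$ is used.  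Because $\p{\abs{\xi}^2+\abs{\xi_1}}^2\ge\f12\p{\abs{\xi}^4+\xi_1^2}$, the anisotropic low-frequency weight $\abs{\xi}^{-2}\p{\xi_1^2+\abs{\xi}^4}$ of $\mathcal{H}^{5/2+s}$ exactly absorbs the singular factor, leaving the low-frequency part controlled by $\tsb{\upvarphi}_{\dot{H}^{-1}}$.  Combining the two regimes with Theorem~\ref{theorem on measurement of compatibility} gives~\eqref{across the universe}; item (3) of Proposition~\ref{linear topological properties of container for free surface and free interface functions}, together with the definition of $\mathcal{H}^{5/2+s}$, gives the regularity asserted in~\eqref{modified data tuple 1}, and by construction $\mathscr{I}^\gam$ of its weak form vanishes, so Proposition~\ref{measurement of compatibility} and Theorem~\ref{isomorphism associated to overdetermined multilayer traveling stokes} place it in the range of $\Uppsi_\gam$.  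The main obstacle is this symbol analysis---especially producing the matching anisotropic bound for $\bf{p}_\gam^{-1}$ near $\xi=0$---which hinges on $\gam\ne0$ and on the precise low- and high-frequency asymptotics of $\bf{n}_\gam$ from Theorem~\ref{finer asymptotic development of the multiplier of the normal stress to normal Dirichlet pseudodifferential operator}.
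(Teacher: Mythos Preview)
Your proof is correct and follows the same overall strategy as the paper: reduce to the $\Psi$DE $\bf{p}_\gam\mathscr{F}[\upeta]=\mathscr{F}[\upvarphi]$ with $\upvarphi=\mathscr{K}^\gam$ of the data, invert the symbol, and read off both~\eqref{across the universe} and membership in $\overset{\leftarrow}{\m{ker}}\mathscr{H}^\gam$ via the Fourier-side integration by parts that shifts the bulk gravity term to the boundary.

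The one genuine tactical difference is in the low-frequency symbol bound. You factor $\bf{p}_\gam=\bf{m}_\gam\,\m{diag}(d_\ell)$ and analyze $\bf{m}_\gam a\cdot a$ directly: the anti-Hermitian correction leaves $\m{Re}[\bf{m}_\gam a\cdot a]=\m{Re}[\bf{n}_{-\gam}a\cdot a]\gtrsim|\xi|^2|a|^2$, and then a case split on $|\xi_1|$ versus $|\xi|^2$ extracts the extra $|\xi_1|$ gain from the imaginary part. The paper instead tests $\bf{p}_\gam b$ against $\bf{n}_\gam^\ast\bf{o}b$ and exploits that $\bf{n}_\gam\bf{o}\bf{n}_\gam^\ast$ is self-adjoint, so that the first term in the imaginary part vanishes identically and one gets $|\bf{p}_\gam b|\gtrsim|\gam\xi_1||b|$ with no case split. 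Both routes yield the same two-sided estimate $|\bf{p}_\gam b|^2\asymp(\xi_1^2+|\xi|^4)|b|^2$ on $B(0,1)$; your factorization is slightly more transparent, while the paper's testing trick is a bit slicker in that it isolates the $\xi_1$ contribution without competing against $\m{Im}[\bf{n}_{-\gam}a\cdot a]$.
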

\begin{proof}
We divide the proof into three steps.

\textbf{Step 1: Establishing invertibility of a matrix field.} Let 
\begin{equation}
\bf{o}_0=-\mathfrak{g}\m{diag}\p{\jump{\uprho}_1,\dots,\jump{\uprho}_m},\;\bf{o}_1=\m{diag}\p{\sig_1,\dots,\sig_m} \in \R^{m \times m},
\end{equation}
and for $\xi\in\R^{n-1}$ we set $\bf{o}\p{\xi}=\bf{o}_0+4\pi^2\abs{\xi}^2\bf{o}_1 \in \R^{m \times m}$ and
\begin{equation}
    \bf{p}_\gam\p{\xi}=\bf{n}_\gam\p{\xi}^\ast\bf{o}\p{\xi}-2\pi\ii\gam\xi_1I_{m\times m}=\bf{n}_{-\gam}\p{\xi}\bf{o}\p{\xi}-2\pi\ii\gam\xi_1I_{m\times m} \in \C^{m \times m},
\end{equation}
where $\textbf{n}_\gam(\xi)$ is as defined in Proposition~\ref{diagonalization of normal stress to normal dirichlet} and satisfies $\bf{n}_{\gam}(\xi)^\ast=\bf{n}_{-\gam}(\xi)$ by Proposition~\ref{adjoint of the normal stress to normal Dirichlet multiplier}. We claim that there exists a constant $C\in\R^+$, depending only on the physical parameters and $\gamma$, such that for all $b\in\C^m$ and a.e. $\xi\in\R^{n-1}$ we have that
\begin{equation}\label{alphaalpha}
    \textstyle C^{-1}\abs{\bf{p}_\gam\p{\xi}b}^2\le\tsb{\tp{{\xi_1}^2+\abs{\xi}^4}\mathbbm{1}_{B\p{0,1}}\p{\xi}+\abs{\xi}^2\mathbbm{1}_{\R^{n-1}\setminus B\p{0,1}}\p{\xi}}\abs{b}^2\le C\abs{\bf{p}_\gam\p{\xi}b}^2.
\end{equation}

We begin the proof of the claim by recalling that there is a full measure set $E\subseteq\R^{n-1}$ such that if $\xi\in E$ then the estimates  from Theorem~\ref{finer asymptotic development of the multiplier of the normal stress to normal Dirichlet pseudodifferential operator} hold for the matrix $\bf{n}_\gam\p{\xi}$.   Let $\xi\in E\setminus B\p{0,1}$ and $b\in\C^m$. Then from Theorem~\ref{finer asymptotic development of the multiplier of the normal stress to normal Dirichlet pseudodifferential operator} we deduce that
\begin{equation}\label{alphabeta}
    \textstyle\abs{\bf{p}_\gam\p{\xi}b}^2\lesssim\ssb{{\xi_1}^2+\abs{\xi}^2}\abs{b}^2\lesssim\abs{\xi}^2\abs{b}^2.
\end{equation}
Similarly, since $\bf{o}\p{\xi}$ is self-adjoint, we have that $2\pi i \gamma \xi_1  b\cdot \bf{o}(\xi) b$ is purely imaginary, so again Theorem~\ref{finer asymptotic development of the multiplier of the normal stress to normal Dirichlet pseudodifferential operator} allows us to bound
\begin{multline}\label{betaalpha}
    \textstyle\abs{\xi}^2\abs{\bf{p}_\gam\p{\xi}b}\abs{b}\gtrsim\m{Re}\sb{\bf{p}_\gam\p{\xi}b\cdot\bf{o}\p{\xi}b}=\m{Re}\sb{\bf{n}_\gam\p{\xi}^\ast\bf{o}\p{\xi}b\cdot\bf{o}\p{\xi}b}\\\gtrsim\abs{\xi}^{-1}\abs{\bf{o}\p{\xi}b}^2=\abs{\xi}^{-1}\ssum{\ell=1}{m}(-\mathfrak{g}\tjump{\uprho}_\ell+4\pi^2\abs{\xi}^2\sig_\ell)^2|b\cdot e_\ell|^2\gtrsim\abs{\xi}^3\abs{b}^2.
\end{multline}
Combining~\eqref{alphabeta} and~\eqref{betaalpha} gives estimate~\eqref{alphaalpha} for $\xi\in E\setminus B\p{0,1}$. 

On the other hand, if $\xi\in B\p{0,1}\cap E$ and $b\in\C^m$, we once again appeal to Theorem~\ref{finer asymptotic development of the multiplier of the normal stress to normal Dirichlet pseudodifferential operator} to arrive at the upper bound 
\begin{equation}\label{betabeta}
    \textstyle\abs{\bf{p}_\gam\p{\xi}b}^2\lesssim\sp{{\xi_1}^2+\abs{\xi}^4}\abs{b}^2.
\end{equation}
For the matching lower bound we combine the following estimates wherein we tacitly use: 1) for each $\ell\in\cb{1,\dots,m}$ $-\mathfrak{g}\jump{\uprho}_\ell>0$ and $\abs{\gam}>0$, and 2) $\bf{n}_\gam\bf{o}{\bf{n}_\gam}^\ast$ is a self-adjoint matrix field. First:
\begin{multline}\label{deltadelta}
    \tabs{\xi}^2\abs{\bf{p}_\gam(\xi)b}\abs{b}\gtrsim\tabs{\bf{o}(\xi)\bf{p}_\gam(\xi)b\cdot{\bf{n}_\gam(\xi)}^\ast\bf{o}(\xi)b}=\tabs{\bf{n}_\gam(\xi)\bf{o}(\xi){\bf{n}_\gam(\xi)}^{\ast}\bf{o}(\xi)b\cdot\bf{o}(\xi)b-2\pi\ii\gam\xi_1\bf{o}(\xi)b\cdot{\bf{n}_\gam(\xi)}^{\ast}\bf{o}(\xi)b}\\
    \ge\tabs{\m{Im}\tsb{\bf{n}_\gam(\xi)\bf{o}(\xi){\bf{n}_\gam(\xi)}^{\ast}\bf{o}(\xi)b\cdot\bf{o}(\xi)b-2\pi\ii\gam\xi_1\bf{o}(\xi)b\cdot{\bf{n}_\gam(\xi)}^{\ast}\bf{o}(\xi)b}}\\=2\pi\tabs{\gam\xi_1}\tabs{\m{Re}\tsb{\bf{n}_\gam(\xi)\bf{o}(\xi)b\cdot\bf{o}(\xi)b}}
    \gtrsim2\pi\abs{\gam\xi_1}\abs{\xi}^2\abs{\bf{o}(\xi)b}^2\gtrsim \abs{\gam\xi_1}\abs{\xi}^2\abs{b}^2
\end{multline}
Second:
\begin{equation}\label{deltadeltadelta}
    \abs{\bf{p}_\gam\p{\xi}b}\abs{b}\gtrsim\m{Re}\sb{\bf{p}_\gam\p{\xi}b\cdot\bf{o}\p{\xi}b}=\m{Re}\tsb{\bf{n}_\gam\p{\xi}^\ast\bf{o}\p{\xi}b\cdot\bf{o}\p{\xi}b}\gtrsim\abs{\xi}^2\abs{\bf{o}\p{\xi}b}^2\gtrsim\abs{\xi}^2\abs{b}^2.
\end{equation}
Estimates~\eqref{betabeta}, \eqref{deltadelta}, and~\eqref{deltadeltadelta} give~\eqref{alphaalpha} in the remaining cases.

\textbf{Step 2: Construction of the free surface functions.} Given $\p{g,f,\tp{k_\ell}_{\ell=1}^m,\tp{h_\ell}_{\ell=1}^m}\in\mathcal{Y}^s$ we propose to define, via item three of Proposition~\ref{linear topological properties of container for free surface and free interface functions}, $\tp{\eta_{\ell}}_{\ell=1}^m\in\tp{\mathcal{H}^{5/2+s}\tp{\R^{n-1}}}^m$ through
\begin{equation}
    \textstyle\mathscr{F}\sb{\tp{\eta_\ell}_{\ell=1}^m}={\bf{p}_\gam}^{-1}\mathscr{F}\sb{\mathscr{K}^\gam\p{g,f,\tp{k_\ell}_{\ell=1}^m,\tp{h_\ell}_{\ell=1}^m}}.
\end{equation}
    Recall that $\mathscr{K}^\gam$ is the operator from Theorem~\ref{theorem on measurement of compatibility}. It is clear that since $\bf{p}_\gam\p{-\xi}=\Bar{\bf{p}_\gam\p{\xi}}$ (this realness assertion follows from that of $\bf{n}_\gam$ - see Propositions~\ref{diagonalization of normal stress to normal dirichlet} and~\ref{characterizations of real-valued tempered distruibutions} ) for a.e. $\xi$ then the above assignment will define a real-valued tempered distribution provided it defines a tempered distribution in the first place. For the latter to hold we need only observe that (note the use of inequality~\eqref{alphaalpha} and continuity of $\mathscr{K}^\gam$)
\begin{multline}
    \norm{(\eta_\ell)_{\ell=1}^m}_{\mathcal{H}^{5/2+s}}^2=\int_{\R^{n-1}}\tsb{\abs{\xi}^{-2}\sp{{\xi_1}^2+\abs{\xi}^4}\mathbbm{1}_{B\p{0,1}}\p{\xi}+\abs{\xi}^{5+2s}\mathbbm{1}_{\R^{n-1}\setminus B\p{0,1}}\p{\xi}}\abs{\mathscr{F}\sb{\tp{\eta_\ell}_{\ell=1}^m}\p{\xi}}^2\;\m{d}\xi\\
    \lesssim\int_{\R^{n-1}}\max\{\abs{\xi}^{-2},\abs{\xi}^{3+2s}\}\tabs{\mathscr{F}\tsb{\mathscr{K}^\gam\tp{g,f,\tp{k_\ell}_{\ell=1}^m,\tp{h_\ell}_{\ell=1}^m}}\p{\xi}}^2\;\m{d}\xi\lesssim\tnorm{\p{g,f,\tp{k_\ell}_{\ell=1}^m,\tp{h_\ell}_{\ell=1}^m}}_{\mathcal{Y}^s}.
\end{multline}
This gives~\eqref{across the universe}.

\textbf{Step 3: Modification of the data.} To show that the modified data tuple in equation~\eqref{modified data tuple 1} belongs to the range of $\Uppsi_\gam$ we use the second item in the equivalence of Theorem~\ref{isomorphism associated to overdetermined multilayer traveling stokes}; so let $F\in({_0}H^1\p{\Omega;\R^n})^\ast$ be defined as $F=\mathscr{P}(f,(k_\ell)_{\ell=1}^m)$, for $\mathscr{P}$ as in Definition~\ref{strong to weak conversion notation}. Using the definitions of $\tp{\eta_\ell}_{\ell=1}^m$ and the mappings $\mathscr{H}^\gam$,$\mathscr{I}^\gam$, and $\mathscr{K}^\gamma$ (Definition~\ref{compatibility form and overdetermined data spaces}, Proposition~\ref{measurement of compatibility}, and Theorem~\ref{theorem on measurement of compatibility}, respectively), for any $\tp{\psi_\ell}_{\ell=1}^m\in\prod_{\ell=1}^mH^{-1/2}\p{\Sigma_\ell}$ we may compute
\begin{multline}\label{a long computation}
    \mathscr{H}^\gam\tsb{\tp{g,F,\tp{h_\ell}_{\ell=1}^m},\tp{\psi_\ell}_{\ell=1}^m}=\tbr{\tp{\psi_\ell}_{\ell=1}^m,\mathscr{I}^\gam\tp{g,F,\tp{h_\ell}_{\ell=1}^m}}_{H^{-1/2},H^{1/2}}\\=\tbr{\tp{\psi_\ell}_{\ell=1}^m,\mathscr{K}^\gam\tp{g,f,(k_\ell)_{\ell=1}^m,\tp{h_\ell}_{\ell=1}^m}}_{H^{-1/2},H^{1/2}}=\int_{\R^{n-1}}\mathscr{F}\tsb{\tp{\psi_\ell}_{\ell=1}^m}\cdot\bf{p}_\gam\mathscr{F}\tsb{\tp{\eta_\ell}_{\ell=1}^m}\\=\int_{\R^{n-1}}\bf{n}_\gam\p{\xi}\mathscr{F}\tsb{\tp{\psi_\ell}_{\ell=1}^m}\p{\xi}\cdot4\pi^2\tabs{\xi}^2\bf{o}_1\mathscr{F}\tsb{\tp{\eta_\ell}_{\ell=1}^m}\p{\xi}\;\m{d}\xi
    -\int_{\R^{n-1}}\mathscr{F}\tsb{\tp{\psi_\ell}_{\ell=1}^m}\p{\xi}\cdot2\pi\ii\gam\xi_1\mathscr{F}\tsb{\tp{\eta_\ell}_{\ell=1}^m}\tp{\xi}\;\m{d}\xi\\+\int_{\R^{n-1}}\bf{n}_\gam\mathscr{F}\tsb{\tp{\psi_\ell}_{\ell=1}^m}\cdot\bf{o}_0\mathscr{F}\tsb{\tp{\eta_\ell}_{\ell=1}^m}.
\end{multline}
For the first two terms after the last equality above we may use item two of Proposition~\ref{linear topological properties of container for free surface and free interface functions} to justify the application of $\mathscr{F}$'s unitary properties; we also recall Proposition~\ref{diagonalization of normal stress to normal dirichlet} which states that $\bf{n}_\gam$ is a spectral representation of $\upnu_\gam$. Hence,
\begin{equation}\label{bird 1}
        \int_{\R^{n-1}}\bf{n}_\gam\p{\xi}\mathscr{F}\tsb{\tp{\psi_\ell}_{\ell=1}^m}\p{\xi}\cdot4\pi^2\abs{\xi}^2\bf{o}_1\mathscr{F}\tsb{\tp{\eta_\ell}_{\ell=1}^m}\p{\xi}\;\m{d}\xi=-\ssum{\ell=1}{m}\int_{\Sigma_\ell}v\cdot\sig_\ell\Delta_\|\eta_\ell e_n,
\end{equation}
for $(q,v)={\upchi_{-\gam}}^{-1}(0,\mathscr{O}(\psi_\ell e_n)_{\ell=1}^m) \in L^2\p{\Omega}\times{_0}H^1\p{\Omega;\R^n}$ the solution to the applied stress PDE in~\eqref{normal_stress pde} with data $\tp{\psi_\ell}_{\ell=1}^m$.  We also have the equality
\begin{equation}\label{bird 2}
    \int_{\R^{n-1}}\mathscr{F}\tsb{\tp{\psi_\ell}_{\ell=1}^m}\p{\xi}\cdot2\pi\ii\gam\xi_1\mathscr{F}\tsb{\tp{\eta_\ell}_{\ell=1}^m}\p{\xi}\;\m{d}\xi=\tbr{\tp{\psi_\ell}_{\ell=1}^m,\tp{\gam\pd_1\eta_\ell}_{\ell=1}^m}_{H^{-1/2},H^{1/2}}.
\end{equation}
For the final term in~\eqref{a long computation} we cannot, in general, apply that $\mathscr{F}$ is unitary directly since $\tp{\eta_\ell}_{\ell=1}^m$ need not belong to $\prod_{\ell=1}^mL^2\p{\Sigma_\ell}$. Instead we utilize the fact that $v$ is solenoidal and vanishes on $\Sigma_0$, which provides us identity~\eqref{dig a pony}.  Hence,
\begin{multline}\label{bird 3}
    \int_{\R^{n-1}}\bf{n}_\gam\mathscr{F}\tsb{\tp{\psi_\ell}_{\ell=1}^m}\cdot\bf{o}_0\mathscr{F}\tsb{\tp{\eta_\ell}_{\ell=1}^m}=-\mathfrak{g}\ssum{\ell=1}{m}\jump{\uprho}_\ell\int_{\R^{n-1}}\mathscr{F}\tsb{\m{Tr}_{\Sigma_\ell}v\cdot e_n}\cdot\mathscr{F}\tsb{\eta_\ell}\\
    =-\mathfrak{g}\ssum{\ell=1}{m}\jump{\uprho}_\ell\int_{\R^{n-1}}\int_{\p{0,a_\ell}}\mathscr{F}\sb{v\p{\cdot,y}}\p{\xi}\;\m{d}y\cdot 2\pi\ii\p{\xi,0}\mathscr{F}\sb{\eta_\ell}\p{\xi}\;\m{d}\xi=-\mathfrak{g}\ssum{\ell=1}{m}\jump{\uprho}_\ell\int_{\Omega}\grad\eta_\ell\mathbbm{1}_{\p{0,a_\ell}}\cdot v.
\end{multline}
The last equality, which is an application of Plancherel's and Fubini's theorems, is justified by the third item of Proposition~\ref{linear topological properties of container for pressure}.  Define $G\in({_0}H^1\p{\Omega;\R^n})^\ast$ through the assignment
\begin{multline}
\br{G,w}_{({_0}H^1)^\ast,{_0}H^1}=\br{F,w}_{({_0}H^1)^\ast,{_0}H^1}+\int_{\Omega}\bsb{\mathfrak{g}\ssum{\ell=1}{m}\jump{\uprho}_\ell\grad\eta_\ell\mathbbm{1}_{\p{0,a_\ell}}}\cdot w+\ssum{\ell=1}{m}\int_{\Sigma_\ell}\sig_\ell\Delta_\|\eta_\ell\cdot w\\
=\int_{\Omega}f\cdot w+\ssum{\ell=1}{m}\int_{\Sigma_\ell}k_\ell\cdot w+\int_{\Omega}\bsb{\mathfrak{g}\ssum{\ell=1}{m}\jump{\uprho}_\ell\grad\eta_\ell\mathbbm{1}_{\p{0,a_\ell}}}\cdot w+\ssum{\ell=1}{m}\int_{\Sigma_\ell}\sig_\ell\Delta_\|\eta_\ell\cdot w,\;w\in{_0}H^1\p{\Omega;\R^n}.
\end{multline}
We now synthesize identities~\eqref{a long computation}, \eqref{bird 1}, \eqref{bird 2}, and~\eqref{bird 3}:
\begin{multline}\label{up around the bend}
    \mathscr{H}^\gam\tsb{\tp{g,F,\tp{h_\ell}_{\ell=1}^m},\tp{\psi_\ell}_{\ell=1}^m}=-\ssum{\ell=1}{m}\int_{\Sigma_\ell}v\cdot\sig_\ell\Delta_\|\eta_\ell e_n-\gam\tbr{\tp{\psi_\ell}_{\ell=1}^m,\tp{\pd_1\eta_\ell}_{\ell=1}^m}_{H^{-1/2},H^{1/2}}\\-\mathfrak{g}\ssum{\ell=1}{m}\jump{\uprho}_\ell\int_{\Omega}\grad\eta_\ell\mathbbm{1}_{\p{0,a_\ell}}\cdot v=\mathscr{H}^\gam\tsb{(0,F-G,(\gam\pd_1\eta_\ell)_{\ell=1}^m),(\psi_\ell)_{\ell=1}^m}.
\end{multline}
Rearranging~\eqref{up around the bend} and using that $\mathscr{H}^\gam$ is bilinear shows that
\begin{equation}
    \mathscr{H}^\gam\tsb{\p{g,G,\tp{h_\ell-\gam\pd_1\eta_\ell}_{\ell=1}^m},\tp{\psi_\ell}_{\ell=1}^m}=0.
\end{equation}
As the above expression vanishes for all $\tp{\psi_\ell}_{\ell=1}^m\in\prod_{\ell=1}^mH^{-1/2}\p{\Sigma_\ell}$, we conclude that the modified data tuple of equation~\eqref{modified data tuple 1} belongs to $\overset{\leftarrow}{\m{ker}}\mathscr{H}^\gam$, which Theorem~\ref{isomorphism associated to overdetermined multilayer traveling stokes} establishes is the range of $\Uppsi_\gam$.  
\end{proof}

At last we are ready to state and prove an isomorphism of Banach spaces induced by the PDE~\eqref{multilayer traveling Stokes with gravity-capillary boundary and jump conditions}.

\begin{thm}[Existence and uniqueness of solutions to~\eqref{multilayer traveling Stokes with gravity-capillary boundary and jump conditions}: surface tension case]\label{isomorphism associated to multilayer traveling Stokes with gravity-capillary boundary and jump conditions in the surface tension case}
For $\gam\in\R\setminus\cb{0}$, $\upsigma=\cb{\sig_\ell}_{\ell=1}^m\subset\R^+$, and $s\in\R^+\cup\cb{0}$ the bounded linear mapping $\Upupsilon_{\gam,\upsigma}:\mathcal{X}^s\to\mathcal{Y}^s$, with action given by~\eqref{ready set action}, is an isomorphism. 
\end{thm}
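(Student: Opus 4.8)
The plan is to combine the injectivity and continuity of $\Upupsilon_{\gam,\upsigma}$, which were already furnished by Proposition~\ref{well-definedness and injectivity}, with a surjectivity argument; since $\mathcal{X}^s$ and $\mathcal{Y}^s$ are Banach, the open mapping theorem then promotes the resulting bounded linear bijection to an isomorphism (equivalently, the construction below is quantitatively bounded and exhibits the inverse directly). So everything reduces to producing, for each datum, a preimage lying in $\mathcal{X}^s$.

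To do this, fix $\tp{g,f,\tp{k_\ell}_{\ell=1}^m,\tp{h_\ell}_{\ell=1}^m}\in\mathcal{Y}^s$. First I would invoke Lemma~\ref{data determines free surface - free interface functions} to produce free surface functions $\tp{\eta_\ell}_{\ell=1}^m\in\tp{\mathcal{H}^{5/2+s}\tp{\R^{n-1}}}^m$ obeying~\eqref{across the universe} and such that the modified data tuple in~\eqref{modified data tuple 1} lies in the range of the overdetermined operator $\Uppsi_\gam$ of Theorem~\ref{isomorphism associated to overdetermined multilayer traveling stokes}; the lemma also records that this modified tuple belongs to $H^{1+s}\p{\Omega}\times H^{s}\p{\Omega;\R^n}\times\prod_{\ell=1}^m H^{1/2+s}\p{\Sigma_\ell;\R^n}\times\prod_{\ell=1}^m H^{3/2+s}\p{\Sigma_\ell}$, and, combined with~\eqref{across the universe}, its norm there is $\lesssim\norm{\tp{g,f,\tp{k_\ell}_{\ell=1}^m,\tp{h_\ell}_{\ell=1}^m}}_{\mathcal{Y}^s}$. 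Now the range of $\Uppsi_\gam$ is closed, since by Theorem~\ref{isomorphism associated to overdetermined multilayer traveling stokes} it is the preimage of the closed set $\overset{\leftarrow}{\m{ker}}\mathscr{H}^\gam$ under the continuous inclusion of Remark~\ref{inclusion remark}, and $\Uppsi_\gam$ is a continuous injection; hence there is a unique $\tp{p_0,u}\in H^{1+s}\p{\Omega}\times{_0}H^{2+s}\p{\Omega;\R^n}$ with $\Uppsi_\gam\tp{p_0,u}$ equal to the modified tuple, and $\norm{p_0}_{H^{1+s}\p{\Omega}}+\norm{u}_{{_0}H^{2+s}\p{\Omega}}\lesssim\norm{\tp{g,f,\tp{k_\ell}_{\ell=1}^m,\tp{h_\ell}_{\ell=1}^m}}_{\mathcal{Y}^s}$. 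Then I would set $p=p_0-\mathfrak{g}\ssum{\ell=1}{m}\jump{\uprho}_\ell\eta_\ell\mathbbm{1}_{\p{0,a_\ell}}$; since $p_0\in H^{1+s}\p{\Omega}$ and $\tp{\eta_\ell}_{\ell=1}^m\in\tp{\mathcal{H}^{5/2+s}}^m\subseteq\tp{\mathcal{H}^{1+s}}^m$, Definition~\ref{container space for the pressure} gives $p\in\mathcal{P}^{1+s}\p{\Omega}$ together with $p+\mathfrak{g}\sum_{\ell}\jump{\uprho}_\ell\eta_\ell\mathbbm{1}_{\p{0,a_\ell}}=p_0\in H^{1+s}\p{\Omega}$, so that $\tp{p,u,\tp{\eta_\ell}_{\ell=1}^m}\in\mathcal{X}^s$ with norm dominated by that of the datum.

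It then remains to verify $\Upupsilon_{\gam,\upsigma}\tp{p,u,\tp{\eta_\ell}_{\ell=1}^m}=\tp{g,f,\tp{k_\ell}_{\ell=1}^m,\tp{h_\ell}_{\ell=1}^m}$, which I would do componentwise, exploiting the identity $S^{\upmu}\p{p,u}=S^{\upmu}\p{p_0,u}-\mathfrak{g}\bp{\sum_k\jump{\uprho}_k\eta_k\mathbbm{1}_{\p{0,a_k}}}I_{n\times n}$. The incompressibility and kinematic (normal-trace) components are immediate, the latter from $\m{Tr}_{\Sigma_\ell}u\cdot e_n=h_\ell-\gam\pd_1\eta_\ell$ together with the $+\gam\pd_1\eta_\ell$ in~\eqref{ready set action}. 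For the bulk momentum component, inside $\Omega_\ell$ the scalar $\sum_k\jump{\uprho}_k\eta_k\mathbbm{1}_{\p{0,a_k}}$ coincides with the purely $x$-dependent quantity $\sum_{k=\ell}^m\jump{\uprho}_k\eta_k$, whose gradient is precisely the artificial bulk forcing $\mathfrak{g}\sum_k\jump{\uprho}_k\grad\eta_k\mathbbm{1}_{\p{0,a_k}}$ appearing in~\eqref{modified data tuple 1}; the two cancel, leaving $f$. For the dynamic (stress) component, the jump identity $\bjump{\sum_k\jump{\uprho}_k\eta_k\mathbbm{1}_{\p{0,a_k}}}_\ell=-\jump{\uprho}_\ell\eta_\ell$ established in the proof of Proposition~\ref{well-definedness and injectivity} turns the subtracted bulk pressure into $\mathfrak{g}\jump{\uprho}_\ell\eta_\ell e_n$ on $\Sigma_\ell$, which together with the $\sig_\ell\Delta_\|\eta_\ell e_n$ carried by~\eqref{modified data tuple 1} exactly reconstitutes $\tp{\mathfrak{g}\jump{\uprho}_\ell+\sig_\ell\Delta_\|}\eta_\ell e_n$, the quantity subtracted in~\eqref{ready set action}; the net on $\Sigma_\ell$ is $k_\ell$. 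This produces a bounded right inverse of $\Upupsilon_{\gam,\upsigma}$, and with Proposition~\ref{well-definedness and injectivity} the map is a Banach isomorphism.

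I do not expect a genuinely new obstacle in this proof: all the substantive analysis — inverting the pseudodifferential system~\eqref{intro psiDE}, the $\abs{\xi}\to0$ and $\abs{\xi}\to\infty$ asymptotics of $\bf{n}_\gam$, and the mapping properties of the anisotropic spaces $\mathcal{H}^s$ and $\mathcal{P}^s$ — has been absorbed into Lemma~\ref{data determines free surface - free interface functions}, Theorem~\ref{isomorphism associated to overdetermined multilayer traveling stokes}, Proposition~\ref{linear topological properties of container for free surface and free interface functions}, and Proposition~\ref{linear topological properties of container for pressure}. The only delicate point is the componentwise check in the last step: one must keep careful track of the signs (recall $\jump{\uprho}_\ell<0$) and of the bulk-to-interface transfer of the hydrostatic correction $-\mathfrak{g}\sum_k\jump{\uprho}_k\eta_k\mathbbm{1}_{\p{0,a_k}}$, whose gradient has to annihilate the manufactured bulk forcing on every slab $\Omega_\ell$ while its jump across every interface $\Sigma_\ell$ simultaneously regenerates exactly the gravity-capillary boundary term removed by $\Upupsilon_{\gam,\upsigma}$.
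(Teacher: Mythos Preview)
Your proposal is correct and follows essentially the same route as the paper: invoke Proposition~\ref{well-definedness and injectivity} for injectivity, use Lemma~\ref{data determines free surface - free interface functions} to produce $\tp{\eta_\ell}_{\ell=1}^m$ and place the modified data in the range of $\Uppsi_\gam$, extract $(p_0,u)$ (the paper calls this $(q,u)$), shift the pressure by the hydrostatic correction, and verify componentwise. Your write-up is slightly more explicit than the paper's on two points---the closed-range argument for the bounded inverse of $\Uppsi_\gam$ on its image, and the full componentwise check (the paper simply records the jump computation and declares the rest ``straightforward'')---but neither departure is substantive.
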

\begin{proof}
Proposition~\ref{well-definedness and injectivity} ensures that this mapping is well-defined and injective, so it remains only to prove surjectivity.   Let $\p{g,f,\tp{k_\ell}_{\ell=1}^m,\tp{h_\ell}_{\ell=1}^m}\in\mathcal{Y}^s$, and define the associated tuple of free surface functions $\tp{\eta_\ell}_{\ell=1}^m\in\tp{\mathcal{H}^{5/2+s}\tp{\R^{n-1}}}^m$ via Lemma~\ref{data determines free surface - free interface functions}. Then the modified data tuple in~\eqref{modified data tuple 1} belongs to the range of $\Uppsi_\gam$. Consequentially, there exists $\p{q,u}\in H^{1+s}\p{\Omega}\times {_0}H^{2+s}\p{\Omega;\R^n}$ such that
\begin{equation}\label{use}
    \textstyle\Uppsi_\gam\p{q,u}=\tp{g,f+\mathfrak{g}\sum_{\ell=1}^m\jump{\uprho}_\ell\grad\eta_\ell\mathbbm{1}_{\p{0,a_\ell}},\tp{k_\ell+\sig_\ell\Delta_\|\eta_\ell e_n}_{\ell=1}^m,\tp{h_\ell-\gam\pd_1\eta_\ell}_{\ell=1}^m}.
\end{equation}
Set $p\in\mathcal{P}^{1+s}\p{\Omega}$ via $p=q-\mathfrak{g}\sum_{\ell=1}^m\jump{\uprho}_\ell\eta_\ell\mathbbm{1}_{\p{0,a_\ell}}$. As $p+\mathfrak{g}\sum_{\ell=1}^m\jump{\uprho}_\ell\eta_\ell\mathbbm{1}_{\p{0,a_\ell}}=q\in H^{1+s}\p{\Omega}$, we have that $\p{p,u,\tp{\eta_\ell}_{\ell=1}^m}\in\mathcal{X}^s$. We then observe that
\begin{equation}
    \tjump{S^{\upmu}(p,u)e_n}_{\ell}=\tjump{S^{\upmu}(q,u)e_n}_\ell-\mathfrak{g}\bjump{\ssum{k=1}{m}\jump{\uprho}_k\eta_k\mathbbm{1}_{(0,a_k)}}_\ell e_n=k_\ell+(\mathfrak{g}\jump{\uprho}_\ell+\sig_\ell\Delta_\|)\eta_\ell e_n.
\end{equation}
It is now straightforward to check $\Upupsilon_{\gam,\upsigma}\p{p,u,\tp{\eta_\ell}_{\ell=1}^m}=\p{g,f,\tp{k_\ell}_{\ell=1}^m,\tp{h_\ell}_{\ell=1}^m}$, which completes the proof that $\Upupsilon_{\gam,\upsigma}$ is a surjection.
\end{proof}

\subsection{Isomorphism in the case without surface tension}\label{section on isomorphism in the case without surface tension}

In this subsection we study~\eqref{multilayer traveling Stokes with gravity-capillary boundary and jump conditions} in the case of a two dimensional fluid ($n=2$) and vanishing surface tension ($\upsigma=\cb{\sig_\ell}_{\ell=1}^m=0$). Again, we first present how the free surface functions are determined from the data. In this instance the proof is simpler because item four of Proposition~\ref{linear topological properties of container for free surface and free interface functions} tells us that the function spaces holding the tuple of free surface functions are familiar Sobolev spaces.

\begin{lem}[Determination of free surface functions: case without surface tension]\label{free surface and free interface function determination in no surface tension case}
If $\gam\in\R\setminus\cb{0}$ and $\p{g,f,\tp{k_\ell}_{\ell=1}^m,\tp{h_\ell}_{\ell=1}^m}\in\mathcal{Y}^s$ for some $s\in\R^+\cup\cb{0}$, then there exists $\tp{\eta_\ell}_{\ell=1}^m\in\tp{\mathcal{H}^{5/2+s}\tp{\R^{n-1}}}^m=\tp{H^{5/2+s}\tp{\R^{n-1}}}^m$ such that the modified data tuple
\begin{multline}\label{modified data tuple 2}
    \textstyle\tp{g,f,\tp{k_\ell+\mathfrak{g}\jump{\uprho}_\ell\eta_\ell e_n}_{\ell=1}^m,\tp{h_\ell-\gam\pd_1\eta_\ell}_{\ell=1}^m}\\\textstyle\in H^{1+s}\p{\Omega}\times H^{s}\p{\Omega;\R^n}\times\prod_{\ell=1}^mH^{1/2+s}\p{\Sigma_\ell;\R^n}\times\prod_{\ell=1}^mH^{3/2+s}\p{\Sigma_\ell}    
\end{multline}
belongs to the range of $\Uppsi_\gam$, where this latter operator is from Theorem~\ref{isomorphism associated to overdetermined multilayer traveling stokes}. Moreover, we have the universal estimate
\begin{equation}\label{silence}
    \norm{\tp{\eta_\ell}_{\ell=1}^m}_{\mathcal{H}^{5/2+s}}\lesssim\norm{\p{g,f,\tp{k_\ell}_{\ell=1}^m,\tp{h_\ell}_{\ell=1}^m}}_{\mathcal{Y}^s}.
\end{equation}
\end{lem}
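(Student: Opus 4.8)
The plan is to mirror the structure of the proof of Lemma~\ref{data determines free surface - free interface functions} (the surface tension case), but with the simplification that $\upsigma = 0$. The key point is that when $n=2$, item four (actually item five) of Proposition~\ref{linear topological properties of container for free surface and free interface functions} gives $\mathcal{H}^{5/2+s}(\R^{n-1}) = H^{5/2+s}(\R^{n-1})$ with equivalent norms, so the anisotropic space plays no special role; nonetheless, the argument for producing $\tp{\eta_\ell}_{\ell=1}^m$ still proceeds through the $\Psi$DE and the Fourier reconstruction machinery, and it is cleanest to phrase it that way so that the estimate \eqref{silence} comes out uniformly.

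First I would set up the relevant matrix field. With $\upsigma=0$ we have $\bf{o}_1 = 0$, so $\bf{o}(\xi) = \bf{o}_0 = -\mathfrak{g}\,\m{diag}\p{\jump{\uprho}_1,\dots,\jump{\uprho}_m}$ is a \emph{constant}, positive-definite, self-adjoint matrix (using $\jump{\uprho}_\ell < 0$ for all $\ell$). Define $\bf{p}_\gam(\xi) = \bf{n}_{-\gam}(\xi)\bf{o}_0 - 2\pi\ii\gam\xi_1 I_{m\times m} \in \C^{m\times m}$. The analogue of Step 1 is to prove that there is $C\in\R^+$, depending only on the physical parameters and $\gam$, such that for a.e.\ $\xi\in\R^{n-1}=\R$ and all $b\in\C^m$,
\begin{equation}
    C^{-1}\abs{\bf{p}_\gam(\xi)b}^2 \le \bsb{\abs{\xi}^2\mathbbm{1}_{B(0,1)}(\xi) + \abs{\xi}^2\mathbbm{1}_{\R\setminus B(0,1)}(\xi)}\abs{b}^2 = \abs{\xi}^2\abs{b}^2 \le C\abs{\bf{p}_\gam(\xi)b}^2.
\end{equation}
Note that since $n=2$ we have $\xi_1^2 = \abs{\xi}^2$, so the weight $\p{\xi_1^2 + \abs{\xi}^4}\mathbbm{1}_{B(0,1)}$ from the definition of $\mathcal{H}^s$ is comparable to $\abs{\xi}^2$ on $B(0,1)$ — this is exactly the coincidence $\mathcal{H}^s = H^s$ in dimension two. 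The upper bound on $\abs{\bf{p}_\gam(\xi)b}$ follows from item one of Theorem~\ref{finer asymptotic development of the multiplier of the normal stress to normal Dirichlet pseudodifferential operator} ($\abs{\bf{n}_\gam(\xi)}\lesssim\min\{\abs{\xi}^2,\abs{\xi}^{-1}\}\le 1$) together with the $\abs{\xi_1}\le\abs{\xi}$ bound on the skew term. For the lower bound, I would use the same two-pronged estimate as in \eqref{deltadelta}–\eqref{deltadeltadelta}: testing against $\bf{o}_0 b$ and taking real parts gives $\abs{\bf{p}_\gam(\xi)b}\,\abs{b} \gtrsim \m{Re}\sb{\bf{n}_{-\gam}(\xi)\bf{o}_0 b\cdot\bf{o}_0 b} \gtrsim \min\{\abs{\xi}^2,\abs{\xi}^{-1}\}\abs{\bf{o}_0 b}^2 \gtrsim \min\{\abs{\xi}^2,\abs{\xi}^{-1}\}\abs{b}^2$ by item two of Theorem~\ref{finer asymptotic development of the multiplier of the normal stress to normal Dirichlet pseudodifferential operator} and positive-definiteness of $\bf{o}_0$; this handles low frequencies (where $\min\{\abs{\xi}^2,\abs{\xi}^{-1}\} = \abs{\xi}^2$). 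For high frequencies, where $\min\{\abs{\xi}^2,\abs{\xi}^{-1}\} = \abs{\xi}^{-1}$ is too weak, I would instead extract the skew-symmetric part: $\abs{\bf{p}_\gam(\xi)b}\,\abs{b} \gtrsim \abs{\m{Im}\sb{\bf{p}_\gam(\xi)b\cdot b}}$; writing $\bf{p}_\gam(\xi)b\cdot b = \bf{n}_{-\gam}(\xi)\bf{o}_0 b\cdot b - 2\pi\ii\gam\xi_1\abs{b}^2$ and using that $\abs{\bf{n}_{-\gam}(\xi)\bf{o}_0 b\cdot b}\lesssim\abs{\xi}^{-1}\abs{b}^2$ is negligible compared with $2\pi\abs{\gam}\abs{\xi_1}\abs{b}^2 = 2\pi\abs{\gam}\abs{\xi}\abs{b}^2$ for $\abs{\xi}\ge 1$, we get $\abs{\bf{p}_\gam(\xi)b}\gtrsim\abs{\gam}\abs{\xi}\abs{b}$. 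Combining the two regimes yields the claimed equivalence.

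With the matrix field estimate in hand, Steps 2 and 3 are essentially verbatim copies of the corresponding steps in Lemma~\ref{data determines free surface - free interface functions}, with $\sig_\ell = 0$ throughout. In Step 2, I define $\tp{\eta_\ell}_{\ell=1}^m$ via $\mathscr{F}\sb{\tp{\eta_\ell}_{\ell=1}^m} = {\bf{p}_\gam}^{-1}\mathscr{F}\sb{\mathscr{K}^\gam\p{g,f,\tp{k_\ell}_{\ell=1}^m,\tp{h_\ell}_{\ell=1}^m}}$ using the Fourier reconstruction item (item four) of Proposition~\ref{linear topological properties of container for free surface and free interface functions}; the realness condition $\bf{p}_\gam(-\xi) = \Bar{\bf{p}_\gam(\xi)}$ follows from $\bf{n}_\gam(-\xi) = \Bar{\bf{n}_\gam(\xi)}$ (Proposition~\ref{diagonalization of normal stress to normal dirichlet}) and reality of $\bf{o}_0$; and the norm bound \eqref{silence} follows from inequality displayed above together with continuity of $\mathscr{K}^\gam$ from Theorem~\ref{theorem on measurement of compatibility}, exactly as in the surface tension case. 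In Step 3, I set $F = \mathscr{P}(f,(k_\ell)_{\ell=1}^m)$ and compute $\mathscr{H}^\gam\tsb{\tp{g,F,\tp{h_\ell}_{\ell=1}^m},\tp{\psi_\ell}_{\ell=1}^m}$ using Proposition~\ref{measurement of compatibility} and Theorem~\ref{theorem on measurement of compatibility}: this equals $\int_{\R^{n-1}}\mathscr{F}\tsb{\tp{\psi_\ell}_{\ell=1}^m}\cdot\bf{p}_\gam\mathscr{F}\tsb{\tp{\eta_\ell}_{\ell=1}^m}$, which now splits into only \emph{two} terms (no capillary term) — the skew term gives $-\gam\tbr{\tp{\psi_\ell}_{\ell=1}^m,\tp{\pd_1\eta_\ell}_{\ell=1}^m}_{H^{-1/2},H^{1/2}}$ as in \eqref{bird 2}, and the $\bf{n}_{-\gam}\bf{o}_0$ term, after using that the normal-stress solution $v$ is solenoidal with vanishing $\Sigma_0$-trace (identity \eqref{dig a pony}), gives $-\mathfrak{g}\sum_{\ell=1}^m\jump{\uprho}_\ell\int_{\Omega}\grad\eta_\ell\mathbbm{1}_{(0,a_\ell)}\cdot v$ as in \eqref{bird 3}. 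Defining $G\in({_0}H^1\p{\Omega;\R^n})^\ast$ by $\br{G,w} = \br{F,w} + \int_\Omega\bsb{\mathfrak{g}\sum_\ell\jump{\uprho}_\ell\grad\eta_\ell\mathbbm{1}_{(0,a_\ell)}}\cdot w$ — note the bulk term here corresponds exactly to adding $\mathfrak{g}\jump{\uprho}_\ell\eta_\ell e_n$ to $k_\ell$ on the boundary once one passes through the normal-stress duality, which is why the modified data tuple \eqref{modified data tuple 2} carries $k_\ell + \mathfrak{g}\jump{\uprho}_\ell\eta_\ell e_n$ rather than a bulk modification of $f$ — one concludes $\mathscr{H}^\gam\tsb{\tp{g,G,\tp{h_\ell-\gam\pd_1\eta_\ell}_{\ell=1}^m},\tp{\psi_\ell}_{\ell=1}^m} = 0$ for all $\tp{\psi_\ell}_{\ell=1}^m$, hence the modified tuple lies in $\overset{\leftarrow}{\m{ker}}\mathscr{H}^\gam$, which is the range of $\Uppsi_\gam$ by Theorem~\ref{isomorphism associated to overdetermined multilayer traveling stokes}.

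The main obstacle is Step 1, specifically the high-frequency lower bound: because $\bf{o}_0$ is bounded (no $\abs{\xi}^2$ growth from capillarity), the ``elliptic'' estimate via $\m{Re}\sb{\bf{n}_{-\gam}\bf{o}_0 b\cdot\bf{o}_0 b}$ only produces a weight $\min\{\abs{\xi}^2,\abs{\xi}^{-1}\}$, which degenerates at infinity — this is precisely why surface tension is needed in higher dimensions and why one restricts to $n=2$ here. The resolution is that in dimension two the tangential frequency is one-dimensional, so $\abs{\xi_1} = \abs{\xi}$ and the skew term $2\pi\ii\gam\xi_1 I$ supplies the missing $\abs{\xi}$-growth all by itself (with no possibility of $\xi_1$ vanishing while $\abs{\xi}\ne 0$). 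One must be slightly careful to check that the cross term $\bf{n}_{-\gam}(\xi)\bf{o}_0 b\cdot b$ is genuinely lower-order than $\abs{\xi}\abs{b}^2$ for \emph{all} $\abs{\xi}\ge 1$, which is immediate from $\abs{\bf{n}_\gam(\xi)}\le C\abs{\xi}^{-1}$; and to confirm that on the transition region $\abs{\xi}\asymp 1$ both estimates are comparable to $\abs{\xi}^2\abs{b}^2\asymp\abs{b}^2$, so the piecewise weight in the statement is genuinely equivalent to $\abs{\xi}^2\abs{b}^2$ there. Everything else is bookkeeping identical to the $\upsigma>0$ case.
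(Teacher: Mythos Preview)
Your proposal is correct, but it diverges from the paper's proof in two places worth noting.

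In Step~1, the paper organizes the lower bound differently: it uses the reverse triangle inequality (the skew term $2\pi\abs{\gam}\abs{\xi}$ dominating $\abs{\bf{n}_{-\gam}(\xi)\bf{o}_0}\le C_0\min\{\abs{\xi}^2,\abs{\xi}^{-1}\}$) for \emph{both} $\abs{\xi}\le R_0$ and $\abs{\xi}\ge R_1$, and invokes the real-part estimate only on the compact annulus $R_0\le\abs{\xi}\le R_1$, where $\min\{\abs{\xi}^2,\abs{\xi}^{-1}\}$ is bounded below. Your split (real part for low frequencies, imaginary part of $\bf{p}_\gam(\xi)b\cdot b$ for high) also works, but the phrase ``negligible for $\abs{\xi}\ge 1$'' is not literally right at $\abs{\xi}=1$; you do acknowledge the transition region, and the real-part bound (which gives $\gtrsim\abs{\xi}^{-1}\abs{b}$ there) covers it, but the paper's organization avoids having to patch.

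In Step~3, the paper takes a genuinely simpler route that exploits the hypothesis $n=2$: since $\tp{\eta_\ell}_{\ell=1}^m\in\tp{H^{5/2+s}}^m\subset (L^2)^m$, one applies Plancherel \emph{directly} to $\int_{\R}\bf{n}_\gam\mathscr{F}[\tp{\psi_\ell}]\cdot\bf{o}_0\mathscr{F}[\tp{\eta_\ell}]$ to obtain the boundary term $-\mathfrak{g}\sum_\ell\jump{\uprho}_\ell\int_{\Sigma_\ell}\eta_\ell\, v\cdot e_n$, which is exactly $\mathscr{P}(f,(k_\ell+\mathfrak{g}\jump{\uprho}_\ell\eta_\ell e_n))$ evaluated at $v$. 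No appeal to the solenoidal identity \eqref{dig a pony} is needed. Your route---going through \eqref{bird 3} to a bulk term and then claiming it ``corresponds exactly'' to the boundary modification---is valid but only because $\mathscr{H}^\gam$ tests against solenoidal $v$ (so $\int_\Omega\grad\eta_\ell\mathbbm{1}_{(0,a_\ell)}\cdot v=\int_{\Sigma_\ell}\eta_\ell\, v\cdot e_n$ after integrating by parts); as functionals on all of ${_0}H^1$ the two $G$'s are \emph{not} equal, so that parenthetical should be sharpened. The paper's direct approach produces the boundary form of the modified data immediately, matching the statement without this extra step.
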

\begin{proof}
We again proceed in three steps as in the proof of Lemma~\ref{data determines free surface - free interface functions}.

\textbf{Step 1: Estimates and invertibility.} Again let $\bf{o}_0=\m{diag}\p{\sig_1,\dots,\sig_m} \in \R^{m \times m}$. We claim that the matrix field
\begin{equation}
    \bf{p}_\gam\p{\xi}=\bf{n}_\gam\p{\xi}^\ast\bf{o}_0-2\pi\ii\gam\xi I_{m\times m}=\bf{n}_{-\gam}\p{\xi}\bf{o}_0-2\pi\ii\gam\xi I_{m\times m} \in \C^{m \times m},
\end{equation}
where $\textbf{n}_\gam(\xi)$ is as defined in Proposition~\ref{diagonalization of normal stress to normal dirichlet} and , satisfies the estimate
\begin{equation}\label{zero}
    C^{-1}\abs{\bf{p}_\gam\p{\xi} b}^2\le\abs{\xi}^2\abs{b}^2\le C\abs{\bf{p}_\gam\p{\xi}b}^2
\end{equation}
for a.e. $\xi\in\R$ and all $b\in\C^m$, for a constant $C\in\R^+$ depending only on the physical parameters. Recall that since $n=2$, $\xi=\xi_1$.

By the first item of Theorem~\ref{finer asymptotic development of the multiplier of the normal stress to normal Dirichlet pseudodifferential operator}, there is a universal constant $C_0\in\R^+$ and a full measure set $E\subset\R$ such that if $\xi\in E$, then $\abs{\bf{n}_\gam\p{\xi}^\ast\bf{o}_0}\le C_0\min\{\abs{\xi}^2,\abs{\xi}^{-1}\}$. Thus the left inequality in~\eqref{zero} follows from the triangle inequality. Also, as a consequence of this estimate on $\bf{n}_\gam^\ast\bf{o}_0$, we learn that there are radii (depending only on $C_0$ and $\abs{\gam}$) $0<R_0<1<R_1$ such that if $\xi\in\R\setminus\sb{(-R_1,-R_0)\cup(R_0,R_1)}$, then
\begin{equation}\label{one}
    2\pi\abs{\gam}\abs{\xi}-C_0\min\{\abs{\xi}^2,\abs{\xi}^{-1}\}\ge\pi\abs{\gam}\abs{\xi}.
\end{equation}
Estimate~\eqref{one} gives the right inequality in~\eqref{zero} for $\xi\in E$ with $\abs{\xi}\le R_0$ or $\abs{\xi}\ge R_1$, by the reverse triangle inequality. For $\xi\in E\cap (-R_1,R_1)\setminus\p{-R_0,R_0}$ we use the second item of Theorem~\ref{finer asymptotic development of the multiplier of the normal stress to normal Dirichlet pseudodifferential operator}. Let $b\in\C^m$. As $\bf{o}_0$ is self-adjoint, we may estimate
\begin{equation}
    \abs{\bf{p}_\gam\p{\xi}b}\abs{b}\gtrsim|\m{Re}\sb{\bf{p}_\gam\p{\xi} b\cdot\bf{o}_0 b}|=|\m{Re}\sb{\bf{o}_0b\cdot\bf{n}_\gam\p{\xi}\bf{o}_0b}|\gtrsim\min\{\abs{\xi}^2,\abs{\xi}^{-1}\}\abs{b}^2\ge\min\{{R_0}^2,{R_1}^{-1}\}\abs{b}^2.
\end{equation}
Therefore~\eqref{zero} is shown.

\textbf{Step 2: Construction of the free surface functions.} Again using item three of Proposition~\ref{linear topological properties of container for free surface and free interface functions} we define, given $\p{g,f,\tp{k_\ell}_{\ell=1}^m,\tp{h_\ell}_{\ell=1}^m}\in\mathcal{Y}^s$, a corresponding tuple of functions $\tp{\eta_\ell}_{\ell=1}^m\in\tp{H^{5/2+s}\tp{\R^{n-1}}}^m$ via $\mathscr{F}\sb{\tp{\eta_\ell}_{\ell=1}^m}={\bf{p}_\gam}^{-1}\mathscr{F}\sb{\mathscr{K}^\gam\p{g,f,\tp{k_\ell}_{\ell=1}^m,\tp{h_\ell}_{\ell=1}^m}}$. This is well-defined thanks to the estimate
\begin{multline}
    \tnorm{(\eta_\ell)_{\ell=1}^m}^2_{\mathcal{H}^{5/2+s}}\asymp\int_{\R^{n-1}}(1+\abs{\xi}^2)^{5/2+s}\abs{\mathscr{F}\sb{\tp{\eta_\ell}_{\ell=1}^m}\p{\xi}}^2\;\m{d}\xi\\\lesssim\int_{\R^{n-1}}(1+\abs{\xi}^2)^{5/2+s}\abs{\xi}^{-2}\abs{\mathscr{F}\sb{\mathscr{K}^\gam\p{g,f,\tp{k_\ell}_{\ell=1}^m,\tp{h_\ell}_{\ell=1}^m}}\p{\xi}}^2\;\m{d}\xi\lesssim\norm{\p{g,f,\tp{k_\ell}_{\ell=1}^m,\tp{h_\ell}_{\ell=1}^m}}_{\mathcal{Y}^s}.
\end{multline}
Hence the estimate of~\eqref{silence} holds.

\textbf{Step 3: Data correction.} To show that the modified data tuple in~\eqref{modified data tuple 2} belongs to the range of $\Uppsi_\gam$ we appeal to the equivalence presented in Theorem~\ref{isomorphism associated to overdetermined multilayer traveling stokes}. Again we define $F\in({_0}H^1\p{\Omega;\R^n})^\ast$ as $F=\mathscr{P}(f,(k_\ell)_{\ell=1}^m)$, for $\mathscr{P}$ in Definition~\ref{strong to weak conversion notation}. Using the definition of $\mathscr{K}^\gam$, we compute
\begin{multline}\label{number nine dream}
    \mathscr{H}^\gam\tsb{\tp{g,F,\tp{h_\ell}_{\ell=1}^m},\tp{\psi_\ell}_{\ell=1}^m} 
    = \tbr{\tp{\psi_{\ell}}_{\ell=1}^m,\mathscr{I}^\gam\tp{g,F,\tp{h_\ell}_{\ell=1}^m}}_{H^{-1/2},H^{1/2}} \\
    =\tbr{\tp{\psi_{\ell}}_{\ell=1}^m,\mathscr{K}^\gam\tp{g,f,(k_\ell)_{\ell=1}^m,\tp{h_\ell}_{\ell=1}^m}}_{H^{-1/2},H^{1/2}}
    =\int_{\R}\mathscr{F}\tsb{\tp{\psi_\ell}_{\ell=1}^m}\cdot\bf{p}_\gam\mathscr{F}\tsb{\tp{\eta_\ell}_{\ell=1}^m} \\
    =\int_{\R}\bf{n}_\gam\mathscr{F}\tsb{\tp{\psi_\ell}_{\ell=1}^m}\cdot\bf{o}_0\mathscr{F}\tsb{\tp{\eta_\ell}_{\ell=1}^m} 
    -\int_{\R}\mathscr{F}\tsb{\tp{\psi_\ell}_{\ell=1}^m}\p{\xi}\cdot2\pi\ii\xi\mathscr{F}\tsb{\tp{\eta_\ell}_{\ell=1}^m}\p{\xi}\;\m{d}\xi,
\end{multline}
for $\tp{\psi_\ell}_{\ell=1}^m\in\prod_{\ell=1}^mH^{-1/2}\p{\Sigma_\ell}$. Denote the solution to the normal stress PDE in equation~\eqref{normal_stress pde} with data $\tp{\psi_\ell}_{\ell=1}^m$ as $    \p{q,v}={\upchi_{-\gam}}^{-1}(0,\mathscr{O}(\psi_\ell e_n)_{\ell=1}^m)\in L^2\p{\Omega}\times{_0}H^1\p{\Omega;\R^n}$ (recall that $\upchi_{-\gam}$ and $\mathscr{O}$ are defined in Proposition~\ref{weak solutions to the traveling stokes with stress boundary problem} and Definition~\ref{strong to weak conversion notation}). We next use the fact that $\mathscr{F}$ is unitary on $L^2$ to rewrite~\eqref{number nine dream} as
\begin{equation}\label{i will}
    \mathscr{H}^\gam\tsb{\p{g,F,\tp{h_\ell}_{\ell=1}^m},\tp{\psi_\ell}_{\ell=1}^m}=-\mathfrak{g}\ssum{\ell=1}{m}\jump{\uprho}_\ell\int_{\Sigma_\ell}\eta_\ell v\cdot e_n-\tbr{\tp{\psi_\ell}_{\ell=1}^m,\tp{\gam\pd_1\eta_\ell}_{\ell=1}^m}_{H^{-1/2},H^{1/2}}.
\end{equation}
Set $G\in({_0}H^1\p{\Omega;\R^n})^\ast$ via
\begin{multline}
    \br{G,w}_{({_0}H^1)^\ast,{_0}H^1}=\br{F,w}_{({_0}H^1)^\ast,{_0}H^1}+\mathfrak{g}\ssum{\ell=1}{m}\jump{\uprho}_\ell\int_{\Sigma_\ell}\eta_\ell w\cdot e_n\\
    =\int_{\Omega}f\cdot w+\ssum{\ell=1}{m}\int_{\Sigma_\ell}k_\ell\cdot w+\mathfrak{g}\ssum{\ell=1}{m}\jump{\uprho}_\ell\int_{\Sigma_\ell}\eta_\ell w\cdot e_n \text{ for }w\in{_0}H^1(\Omega;\R^n).
\end{multline}
Then~\eqref{i will} implies that $\mathscr{H}^\gam\sb{\p{g,G,\tp{h_\ell-\gam\pd_1\eta_\ell}_{\ell=1}^m,\tp{\psi_\ell}_{\ell=1}^m}}=0$ for all $\tp{\psi_\ell}_{\ell=1}^m$,  so we conclude, using Theorem~\ref{isomorphism associated to overdetermined multilayer traveling stokes}, that the modified data tuple belongs to the range of $\Uppsi_\gam$.
\end{proof}

Finally, we state and prove the analogue to Theorem~\ref{isomorphism associated to multilayer traveling Stokes with gravity-capillary boundary and jump conditions in the surface tension case}.

\begin{thm}[Existence and uniqueness of solutions to~\ref{multilayer traveling Stokes with gravity-capillary boundary and jump conditions}: case without surface tension]\label{isomorphism associated to multilayer traveling Stokes with gravity-capillary boundary and jump conditions in the zero surface tension case}
For $\gam\in\R\setminus\cb{0}$, $n=2$, and $s\in\R^+\cup\cb{0}$ the bounded linear mapping $\Upupsilon_{\gam,0}:\mathcal{X}^s\to\mathcal{Y}^s$, with action given by~\eqref{ready set action}, is an isomorphism.
\end{thm}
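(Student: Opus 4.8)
The plan is to reproduce, almost verbatim, the argument of Theorem~\ref{isomorphism associated to multilayer traveling Stokes with gravity-capillary boundary and jump conditions in the surface tension case}, substituting the zero-surface-tension determination result Lemma~\ref{free surface and free interface function determination in no surface tension case} for Lemma~\ref{data determines free surface - free interface functions}. Since Proposition~\ref{well-definedness and injectivity} is stated for every $\upsigma=\cb{\sig_\ell}_{\ell=1}^m\subset\R^+\cup\cb{0}$, it already gives that $\Upupsilon_{\gam,0}\colon\mathcal{X}^s\to\mathcal{Y}^s$ is well-defined, bounded, and injective; thus only surjectivity remains, and the isomorphism statement then follows from the bounded inverse theorem (equivalently, the construction below is itself bounded via estimate~\eqref{silence}).

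For surjectivity, fix $\p{g,f,\tp{k_\ell}_{\ell=1}^m,\tp{h_\ell}_{\ell=1}^m}\in\mathcal{Y}^s$. First I would apply Lemma~\ref{free surface and free interface function determination in no surface tension case} to produce a tuple $\tp{\eta_\ell}_{\ell=1}^m$, which since $n=2$ lies in $\tp{\mathcal{H}^{5/2+s}\tp{\R^{n-1}}}^m=\tp{H^{5/2+s}\tp{\R^{n-1}}}^m$ by item~(5) of Proposition~\ref{linear topological properties of container for free surface and free interface functions}, such that the modified data tuple~\eqref{modified data tuple 2} lies in the range of $\Uppsi_\gam$; by Theorem~\ref{isomorphism associated to overdetermined multilayer traveling stokes} there is then a $\p{q,u}\in H^{1+s}\p{\Omega}\times{_0}H^{2+s}\p{\Omega;\R^n}$ with $\Uppsi_\gam\p{q,u}$ equal to~\eqref{modified data tuple 2}. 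The one bookkeeping point at which this differs from the surface-tension proof is the pressure normalization: in~\eqref{modified data tuple 2} the gravity contribution has been absorbed into the interfacial stress data $k_\ell\mapsto k_\ell+\mathfrak{g}\jump{\uprho}_\ell\eta_\ell e_n$ rather than into the bulk force, so here one should simply set $p=q$ (no renormalization), rather than $p=q-\mathfrak{g}\sum_{\ell=1}^m\jump{\uprho}_\ell\eta_\ell\mathbbm{1}_{\p{0,a_\ell}}$. I would then verify $\p{p,u,\tp{\eta_\ell}_{\ell=1}^m}\in\mathcal{X}^s$: because $n=2$ we have $\mathcal{P}^{1+s}\p{\Omega}=H^{1+s}\p{\Omega_\ell}$ by item~(5) of Proposition~\ref{linear topological properties of container for pressure}, so $p=q\in\mathcal{P}^{1+s}\p{\Omega}$, while $p+\mathfrak{g}\sum_{\ell=1}^m\jump{\uprho}_\ell\eta_\ell\mathbbm{1}_{\p{0,a_\ell}}=q+\mathfrak{g}\sum_{\ell=1}^m\jump{\uprho}_\ell\eta_\ell\mathbbm{1}_{\p{0,a_\ell}}\in H^{1+s}\p{\Omega}$ since each $\eta_\ell\in H^{5/2+s}\p{\R^{n-1}}$ is a $y$-independent, hence $H^{1+s}\p{\Omega_j}$, function on every slab. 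Finally, reading off the four components of $\Uppsi_\gam\p{q,u}=\eqref{modified data tuple 2}$ with $p=q$ and $\sig_\ell=0$: the divergence equation, the bulk momentum equation, and the kinematic condition $\m{Tr}_{\Sigma_\ell}u\cdot e_n+\gam\pd_1\eta_\ell=h_\ell$ are immediate, and the stress boundary condition follows from $\jump{S^{\upmu}\p{p,u}e_n}_\ell=\jump{S^{\upmu}\p{q,u}e_n}_\ell=k_\ell+\mathfrak{g}\jump{\uprho}_\ell\eta_\ell e_n=k_\ell+(\mathfrak{g}\jump{\uprho}_\ell+\sig_\ell\Delta_\|)\eta_\ell e_n$, while interfacial velocity continuity and the no-slip condition on $\Sigma_0$ are built into the domain of $\Uppsi_\gam$. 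This gives $\Upupsilon_{\gam,0}\p{p,u,\tp{\eta_\ell}_{\ell=1}^m}=\p{g,f,\tp{k_\ell}_{\ell=1}^m,\tp{h_\ell}_{\ell=1}^m}$.

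I do not expect a serious obstacle: all of the analytic content has been front-loaded into Lemma~\ref{free surface and free interface function determination in no surface tension case} (and, through it, into Theorems~\ref{finer asymptotic development of the multiplier of the normal stress to normal Dirichlet pseudodifferential operator} and~\ref{theorem on measurement of compatibility}) and into Theorem~\ref{isomorphism associated to overdetermined multilayer traveling stokes}, all of which are available; moreover the $n=2$ identifications $\mathcal{H}^{s}=H^{s}$ and $\mathcal{P}^{s}=H^{s}$ collapse the specialized Sobolev space machinery entirely. The only thing to stay vigilant about is keeping the pressure normalization consistent with the placement of the gravity term in~\eqref{modified data tuple 2}; a mismatch there would spoil the bulk momentum equation and would also force one unnecessarily back into the specialized pressure space.
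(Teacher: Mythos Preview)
Your proposal is correct and follows essentially the same approach as the paper's own proof: cite Proposition~\ref{well-definedness and injectivity} for injectivity, apply Lemma~\ref{free surface and free interface function determination in no surface tension case} to get $\tp{\eta_\ell}_{\ell=1}^m$, solve the overdetermined system via Theorem~\ref{isomorphism associated to overdetermined multilayer traveling stokes}, and then use the $n=2$ identifications $\mathcal{H}^s=H^s$ and $\mathcal{P}^s=H^s$ to land in $\mathcal{X}^s$. The paper's proof is slightly terser---it names the overdetermined solution $\p{p,u}$ directly rather than introducing $q$ and then setting $p=q$---but the content is identical, and your explicit remark that no pressure renormalization is needed here (in contrast to the surface tension case) is a helpful clarification.
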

\begin{proof}
Proposition~\ref{well-definedness and injectivity} ensures that this mapping is well-defined and injective, so only surjectivity remains.  Let $\p{g,f,\tp{k_\ell}_{\ell=1}^m,\tp{h_\ell}_{\ell=1}^m}\in\mathcal{Y}^s$ and define the associated tuple of free surface functions $\tp{\eta_\ell}_{\ell=1}^m\in \tp{H^{5/2+s}\tp{\R^{n-1}}}^m$ via Lemma~\ref{free surface and free interface function determination in no surface tension case}. Then the modified data tuple in equation~\eqref{modified data tuple 2} belongs to the range of $\Uppsi_\gam$. Consequentially, there exists $\p{p,u}\in H^{1+s}\p{\Omega}\times{_0}H^{2+s}\p{\Omega;\R^n}$ such that
\begin{equation}
\Uppsi_\gam\p{p,u}=\tp{g,f,\tp{k_\ell+\mathfrak{g}\jump{\uprho}_\ell\eta_\ell}_{\ell=1}^m,\tp{h_\ell-\gam\pd_1\eta_\ell}_{\ell=1}^m}.
\end{equation}
By item four of both Propositions~\ref{linear topological properties of container for free surface and free interface functions} and~\ref{linear topological properties of container for pressure}, we have that $\p{p,u,\tp{\eta_{\ell=1}^m}}\in\mathcal{X}^s$. It's also clear that $\Upupsilon_{\gam,0}\p{p,u,\tp{\eta_{\ell})_{\ell=1}^m}}=\p{g,f,\tp{k_\ell}_{\ell=1}^m,\tp{h_\ell}_{\ell=1}^m}$. Hence, $\Upupsilon_{\gam,0}$ is a surjection.
\end{proof}

\section{Nonlinear analysis}\label{section on nonlinear analysis}
We now use the Banach isomorphisms constructed in the previous section to solve the fully nonlinear problems~\eqref{flattened problem at the nonlinear level} and~\eqref{equations of motion written in traveling Eulerian coordinates} for small data by way of the implicit function theorem.  The proofs of most of the results in this section essentially mirror those used in the one layer analysis of~\cite{leoni2019traveling} (except that we use our new isomorphisms), so for the sake of brevity we will mostly sketch the details.  For full details we refer to Section 8 of \cite{leoni2019traveling}.

\subsection{Preliminaries}

This subsection is dedicated to showing that the nonlinear mapping associated to the flattened PDE~\eqref{flattened problem at the nonlinear level} is both well-defined and smooth. We begin by examining the smoothness of the nonlinearities present. First we have a simple product estimate.
\begin{prop}\label{product between a standard sobolev space and a specialized sobolev space}
Let $\ell\in\cb{1,\dots,m}$ and $s\in\R^+$ with $(n-1)/2<s$. If $f\in\mathcal{H}^s\p{\R^{n-1}}$ and $g\in H^s\p{\Sigma_\ell}$, then the pointwise product satisfies the inclusion $fg\in H^s\p{\Sigma_\ell}$. Moreover the bilinear mapping $\mathcal{H}^s\times H^s\ni\p{f,g}\mapsto fg\in H^s$ is continuous and hence smooth.
\end{prop}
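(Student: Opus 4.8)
The plan is to reduce the claimed product estimate to the standard Sobolev algebra property $H^s(\R^{n-1}) \cdot H^s(\R^{n-1}) \emb H^s(\R^{n-1})$, which holds because $s > (n-1)/2$. The obstruction to doing this directly is that $\mathcal{H}^s(\R^{n-1})$ is \emph{not} contained in $H^s(\R^{n-1})$; the two spaces differ only in their treatment of low frequencies, where $\mathcal{H}^s$ permits mild growth of $\mathscr{F}[f]$. So first I would invoke item $(2)$ of Proposition~\ref{linear topological properties of container for free surface and free interface functions} with $k=0$: since $(n-1)/2 < s$, that proposition gives the decomposition $\mathcal{H}^s(\R^{n-1}) \emb C^0_0(\R^{n-1}) + H^s(\R^{n-1})$, so any $f \in \mathcal{H}^s$ can be written $f = f_{\m{lo}} + f_{\m{hi}}$ with $f_{\m{lo}} \in C^0_0(\R^{n-1})$, $f_{\m{hi}} \in H^s(\R^{n-1})$, and $\norm{f_{\m{lo}}}_{C^0_0} + \norm{f_{\m{hi}}}_{H^s} \lesssim \norm{f}_{\mathcal{H}^s}$. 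Concretely one takes $f_{\m{lo}} = \mathscr{F}^{-1}[\mathbbm{1}_{B(0,1)}\mathscr{F}[f]]$ and $f_{\m{hi}} = \mathscr{F}^{-1}[\mathbbm{1}_{\R^{n-1}\setminus B(0,1)}\mathscr{F}[f]]$; the high-frequency piece lies in $H^s$ directly from the definition of $\norm{\cdot}_{\mathcal{H}^s}$, and the low-frequency piece lies in $C^0_0$ by the argument behind item $(2)$ (its Fourier transform is in $L^1$ with the stated growth control, giving a continuous function vanishing at infinity via Riemann–Lebesgue).

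Next I would split the product: $fg = f_{\m{lo}}\,g + f_{\m{hi}}\,g$. For the second term, $f_{\m{hi}} \in H^s(\Sigma_\ell)$ and $g \in H^s(\Sigma_\ell)$, so the standard Sobolev algebra estimate on $\R^{n-1}$ (valid since $s > (n-1)/2$) gives $f_{\m{hi}}\,g \in H^s(\Sigma_\ell)$ with $\norm{f_{\m{hi}}\,g}_{H^s} \lesssim \norm{f_{\m{hi}}}_{H^s}\norm{g}_{H^s} \lesssim \norm{f}_{\mathcal{H}^s}\norm{g}_{H^s}$. For the first term, $f_{\m{lo}} \in C^0_0 \subset L^\infty$; since $s > (n-1)/2$ one also has, more usefully, $f_{\m{lo}} \in C^k_b$ for any $k < s - (n-1)/2$, but for the $L^2$-based estimate the cleanest route is to note that multiplication by the fixed band-limited, bounded-with-all-derivatives function $f_{\m{lo}}$ is bounded on $H^s$: indeed $f_{\m{lo}} \in \bigcap_{t \in \R} H^t(\R^{n-1})$ because $\mathscr{F}[f_{\m{lo}}]$ is compactly supported and in $L^1$, so again the Sobolev algebra property (or the classical fact that a Schwartz-class-like band-limited multiplier is bounded on every $H^s$) gives $\norm{f_{\m{lo}}\,g}_{H^s} \lesssim \norm{f_{\m{lo}}}_{H^{s_0}}\norm{g}_{H^s}$ for a suitable fixed $s_0 > (n-1)/2$, and $\norm{f_{\m{lo}}}_{H^{s_0}} \lesssim \norm{f}_{\mathcal{H}^s}$ with implied constant depending only on the band-limiting radius and $s$. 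Combining the two pieces yields $fg \in H^s(\Sigma_\ell)$ with $\norm{fg}_{H^s} \lesssim \norm{f}_{\mathcal{H}^s}\norm{g}_{H^s}$.

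Finally, to conclude continuity (hence smoothness), I would observe that the map $(f,g) \mapsto fg$ is bilinear, and the estimate just derived is exactly the boundedness statement $\norm{fg}_{H^s} \lesssim \norm{f}_{\mathcal{H}^s}\norm{g}_{H^s}$; a bounded bilinear map between Banach spaces is continuous, and continuous multilinear maps are $C^\infty$ (their derivatives of order $\ge 2$ vanish, being constant multilinear maps), so smoothness is automatic. The only genuine subtlety is making the splitting $f = f_{\m{lo}} + f_{\m{hi}}$ quantitative with constants depending only on $s$ and $n$, which is handled by the explicit Fourier cutoff choice above together with Proposition~\ref{linear topological properties of container for free surface and free interface functions}; everything else is the standard Sobolev product estimate in the supercritical range $s > (n-1)/2$. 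I expect the packaging of the low-frequency multiplier bound to be the main (though still routine) point requiring care.
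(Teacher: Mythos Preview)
Your overall strategy---split $f=f_{\m{lo}}+f_{\m{hi}}$ by a frequency cutoff at $|\xi|=1$ and handle $f_{\m{hi}}\,g$ via the standard $H^s$ algebra property---is sound, and the paper's own proof is simply a citation of Theorem~5.8 in \cite{leoni2019traveling}, which packages essentially this argument. However, there is a genuine error in your treatment of the low-frequency piece: the inference ``$\mathscr{F}[f_{\m{lo}}]$ is compactly supported and in $L^1$, hence $f_{\m{lo}}\in\bigcap_{t\in\R}H^t$'' is false for $n\ge 3$. Compactly supported $L^1$ Fourier data need not be $L^2$. For instance, with $n=3$ take (after symmetrizing for reality) $\mathscr{F}[f](\xi_1,\xi_2)=\xi_2^{-2}\mathbbm{1}_{\{0<\xi_1<\xi_2^2,\;0<\xi_2<1/2\}}$; one checks directly that $\int_{B(0,1)}|\xi|^{-2}(\xi_1^2+|\xi|^4)|\mathscr{F}[f]|^2<\infty$, so $f\in\mathcal{H}^s$ for every $s$, yet $\mathscr{F}[f]\notin L^2$, so $f_{\m{lo}}=f\notin H^t$ for any $t\in\R$. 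Consequently you cannot invoke the algebra bound $\tnorm{f_{\m{lo}}g}_{H^s}\lesssim\tnorm{f_{\m{lo}}}_{H^{s_0}}\tnorm{g}_{H^s}$.

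The repair is short and stays within your framework: use $\mathscr{F}[f_{\m{lo}}]\in L^1$ directly rather than passing through $H^{s_0}$. Since $\supp\mathscr{F}[f_{\m{lo}}]\subseteq\Bar{B(0,1)}$, for $|\eta|\le 1$ one has $(1+|\xi|^2)^{s/2}\lesssim(1+|\xi-\eta|^2)^{s/2}$, and Young's convolution inequality then gives
\[
\tnorm{f_{\m{lo}}g}_{H^s}\lesssim\tnorm{\mathscr{F}[f_{\m{lo}}]}_{L^1}\,\tnorm{g}_{H^s}.
\]
The bound $\tnorm{\mathscr{F}[f_{\m{lo}}]}_{L^1}\lesssim\tnorm{f}_{\mathcal{H}^s}$ follows from Cauchy--Schwarz once one verifies that the low-frequency weight $w(\xi)=|\xi|^{-2}(\xi_1^2+|\xi|^4)$ satisfies $\int_{B(0,1)}w^{-1}\,\m{d}\xi<\infty$, which is a routine polar-coordinate computation valid in every dimension $n\ge 2$. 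With this correction your argument is complete.
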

\begin{proof}
This is Theorem 5.8 in~\cite{leoni2019traveling}.
\end{proof}

The more complicated nonlinearities present in system~\eqref{flattened problem at the nonlinear level} are also smooth, as a consequence of the following result.
\begin{prop}\label{smoothness of the more complicated nonlinearity}
Let $s\in\R^+$ with $s>n/2$, and $m=1$ There exists a positive radius $\del\p{s}\in\R^+$ such that the following hold.
\begin{enumerate}
    \item If $\eta\in\mathcal{H}^s\tp{\R^{n-1}}$ satisfies $\norm{\eta}_{\mathcal{H}^s}<\del\p{s}$ then $\norm{\eta}_{C^0_b}<\f12$.
    \item By the first item for $\eta\in B_{\mathcal{H}^s}\tp{0,\del\p{s}}$, $w\in H^s\tp{\R^{n-1}}$, and $v\in H^{s}\p{\Omega}$ we are free to define pointwise $\Upgamma_0\p{\eta,w}=\f{w}{1+\eta}$ and $\Upgamma_1\p{\eta,v}=\f{v}{1+\eta}$. Then $\Upgamma_0\p{\eta,w}\in H^s\tp{\R^{n-1}}$ and $\Upgamma_1\p{\eta,v}=\f{v}{1+\eta}\in H^s\tp{\Omega}$, and the mappings $\Upgamma_0:B_{\mathcal{H}^s}\p{0,\del\p{s}}\times H^s\tp{\R^{n-1}}\to H^{s}\tp{\R^{n-1}}$ and $\Upgamma_1:B_{\mathcal{H}^s}\p{0,\del\p{s}}\times H^s\tp{\Omega}\to H^s\tp{\Omega}$ are smooth.
\end{enumerate}
\end{prop}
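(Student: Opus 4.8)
The statement is essentially a composition/division lemma for the specialized space $\mathcal{H}^s$ paired with standard Sobolev spaces, and the cleanest route is to reduce everything to two facts: (a) the algebra-type multiplication bound of Proposition~\ref{product between a standard sobolev space and a specialized sobolev space}, which already tells us $\mathcal{H}^s \cdot H^s \hookrightarrow H^s$ continuously when $s > (n-1)/2$, together with the ordinary fact that $H^s(\Omega)$ is a Banach algebra when $s > n/2$; and (b) the smoothness of the scalar map $z \mapsto (1+z)^{-1}$ on a neighborhood of $0$, transported to the function-space level via a power series. So the plan is as follows. First I would fix $\del(s) > 0$ small enough that the embedding $\mathcal{H}^s(\R^{n-1}) \hookrightarrow C^0_b(\R^{n-1})$ from item two of Proposition~\ref{linear topological properties of container for free surface and free interface functions} (valid since $s > n/2 > (n-1)/2 + 0$, so we may take $k = 0$) forces $\norm{\eta}_{C^0_b} < 1/2$ whenever $\norm{\eta}_{\mathcal{H}^s} < \del(s)$; this is item one, and it also guarantees $1 + \eta$ is bounded away from zero so that the pointwise quotients in item two make sense.

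For item two I would handle $\Upgamma_1$ first, since $H^s(\Omega)$ is a genuine Banach algebra for $s > n/2$. Write $\Upgamma_1(\eta, v) = v \cdot (1+\eta)^{-1}$ and expand $(1+\eta)^{-1} = \sum_{j=0}^\infty (-\eta)^j$ in $H^s(\Omega)$-norm: because $\norm{\eta}_{C^0_b} < 1/2$ and, more to the point, on the restricted ball $B_{\mathcal{H}^s}(0, \del(s))$ we can (after possibly shrinking $\del(s)$) ensure $\norm{\eta}_{H^s(\Omega)} < 1/2$ as well — here I am identifying $\eta$ with its trivial extension, but it is cleaner to note that for $\Upgamma_1$ the relevant object is $\eta$ as an element of $\mathcal{H}^s(\R^{n-1})$ multiplying $v \in H^s(\Omega)$, so I would instead invoke Proposition~\ref{product between a standard sobolev space and a specialized sobolev space} to get $\norm{\eta^j v}_{H^s(\Omega)} \lesssim \norm{\eta}_{\mathcal{H}^s}^j \norm{v}_{H^s}$ with a geometric-series controlling constant, so the series converges absolutely and defines a bounded bilinear-in-$v$, real-analytic-in-$\eta$ map; real-analyticity of a convergent power series with values in a Banach space gives smoothness. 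For $\Upgamma_0$ the argument is identical with $\R^{n-1}$ in place of $\Omega$, using the ordinary algebra property of $H^s(\R^{n-1})$ for $s > (n-1)/2$ together with Proposition~\ref{product between a standard sobolev space and a specialized sobolev space} to absorb the $\mathcal{H}^s$-factors $\eta^j$.

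The one genuine subtlety — and the step I expect to need the most care — is that $\mathcal{H}^s$ is \emph{not} itself an algebra (indeed it need not even be closed under composition with rotations, per the discussion in the introduction), so I cannot naively say "$(1+\eta)^{-1} - 1 \in \mathcal{H}^s$." What saves us is that in $\Upgamma_0$ and $\Upgamma_1$ the target is a standard Sobolev space, and every term $\eta^j w$ (resp. $\eta^j v$) in the expansion has at least one $\mathcal{H}^s$-factor and the rest of the factors can be paired using Proposition~\ref{product between a standard sobolev space and a specialized sobolev space} iteratively: $\eta^j w = \eta \cdot (\eta^{j-1} w)$ where $\eta^{j-1} w \in H^s$ by induction, and then $\mathcal{H}^s \cdot H^s \hookrightarrow H^s$. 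So the structure of the nonlinearity — quotient by $(1+\eta)$ landing in a standard space — is exactly what is needed, and no algebra property of $\mathcal{H}^s$ is ever invoked. Finally, to pass from "real-analytic" to "smooth" I would either cite the standard fact that a locally convergent Banach-space-valued power series is $C^\infty$, or directly verify that the partial derivatives in $\eta$ are again given by convergent series of the same type with the same estimates; uniform convergence of these derivative series on a slightly smaller ball then yields $C^\infty$ regularity on $B_{\mathcal{H}^s}(0,\del(s))$ after one final shrinkage of $\del(s)$.
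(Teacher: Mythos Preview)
Your approach is sound and essentially correct, but it differs from the paper's in a notable way. The paper does not run the power-series argument directly; instead it imports a black-box result from the companion paper (Theorem~5.15 of \cite{leoni2019traveling}), which asserts that the map $\Upgamma_2(\zeta,u)=u/(1+\zeta)$ is smooth from $B_{\mathcal{P}^s}(0,\ep_0)\times H^s(\Omega)$ to $H^s(\Omega)$. It then obtains $\Upgamma_1$ by precomposing with the continuous inclusion $\iota:\mathcal{H}^s(\R^{n-1})\hookrightarrow\mathcal{P}^s(\Omega)$, and obtains $\Upgamma_0$ by further composing with the inclusion $H^s(\R^{n-1})\hookrightarrow H^s(\Omega)$ and a projection back onto the $y$-independent subspace. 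So the paper's proof is a three-line reduction to an external result, whereas your proposal unpacks that result via the geometric series and the iterated product estimate --- morally the same mechanism, but carried out explicitly.

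One point to tighten: you invoke Proposition~\ref{product between a standard sobolev space and a specialized sobolev space} to bound $\norm{\eta^j v}_{H^s(\Omega)}$, but as stated that proposition only covers products $\mathcal{H}^s(\R^{n-1})\times H^s(\Sigma_\ell)\to H^s(\Sigma_\ell)$, i.e.\ functions on $\R^{n-1}$, not on the slab $\Omega$. You need the analogous estimate $\norm{\eta v}_{H^s(\Omega)}\lesssim\norm{\eta}_{\mathcal{H}^s}\norm{v}_{H^s(\Omega)}$ for $\eta$ depending only on the horizontal variable. This is true and provable by the same method (or by slicing in $y$), but it is not literally what Proposition~\ref{product between a standard sobolev space and a specialized sobolev space} says, so you would need to state and justify it separately. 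The paper sidesteps this by routing through $\mathcal{P}^s(\Omega)$, where the needed product structure is already built into the cited theorem.
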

\begin{proof}
The existence of a $\del_1\p{s} \in \R^+$ for which the first item holds follows from the supercritical embedding within item 2 of Proposition~\ref{linear topological properties of container for free surface and free interface functions}.

Theorem 5.15 in~\cite{leoni2019traveling} states that for some $\ep_0\in\R^+$ the mapping $\Upgamma_2:B_{\mathcal{P}^s}(0,\ep_0)\times H^s(\Omega)\to H^s(\Omega)$, for $B_{\mathcal{P}^s}(0,\ep_0)$ the open $\ep_0$-ball of the space $\mathcal{P}^s(\Omega)$ (Definition~\ref{container space for the pressure}, for $m=1$), defined by $\Upgamma_2(\zeta,u)=\f{u}{1+\zeta}$ is smooth and well-defined. Denote the continuous (and hence smooth) inclusion mapping: $\iota:\mathcal{H}^s(\R^{n-1})\to\mathcal{P}^s(\Omega)$. There then exists $\del_2(s)\in\R^+$ such that $\iota(B_{\mathcal{H}^s}(0,\del_2(s)))\subseteq B_{\mathcal{P}^s}(0,\ep_0)$. Let $\del(s)=\min\tcb{\del_1(s),\del_2(s)}$. Then $\Upgamma_1$ is smooth as $\Upgamma_1=\Upgamma_2\circ(\iota,\m{id}_{H^s(\Omega)})$. By letting $\mathfrak{P}:H^s(\Omega)\to H^s(\R^{n-1})$ denote the smooth projection onto this closed subspace and $\tilde{\iota}:H^s(\R^{n-1})\to H^s(\Omega)$ denote this smooth inclusion we deduce that $\Upgamma_0=\mathfrak{P}\circ\Upgamma_2\circ(\iota,\tilde{\iota})$ is also smooth.
\end{proof}

The data spaces $\mathcal{Y}^s$ for which we solve the linearized flattened problem enforce the divergence compatibility condition from Proposition~\ref{divergence_divergence}. To ensure that the nonlinear mapping associated to the flattened problem has a target enforcing this condition, we require the following result.

\begin{prop}\label{nonlinear divergence compatibility}
Suppose that $s\in\R^+$ satisfies $s>n/2$, $u\in{_0}H^{2+s}\p{\Omega[0];\R^n}$, and $\tp{\eta_{\ell}}_{\ell=1}^m\subset B_{\mathcal{H}^{5/2+s}}\p{0,\del}$ for $\del=\f12\min\{a_1,a_2-a_1,\dots,a_{m}-a_{m-1}\}\del\p{5/2+s}\in\R^+$, where  $\del\p{5/2+s} \in \R^+$ is as in Proposition~\ref{smoothness of the more complicated nonlinearity}. Then for each $\ell\in\cb{1,\dots,m}$ we have the identity 
\begin{equation}
    \int_{\p{0,a_\ell}}J\mathcal{A}\grad\cdot u=u\cdot\mathcal{N}_\ell\p{\cdot,a_\ell}+(\grad_\|,0)\cdot\int_{\p{0,a_\ell}}J\mathcal{A}^{\m{t}}u,
\end{equation}
where $J$, $\mathcal{A}$, and $\mathcal{N}_\ell$ are functions of $\tp{\eta_\ell}_{\ell=1}^m$ as defined in Section~\ref{section on reformulation in a fixed domain}.
\end{prop}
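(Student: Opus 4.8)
The plan is to derive the stated relation from the \emph{Piola identity} for the flattening maps, followed by an integration in the vertical variable. First I would record that for each $\ell\in\cb{1,\dots,m}$ and each $i\in\cb{1,\dots,n}$ the matrix fields from Section~\ref{section on reformulation in a fixed domain} satisfy $\ssum{k=1}{n}\pd_k\sp{J_\ell(\mathcal{A}_\ell)_{ik}}=0$ on $\Omega_\ell$. In this setting it need not be imported abstractly but follows by a direct differentiation of the explicit formulas: for $i=n$ it is immediate since $(J_\ell\mathcal{A}_\ell)_{in}=1$ and the remaining entries of the last row vanish, while for $i\le n-1$ it reduces to the cancellation $\pd_{x_i}\bp{\f{a_\ell+\eta_\ell-a_{\ell-1}-\eta_{\ell-1}}{a_\ell-a_{\ell-1}}}+\pd_y\bp{-\f{(a_\ell-y)\pd_{x_i}\eta_{\ell-1}+(y-a_{\ell-1})\pd_{x_i}\eta_\ell}{a_\ell-a_{\ell-1}}}=0$. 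Combining this with the product rule and the definition $(\mathcal{A}\grad)\cdot u=\ssum{i,k=1}{n}\mathcal{A}_{ik}\pd_k u_i$ gives the pointwise identity
\begin{equation*}
    \grad\cdot\sp{J\mathcal{A}^{\m{t}}u}=J\,(\mathcal{A}\grad)\cdot u\quad\text{on each }\Omega_\ell .
\end{equation*}
The regularity needed for the product rule is available: under $\tp{\eta_\ell}_{\ell=1}^m\subset B_{\mathcal{H}^{5/2+s}}\p{0,\del}$ and $s>n/2$, item $(2)$ of Proposition~\ref{linear topological properties of container for free surface and free interface functions} gives $\eta_\ell\in C^2_0\tp{\R^{n-1}}$ with $\norm{\eta_\ell}_{C^0_b}$ small enough (by the choice of $\del$ and Proposition~\ref{smoothness of the more complicated nonlinearity}) that $J_\ell\asymp1$, so $J\mathcal{A}^{\m{t}}$ is $C^1$ on each slab and $u\in{_0}H^{2+s}\subset H^1$, whence $J\mathcal{A}^{\m{t}}u\in H^1$ on each slab.

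Next I would integrate this identity over $y\in\p{0,a_\ell}$ after writing $\grad\cdot\sp{J\mathcal{A}^{\m{t}}u}=(\grad_\|,0)\cdot\sp{J\mathcal{A}^{\m{t}}u}+\pd_n\ssb{\sp{J\mathcal{A}^{\m{t}}u}\cdot e_n}$. For the tangential term one interchanges $(\grad_\|,0)\cdot$ with $\int_{\p{0,a_\ell}}$ (using that $(J\mathcal{A}^{\m{t}}u)\cdot e_k=Ju_k$ for $k\le n-1$, which is $H^1$ on $\Omega^{\m{e}}[0]$), producing $(\grad_\|,0)\cdot\int_{\p{0,a_\ell}}J\mathcal{A}^{\m{t}}u$. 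For the vertical term, set $\phi=(J\mathcal{A}^{\m{t}}u)\cdot e_n=u\cdot(J\mathcal{A}e_n)$ and note, again from the explicit formula, that $J_\ell\mathcal{A}_\ell e_n=\bp{-\f{(a_\ell-y)\grad_\|\eta_{\ell-1}+(y-a_{\ell-1})\grad_\|\eta_\ell}{a_\ell-a_{\ell-1}},\,1}$; evaluating at $y=a_{\ell-1}$ and at $y=a_\ell$ shows that $J\mathcal{A}e_n$ is continuous across every interface $\Sigma_j$ and equals $\mathcal{N}_j=\tp{-\grad_\|\eta_j,1}$ there (with $\mathcal{N}_0=e_n$ on $\Sigma_0$). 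Consequently $J\mathcal{A}e_n$ is globally Lipschitz, and since $u\in{_0}H^1\tp{\Omega[0];\R^n}$ is globally $H^1$, the product $\phi$ lies in $H^1\tp{\Omega^{\m{e}}[0]}$ and is absolutely continuous on a.e.\ vertical line, so the fundamental theorem of calculus gives $\int_{\p{0,a_\ell}}\pd_n\phi\,\m{d}y=\phi(\cdot,a_\ell)-\phi(\cdot,0)$. The trace of $u$ on $\Sigma_0$ vanishes, so $\phi(\cdot,0)=0$, while $\phi(\cdot,a_\ell)=(\m{Tr}_{\Sigma_\ell}u)\cdot\mathcal{N}_\ell=u\cdot\mathcal{N}_\ell(\cdot,a_\ell)$. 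Adding the two contributions yields the asserted identity.

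The only genuinely delicate point is the bookkeeping across the interior interfaces $\Sigma_1,\dots,\Sigma_{\ell-1}$: since $J$ and $\mathcal{A}$ are only piecewise defined, the telescoping in the vertical integration would break if $\phi$ jumped at an interface. The computation that $J\mathcal{A}e_n$ attains the common value $\mathcal{N}_j$ from both sides of $\Sigma_j$, together with the global $H^1$-membership of $u$ (so that its traces from above and below agree), is precisely what makes $\phi$ globally $H^1$ and rescues the argument; this is the step I would be most careful with. Everything else is routine once the Piola identity is in hand, and the proof is otherwise a direct multilayer transcription of the single-layer argument (cf.\ Section~8 of~\cite{leoni2019traveling}).
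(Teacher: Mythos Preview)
Your argument is correct and follows essentially the same route as the paper: the Piola identity on each slab, then integration in $y$ with the telescoping handled by the continuity of $J\mathcal{A}e_n$ across interfaces; the paper simply phrases this as applying the single-layer identity on each $\p{a_{k-1},a_k}$ and summing.

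One small correction: your claim that $(J\mathcal{A}^{\m{t}}u)\cdot e_k=Ju_k$ is \emph{$H^1$ on $\Omega^{\m{e}}[0]$} is false, since $J$ is constant in $y$ on each slab but jumps across the interfaces $\Sigma_j$, so $Ju_k$ is generically discontinuous there. This does not break the interchange $\int_{\p{0,a_\ell}}(\grad_\|,0)\cdot(J\mathcal{A}^{\m{t}}u)=(\grad_\|,0)\cdot\int_{\p{0,a_\ell}}J\mathcal{A}^{\m{t}}u$, because the interfaces are horizontal and tangential derivatives of piecewise-$H^1$ functions with jumps only across horizontal hyperplanes coincide with the piecewise tangential derivatives (no singular contributions). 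The paper's slab-by-slab summation sidesteps this subtlety automatically; in your presentation you should replace the $H^1$ claim with this observation or, equivalently, interchange on each slab and sum.
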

\begin{proof}
Let $k\in\cb{1,\dots,\ell}$. Arguing as in Proposition 8.2 in~\cite{leoni2019traveling} we arrive at
\begin{equation}
    \int_{\p{a_{k-1},a_\ell}}J_k\mathcal{A}_k\grad\cdot u=u\cdot\mathcal{N}_\ell\p{\cdot,a_k}-u\cdot\mathcal{N}_{k-1}\p{\cdot,a_{k-1}}+\p{\grad_\|,0}\cdot\int_{\p{a_{k-1},a_k}}J_k{\mathcal{A}_k}^{\m{t}}u,
\end{equation}
where we take $\mathcal{N}_{0}=e_n$. Summing over $k\le\ell$ and using that $u$ vanishes on $\Sigma_0$ gives the result.
\end{proof}

We now arrive at our final preliminary result, which states that the nonlinear mapping associated to the flattened problem~\eqref{flattened problem at the nonlinear level} is well-defined and smooth.
\begin{thm}\label{smoothness of nonlinear operator of the flattened problem}
Let $s\in\R^+$ with $s>n/2$, $\upsigma=\cb{\sig_\ell}_{\ell=1}^m\subset\R^{+}\cup\cb{0}$, and $\kappa\in\R^+$. Define the open set 
\begin{equation}
    U^s_\kappa=\cb{\p{p,u,\tp{\eta_\ell}_{\ell=1}^m}\in\mathcal{X}^s\;:\;  \eta_\ell\in B_{\mathcal{H}^{5/2+s}}\p{0,\kappa} \text{ for } \ell\in\cb{1,\dots,m}}
\end{equation}
and the mapping $\textstyle\Upxi_{\upsigma}:\R\times\prod_{\ell=1}^mH^{1/2+s}\p{\Sigma_\ell[0];\R^{n\times n}_{\m{sym}}}\times U^s_\kappa\to\mathcal{Y}^s$ with action given via
\begin{multline}
    \Upxi_{\upsigma}\p{\gam,\tp{\mathcal{T}_\ell}_{\ell=1}^m,p,u,\tp{\eta_\ell}_{\ell=1}^m}=\Big(J\mathcal{A}\grad\cdot u,\ssum{\ell=1}{m}\mathbbm{1}_{\Omega_\ell[0]}\sb{[\rho_\ell\p{u-\gam e_1}\cdot\mathcal{A}\grad] u+(\mathcal{A}\grad)\cdot S^{\upmu}_{\mathcal{A}}\p{p,u}},\\\sp{\jump{S^{\upmu}\p{p,u}}_\ell\mathcal{N}_\ell-\p{\mathfrak{g}\jump{\uprho}_\ell\eta+\sig_\ell\m{H}\p{\eta_\ell}}\mathcal{N}_\ell-\mathcal{T}_\ell\mathcal{N}_\ell}_{\ell=1}^m,\tp{\gam\pd_1\eta_\ell+u\cdot\mathcal{N}_\ell}_{\ell=1}^m\Big),
\end{multline}
for $J$, $\mathcal{A}$, and $\mathcal{N}$ defined as functions of $\tp{\eta_\ell}_{\ell=1}^m$ as in Sections~\ref{section on Eulerian coordinate formulation} and~\ref{section on reformulation in a fixed domain}. There exists $\kappa_0\in\R^+$ such that for all $0<\kappa\le\kappa_0$ the mapping $\Upxi_{\upsigma}$ is well-defined, i.e. maps into $\mathcal{Y}^s$, and is smooth.
\end{thm}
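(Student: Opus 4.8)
The plan is to reduce the statement to a finite list of nonlinear operations and show that each one is a composition of smooth maps between the relevant Banach spaces, exploiting the product and composition lemmas already established (Propositions~\ref{product between a standard sobolev space and a specialized sobolev space} and~\ref{smoothness of the more complicated nonlinearity}) together with the fact that $s>n/2$ makes $H^s(\Omega_\ell[0])$ and $H^s(\Sigma_\ell)$ Banach algebras. The first step is to choose $\kappa_0$: by item~(1) of Proposition~\ref{smoothness of the more complicated nonlinearity} and the scaling in Proposition~\ref{nonlinear divergence compatibility}, there is a $\kappa_0 \in \R^+$ such that whenever $\max_\ell \norm{\eta_\ell}_{\mathcal{H}^{5/2+s}} < \kappa_0$ one has $\max_\ell \norm{\eta_\ell}_{C^0_b} < \tfrac14 \min\{a_1,a_2-a_1,\dots,a_m-a_{m-1}\}$, so that each flattening map $\mathfrak{F}_\ell$ of~\eqref{calcium} is a genuine bi-Lipschitz homeomorphism and the denominators $a_\ell + \eta_\ell(x) - a_{\ell-1} - \eta_{\ell-1}(x)$ appearing in $J_\ell$ and $\mathcal{A}_\ell$ stay bounded away from zero. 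Shrinking $\kappa_0$ further if needed so that it is below $\del(5/2+s)$ times the appropriate geometric constant lets us invoke Proposition~\ref{smoothness of the more complicated nonlinearity} to handle the reciprocals.

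Next I would unpack the five components of $\Upxi_\upsigma$ one at a time. The entries of $\grad\mathfrak{F}_\ell$, $J_\ell$, and $\mathcal{A}_\ell$ are, after clearing denominators, polynomials in $\eta_\ell$, $\eta_{\ell-1}$, $\grad_\| \eta_\ell$, $\grad_\| \eta_{\ell-1}$ and affine functions of the vertical coordinate; since $\eta_\ell \in \mathcal{H}^{5/2+s}$ embeds into $C^k_0 + H^{5/2+s}$ and $\grad_\|\eta_\ell \in \mathcal{H}^{3/2+s}$, and since multiplication of an $\mathcal{H}^t$ element against an $H^t$ element lands in $H^t$ (Proposition~\ref{product between a standard sobolev space and a specialized sobolev space}, with $t = 1/2+s$ after one more derivative), each such expression is a smooth function of $\tp{\eta_\ell}_{\ell=1}^m$ into $H^{1/2+s}(\Omega_\ell[0])$, or into $H^{3/2+s}$ for the divergence entry. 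Division by $J$ is smooth by Proposition~\ref{smoothness of the more complicated nonlinearity} once $\kappa_0$ is small. The terms that involve $u$ or $p$ — namely $[\rho_\ell(u-\gam e_1)\cdot \mathcal{A}\grad] u$, $(\mathcal{A}\grad)\cdot S^\upmu_{\mathcal{A}}(p,u)$, $\jump{S^\upmu(p,u)}_\ell \mathcal{N}_\ell$, $u\cdot \mathcal{N}_\ell$ — are multilinear in $(p,u)$ with coefficients the smooth $\mathcal{A},\mathcal{N}$ built above; here one uses that $u \in {_0}H^{2+s}$ gives $\grad u \in H^{1+s}(\Omega_\ell)$ and traces $\m{Tr}_{\Sigma_\ell} u \in H^{3/2+s}$, $\m{Tr}_{\Sigma_\ell}\grad u \in H^{1/2+s}$, and that $p + \mathfrak{g}\sum \jump{\uprho}_k \eta_k \mathbbm{1}_{(0,a_k)} \in H^{1+s}(\Omega)$ by Definition~\ref{domain of multilayer traveling Stokes with gravity-capillary boundary and jump conditions solution operator} so that $\grad p \in H^s$ and $\jump{p}_\ell \in H^{1/2+s}$ after subtracting the explicit jump in the $\eta_k$ term. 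The mean curvature $\m{H}(\eta_\ell)$ is $\grad_\| \cdot [\grad_\| \eta_\ell (1+|\grad_\|\eta_\ell|^2)^{-1/2}]$, which is again a composition: the square root of $1 + |\grad_\|\eta_\ell|^2$ applied to an $H^{3/2+s}$ function, then a reciprocal (smooth near $1$, i.e. for $\kappa_0$ small), then a product with $\grad_\|\eta_\ell$, then one tangential derivative, landing in $H^{1/2+s}$; multiplying by $\mathcal{N}_\ell$ keeps us in $H^{1/2+s}(\Sigma_\ell;\R^n)$. The applied-stress term $\mathcal{T}_\ell \mathcal{N}_\ell$ is bilinear and continuous from $H^{1/2+s}\times H^{3/2+s}$ into $H^{1/2+s}$.

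The one genuinely structural point — and the step I expect to be the main obstacle — is checking that $\Upxi_\upsigma$ actually lands in $\mathcal{Y}^s$ and not merely in the larger product of Sobolev spaces: membership in $\mathcal{Y}(\Omega)$ requires the divergence compatibility condition $h_\ell - \int_{(0,a_\ell)} g \in \dot{H}^{-1}(\Sigma_\ell)$ with a controlled norm, where here $g = J\mathcal{A}\grad\cdot u$ and $h_\ell = \gam \pd_1 \eta_\ell + u\cdot \mathcal{N}_\ell$. This is exactly supplied by Proposition~\ref{nonlinear divergence compatibility}, which rewrites $u\cdot\mathcal{N}_\ell - \int_{(0,a_\ell)} J\mathcal{A}\grad\cdot u$ as $-(\grad_\|,0)\cdot \int_{(0,a_\ell)} J\mathcal{A}^{\m{t}} u$, a tangential divergence of an $L^2$ (indeed $H^{1+s}$) field, hence in $\dot{H}^{-1}$ with norm $\lesssim \norm{J\mathcal{A}^{\m{t}} u}_{L^2}$; combined with $\gam \pd_1 \eta_\ell \in \dot{H}^{-1}(\Sigma_\ell)$ from item~(3) of Proposition~\ref{linear topological properties of container for free surface and free interface functions}, this gives the required inclusion and estimate. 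Once well-definedness into $\mathcal{Y}^s$ is secured, smoothness follows because $\Upxi_\upsigma$ has been exhibited as a finite sum of compositions and products of the smooth building blocks above, and composition and multiplication of smooth maps between Banach spaces is smooth. I would finish by noting that the $\gamma$-dependence enters only linearly (through $\gam e_1$ and $\gam\pd_1\eta_\ell$), so smoothness in the full variable $(\gam,\tp{\mathcal{T}_\ell}_{\ell=1}^m, p, u, \tp{\eta_\ell}_{\ell=1}^m)$ is immediate, and point to Section~8 of~\cite{leoni2019traveling} for the single-layer template that this argument parallels verbatim except for the bookkeeping over $\ell$.
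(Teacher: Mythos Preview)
Your proposal is correct and follows essentially the same approach as the paper: choose $\kappa_0$ from the radii in Propositions~\ref{smoothness of the more complicated nonlinearity} and~\ref{nonlinear divergence compatibility} (the paper also folds in the radius from Theorem~A.10 of~\cite{leoni2019traveling} for the mean curvature, which you instead unpack by hand), use Propositions~\ref{product between a standard sobolev space and a specialized sobolev space} and~\ref{smoothness of the more complicated nonlinearity} to handle each nonlinear building block, and invoke Proposition~\ref{nonlinear divergence compatibility} for the crucial $\dot H^{-1}$ compatibility needed to land in $\mathcal{Y}^s$. The paper's own proof is in fact just a two-sentence sketch that points to Theorem~8.3 of~\cite{leoni2019traveling}; your write-up is a faithful and more explicit execution of exactly that template.
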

\begin{proof}
Theorem A.10 of~\cite{leoni2019traveling} asserts that there is $\del_1\in\R^+$ for which the mean curvature operator $\m{H}:B_{\mathcal{H}^{5/2}+s}(0,\del_1)\to H^{1/2+s}(\R^{n-1})$ is well-defined and smooth. Thus we set $\kappa_0$ to be the minimum of $\del_1$ and $\del$, for $\del$ the radius from Proposition~\ref{nonlinear divergence compatibility}. By combining the analysis of the nonlinearities from Propositions~\ref{product between a standard sobolev space and a specialized sobolev space} and~\ref{smoothness of the more complicated nonlinearity} with the nonlinear divergence compatibility of Proposition~\ref{nonlinear divergence compatibility}, we may argue as in Theorem 8.3 in~\cite{leoni2019traveling} to deduce well-definedness and smoothness.
\end{proof}
\subsection{Solvability of \eqref{flattened problem at the nonlinear level} and \eqref{equations of motion written in traveling Eulerian coordinates}}

To solve~\eqref{flattened problem at the nonlinear level} we combine the smoothness result from Theorem~\ref{smoothness of nonlinear operator of the flattened problem} with the linear isomorphisms of Theorems~\ref{isomorphism associated to multilayer traveling Stokes with gravity-capillary boundary and jump conditions in the surface tension case} and~\ref{isomorphism associated to multilayer traveling Stokes with gravity-capillary boundary and jump conditions in the zero surface tension case}.
\begin{thm}\label{theorem on the solvability of the flattened problem}
Suppose that $\upsigma=\cb{\sigma_\ell}_{\ell=1}^m\subset\R^+$ and $n\ge 2$ or $\upsigma=0$ and $n=2$. Assume that $\R^+\ni s>n/2$. Then there exists open sets $\mathcal{V}_s\subset \mathcal{X}^s$ and $\mathcal{U}_s\subset\R\setminus\cb{0}\times\prod_{\ell=1}^mH^{1/2+s}\tp{\Sigma_\ell[0];\R^{n\times n}_{\m{sym}}}\times H^s\p{\Omega[0];\R^n}$ such that the following hold.
\begin{enumerate}
    \item $\p{0,0,\tp{0}_{\ell=1}^m}\in\mathcal{V}_s$ and $(\R\setminus\cb{0})\times\cb{\tp{0}_{\ell=1}^m}\times\cb{0}\subset\mathcal{U}_s$.
    \item For each $\p{\gam,\tp{\mathcal{T}_\ell}_{\ell=1}^m,f}\in\mathcal{U}_s$ there exists a unique $\p{p,u,\tp{\eta_\ell}_{\ell=1}^m}\in\mathcal{V}_s$ solving~\eqref{flattened problem at the nonlinear level} classically.
    \item The mapping $\mathcal{U}_s\ni\p{\gam,\tp{\mathcal{T}_\ell}_{\ell=1}^m,f}\mapsto\p{p,u,\tp{\eta_\ell}_{\ell=1}^m}\in\mathcal{V}_s$ is smooth.
\end{enumerate}
\end{thm}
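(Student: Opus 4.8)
The plan is to apply the implicit function theorem to the smooth map $\Upxi_{\upsigma}$ from Theorem~\ref{smoothness of nonlinear operator of the flattened problem}, using the linear isomorphisms from Theorems~\ref{isomorphism associated to multilayer traveling Stokes with gravity-capillary boundary and jump conditions in the surface tension case} and~\ref{isomorphism associated to multilayer traveling Stokes with gravity-capillary boundary and jump conditions in the zero surface tension case} to identify the derivative at the origin with an isomorphism. First I would fix $\kappa \in (0,\kappa_0]$ with $\kappa_0$ as in Theorem~\ref{smoothness of nonlinear operator of the flattened problem}, so that $\Upxi_{\upsigma}: (\R\setminus\{0\})\times\prod_{\ell=1}^m H^{1/2+s}(\Sigma_\ell[0];\R^{n\times n}_{\m{sym}})\times U^s_\kappa \to \mathcal{Y}^s$ is well-defined and smooth. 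The key observation is that $\Upxi_{\upsigma}(\gam,\tp{0}_{\ell=1}^m,0,0,\tp{0}_{\ell=1}^m) = \tp{0,0,\tp{0}_{\ell=1}^m,\tp{0}_{\ell=1}^m} = 0$ for every $\gam\neq 0$ (all nonlinear terms vanish when $u=0$, $p=0$, $\upeta=0$, and then $\mathcal{A}=I$, $J=1$, $\mathcal{N}_\ell=e_n$, $\m{H}(0)=0$), so the origin is a solution for each wave speed.

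Next I would compute the partial derivative $D_{(p,u,\upeta)}\Upxi_{\upsigma}$ at the point $(\gam,\tp{0}_{\ell=1}^m,0,0,\tp{0}_{\ell=1}^m)$. Differentiating through the quasilinear terms and using that at $\upeta=0$ one has $J\mathcal{A} = I$, $\mathcal{N}_\ell = e_n$, $D\m{H}(0)\eta_\ell = \Delta_\|\eta_\ell$, and that the terms quadratic in $u$ drop out, this derivative is exactly the map $\Upupsilon_{\gam,\upsigma}:\mathcal{X}^s\to\mathcal{Y}^s$ defined in~\eqref{ready set action}. (One must check the sign conventions on $\gam\pd_1\eta_\ell$ and on the linearization of the kinematic condition $\gam\pd_1\eta_\ell + u\cdot\mathcal{N}_\ell$ match, but this is a routine bookkeeping comparison with Proposition~\ref{well-definedness and injectivity}.) By Theorem~\ref{isomorphism associated to multilayer traveling Stokes with gravity-capillary boundary and jump conditions in the surface tension case} in the case $n\ge2$, $\upsigma\subset\R^+$, and by Theorem~\ref{isomorphism associated to multilayer traveling Stokes with gravity-capillary boundary and jump conditions in the zero surface tension case} in the case $n=2$, $\upsigma=0$, this linear map $\Upupsilon_{\gam,\upsigma}$ is a Banach isomorphism for every $\gam\in\R\setminus\{0\}$.

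With these ingredients the implicit function theorem applies around each $(\gam_0,\tp{0}_{\ell=1}^m,0)$ with $\gam_0\neq0$: it produces a neighborhood $\mathcal{U}_{\gam_0}\subset(\R\setminus\{0\})\times\prod_{\ell=1}^m H^{1/2+s}(\Sigma_\ell[0];\R^{n\times n}_{\m{sym}})\times H^s(\Omega[0];\R^n)$ of $(\gam_0,\tp{0}_{\ell=1}^m,0)$, a neighborhood $\mathcal{V}_{\gam_0}\subset U^s_\kappa$ of $0$, and a smooth solution map $\mathcal{U}_{\gam_0}\ni(\gam,\tp{\mathcal{T}_\ell}_{\ell=1}^m,f)\mapsto(p,u,\upeta)\in\mathcal{V}_{\gam_0}$ with $\Upxi_{\upsigma}(\gam,\tp{\mathcal{T}_\ell}_{\ell=1}^m,p,u,\upeta)=0$ and with this solution the unique one in $\mathcal{V}_{\gam_0}$. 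I would then set $\mathcal{U}_s = \bigcup_{\gam_0\in\R\setminus\{0\}}\mathcal{U}_{\gam_0}$ and take $\mathcal{V}_s$ to be a suitable open subset of $\mathcal{X}^s$ containing $0$ small enough that uniqueness holds globally on $\mathcal{V}_s$ (shrinking $\kappa$ further if necessary so that the free surface bound $\max_\ell\norm{\eta_\ell}_{C^0_b}\le\frac14\min\{a_1,\dots,a_m-a_{m-1}\}$ holds via the embedding in item~2 of Proposition~\ref{linear topological properties of container for free surface and free interface functions}). Finally, that a zero of $\Upxi_{\upsigma}$ is a \emph{classical} solution to~\eqref{flattened problem at the nonlinear level} follows from the embeddings $\mathcal{X}^s\emb C^{1+r}_b(\Omega[0])\times[C^0_b\cap C^{2+r}_b] \times (C^{3+r}_0)^m$ recorded in the statement of Theorem~\ref{custom theorem 1}, which come from items~2 of Propositions~\ref{linear topological properties of container for free surface and free interface functions} and~\ref{linear topological properties of container for pressure} together with the Sobolev embedding $H^{2+s}\emb C^{1+r}$ for $r<s-n/2$; this matches the argument of Theorem~8.4 in~\cite{leoni2019traveling}. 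The main obstacle is the careful verification that $D_{(p,u,\upeta)}\Upxi_{\upsigma}$ at the origin is precisely $\Upupsilon_{\gam,\upsigma}$ and not merely a compact perturbation of it — one must track every linearization, in particular that of $(\mathcal{A}\grad)\cdot S^{\upmu}_{\mathcal{A}}(p,u)$ and of $\jump{S^{\upmu}(p,u)}_\ell\mathcal{N}_\ell - (\mathfrak{g}\jump{\uprho}_\ell\eta_\ell+\sigma_\ell\m{H}(\eta_\ell))\mathcal{N}_\ell$, and confirm that the $\upeta$-dependence entering through $\mathcal{A}$, $J$, $\mathcal{N}_\ell$ contributes nothing at $\upeta=0$ because it is always multiplied by $u$ or $p$, both zero at the base point; the rest is an organized application of the implicit function theorem and the gluing of local solution maps.
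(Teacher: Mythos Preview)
Your approach is essentially the same as the paper's: apply the implicit function theorem to the smooth map $\Upxi_{\upsigma}$ of Theorem~\ref{smoothness of nonlinear operator of the flattened problem}, identify the partial derivative in the $(p,u,\upeta)$ variables at the trivial solution as $\Upupsilon_{\gam,\upsigma}$, invoke Theorems~\ref{isomorphism associated to multilayer traveling Stokes with gravity-capillary boundary and jump conditions in the surface tension case} and~\ref{isomorphism associated to multilayer traveling Stokes with gravity-capillary boundary and jump conditions in the zero surface tension case} for the isomorphism, and glue the resulting local solution maps over $\gam_\star\in\R\setminus\{0\}$.

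One small bookkeeping point: as defined in Theorem~\ref{smoothness of nonlinear operator of the flattened problem}, $\Upxi_{\upsigma}$ does \emph{not} take the bulk forcing $f$ as an argument, so the equation to solve for~\eqref{flattened problem at the nonlinear level} is $\Upxi_{\upsigma}(\gam,\tp{\mathcal{T}_\ell}_{\ell=1}^m,p,u,\upeta)=(0,f,\tp{0}_{\ell=1}^m,\tp{0}_{\ell=1}^m)$, not $\Upxi_{\upsigma}=0$. The paper handles this by applying the implicit function theorem with target in a full neighborhood $\mathfrak{C}(\gam_\star)\subset\mathcal{Y}^s$ and then restricting to data of the form $(0,f,0,0)$; equivalently you could apply the implicit function theorem to $(\gam,\mathcal{T},f,p,u,\upeta)\mapsto\Upxi_{\upsigma}(\gam,\mathcal{T},p,u,\upeta)-(0,f,0,0)$, whose $(p,u,\upeta)$-derivative at the origin is still $\Upupsilon_{\gam,\upsigma}$. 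Either way the argument goes through, and your identification of $D_{(p,u,\upeta)}\Upxi_{\upsigma}$ with $\Upupsilon_{\gam,\upsigma}$ is exactly what the paper records.
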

\begin{proof}
We apply the implicit function theorem to $\Upxi_{\upsigma}$ (see, for instance, Theorem 2.5.7 in~\cite{MR960687}). Denote the Hilbert space $\mathcal{E}^s=\R\times\prod_{\ell=1}^mH^{1/2+s}\tp{\Sigma_\ell[0];\R^{n\times n}_{\m{sym}}}$. Viewing the domain of $\Upxi_{\upsigma}$ as the product $\mathcal{E}^s\times U^s_{\kappa_0}\subset\mathcal{E}^s\times\mathcal{X}^s$ we define the partial derivatives with respect to the first and second factors via
\begin{equation}
    \textstyle D_1\Upxi_{\upsigma}:\mathcal{E}^s\times U^s_\del\to\mathcal{L}\p{\mathcal{E}^s;\mathcal{Y}^s}\text{ and }D_2\Upxi_{\upsigma}:\mathcal{E}^s\times U^s_\del\to\mathcal{L}\p{\mathcal{X}^s;\mathcal{Y}^s}.
\end{equation}
For any $\gam\in\R$ we have $\Upxi_{\upsigma}\p{\gam,\tp{0}_{\ell=1}^m,0,0,\tp{0}_{\ell=1}^m}=0$ and $D_2\Upxi_{\upsigma}\p{\gam,\tp{0}_{\ell=1}^m,0,0,\tp{0}_{\ell=1}^m}=\Upupsilon_{\gam,\upsigma}$, for the latter operator as in Proposition~\ref{well-definedness and injectivity}. Theorems~\ref{smoothness of nonlinear operator of the flattened problem}, \ref{isomorphism associated to multilayer traveling Stokes with gravity-capillary boundary and jump conditions in the surface tension case}, and~\ref{isomorphism associated to multilayer traveling Stokes with gravity-capillary boundary and jump conditions in the zero surface tension case} witness the satisfaction of the implicit function theorem's hypotheses whenever $\gam\in\R\setminus\cb{0}$.

Therefore for each $\gam_{\star}\in\R\setminus\cb{0}$ there exists open sets $\mathfrak{A}\p{\gam_\star}\subset\mathcal{E}^s$, $\mathfrak{B}\p{\gam_\star}\subset U^s_{\kappa_0}$, and $\mathfrak{C}\p{\gam_\star}\subset\mathcal{Y}^s$ such that $\p{\gam_\star,\tp{0}_{\ell=1}^m}\in\mathfrak{A}\p{\gam_\star}$, $\p{0,0,\tp{0}_{\ell=1}^m}\in\mathfrak{B}\p{\gam_\star}$, and $\p{0,0,\tp{0}_{\ell=1}^m,\tp{0}_{\ell=1}^m}\in\mathfrak{C}\p{\gam_\star}$ and a smooth mapping $\upvarpi_{\gam_\star}:\mathfrak{A}\p{\gam_\star}\times\mathfrak{C}\p{\gam_\star}\to\mathfrak{B}\p{\gam_\star}$ such that
\begin{equation}\label{lithium}
    \textstyle\Upxi_{\upsigma}\p{\gam,\tp{\mathcal{T}_\ell}_{\ell=1}^m,\upvarpi_{\gam_\star}\p{\gam,\tp{\mathcal{T}_\ell}_{\ell=1}^m,g,f,\tp{k_\ell}_{\ell=1}^m,\tp{h_\ell}_{\ell=1}^m}}=\p{g,f,\tp{k_\ell}_{\ell=1}^m,\tp{h_\ell}_{\ell=1}^m}
\end{equation}
for all $\p{g,f,\tp{k_\ell}_{\ell=1}^m,\tp{h_\ell}_{\ell=1}^m}\in\mathfrak{C}\p{\gam_\star}$ and all $\p{\gam,\tp{\mathcal{T}_\ell}_{\ell=1}^m}\in\mathfrak{A}\p{\gam_\star}$. Moreover the tuple $\p{p,u,\tp{\eta_\ell}_{\ell=1}^m}=\upvarpi_{\gam_\star}\p{\gam,\tp{\mathcal{T}_\ell}_{\ell=1}^m,g,f,\tp{k_\ell}_{\ell=1}^m,\tp{h_\ell}_{\ell=1}^m}\in\mathfrak{B}\p{\gam_\star}$ is the unique solution to~\eqref{lithium} in $\mathfrak{B}\p{\gam_\star}$.

Define the open sets
\begin{multline}
    \mathfrak{C}_1(\gam_\star)=\tcb{f\;:\;(0,f,(0)_{\ell=1}^m,(0)_{\ell=1}^m)\in\mathfrak{C}(\gam_\star)}\subseteq H^s(\Omega[0];\R^n),\\\textstyle
    \mathcal{U}_s =\bigcup_{\gam\in\R\setminus\cb{0}}\mathfrak{A}\p{\gam_\star}\times\mathfrak{C}_1\p{\gam_\star}\subset\mathcal{E}^s\times H^s\p{\Omega[0];\R^n},  \text{ and }
\mathcal{V}_s=\bigcup_{\gam\in\R\setminus\cb{0}}\mathfrak{B}\p{\gam_\star}\subset U_{\kappa_0}^s.
\end{multline}
Observe that the first item is satisfied with these open sets. Define $\upvarphi:\mathcal{U}_s\to\mathcal{V}_s$ via $\upvarphi\p{\gam,\tp{\mathcal{T}_\ell}_{\ell=1}^m,f}=\upvarpi_{\gam_\star}\p{\gam,\tp{\mathcal{T}_\ell}_{\ell=1}^m,f}$ when $\p{\gam,\tp{\mathcal{T}_\ell}_{\ell=1}^m}\in\mathfrak{A}\p{\gam_\star}$ for some $\gam_\star\in\R\setminus\cb{0}$. The map $\upvarphi$ is well-defined and smooth by the previous analysis.

Taking $\p{p,u,\tp{\eta_\ell}_{\ell=1}^m}=\upvarphi\p{\gam,\tp{\mathcal{T}_\ell}_{\ell=1}^m,f}$ and noting the embeddings of the specialized Sobolev spaces (see Propositions~\ref{linear topological properties of container for free surface and free interface functions} and~\ref{linear topological properties of container for pressure}) completes the justification of the second and third items.
\end{proof}

Next we examine system~\eqref{equations of motion written in traveling Eulerian coordinates}. Our first result gives some of the mapping properties of the flattening map $\mathfrak{F}$ and its inverse from Section~\ref{section on reformulation in a fixed domain}.

\begin{prop}\label{properties of the flatteneing map and its inverse}
Let $n,k\in\N$ with $1\le n/2<k$, let $\upeta=\tp{\eta_\ell}_{\ell=1}^m\in\tp{\mathcal{H}^{5/2+k}\tp{\R^{n-1}}}^m$ be such that
\begin{equation}\label{hydrogen}
    \textstyle\max\{\norm{\eta_1}_{C^0_b},\dots,\norm{\eta_m}_{C^0_b}\}\le\f14\min\cb{a_1,a_2-a_1,\dots,a_m-a_{m-1}},
\end{equation}
and define $\mathfrak{G}:\Omega[\upeta]\to\Omega[0]$ via $\mathfrak{G}={\mathfrak{F}_\ell}^{-1}$ in the set $\Omega_\ell[\upeta]$ for each $\ell\in\cb{1,\dots,m}$ as in~\eqref{sodium}. Then the following hold.
\begin{enumerate}
    \item $\mathfrak{G}\in C^{0,1}\p{\Omega[\upeta],\Omega[0]}$ is a bi-Lipschitz homeomorphism with inverse given by  $\mathfrak{F}\in C^{0,1}\p{\Omega[0];\Omega[\upeta]}$, as defined in~\eqref{calcium}.
    \item Set $\mathfrak{G}_\ell=\mathfrak{G}\res\Bar{\Omega_\ell[\upeta]}$ for $\ell\in\cb{1,\dots,m}$. Then $\mathfrak{G}_\ell  \in C^r\p{\Omega_\ell[\upeta],\Omega_\ell[0]}$ is a diffeomorphism with inverse given by $\mathfrak{F}_\ell = \mathfrak{F} \res \Bar{\Omega_\ell[0]} \in C^r\p{\Omega_\ell[0],\Omega_\ell[\upeta]}$,  where $\N\ni r<3+k-n/2$.
    \item If $g\in{_0}H^1\p{\Omega[0]}$ then $g\circ\mathfrak{G}\in{_0}H^1\p{\Omega[\upeta]}$. Moreover there is $c\in\R^+$, independent of $g$, such that $\norm{g\circ\mathfrak{G}}_{{_0}H^1}\le c\norm{g}_{{_0}H^1}$.
    \item For $\R^+\cup\cb{0}\ni s\le k+2$, if $f\in H^s\p{\Omega[0]}$ then $f\circ\mathfrak{G}\in H^s\p{\Omega[\upeta]}$. Moreover there is $\tilde{c}\in\R^+$, independent of $f$, such that $\norm{f\circ\mathfrak{G}}_{H^s\p{\Omega[\upeta]}}\le\tilde{c}\norm{f}_{H^s\p{\Omega[0]}}$. 
\end{enumerate}
\end{prop}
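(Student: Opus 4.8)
The plan is to establish the four items in order, since each builds on the previous. For item (1), I would first invoke the hypothesis \eqref{hydrogen} to guarantee that for each $\ell$ one has $a_\ell - a_{\ell-1} \neq \eta_{\ell-1} - \eta_\ell$ pointwise (indeed $\abs{\eta_{\ell-1}-\eta_\ell} \le \frac12 \min\{a_1,a_2-a_1,\dots\} < a_\ell - a_{\ell-1}$), so that the formulas~\eqref{calcium} and~\eqref{sodium} make sense and $\mathfrak{F}_\ell, {\mathfrak{F}_\ell}^{-1}$ are mutually inverse homeomorphisms on the closed slabs. Since $\eta_\ell \in \mathcal{H}^{5/2+k}(\R^{n-1}) \emb C^1_0(\R^{n-1})$ by item (2) of Proposition~\ref{linear topological properties of container for free surface and free interface functions} (using $n/2 < k$, so $(n-1)/2 + 1 < 5/2+k$), each $\eta_\ell$ is in particular Lipschitz, and the explicit formulas show $\mathfrak{F}_\ell$ and ${\mathfrak{F}_\ell}^{-1}$ are Lipschitz with the Lipschitz constants controlled by $\nabla_\| \eta_\ell$ and the lower bound on $J_\ell$. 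The matching on interfaces $\mathfrak{F}_\ell = \mathfrak{F}_{\ell-1}$ on $\Sigma_{\ell-1}[0]$ was already noted in Section~\ref{section on reformulation in a fixed domain}, so the pasted map $\mathfrak{F}$ is a global bi-Lipschitz homeomorphism with inverse $\mathfrak{G}$.

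For item (2), the regularity gain comes from the fact that $\mathcal{H}^{5/2+k}(\R^{n-1}) \emb C^r_0(\R^{n-1})$ precisely when $(n-1)/2 + r < 5/2+k$, i.e. $r < 3 + k - n/2$ (here I am using item (2) of Proposition~\ref{linear topological properties of container for free surface and free interface functions} with the split $\mathcal{H}^{5/2+k} \emb C^r_0 + H^{5/2+k}$ and then Sobolev embedding on the $H^{5/2+k}$ piece). Given $\eta_\ell \in C^r$, the formula~\eqref{calcium} exhibits $\mathfrak{F}_\ell$ as a $C^r$ map, and the formula~\eqref{sodium} together with the nonvanishing of the denominator $a_\ell + \eta_\ell - a_{\ell-1} - \eta_{\ell-1} \geq \frac12(a_\ell - a_{\ell-1}) > 0$ exhibits ${\mathfrak{F}_\ell}^{-1}=\mathfrak{G}_\ell$ as $C^r$ as well; the Jacobian $J_\ell$ is bounded away from zero, so these are genuine diffeomorphisms of the open slabs. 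Items (3) and (4) are then the statement that composition with a bi-Lipschitz (for (3)) respectively $C^r$-with-$r$-large-enough (for (4)) change of variables boundedly preserves $H^1$ and $H^s$ on each slab. For (3), $g \circ \mathfrak{G}$ lies in $H^1$ on each $\Omega_\ell[\upeta]$ by the standard bi-Lipschitz invariance of $H^1$ (chain rule plus the a.e. bound $\abs{\nabla \mathfrak{G}} \lesssim 1$ and $\abs{\det \nabla \mathfrak{F}}^{-1} = J^{-1} \lesssim 1$), continuity of $g\circ \mathfrak{G}$ across the interfaces $\Sigma_\ell$ is inherited from continuity of $g$ and of $\mathfrak{G}$, hence $g \circ \mathfrak{G} \in H^1(\Omega^{\m{e}}[\upeta])$, and $\m{Tr}_{\Sigma_0}(g\circ\mathfrak{G}) = \m{Tr}_{\Sigma_0} g = 0$ since $\mathfrak{G}$ fixes $\Sigma_0$; the norm bound follows from the same change-of-variables estimate.

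For item (4), which is the most substantive of the four, I would reduce to a slab-by-slab statement: since $\mathfrak{G}$ restricted to $\Omega_\ell[\upeta]$ is $\mathfrak{G}_\ell \in C^r(\Omega_\ell[\upeta];\Omega_\ell[0])$ with bounded derivatives up to order $r$ and Jacobian bounded away from zero, and since $r$ is chosen with $r < 3 + k - n/2$ so that in particular $r \geq s$ is available whenever $s \le k+2$ (here one checks $k+2 < 3 + k - n/2$ fails in general, so one must instead argue with $\lfloor s \rfloor \le r$ and interpolate — this is the delicate bookkeeping point), one applies the standard result that composition with a $C^r$ diffeomorphism with $r \geq \lceil s \rceil$ and controlled derivatives boundedly preserves $H^s$, for integer $s$ via the chain rule and Faà di Bruno, and for fractional $s$ via interpolation between consecutive integer orders. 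I expect the main obstacle to be exactly this last point: verifying that the regularity $r$ guaranteed by the anisotropic embedding of $\mathcal{H}^{5/2+k}$ is enough to push $H^s$ forward for all $s \le k+2$, which requires being careful that the composition operator only needs $\lceil s\rceil \le r+1$-ish many derivatives of the change of variables rather than $s+1$, and then summing the slab estimates to get the global bound. Fortunately this is precisely the content of the single-layer argument in~\cite{leoni2019traveling} (their Section 8 and Appendix), so I would follow that blueprint, applying it on each slab $\Omega_\ell[\upeta]$ separately and then summing over $\ell$, invoking the continuity of $g\circ\mathfrak{G}$ across interfaces to conclude membership in $H^1(\Omega^{\m{e}}[\upeta])$ where needed.
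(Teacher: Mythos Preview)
Your proposal is correct and follows essentially the same approach as the paper: establish bi-Lipschitz via the embedding $\mathcal{H}^{5/2+k}\hookrightarrow C^r_0$ and the explicit formulas, deduce item (3) from standard bi-Lipschitz invariance of $H^1$, and obtain items (2) and (4) by applying the single-layer argument of Theorem~8.4 in~\cite{leoni2019traveling} slab by slab. Your flagging of the bookkeeping issue in item (4)—that $r<3+k-n/2$ does not directly give $r\ge k+2$—is exactly the reason the paper defers to the refined composition estimate from~\cite{leoni2019traveling} rather than a naive chain-rule argument.
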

\begin{proof}
By inspection, $\mathfrak{G}$ is a homeomorphism with weak derivative in $\Omega[\upeta]$ given by $\grad\mathfrak{G}\p{x,y}=\mathcal{A}^{\m{t}}\circ\mathfrak{G}$, for $\mathcal{A}$ the geometry matrix field from Section~\ref{section on reformulation in a fixed domain}. By the embedding of item two in Proposition~\ref{linear topological properties of container for free surface and free interface functions}, this weak gradient is essentially bounded. Hence $\mathfrak{G}$ is Lipschitz. A similar argument shows that $\mathfrak{F}=\mathfrak{G}^{-1}$ is also Lipschitz. Hence the first and third items are now shown.  The second and fourth items are now shown by applying the arguments of Theorem 8.4 in~\cite{leoni2019traveling} to the restrictions $\mathfrak{G}\res\Omega_\ell[\upeta]$ for $\ell\in\cb{1,\dots,m}$.
\end{proof}

Finally, we prove the solvability of the free boundary problem~\eqref{equations of motion written in traveling Eulerian coordinates}.

\begin{thm}\label{solvability of the free boundary and free interface problem}
Let $n,k\in\N$ with $1\le n/2<k$. Suppose that $\upsigma=\cb{\sigma_\ell}_{\ell=1}^m\subset\R^+$ and $n\ge 2$ or $\upsigma=0$ and $n=2$.  For all $\gam\in\R\setminus\cb{0}$, there exists $\ep\in\R^+$ such that if $\tp{\mathcal{T}_\ell}_{\ell=1}^m\in\prod_{\ell=1}^mH^{1/2+k}\p{\Sigma_\ell[0];\R^{n\times n}_{\m{sym}}}$, $f\in H^{k}\p{\Omega[0];\R^n}$, and $\sum_{\ell=1}^m\sb{\norm{\mathcal{T}_\ell}_{H^{1/2+k}}+\norm{f}_{H^s\p{\Omega_\ell[0]}}}<\ep$ then there exists a tuple of free surface functions $\upeta=\tp{\eta_\ell}_{\ell=1}^m\in\tp{\mathcal{H}^{5/2+k}\tp{\R^{n-1}}}^m$ satisfying~\eqref{hydrogen} such that the following hold. 
\begin{enumerate}
    \item If $\mathfrak{G}$ is the diffeomorphism from Proposition~\ref{properties of the flatteneing map and its inverse}, then we have the inclusion $\mathcal{F}:=f\circ\mathfrak{G}\in H^{k}\p{\Omega[\upeta];\R^n}$.
    \item There exists $\p{q,v}\in\mathcal{P}^{1+k}\p{\Omega[\upeta]}\times{_0}H^{2+k}\p{\Omega[\upeta];\R^n}$ such that $\p{q,v,\upeta}$ is a classical solution to system~\eqref{equations of motion written in traveling Eulerian coordinates} with forcing $\mathcal{F}$ and applied surface stresses $\tp{\mathcal{T}_\ell}_{\ell=1}^m$.
\end{enumerate}
\end{thm}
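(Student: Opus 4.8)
The plan is to feed the data into the flattened-problem solvability of Theorem~\ref{theorem on the solvability of the flattened problem} and then transport the resulting solution back to $\Omega[\upeta]$ by undoing the change of variables of Section~\ref{section on reformulation in a fixed domain}, using the mapping properties recorded in Proposition~\ref{properties of the flatteneing map and its inverse}. Fix $\gam\in\R\setminus\cb{0}$ and apply Theorem~\ref{theorem on the solvability of the flattened problem} with $s=k$ (admissible since $k\in\N$ and $k>n/2$), producing open sets $\mathcal{V}_k\subset\mathcal{X}^k$ and $\mathcal{U}_k$ together with the smooth small-data solution operator. Since $\mathcal{U}_k$ is open, contains $(\R\setminus\cb{0})\times\cb{\tp{0}_{\ell=1}^m}\times\cb{0}$, and the solution operator is continuous with $(\gam,\tp{0}_{\ell=1}^m,0)\mapsto(0,0,\tp{0}_{\ell=1}^m)$ (the zero triple solves~\eqref{flattened problem at the nonlinear level} with zero stresses and force and lies in $\mathcal{V}_k$, so it is the solution by uniqueness), there is $\ep>0$ such that the smallness hypothesis of the theorem forces $(\gam,\tp{\mathcal{T}_\ell}_{\ell=1}^m,f)\in\mathcal{U}_k$ and makes the resulting solution $\p{p,u,\tp{\eta_\ell}_{\ell=1}^m}\in\mathcal{V}_k$ of~\eqref{flattened problem at the nonlinear level} so small in $\mathcal{X}^k$ that, via the embedding $\mathcal{H}^{5/2+k}\emb C^0_0$ of item two of Proposition~\ref{linear topological properties of container for free surface and free interface functions} (valid because $5/2+k>(n-1)/2$), the tuple $\upeta=\tp{\eta_\ell}_{\ell=1}^m\in\tp{\mathcal{H}^{5/2+k}\tp{\R^{n-1}}}^m$ satisfies~\eqref{hydrogen}. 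This is the sought tuple of free surface functions.

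With $\upeta$ in hand, I would construct $\mathfrak{F}$ and its inverse $\mathfrak{G}$ exactly as in~\eqref{calcium} and~\eqref{sodium}. By Proposition~\ref{properties of the flatteneing map and its inverse} these are bi-Lipschitz homeomorphisms between $\Omega[\upeta]$ and $\Omega[0]$ restricting to $C^r$-diffeomorphisms on each slab for $\N\ni r<3+k-n/2$; since $k>n/2$ one may take $r\ge 2$, which is enough to make classical sense of the second-order system. That proposition also gives that pulling back along $\mathfrak{G}$ is bounded ${_0}H^1\p{\Omega[0]}\to{_0}H^1\p{\Omega[\upeta]}$ and $H^s\p{\Omega[0]}\to H^s\p{\Omega[\upeta]}$ for $0\le s\le k+2$. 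I then set $\mathcal{F}=f\circ\mathfrak{G}$, $v=u\circ\mathfrak{G}$, $q=p\circ\mathfrak{G}$. Item four of Proposition~\ref{properties of the flatteneing map and its inverse} with $s=k$ yields $\mathcal{F}\in H^k\p{\Omega[\upeta];\R^n}$, proving item (1). Items three and four together give $v\in{_0}H^{2+k}\p{\Omega[\upeta];\R^n}$ (the slab-wise $H^{2+k}$ part from item four, the global ${_0}H^1$ part from item three). For $q$ I would exploit the decomposition $p=q_0-\mathfrak{g}\sum_{\ell=1}^m\jump{\uprho}_\ell\eta_\ell\mathbbm{1}_{\Omega_1[0]\cup\cdots\cup\Omega_\ell[0]}$ with $q_0:=p+\mathfrak{g}\sum_{\ell=1}^m\jump{\uprho}_\ell\eta_\ell\mathbbm{1}_{\p{0,a_\ell}}\in H^{1+k}\p{\Omega[0]}$ guaranteed by Definition~\ref{domain of multilayer traveling Stokes with gravity-capillary boundary and jump conditions solution operator}: because $\mathfrak{G}$ preserves the horizontal variable and carries each $\Omega_\ell[\upeta]$ onto $\Omega_\ell[0]$, composition with $\mathfrak{G}$ leaves every $\eta_\ell$ unchanged and sends $\mathbbm{1}_{\Omega_1[0]\cup\cdots\cup\Omega_\ell[0]}$ to $\mathbbm{1}_{\Omega_1[\upeta]\cup\cdots\cup\Omega_\ell[\upeta]}$, so $q=q_0\circ\mathfrak{G}-\mathfrak{g}\sum_{\ell=1}^m\jump{\uprho}_\ell\eta_\ell\mathbbm{1}_{\Omega_1[\upeta]\cup\cdots\cup\Omega_\ell[\upeta]}$ with $q_0\circ\mathfrak{G}\in H^{1+k}\p{\Omega[\upeta]}$; this is precisely the decomposition demanded by Definition~\ref{container space for the pressure}, so $q\in\mathcal{P}^{1+k}\p{\Omega[\upeta]}$.

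It remains to check that $\p{q,v,\upeta}$ classically solves~\eqref{equations of motion written in traveling Eulerian coordinates} with forcing $\mathcal{F}$ and stresses $\tp{\mathcal{T}_\ell}_{\ell=1}^m$. This is carried out by reversing, line by line, the derivation of~\eqref{flattened problem at the nonlinear level} from~\eqref{equations of motion written in traveling Eulerian coordinates} in Sections~\ref{section on Eulerian coordinate formulation} and~\ref{section on reformulation in a fixed domain}: the chain-rule identities relating $\grad$ with $\mathcal{A}\grad$, $S^{\upmu}$ with $S^{\upmu}_{\mathcal{A}}$, and the dynamic, kinematic, jump, and no-slip conditions with their flattened counterparts are algebraic consequences of the definitions of $\mathcal{A}$, $J$, $\mathcal{N}_\ell$, and they are invertible since $\mathfrak{F}$ restricts to a $C^r$-diffeomorphism with $r\ge 2$ on each slab while the flattened solution already has, by the embeddings of Proposition~\ref{linear topological properties of container for pressure} and the usual Sobolev embedding on slabs (valid as $k>n/2$), $p$ of class $C^1$ and $u$ of class $C^2$ on each slab, so no regularity is lost under composition with $\mathfrak{G}$. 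I expect this verification to be the only genuinely delicate point, since the smallness bookkeeping and the transfer of the specialized spaces $\mathcal{H}^{5/2+k}$ and $\mathcal{P}^{1+k}$ are dispatched cleanly thanks to the vertical, slab-preserving structure of $\mathfrak{F}$ and to Propositions~\ref{properties of the flatteneing map and its inverse} and~\ref{linear topological properties of container for free surface and free interface functions}; the whole argument follows the one-layer case of Theorem~8.4 and the surrounding discussion in~\cite{leoni2019traveling}.
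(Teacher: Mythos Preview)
Your proposal is correct and follows the same route as the paper: solve the flattened problem via Theorem~\ref{theorem on the solvability of the flattened problem}, build $\mathfrak{F}$ and $\mathfrak{G}$ from the resulting $\upeta$ via Proposition~\ref{properties of the flatteneing map and its inverse}, and then pull the solution back to $\Omega[\upeta]$. You have in fact written out considerably more detail than the paper's own proof, which simply defers to the one-layer argument of Theorem~1.3 in~\cite{leoni2019traveling}; your explicit handling of the $\mathcal{P}^{1+k}$ membership of $q$ through the decomposition in Definition~\ref{domain of multilayer traveling Stokes with gravity-capillary boundary and jump conditions solution operator} and the slab-preserving structure of $\mathfrak{G}$ is exactly the right way to flesh this out.
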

\begin{proof}
We argue as in the proof of Theorem 1.3 in~\cite{leoni2019traveling}. For small data we may solve the flattened problem via Theorem~\ref{theorem on the solvability of the flattened problem}. Then we obtain the associated flattening mapping via Proposition~\ref{properties of the flatteneing map and its inverse}. Finally, we pre-compose the solution to the flattened problem with the inverse of the flattening map to obtain the desired solution to the free boundary problem.
\end{proof}

\appendix
\section{tools from analysis}\label{section on tools from analysis}

This appendix records various tools and results used throughout the paper.

\subsection{Real-valued tempered distributions}\label{appendix on real valued tempered distributions}
Recall the notion of a real valued tempered distribution.

\begin{defn}[Real valued tempered distributions]\label{defn of real valued tempered distributions} We say that  $F\in(\mathscr{S}(\R^d;\C))^\ast$ is $\R$-valued if $F$ equals its complex conjugate $\Bar{F}$, where we define $\Bar{F}\in(\mathscr{S}^\ast(\R^d;\C))^\ast$ with action $\tbr{\Bar{F},\varphi}=\Bar{\tbr{F,\Bar{\varphi}}}$ for $\varphi\in\mathscr{S}(\R^d;\mathbb{C})$.  
\end{defn}

The following are useful characterizations of the $\R$-valued tempered distributions. Here we recall that the reflection operator $\updelta_{-1}$ acts on functions $f: \R^d \to \C^k$ via $\updelta_{-1}f(x) = f(-x)$ and acts on $F\in(\mathscr{S}(\R^d;\C))^\ast$ via $\br{\updelta_{-1} F,\varphi} = \br{ F,\updelta_{-1}\varphi}.$

\begin{prop}[Characterizations of real-valued tempered distributions]\label{characterizations of real-valued tempered distruibutions}
For $F\in(\mathscr{S}(
\R^d;\C))^\ast$ the following are equivalent.
\begin{enumerate}
    \item $F$ is $\R$-valued.
    \item $\Bar{\mathscr{F}[F]}=\updelta_{-1}\mathscr{F}[F]$.
    \item $F\in(\mathscr{S}(\R^d;\R))^\ast$ in the sense that $\tbr{F,\varphi}\in\R$ for all $\varphi\in\mathscr{S}(\R^d;\R)$.
\end{enumerate}
\end{prop}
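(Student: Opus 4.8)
The plan is to establish the cycle of implications $(1) \Rightarrow (2) \Rightarrow (3) \Rightarrow (1)$, using only the definition of the Fourier transform on tempered distributions, the definition of $\updelta_{-1}$, and the definition of the conjugate $\Bar{F}$ recorded just above.

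First I would observe the purely formal identity relating conjugation, reflection, and the Fourier transform on Schwartz functions: for $\varphi \in \mathscr{S}(\R^d;\C)$ one has $\mathscr{F}[\Bar{\varphi}] = \updelta_{-1}\Bar{\mathscr{F}[\varphi]}$, which is immediate from the integral formula for $\mathscr{F}$ and a change of variables $\xi \mapsto -\xi$. Dualizing this gives, for any $F \in (\mathscr{S}(\R^d;\C))^\ast$, the relation $\mathscr{F}[\Bar{F}] = \updelta_{-1}\Bar{\mathscr{F}[F]}$ (one checks $\br{\mathscr{F}[\Bar F],\varphi} = \br{\Bar F, \mathscr{F}[\varphi]} = \Bar{\br{F,\Bar{\mathscr{F}[\varphi]}}} = \Bar{\br{F,\mathscr{F}[\updelta_{-1}\Bar\varphi]}} = \Bar{\br{\mathscr{F}[F],\updelta_{-1}\Bar\varphi}} = \br{\updelta_{-1}\Bar{\mathscr{F}[F]},\varphi}$, using that $\mathscr{F}$ commutes appropriately with $\updelta_{-1}$ and that $\updelta_{-1}$ is its own transpose). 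With this in hand, $(1) \Leftrightarrow (2)$ is nearly automatic: $F = \Bar F$ holds if and only if $\mathscr{F}[F] = \mathscr{F}[\Bar F] = \updelta_{-1}\Bar{\mathscr{F}[F]}$, since $\mathscr{F}$ is a bijection on $(\mathscr{S}(\R^d;\C))^\ast$.

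For $(1) \Rightarrow (3)$: if $\varphi \in \mathscr{S}(\R^d;\R)$, then $\Bar\varphi = \varphi$, so $\Bar{\br{F,\varphi}} = \Bar{\br{F,\Bar\varphi}} = \br{\Bar F,\varphi} = \br{F,\varphi}$, whence $\br{F,\varphi} \in \R$. For $(3) \Rightarrow (1)$: every $\varphi \in \mathscr{S}(\R^d;\C)$ decomposes as $\varphi = \varphi_1 + \ii\varphi_2$ with $\varphi_1,\varphi_2 \in \mathscr{S}(\R^d;\R)$, namely $\varphi_1 = (\varphi + \Bar\varphi)/2$ and $\varphi_2 = (\varphi - \Bar\varphi)/(2\ii)$; then $\br{\Bar F,\varphi} = \Bar{\br{F,\Bar\varphi}} = \Bar{\br{F,\varphi_1} - \ii\br{F,\varphi_2}} = \br{F,\varphi_1} + \ii\br{F,\varphi_2} = \br{F,\varphi}$ using that $\br{F,\varphi_1}$ and $\br{F,\varphi_2}$ are real by hypothesis. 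Hence $F = \Bar F$, closing the cycle.

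The argument is entirely formal, so there is no real obstacle; the only point requiring a modicum of care is bookkeeping the interaction of $\mathscr{F}$, complex conjugation, and $\updelta_{-1}$ in the distributional pairing — in particular making sure the transpose of $\updelta_{-1}$ is again $\updelta_{-1}$ and that the antilinearity in the definition of $\Bar{F}$ is tracked correctly. If one prefers, the equivalence $(2) \Leftrightarrow (3)$ can be obtained directly by applying $(1) \Leftrightarrow (3)$ to $\mathscr{F}[F]$ in place of $F$, but it is cleaner to just run the three-step cycle as above.
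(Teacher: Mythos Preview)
Your proof is correct and follows essentially the same approach as the paper: the paper proves $(1)\Leftrightarrow(3)$ exactly as you do (decomposing $\varphi$ into real and imaginary parts for one direction, using $\varphi=\Bar\varphi$ for the other), and for $(1)\Leftrightarrow(2)$ it simply cites an external reference, whereas you supply the self-contained distributional computation $\mathscr{F}[\Bar F]=\updelta_{-1}\Bar{\mathscr{F}[F]}$. Your bookkeeping of conjugation, $\updelta_{-1}$, and $\mathscr{F}$ is accurate.
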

\begin{proof}
The equivalence of the first and second items is standard; see, for instance, Lemma A.1 of~\cite{leoni2019traveling} for a proof. We prove that the first and third items are equivalent. Suppose first that $(3)$ holds. If $\varphi\in\mathscr{S}(\R^d;\C)$ then $\m{Re}[\varphi],\m{Im}[\varphi]\in\mathscr{S}(\R^d;\R)$; hence we are free to equate
\begin{equation}
    \tbr{\Bar{F},\varphi}=\tbr{\Bar{F},\m{Re}[\varphi]}+\ii\tbr{\Bar{F},\m{Im}[\varphi]}=\tbr{F,\m{Re}[\varphi]}+\ii\tbr{F,\m{Im}[\varphi]}=\tbr{F,\varphi}.
\end{equation}
Therefore $(3)\imp(1)$. Next suppose that $(1)$ holds. If $\varphi\in\mathscr{S}(\R^d;\mathbb{R})$ then $\varphi=\Bar{\varphi}$. Hence,
\begin{equation}
    \tbr{F,\varphi}=\tbr{\Bar{F},\varphi}=\Bar{\tbr{F,\Bar{\varphi}}}=\Bar{\tbr{F,\varphi}}\imp\tbr{F,\varphi}\in\R.
\end{equation}
Thus $(1)\imp(3)$.
\end{proof}
\begin{rmk}\label{notation for real valued tempered distributions}
By the previous proposition it is not an abuse of notation to denote the space of real valued tempered distributions with $(\mathscr{S}(\R^d;\R))^\ast$.
\end{rmk}
\begin{rmk}\label{real valued functions and real valued tempered distributions}
If $f\in(\mathscr{S}(\R^d;\C))^\ast\cap L^1_{\loc}(\R^d;\C)$ then by the third item of Proposition~\ref{characterizations of real-valued tempered distruibutions} $f$ is a $\R$-valued tempered distribution if and only if $f(x)\in\R$ for a.e. $x\in\R^d$.
\end{rmk}

\subsection{(Anti-)duality and Lax-Milgram}\label{appendix on (anti-)duality and Lax-Milgram}

Recall notions of sesquilinearity and anti-duality as defined in Section~\ref{section on conventions of notation} of the introduction.  The following variant of the Lax-Milgram lemma is adapted to anti-duality. For the well-known $\R$-valued version of this result we refer, for instance, to Corollary 5.8 of~\cite{MR2759829}.

\begin{prop}[Lax-Milgram]\label{lax-milgram lemma}
Suppose that $H$ is $\C$-Hilbert and $B:H\times H\to\C$ is a continuous and sesquilinear mapping for which there exists $c\in\R^+$ such that for all $u\in H$ one has the coercive estimate $\norm{u}^2\le c\m{Re}[B(u,u)]$. Then, there exists a $\C$-linear continuous isomorphism $\beta:H^{\Bar{\ast}}\to H$ satisfying
\begin{equation}
    B(\beta F,v)=\tbr{F,v}_{H^{\Bar{\ast}},H}\text{ for all }F\in H^{\Bar{\ast}}\text{ and }v\in H.
\end{equation}
\end{prop}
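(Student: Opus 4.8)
\textbf{Proof plan for the Lax--Milgram lemma (Proposition~\ref{lax-milgram lemma}).} The strategy is the standard reduction of the complex, anti-dual formulation to the well-known real-valued statement (as in Corollary 5.8 of~\cite{MR2759829}), mirroring the bootstrapping device already used in Lemma~\ref{the pressure lemma} of this excerpt. First I would regard the $\C$-Hilbert space $H$ as a real Hilbert space $H_\R$ by restricting scalars, with inner product $\br{u,v}_{H_\R} = \m{Re}\,\br{u,v}_H$; this induces the same norm and hence the same topology. Next, given the continuous sesquilinear form $B$, define the real-bilinear form $B_\R(u,v) = \m{Re}[B(u,v)]$ on $H_\R \times H_\R$. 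Continuity of $B_\R$ is immediate from that of $B$, and the hypothesis $\norm{u}^2 \le c\,\m{Re}[B(u,u)] = c\, B_\R(u,u)$ is exactly real coercivity. The real Lax--Milgram theorem then produces a continuous $\R$-linear isomorphism $\beta_\R : (H_\R)^\ast \to H_\R$ with $B_\R(\beta_\R G, v) = \br{G,v}_{(H_\R)^\ast, H_\R}$ for all real-linear continuous functionals $G$ and all $v \in H$.

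The remaining work is bookkeeping to upgrade this to the complex/anti-dual picture. I would set up the canonical real-linear isomorphism $\Theta : H^{\bar\ast} \to (H_\R)^\ast$ sending an antilinear functional $F$ to its real part $\Theta F := \m{Re}\circ F$; this is a bijection with inverse reconstructing $F$ from its real part via $\br{F,v}_{H^{\bar\ast},H} = \m{Re}[\br{F,v}] + \ii\,\m{Re}[\br{F,\ii v}]$ (the same reconstruction identity invoked at the end of the proof of Lemma~\ref{the pressure lemma}). Then define $\beta := \beta_\R \circ \Theta : H^{\bar\ast} \to H$. By construction $\beta$ is continuous and $\R$-linear, and for every $F \in H^{\bar\ast}$ and $v \in H$ one has $B_\R(\beta F, v) = \m{Re}[B(\beta F, v)] = \br{\Theta F, v}_{(H_\R)^\ast, H_\R} = \m{Re}[\br{F,v}_{H^{\bar\ast},H}]$.

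To finish I must check two things: that $B(\beta F, v) = \br{F,v}_{H^{\bar\ast},H}$ holds with \emph{complex} values (not merely their real parts), and that $\beta$ is in fact $\C$-linear. For the first, apply the real identity just obtained with $v$ replaced by $\ii v$: since $B$ is sesquilinear and linear in the first argument, $B(\beta F, \ii v) = \bar{\ii}\, B(\beta F, v) = -\ii\, B(\beta F, v)$, so $\m{Re}[B(\beta F,v)] = \m{Re}[\br{F,v}]$ and $\m{Re}[-\ii B(\beta F, v)] = \m{Re}[-\ii \br{F,v}]$, i.e.\ $\m{Im}[B(\beta F,v)] = \m{Im}[\br{F,v}]$; hence the full complex identity. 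For $\C$-linearity of $\beta$: it suffices to show $\beta(\ii F) = \ii\,\beta(F)$, where $(\ii F)(v) := \ii\,F(v)$ is again antilinear. Using the complex identity, $B(\beta(\ii F), v) = \br{\ii F, v} = \ii\,\br{F,v} = \ii\, B(\beta F, v) = B(\ii\,\beta F, v)$ for all $v$, where the last equality uses linearity of $B$ in its first slot; coercivity forces injectivity of $u \mapsto B(u,\cdot)$, so $\beta(\ii F) = \ii\,\beta F$. Bijectivity of $\beta$ and continuity of $\beta^{-1}$ follow since $\beta_\R$ and $\Theta$ are both isomorphisms of topological vector spaces. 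The main (and only) obstacle is organizing these real-versus-complex compatibility checks cleanly; there is no analytic difficulty, since all the hard content is contained in the cited real Lax--Milgram theorem.
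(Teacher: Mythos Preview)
Your proposal is correct and follows essentially the same approach as the paper: restrict scalars to obtain a real Hilbert space, apply the real Lax--Milgram lemma to $\m{Re}[B(\cdot,\cdot)]$, precompose with the real-part map $H^{\bar\ast}\to (H_\R)^\ast$, and then recover the full complex identity by substituting $\ii v$ for $v$. The only cosmetic difference is in the verification of $\C$-linearity of $\beta$: you check $\beta(\ii F)=\ii\,\beta F$ directly using the complex identity and injectivity, whereas the paper observes that $\beta$ is the inverse of the manifestly $\C$-linear map $u\mapsto B(u,\cdot)\in H^{\bar\ast}$.
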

\begin{proof}
Let $K$ be the $\R$-Hilbert space with underlying vector space equal to that of $H$ and equipped with inner product $(\cdot,\cdot)_K=\m{Re}[(\cdot,\cdot)_H]$. The map $\m{Re}[B(\cdot,\cdot)]:K\times K\to\R$ is then a bilinear form satisfying the hypotheses of the $\R$-valued Lax-Milgram lemma; in other words, $\m{Re}[B(\cdot,\cdot)]$ is bounded and coercive. Thus there exists an $\R$-isomorphism $\al_0:K^\ast\to K$ such that for all $v\in K$ and all $G\in K^\ast$ we have $\m{Re}[B(\al_0G,v)]=\tbr{G,v}_{K^\ast,K}$. Let $\al_1:H^{\bar{\ast}}\to K^\ast$ be the $\R$-linear mapping defined via $\tbr{\al_1F,v}_{K^\ast,K}=\m{Re}[\tbr{F,v}_{H^{\Bar{\ast}},H}]$. Set $\be:H^{\Bar{\ast}}\to H$ via $\be=\al_0\al_1$.

By definition of $\be$ for all $F\in H^{{\Bar{\ast}}}$ and $v\in H$ we have $\m{Re}[B(\be F,v)-\tbr{F,v}_{H^{\bar{\ast}},H}]=0$. By antilinearity:
\begin{equation}
    B(\be F,v)-\tbr{F,v}_{H^{\Bar{\ast}},H}=\m{Re}[B(\be F,v)-\tbr{F,v}_{H^{\Bar{\ast}},H}]+\ii\m{Re}[B(\be F,\ii v)-\tbr{F,\ii v}_{H^{\Bar{\ast}},H}]=0.
\end{equation}
Finally $\beta$ is a $\C$-linear isomorphism as it is the inverse of the following $\C$-linear mapping: $\al_2:H\to H^{\Bar{\ast}}$ with action. $\br{\al_2v,w}_{H^{\Bar{\ast}},H}=B(v,w)$.
\end{proof}
To conclude this subsection, we review some representation formulae of (anti)-dual spaces. 
\begin{prop}[(Anti)-dual representation of Sobolev spaces]\label{(anti-)dual representation of Sobolev spaces}
Let $s\in\R$ and $\mathbb{K}\in\tcb{\R,\C}$. Recall that the $\mathbb{K}^k$-valued $L^2$-based Sobolev space on $\R^d$ of order $s$ is defined as
\begin{multline}
    H^s(\R^d;\mathbb{K}^k)=\Big\{f\in\sp{(\mathscr{S}(\R^d;\mathbb{K}))^\ast}^k\;:\;\mathscr{F}[f]\in L^1_{\loc}(\R^d;\C^k)\\\text{and }\tnorm{f}_{H^s}^2=\int_{\R^d}(1+\tabs{\xi}^2)^s\tabs{\mathscr{F}[f](\xi)}^2\;\m{d}\xi<\infty\Big\}.
\end{multline}
We have the representation formula $(H^s(\R^d;\mathbb{K}^k))^{\Bar{\ast}}=H^{-s}(\R^n;\mathbb{K}^k)$ where one may view the (anti-)dual pairing as the sesquilinear (bilinear when $\mathbb{K}=\C$) $L^2$-pairing of Fourier transforms. In other words, for all $G\in(H^s(\R^d;\mathbb{K}^k))^{\Bar{\ast}}$ there exists a unique $g\in H^{-s}(\R^d;\mathbb{K}^k)$ such that for all $f\in H^s(\R^d;\mathbb{K}^k)$ one has the equality
\begin{equation}
    \tbr{G,f}_{(H^s)^{\Bar{\ast}},H^s}=\int_{\R^d}\mathscr{F}[g]\cdot\mathscr{F}[f]=:\tbr{g,f}_{H^{-s},H^s}.
\end{equation}
Conversely if $g\in H^{-s}(\R^d;\mathbb{K}^k)$ then $f\mapsto\br{g,f}_{H^{- s},H^s}$ defines a member of $(H^s(\R^d;\mathbb{K}^k))^{\Bar{\ast}}$.
\end{prop}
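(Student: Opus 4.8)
The plan is to reduce the statement to the Riesz representation theorem by conjugating with the Fourier transform, which converts the weight defining the $H^s$ norm into an honest $L^2$ weight. Write $\Lambda^t$ for multiplication by $(1+\tabs{\xi}^2)^{t/2}$, and let $L^2_{\mathbb{K}}$ denote $L^2(\R^d;\C^k)$ when $\mathbb{K}=\C$ and, when $\mathbb{K}=\R$, the closed $\R$-linear subspace $\{h\in L^2(\R^d;\C^k) : \Bar{h}=\updelta_{-1}h\}$; by Proposition~\ref{characterizations of real-valued tempered distruibutions} and Remark~\ref{real valued functions and real valued tempered distributions} this is precisely the set of Fourier transforms of $\R^k$-valued tempered distributions lying in $L^1_{\loc}$, and (since the pairing $\int a\cdot b$ is real on it) it is a genuine $\R$-Hilbert space. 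The first step is to observe that $U_s f = \Lambda^s\mathscr{F}[f]$ defines a $\mathbb{K}$-linear isometric isomorphism of $H^s(\R^d;\mathbb{K}^k)$ onto $L^2_{\mathbb{K}}$: the isometry is exactly the definition of $\tnorm{\cdot}_{H^s}$, and surjectivity follows by setting $f=\mathscr{F}^{-1}[\Lambda^{-s}h]$ for $h\in L^2_{\mathbb{K}}$, which is a tempered distribution because $\Lambda^{-s}h$ pairs absolutely with Schwartz functions, and which is $\R^k$-valued in the real case since the even real multiplier $\Lambda^{-s}$ preserves the conjugation symmetry.

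Next, given $G\in(H^s(\R^d;\mathbb{K}^k))^{\Bar{\ast}}$, I would push it to the Fourier side: $h\mapsto G(U_s^{-1}h)$ is a continuous functional on $L^2_{\mathbb{K}}$, which is continuous antilinear when $\mathbb{K}=\C$ and continuous $\R$-linear when $\mathbb{K}=\R$. In either case the corresponding Riesz representation theorem (the antilinear variant over $\C$, the usual one over the $\R$-Hilbert space $L^2_{\mathbb{K}}$) yields a unique $w\in L^2_{\mathbb{K}}$ with $G(U_s^{-1}h)=\int_{\R^d}w\cdot h$ for all $h\in L^2_{\mathbb{K}}$. Setting $g=\mathscr{F}^{-1}[\Lambda^s w]$ and unwinding definitions gives $\mathscr{F}[g]=\Lambda^s w\in L^1_{\loc}$, $\tnorm{g}_{H^{-s}}=\tnorm{w}_{L^2}<\infty$ (so $g\in H^{-s}(\R^d;\mathbb{K}^k)$, again $\R^k$-valued in the real case), and for every $f\in H^s(\R^d;\mathbb{K}^k)$,
\[ \tbr{G,f}_{(H^s)^{\Bar{\ast}},H^s} = G(U_s^{-1}U_s f) = \int_{\R^d} w\cdot\Lambda^s\mathscr{F}[f] = \int_{\R^d}\Lambda^s w\cdot\mathscr{F}[f] = \int_{\R^d}\mathscr{F}[g]\cdot\mathscr{F}[f], \]
which is the asserted formula. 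Uniqueness of $g$ follows from surjectivity of $U_s$: if $g,g'$ both represent $G$, then $\Lambda^{-s}(\mathscr{F}[g]-\mathscr{F}[g'])$ lies in $L^2_{\mathbb{K}}$ and pairs to zero against all of $L^2_{\mathbb{K}}$, in particular against itself, so it vanishes.

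For the converse direction I would simply verify that for $g\in H^{-s}(\R^d;\mathbb{K}^k)$ the assignment $f\mapsto\int_{\R^d}\mathscr{F}[g]\cdot\mathscr{F}[f]$ is well defined and bounded by $\tnorm{g}_{H^{-s}}\tnorm{f}_{H^s}$ via Cauchy--Schwarz applied to $\Lambda^{-s}\mathscr{F}[g]$ and $\Lambda^s\mathscr{F}[f]$, that it is antilinear in $f$ when $\mathbb{K}=\C$, and that it is $\R$-valued and $\R$-linear when $\mathbb{K}=\R$ (a short $\xi\mapsto-\xi$ change of variables using the conjugation symmetry); together with the first part this also identifies $g\mapsto\tbr{g,\cdot}$ as the isometric inverse of $G\mapsto g$. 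I do not anticipate any serious difficulty: the only genuinely delicate point — the main obstacle — is the real-case bookkeeping, namely confirming that $L^2_{\mathbb{K}}$ is the right $\R$-Hilbert space, that $U_s$ and $\mathscr{F}^{-1}\Lambda^{\pm s}$ preserve it, and that the real Riesz theorem is being applied on a bona fide $\R$-Hilbert space, all of which is supplied by Proposition~\ref{characterizations of real-valued tempered distruibutions}.
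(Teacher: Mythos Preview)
Your argument is correct. The approach differs from the paper's: the paper cites an external reference for the complex case and then, for $\mathbb{K}=\R$, complexifies the functional $G$ to an antilinear $G_0$ on $H^s(\R^d;\C^k)$, applies the complex result, and checks afterward (via Proposition~\ref{characterizations of real-valued tempered distruibutions}) that the representing distribution is $\R$-valued. You instead treat both fields uniformly by conjugating $H^s(\R^d;\mathbb{K}^k)$ with the weighted Fourier map $U_s$ onto the single Hilbert space $L^2_{\mathbb{K}}$ and invoking Riesz there. Your route is more self-contained and handles the two cases in one stroke; the paper's route is a reduction that saves rewriting the complex case but requires the complexification step and a separate real-valuedness check. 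The uniqueness arguments are essentially the same computation: the paper's test function $f_0$ with $\mathscr{F}[f_0]=(1+\tabs{\xi}^2)^{-s}\mathscr{F}[g-g_0]$ is exactly your $U_s^{-1}$ applied to $\Lambda^{-s}(\mathscr{F}[g]-\mathscr{F}[g'])$.
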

\begin{proof}
    The assertions for the case $\mathbb{K}=\C$ are a consequence of the discussion after Theorem 6.3 in~\cite{MR1357411}. Suppose that $\mathbb{K}=\R$ and that $G\in(H^s(\R^d;\R^k))^{\ast}$. We may define $G_0\in(H^s(\R^d;\C^k))^{\Bar{\ast}}$ via
    \begin{equation}
        \tbr{G_0,f}_{(H^s)^{\Bar{\ast}},H^s}=\tbr{G,\m{Re}[f]}_{(H^s)^\ast,H^s}-\ii\tbr{G,\m{Im}[f]}_{(H^s)^\ast,H^s},\;f\in H^s(\R^d;\C^k).
    \end{equation}
    Applying the result for the $\C$-valued case gives us $g\in H^{-s}(\R^d;\C^k)$ such that $\tbr{G_0,f}_{(H^{-s})^{\Bar{\ast}},H^s}=\tbr{g,f}_{H^{-s},H^s}$ for all $f\in H^s(\R^d;\C^k)$. We next note that $g\in\tp{(\mathscr{S}(\R^d;\R))^\ast}^k$ by Proposition~\ref{characterizations of real-valued tempered distruibutions}: that is, if $f\in\mathscr{S}(\R^d;\R^k)\subset H^s(\R^d;\C^k)$, then
    \begin{equation}
        \tbr{g,f}_{\mathscr{S}^\ast,\mathscr{S}}=\tbr{g,\Bar{f}}_{\mathscr{S}^\ast,\mathscr{S}}=\tbr{g,f}_{H^{-s},H^s}=\tbr{G,\m{Re}[f]}_{(H^s)^\ast,H^s}-\ii\tbr{G,\m{Im}[f]}_{(H^s)^\ast,H^s}=\tbr{G,f}_{(H^s)^\ast,H^s}\in\R.
    \end{equation}
    Finally, $g$ is uniquely determined by the following argument. Suppose that $g_0\in H^{-s}(\R^d;\R^k)$ also satisfies $\tbr{G,f}_{(H^s)^\ast,H^s}=\tbr{g_0,f}_{H^{-s},H^s}$ for all $f\in H^s(\R^d;\R^k)$. Define $f_0\in H^s(\R^d;\C^k)$ via $\mathscr{F}[f_0](\xi)=(1+\abs{\xi}^2)^{-s}\mathscr{F}[g-g_0](\xi)$, $\xi\in\R^d$. Again, Proposition~\ref{characterizations of real-valued tempered distruibutions} assures us that, in fact, $f_0\in H^s(\R^d;\R^k)$. Therefore,
    \begin{equation}
        0=\tbr{g-g_0,f_0}_{H^{-s},H^s}=\int_{\R^d}(1+\abs{\xi}^2)^{-s}\tabs{\mathscr{F}\tsb{g-g_0}(\xi)}^2\;\m{d}\xi=\norm{g-g_0}_{H^{-s}}^2.
    \end{equation}
\end{proof}
\begin{rmk}
In this paper we choose identify the functional $G$ with the tempered distribution $g$ and the (anti-)duality pairing $\br{\cdot,\cdot}_{(H^s)^{\Bar{\ast}},H^s}$ with $\br{\cdot,\cdot}_{H^{-s},H^s}$.
\end{rmk}
\subsection{Fourier multipliers}\label{appendix on (tangential) multipliers}
We begin this subsection recalling the characterization of essentially bounded Fourier multipliers as $L^2$-bounded translation invariant linear mappings.
\begin{prop}\label{tilo-fourier multiplier L2 characterization}
Let $\mathbb{K}\in\tcb{\R,\C}$. The following are equivalent for a continuous linear mapping $T\in\mathcal{L}(L^2(\R^d;\mathbb{K}))$.
\begin{enumerate}
    \item $T$ commutes with all translation operators.
    \item There exists $m\in L^\infty(\R^d;\C)$ such that $Tf=\mathscr{F}^{-1}[m\mathscr{F}[f]]$ for all $f\in L^2(\R^d;\mathbb{K})$.  
\end{enumerate}
In either case $\norm{T}_{\mathcal{L}(L^2)}\le\norm{m}_{L^\infty}\le2\norm{T}_{\mathcal{L}(L^2)}$, and if $\mathbb{K}=\R$ then $\Bar{m(\xi)}=m(-\xi)$ for a.e. $\xi\in\R^d$.
\end{prop}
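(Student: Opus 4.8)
The plan is to prove the two implications of the equivalence separately. The direction $(2)\imp(1)$, together with the bound $\norm{T}_{\mathcal{L}(L^2)}\le\norm{m}_{L^\infty}$, is a one-line consequence of Plancherel's theorem and the observation that conjugating translation by $y$ through $\mathscr{F}$ produces multiplication by the unimodular function $\xi\mapsto e^{-2\pi\ii y\cdot\xi}$, which commutes with multiplication by $m$. So the substance is the converse $(1)\imp(2)$ together with the matching bound $\norm{m}_{L^\infty}\le 2\norm{T}_{\mathcal{L}(L^2)}$, which I will in fact upgrade to the equality $\norm{T}_{\mathcal{L}(L^2)}=\norm{m}_{L^\infty}$. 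As a preliminary reduction, when $\mathbb{K}=\R$ I would extend $T$ to a $\C$-linear operator on $L^2(\R^d;\C)$ by $T(f+\ii g)=Tf+\ii Tg$; this extension has the same operator norm (by the Hilbert-space identity $\norm{Tu}^2+\norm{Tv}^2\le\norm{T}^2(\norm{u}^2+\norm{v}^2)$), still commutes with all translations, and additionally sends real-valued functions to real-valued functions — a fact to be invoked only at the end.

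For the complex case, the first key step is to show that a bounded translation-invariant $T$ commutes with convolution by Schwartz functions. Given $\varphi\in\mathscr{S}(\R^d)$ and $f\in L^2$, I would write $\varphi*f=\int_{\R^d}\varphi(y)\,\tau_y f\,\m{d}y$ as a Bochner integral in $L^2$ — legitimate since $y\mapsto\tau_yf$ is $L^2$-continuous and $\int\abs{\varphi(y)}\norm{\tau_yf}_{L^2}\,\m{d}y=\norm{\varphi}_{L^1}\norm{f}_{L^2}<\infty$ — and pull the bounded operator $T$ inside the integral to get $T(\varphi*f)=\varphi*(Tf)$. Specializing to $f,\varphi\in\mathscr{S}(\R^d)$ and using $\varphi*\psi=\psi*\varphi$ yields, after applying $\mathscr{F}$, the identity $\hat\varphi\,\widehat{T\psi}=\hat\psi\,\widehat{T\varphi}$ in $L^2$ for all $\varphi,\psi\in\mathscr{S}(\R^d)$. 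Fixing $\psi_0$ a Gaussian, whose Fourier transform vanishes nowhere, and defining the measurable function $m:=\widehat{T\psi_0}/\hat\psi_0$, this identity forces $\widehat{T\varphi}=m\,\hat\varphi$ almost everywhere for every $\varphi\in\mathscr{S}(\R^d)$.

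The step I expect to be the main (if minor) obstacle is proving $m\in L^\infty$ with $\norm{m}_{L^\infty}\le\norm{T}_{\mathcal{L}(L^2)}$: one cannot argue directly from $\norm{m\hat\varphi}_{L^2}\le\norm{T}\norm{\hat\varphi}_{L^2}$ by density, because $m$ is not yet known to be bounded so there is no a priori control on $m\hat g_n$ as $\hat g_n\to\hat g$. The fix is a truncation: set $m_N:=m\mathbbm{1}_{\{\abs{m}\le N\}}$, which defines a genuinely bounded multiplication operator on $L^2$; the inequality $\norm{m_N\hat\varphi}_{L^2}\le\norm{\widehat{T\varphi}}_{L^2}\le\norm{T}\norm{\hat\varphi}_{L^2}$ then extends from the dense set $\{\hat\varphi:\varphi\in\mathscr{S}(\R^d)\}=\mathscr{S}(\R^d)$ to all of $L^2$, and testing against $\mathbbm{1}_E$ with $E=\{\,\norm{T}<\abs{m}\le N\,\}\cap B(0,R)$ forces $\abs{E}=0$; letting $N,R\to\infty$ gives $\abs{\{\abs{m}>\norm{T}\}}=0$. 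Once $m\in L^\infty$, the operator $f\mapsto\mathscr{F}^{-1}[m\hat f]$ is bounded on $L^2$, agrees with $T$ on $\mathscr{S}(\R^d)$, hence equals $T$ on all of $L^2$; Plancherel then also gives $\norm{T}_{\mathcal{L}(L^2)}\le\norm{m}_{L^\infty}$, so $\norm{T}_{\mathcal{L}(L^2)}=\norm{m}_{L^\infty}$ and in particular the stated two-sided estimate.

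Finally, for the reality assertion when $\mathbb{K}=\R$, I would apply the identity $\widehat{T\psi_0}=m\,\hat\psi_0$ to the real Gaussian $\psi_0$. Since $T\psi_0$ is real-valued and an $L^2$ function $h$ is real-valued iff $\overline{\hat h(\xi)}=\hat h(-\xi)$ for a.e. $\xi$ (Proposition~\ref{characterizations of real-valued tempered distruibutions}), and since $\hat\psi_0(-\xi)=\overline{\hat\psi_0(\xi)}\ne0$ everywhere, one gets $\overline{m(\xi)}\,\overline{\hat\psi_0(\xi)}=\overline{\widehat{T\psi_0}(\xi)}=\widehat{T\psi_0}(-\xi)=m(-\xi)\,\hat\psi_0(-\xi)=m(-\xi)\,\overline{\hat\psi_0(\xi)}$, and cancelling the nonzero factor yields $\overline{m(\xi)}=m(-\xi)$ for a.e.\ $\xi\in\R^d$, completing the proof.
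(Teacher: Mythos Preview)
Your proof is correct. The paper's own proof handles the complex case by citing Theorem~2.5.10 of Grafakos as a black box, whereas you give a self-contained argument via the Bochner-integral identity $T(\varphi*f)=\varphi*Tf$, the symmetry relation $\hat\varphi\,\widehat{T\psi}=\hat\psi\,\widehat{T\varphi}$, and the truncation $m_N$ to bootstrap $m\in L^\infty$; this is essentially the textbook proof behind that citation, and it has the bonus that your argument yields the sharp equality $\norm{T}_{\mathcal{L}(L^2)}=\norm{m}_{L^\infty}$ rather than the paper's stated factor-of-two inequality. For the real case both you and the paper complexify $T$ in the same way and then read off $\overline{m}=\updelta_{-1}m$; the paper derives this by computing $\overline{m\mathscr{F}[f]}$ for arbitrary $f$ and equating it to $\updelta_{-1}m\,\overline{\mathscr{F}[f]}$, while you test against the single nowhere-vanishing Gaussian $\hat\psi_0$ --- a cleaner but equivalent route.
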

\begin{proof}
    The case $\mathbb{K}=\C$ is handled in Theorem 2.5.10 of~\cite{MR3243734}. It remains to handle the case $\mathbb{K}=\R$. The implication $(2)\imp(1)$ is clear. On the other hand, given a translation invariant $T\in\mathcal{L}(L^2(\R^d;\R))$ define $T_0\in\mathcal{L}(L^2(\R^d;\C))$ via $T_0f=T\m{Re}[f]+\ii T\m{Im}[f]$ for $f\in L^2(\R^d;\C)$. $T_0$ is also translation invariant. Hence by the $\C$-valued case there is $m\in L^\infty(\R^n;\C)$ such that the action of $T_0$ is given by multiplication of $m$ in frequency space. Using Proposition~\ref{characterizations of real-valued tempered distruibutions} and Remark~\ref{real valued functions and real valued tempered distributions} we compute:
    \begin{equation}
        \Bar{m\mathscr{F}[f]}=\mathscr{F}[T\m{Re}[f]]-\ii\mathscr{F}[T\m{Im}[f]]=\updelta_{-1}\mathscr{F}[T\m{Re}[\Bar{f}]]+\ii\updelta_{-1}\mathscr{F}[T\m{Im}[\Bar{f}]]=\updelta_{-1}m\updelta_{-1}\mathscr{F}[\Bar{f}]=\updelta_{-1}m\Bar{\mathscr{F}[f]}.
    \end{equation}
    The above equality holds for all $f\in L^2(\R^n;\C)$ and so we deduce that $\Bar{m}=\updelta_{-1}m$ almost everywhere.
    \end{proof}
    We may also generalize the previous theorem to a characterization of continuous, linear, and translation invariant mappings between the $L^2$-based fractional Sobolev spaces. First we recall the Bessel potential.

    \begin{defn}[Bessel potential]\label{definition of Bessel potential}
    For $s\in\R$ we define the Bessel Potential of order $s$ as the operator $\J^s\in\mathcal{L}((\mathscr{S}(\R^d;\C))^\ast)$ defined via $\J^sF=\mathscr{F}^{-1}[(1+\tabs{\cdot}^2)^{s/2}\mathscr{F}[F]]$ for $F\in(\mathscr{S}(\R^d;\C))^\ast$.  Thanks to Proposition~\ref{characterizations of real-valued tempered distruibutions}, $\J^sF$ is $\R$-valued whenever this true of $F$. We also recall that for any $t\in\R$ and $\mathbb{K}\in\tcb{\R;\C}$ $\J^s\in\mathcal{L}(H^{t+s}(\R^d;\mathbb{K});H^t(\R^d;\mathbb{K}))$ is an isometric isomorphism.
    \end{defn}
    \begin{prop}\label{tilo-fourier multiplier Sobolev space characterization}
    Let $\mathbb{K}\in\tcb{\R,\C}$, $s,t\in\R$. The following are equivalent for a continuous linear mapping $T\in\mathcal{L}\tp{H^s(\R^d;\mathbb{K});H^t(\R^d;\mathbb{K})}$.
    \begin{enumerate}
        \item $T$ commutes with all translation operators.
        \item There exists a measurable function $\mu:\R^d\to\C$ such that
        \begin{equation}\label{equation central station}
            \m{m}_\mu[r]:=\m{esssup}\tcb{(1+\abs{\xi}^2)^{r/2}\abs{\mu(\xi)}\;:\;\xi\in\R^d}\in[0,\infty]
        \end{equation}
        is finite for $r=t-s$ and for all $f\in H^s(\R^d;\mathbb{K})$ one has $Tf=\mathscr{F}^{-1}[\mu\mathscr{F}[f]]$.
    \end{enumerate}
    In either case $\norm{T}_{\mathcal{L}(H^s;H^t)}\le\m{m}_\mu[t-s]\le 2\norm{T}_{\mathcal{L}(H^s;H^t)}$, and if $\mathbb{K}=\R$ then $\updelta_{-1}\mu=\Bar{\mu}$,
    \end{prop}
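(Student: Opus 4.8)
\textbf{Proof plan for Proposition~\ref{tilo-fourier multiplier Sobolev space characterization}.}
The plan is to reduce the statement to the already-established $L^2$ characterization (Proposition~\ref{tilo-fourier multiplier L2 characterization}) by conjugating $T$ with appropriate Bessel potentials. First I would observe that the implication $(2)\imp(1)$ is immediate: multiplication by $\mu$ in frequency space commutes with the modulations that are the Fourier transforms of translations, so $Tf=\mathscr{F}^{-1}[\mu\mathscr{F}[f]]$ is translation invariant on $H^s$. The content is in $(1)\imp(2)$.

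For that direction, suppose $T\in\mathcal{L}(H^s(\R^d;\mathbb{K});H^t(\R^d;\mathbb{K}))$ is translation invariant. Recall from Definition~\ref{definition of Bessel potential} that $\J^s:L^2\to H^s$ and $\J^{-t}:H^t\to L^2$ are isometric isomorphisms (working over $\mathbb{K}$), and that each $\J^r$ is itself a Fourier multiplier, hence translation invariant. Therefore the composition $S:=\J^{-t}\circ T\circ\J^s\in\mathcal{L}(L^2(\R^d;\mathbb{K}))$ is translation invariant, with $\norm{S}_{\mathcal{L}(L^2)}=\norm{T}_{\mathcal{L}(H^s;H^t)}$ by isometry. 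Proposition~\ref{tilo-fourier multiplier L2 characterization} then furnishes $m\in L^\infty(\R^d;\C)$ with $Sf=\mathscr{F}^{-1}[m\mathscr{F}[f]]$ for all $f\in L^2(\R^d;\mathbb{K})$, satisfying $\norm{S}_{\mathcal{L}(L^2)}\le\norm{m}_{L^\infty}\le 2\norm{S}_{\mathcal{L}(L^2)}$, and with $\updelta_{-1}m=\Bar{m}$ when $\mathbb{K}=\R$. Now set
\begin{equation}
    \mu(\xi)=(1+\abs{\xi}^2)^{(t-s)/2}m(\xi)\quad\text{for }\xi\in\R^d.
\end{equation}
Unwinding the conjugation, for $f\in H^s(\R^d;\mathbb{K})$ we have $\J^{-s}f\in L^2$, so $T f=\J^t S\J^{-s}f=\mathscr{F}^{-1}[(1+\abs{\cdot}^2)^{t/2}m(1+\abs{\cdot}^2)^{-s/2}\mathscr{F}[f]]=\mathscr{F}^{-1}[\mu\mathscr{F}[f]]$, which is the claimed representation. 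The quantity $\m{m}_\mu[t-s]$ of~\eqref{equation central station} equals $\norm{m}_{L^\infty}$ by construction, and since $\norm{m}_{L^\infty}$ is squeezed between $\norm{S}_{\mathcal{L}(L^2)}=\norm{T}_{\mathcal{L}(H^s;H^t)}$ and twice that, we obtain the stated norm bounds. Finally, when $\mathbb{K}=\R$, the factor $(1+\abs{\xi}^2)^{(t-s)/2}$ is even and real, so $\updelta_{-1}\mu=(1+\abs{\cdot}^2)^{(t-s)/2}\updelta_{-1}m=(1+\abs{\cdot}^2)^{(t-s)/2}\Bar{m}=\Bar{\mu}$, completing the proof.

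The only point requiring a little care — and the closest thing to an obstacle — is verifying that the Bessel potentials interact with the base field $\mathbb{K}$ correctly: one must confirm that $\J^r$ maps $\mathbb{K}$-valued functions to $\mathbb{K}$-valued functions (for $\mathbb{K}=\R$ this is recorded in Definition~\ref{definition of Bessel potential} via Proposition~\ref{characterizations of real-valued tempered distruibutions}), so that the conjugated operator $S$ genuinely acts on $L^2(\R^d;\mathbb{K})$ and Proposition~\ref{tilo-fourier multiplier L2 characterization} applies in the $\mathbb{K}$-valued form. Everything else is a routine unraveling of multiplier compositions in frequency space.
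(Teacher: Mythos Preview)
Your approach is exactly the paper's: conjugate by Bessel potentials to reduce to the $L^2$ characterization of Proposition~\ref{tilo-fourier multiplier L2 characterization}. However, you have the direction of the Bessel potentials reversed, and this error propagates. Under the paper's Definition~\ref{definition of Bessel potential}, $\J^r$ multiplies by $(1+\abs{\xi}^2)^{r/2}$ in frequency, so $\J^r:H^{a+r}\to H^a$; in particular $\J^s:H^s\to L^2$ and $\J^t:H^t\to L^2$, the opposite of what you state. Consequently your $S=\J^{-t}T\J^s$ does not map $L^2\to L^2$ (it maps $H^{2s}\to H^{2t}$), so Proposition~\ref{tilo-fourier multiplier L2 characterization} cannot be applied to it. The correct conjugate is the paper's $T_0=\J^{t}T\J^{-s}$, and then $T=\J^{-t}T_0\J^{s}$, which on the Fourier side gives $\mu(\xi)=(1+\abs{\xi}^2)^{(s-t)/2}\omega(\xi)$, not your $(1+\abs{\xi}^2)^{(t-s)/2}m(\xi)$. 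With your sign one computes $\m{m}_\mu[t-s]=\operatorname{esssup}(1+\abs{\xi}^2)^{t-s}\abs{m(\xi)}$, which is not $\norm{m}_{L^\infty}$ in general, so your norm-bound verification fails as written.

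Once the signs are corrected, your argument is line-for-line the paper's proof, including the remark that the reverse implication and the $\mathbb{K}=\R$ reflection identity for $\mu$ follow immediately from the corresponding facts for the $L^2$ multiplier.
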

    \begin{proof}
        Suppose that the first item holds. Using the Bessel potentials from the previous definition, we obtain the bounded and translation-invariant $L^2$-operator $T_0 := \J^{t}T\J^{-s}$.  Applying Proposition~\ref{tilo-fourier multiplier L2 characterization} grants us $\omega\in L^\infty\tp{\R^d;\mathbb{K}}$ such that if $\mathbb{K}=\R$ then $\updelta_{-1}\omega=\Bar{\omega}$ and $T_0F=\mathscr{F}^{-1}[\omega\mathscr{F}[F]]$ for $F\in L^2(\R^d;\mathbb{K})$. Set $\mu\p{\xi}=(1+\abs{\xi}^2)^{(s-t)/2}\omega(\xi)$. We check that $\mu$ is the desired spectral representation of $T$:
        \begin{equation}
            TF=\J^{-t}T_0\J^sF=\J^{-t}T_0\mathscr{F}^{-1}[(1+\abs{\cdot})^{s/2}\mathscr{F}[F]]=\mathscr{F}^{-1}[(1+\abs{\cdot}^2)^{-t/2}\omega(\xi)(1+\abs{\cdot}^2)^{s/2}\mathscr{F}[F]]=\mathscr{F}^{-1}[\mu\mathscr{F}[F]],
        \end{equation}
        for $F\in H^s(\R^d;\mathbb{K})$. Using once more Proposition~\ref{tilo-fourier multiplier L2 characterization}, we arrive at the bounds
        \begin{equation}
            \m{m}_\mu[t-s]=\norm{\omega}_{L^\infty}\asymp\norm{T_0}_{\mathcal{L}(L^2)}=\norm{T}_{\mathcal{L}(H^s;H^t)}.
        \end{equation}
        Thus the forward direction is shown. The reverse implication is proved in a similar manner.
    \end{proof}
    
    We now set notation for $L^2$-bounded translation invariant mappings, emphasizing that this space is parameterized by essentially bounded functions.
    
    \begin{defn}[Tangential Fourier multipliers I]\label{definition of tangential multipliers}
    Let $\mathbb{K}\in\tcb{\R,\C}$ and let $\omega\in L^\infty(\R^d;\C^{k\times k})$ be a multiplier such that if $\mathbb{K}=\R$ then $\Bar{\omega}=\updelta_{-1}\omega$. We make the following definitions.
    \begin{enumerate}
        \item $M_\omega\in\mathcal{L}(L^2(\R^d;\mathbb{K}^k))$ is defined via $M_{\omega}f=\mathscr{F}^{-1}[\omega\mathscr{F}[f]]$ for $f\in L^2(\R^d;\mathbb{K}^k)$.
        \item If $a<b$ are real numbers we set $U=\R^d\times(a,b)$ and extend $M_\omega$ to be a member of $L^2(U;\mathbb{K}^k)$ via $M_\omega f(\cdot,y)=\mathscr{F}^{-1}[\omega\mathscr{F}[f(\cdot,y)]]$ for $y\in(a,b)$ and $f\in L^2(U;\mathbb{K}^k)$.
    \end{enumerate}
    \end{defn}
    
    We would like to further extend the definitions of $M_\omega$ to the spaces $H^s(\R^d\times(a,b);\mathbb{K}^k)$ for $s\in\R^+$ and $({_0}H^1(\R^d\times(a,b);\mathbb{K}^k))^{\Bar{\ast}}$ and study their boundedness properties. To do this we need the following preliminary estimates.

    \begin{lem}\label{lemma on fractional sobolev boundedness of tangential multipliers in the stip}
    Let $s\in\R^+\cup\cb{0}$, $\mathbb{K}$, and $\omega$ be as in Definition~\ref{definition of tangential multipliers}, $a<b$ be real, and $U=\R^d\times(a,b)$. Then the following hold.
    \begin{enumerate}
        \item If $f\in H^s(U;\mathbb{K}^k)$ then $M_\omega f\in H^s(U;\mathbb{K}^k)$ and $\tnorm{M_\omega f}_{H^s}\le c_0\tnorm{\omega}_{L^\infty}\tnorm{f}_{H^s}$ for a constant $c_0\in\R^+$ depending only on $a$, $b$, $d$, $s$. Moreover if $s>1/2$ then for $z\in[a,b]$ we have
        \begin{equation}
            \m{Tr}_{\R^d\times\tcb{z}}M_\omega f=M_\omega\m{Tr}_{\R^d\times\tcb{z}}f.
        \end{equation}
        \item If $f\in H^s(U;\mathbb{K}^k)$, then setting $\J^sf(\cdot,y):=\mathscr{F}^{-1}[(1+\abs{\cdot}^2)^{s/2}\mathscr{F}[f(\cdot,y)]]$ for $y\in(a,b)$ defines an $L^2$-function $\J^sf\in L^2(U;\mathbb{K}^k)$, and there is a constant $c_1\in\R^+$, dependent only upon $a$, $b$, $d$, and $s$, for which $\tnorm{\J^sf}_{L^2}\le c_1\tnorm{f}_{H^s}$.
    \end{enumerate}
    \end{lem}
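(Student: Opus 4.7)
The plan is to transfer both estimates from the slab $U=\R^d\times(a,b)$ to the full space $\R^{d+1}$ by means of a bounded extension operator, where Plancherel's theorem reduces both inequalities to trivial pointwise bounds on symbols. Specifically, I would fix a standard bounded extension operator $E:H^s(U;\mathbb{K}^k)\to H^s(\R^{d+1};\mathbb{K}^k)$, whose existence for the slab is classical and whose norm depends only on $a,b,d,s$. On $\R^{d+1}$, define the Fourier multipliers $\tilde M_\omega g=\mathscr{F}^{-1}[\omega(\xi)\mathscr{F}[g]]$ and $\tilde\J^s g=\mathscr{F}^{-1}[(1+\abs{\xi}^2)^{s/2}\mathscr{F}[g]]$, viewing $\omega$ and $(1+\abs{\xi}^2)^{s/2}$ as functions on $\R^{d+1}$ independent of the last variable $\eta$. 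From Plancherel and the pointwise bounds $\abs{\omega(\xi)}\le\tnorm{\omega}_{L^\infty}$ and $(1+\abs{\xi}^2)^{s/2}\le(1+\abs{\xi}^2+\eta^2)^{s/2}$, one immediately obtains
\begin{equation}
\tnorm{\tilde M_\omega g}_{H^s(\R^{d+1})}\le\tnorm{\omega}_{L^\infty}\tnorm{g}_{H^s(\R^{d+1})}\quad\text{and}\quad\tnorm{\tilde \J^s g}_{L^2(\R^{d+1})}\le\tnorm{g}_{H^s(\R^{d+1})}.
\end{equation}

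The essential technical step is to verify the consistency identities $\tilde M_\omega(Ef)\res U=M_\omega f$ and $\tilde\J^s(Ef)\res U=\J^s f$ almost everywhere in $U$, where the right sides are the slice-wise operators from Definition~\ref{definition of tangential multipliers} and the lemma statement. I would prove these via a partial Fourier transform argument: since the symbols $\omega$ and $(1+\abs{\xi}^2)^{s/2}$ depend only on $\xi$ and not on $\eta$, writing the $(d+1)$-dimensional Fourier transform as an iterated transform in $x$ and then $y$ and applying Fubini (together with the Plancherel theorem for Bochner-valued $L^2$ functions with values in $L^2(\R^d)$) shows that both $\tilde M_\omega$ and $\tilde\J^s$ coincide with the slice-wise application of $M_\omega$ and $\J^s$ to the $y$-sections. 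Once these identities are in hand, composing the full-space estimates with the boundedness of $E$ yields $\tnorm{M_\omega f}_{H^s(U)}\le c_0\tnorm{\omega}_{L^\infty}\tnorm{f}_{H^s(U)}$ and $\tnorm{\J^s f}_{L^2(U)}\le c_1\tnorm{f}_{H^s(U)}$.

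For the trace identity in part (1) when $s>1/2$, I would first verify $\m{Tr}_{\R^d\times\tcb{z}}M_\omega f=M_\omega\m{Tr}_{\R^d\times\tcb{z}}f$ on a dense subclass — for instance Schwartz functions restricted to $\Bar U$ — where the commutation is immediate from the slice-wise definition of $M_\omega$. I would then pass to the limit on general $f\in H^s(U;\mathbb{K}^k)$ by using the continuity of the trace operator $H^s(U)\to H^{s-1/2}(\R^d)$, the just-proved boundedness of $M_\omega$ on $H^s(U)$, and the boundedness of $M_\omega$ on $H^{s-1/2}(\R^d)$ furnished by Proposition~\ref{tilo-fourier multiplier Sobolev space characterization}. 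The principal obstacle in this plan is a careful justification of the two consistency identities; the underlying fact is elementary, but it requires reconciling the $(d+1)$-dimensional Fourier transform used to define $\tilde M_\omega$ and $\tilde\J^s$ with the partial horizontal Fourier transforms appearing in the slice-wise definitions of $M_\omega$ and $\J^s$, which is a standard but somewhat pedantic application of Fubini's theorem.
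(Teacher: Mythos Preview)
Your approach is correct and complete, but it differs from the paper's. For item~(1) the paper argues by interpolation: $M_\omega$ is bounded on $L^2(U)$ by definition, and since it commutes with all distributional derivatives (horizontal and vertical) it is bounded on $H^m(U)$ for integer $m$; real or complex interpolation then gives the result for all $s\ge 0$. For item~(2) the paper simply cites an external result (Corollary~A.6 of~\cite{leoni2019traveling}). Your extension-to-full-space argument handles both items uniformly and is arguably more self-contained: once you have a bounded extension $E:H^s(U)\to H^s(\R^{d+1})$, Plancherel on $\R^{d+1}$ does all the work, and the only technical point is the slice-wise consistency $\tilde M_\omega(Ef)\res U=M_\omega f$ and $\tilde\J^s(Ef)\res U=\J^s f$, which you correctly identify as a Fubini argument exploiting that the symbols are independent of the vertical frequency. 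The interpolation route avoids having to verify these consistency identities explicitly, but it relies on the interpolation theory of $H^s$ on slabs, which itself typically uses an extension operator in the background; so the two approaches are of comparable depth. Your treatment of the trace identity by density is also standard and fine.
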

    \begin{proof}
        The first item follows from interpolation, the fact that $M_\omega$ commutes with distributional derivatives, and Proposition~\ref{tilo-fourier multiplier L2 characterization}. The second item follows from Corollary A.6 of~\cite{leoni2019traveling}.
    \end{proof}
    
    By the first item of the previous lemma, we may extend the definition of tangential Fourier multipliers in the following way.
    
    \begin{defn}[Tangential Fourier multipliers II]\label{definition of tangential fourier multipliers 2}
    Let $\mathbb{K}$ and $\omega$ be as in Definition~\ref{definition of tangential multipliers}, $a<b$ be real, and $U=\R^d\times (a,b)$. If $F\in(_0H^1(U;\mathbb{K}^k))^{\Bar{\ast}}$ we define $M_\omega F\in({_0}H^1(U;\mathbb{K}^k))^{\Bar{\ast}}$ to be the (anti-)linear functional with action on $\varphi\in{_0}H^1(U;\mathbb{K}^k)$ given by
    \begin{equation}
        \tbr{M_{\omega}F,\varphi}_{({_0}H^1)^{\Bar{\ast}},{_0}H^1}=\tbr{F,M_{\Bar{\omega}}\varphi}_{({_0}H^1)^{\Bar{\ast}},{_0}H^1}.
    \end{equation}
    Thanks to the first item of Lemma~\ref{lemma on fractional sobolev boundedness of tangential multipliers in the stip}, $M_\omega F$ is well-defined and $\tnorm{M_{\omega}F}_{({_0}H^1)^{\Bar{\ast}}}\lesssim\norm{\omega}_{L^\infty}\norm{F}_{({_0}H^1)^{\Bar{\ast}}}$.
    \end{defn}
    
    Finally, we arrive at the principal result of this subsection.

    \begin{prop}\label{culmination of the multiplier appendix}
    Let $s\in\R^+\cup\cb{0}$, $\mathbb{K}\in\tcb{\R,\C}$, and $\omega\in L^\infty\tp{\R^d;\C^{k\times k}}$ be such that if $\mathbb{K}=\R$ then $\Bar{\omega}=\updelta_{-1}\omega$.  Let $m\in\N^+$ and $\cb{a_\ell}_{\ell=1}^m\subset\R^+$ be such that $a_0=0<a_1<\cdots<a_m$, and set $U_\ell=\R^d\times(a_{\ell-1},a_{\ell})$, $\ell\in\tcb{1,\dots,m}$, $U=\bigcup_{\ell=1}^mU_\ell$, and $W=(\Bar{U})\oo$.     Then there exists a constant $c_2\in\R^+$, independent of $\omega$, for which the following estimates hold, where  $\m{m}_{\omega}[\cdot]$ is as in~\eqref{equation central station}. 
    \begin{enumerate}
        \item If $g\in H^{1+s}(U;\mathbb{K}^k)$, then $M_{\omega}g\in L^2(U;\mathbb{K}^k)$ and 
        \begin{equation}
            \tnorm{M_{\omega}g}_{L^2}\le c_2\m{m}_{\omega}[1+s]\ssum{\ell=1}{m}\tnorm{g}_{H^{1+s}(U_\ell)}.
        \end{equation}
        \item If $f\in H^s(U;\mathbb{K}^k)$, $\tp{k_\ell}_{\ell=1}^m\in\prod_{\ell=1}^mH^{1/2+s}(\R^d\times\tcb{a_\ell};\mathbb{K}^k)$, and we define $F\in({_0}H^1(W;\mathbb{K}^k))^{\Bar{\ast}}$ via $\tbr{F,\varphi}_{({_0}H^1)^{\Bar{\ast}},{_0}H^1}=\int_{U}f\cdot\varphi+\sum_{\ell=1}^m\int_{\R^d\times\tcb{a_\ell}}k_\ell\cdot \varphi$, then $M_{\omega}F\in({_0}H^1(W;\mathbb{K}^k))^{\Bar{\ast}}$ with
        \begin{equation}
            \tnorm{M_{\omega}F}_{({_0}H^1)^{\Bar{\ast}}}\le c_2\m{m}_{\omega}[1+s]\ssum{\ell=1}{m}\tsb{\tnorm{f}_{H^s(U_\ell)}+\norm{k_\ell}_{H^{1/2+s}}}.
        \end{equation}
    \end{enumerate}
    \end{prop}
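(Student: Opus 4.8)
The plan is to bootstrap the estimates from the single-strip case (Lemma~\ref{lemma on fractional sobolev boundedness of tangential multipliers in the stip}) and the multiplier characterization of Proposition~\ref{tilo-fourier multiplier Sobolev space characterization} to the multilayer geometry, paying attention to the fact that the target Sobolev orders drop by $1+s$ relative to the input. Throughout, $\m{m}_\omega[1+s] = \m{esssup}\{(1+\abs{\xi}^2)^{(1+s)/2}\abs{\omega(\xi)}: \xi \in \R^d\}$ measures the size of $\omega$ after an order-$(1+s)$ gain of decay; the key structural observation is that the pairing in $(_0H^1(W;\mathbb{K}^k))^{\Bar{\ast}}$ costs one derivative, and the ``extra'' $s$ derivatives available on $g$ and on $f,k_\ell$ can be traded against $\omega$'s decay.

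First I would prove item one. For $g \in H^{1+s}(U;\mathbb{K}^k)$, decompose $M_\omega g = \sum_{\ell=1}^m \mathbbm{1}_{U_\ell} M_\omega g$ and work strip by strip, so it suffices to bound $\norm{M_\omega g}_{L^2(U_\ell)}$. On a fixed strip write $\omega = \omega \cdot (1+\abs{\xi}^2)^{-(1+s)/2} \cdot (1+\abs{\xi}^2)^{(1+s)/2}$; the first two factors form a multiplier bounded in $L^\infty$ by $\m{m}_\omega[1+s]$, hence (by the $L^2$-boundedness half of Proposition~\ref{tilo-fourier multiplier L2 characterization}, applied slicewise in the vertical variable as in Definition~\ref{definition of tangential multipliers}) it defines an operator of norm $\lesssim \m{m}_\omega[1+s]$ on $L^2(U_\ell;\mathbb{K}^k)$, while the third factor is precisely the tangential Bessel potential $\J^{1+s}$ whose action on $H^{1+s}(U_\ell)$ lands in $L^2(U_\ell)$ with norm $\lesssim \norm{g}_{H^{1+s}(U_\ell)}$ by the second item of Lemma~\ref{lemma on fractional sobolev boundedness of tangential multipliers in the stip}. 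Composing and summing over $\ell$ gives the claimed bound.

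Next I would handle item two by duality. By Definition~\ref{definition of tangential fourier multipliers 2}, $M_\omega F$ acts on $\varphi \in {_0}H^1(W;\mathbb{K}^k)$ via $\tbr{M_\omega F, \varphi} = \tbr{F, M_{\Bar\omega}\varphi} = \int_U f \cdot M_{\Bar\omega}\varphi + \sum_{\ell=1}^m \int_{\R^d \times \{a_\ell\}} k_\ell \cdot M_{\Bar\omega}\varphi$. To estimate the first term: $\int_U f \cdot M_{\Bar\omega}\varphi$; here I would move the multiplier and the Bessel potential onto $\varphi$, writing $M_{\Bar\omega} = \J^{-(1+s)} \circ (\text{bounded } L^2\text{-multiplier of norm} \lesssim \m{m}_\omega[1+s])$, so that $\abs{\int_U f \cdot M_{\Bar\omega}\varphi} \lesssim \m{m}_\omega[1+s] \norm{f}_{H^s(U)} \norm{\J^{-(1+s)+s}\varphi}$-type bound; more carefully, pair $f \in H^s$ against the remaining piece of $\varphi$, which after removing $1+s$ orders of smoothing sits in $H^{-(1+s)+1} = H^{-s}$ on each strip with norm controlled by $\norm{\varphi}_{{_0}H^1}$, using that $\varphi \in {_0}H^1(W) \hookrightarrow H^1$ on each $U_\ell$. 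For the boundary terms, $M_{\Bar\omega}$ commutes with the trace onto $\R^d \times \{a_\ell\}$ (first item of Lemma~\ref{lemma on fractional sobolev boundedness of tangential multipliers in the stip}, valid since $1 > 1/2$), so $\int_{\R^d\times\{a_\ell\}} k_\ell \cdot M_{\Bar\omega}\m{Tr}\,\varphi$; again split $M_{\Bar\omega}$ into an order-$(1+s)$ smoothing times an $L^2$-bounded piece, pair $k_\ell \in H^{1/2+s}$ against $\m{Tr}\,\varphi$ smoothed down $1+s$ orders, i.e.\ in $H^{1/2 - (1+s) + 1} = H^{1/2 - s}$... the bookkeeping must be arranged so the net pairing closes with $\norm{\m{Tr}_{\R^d\times\{a_\ell\}}\varphi}_{H^{1/2}} \lesssim \norm{\varphi}_{{_0}H^1}$. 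Taking the supremum over $\norm{\varphi}_{{_0}H^1} \le 1$ then yields the stated bound on $\norm{M_\omega F}_{({_0}H^1)^{\Bar{\ast}}}$.

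The main obstacle I anticipate is purely bookkeeping: making the fractional-order arithmetic in item two airtight, i.e.\ verifying at each step that after peeling off an order-$(1+s)$ Bessel potential from $M_{\Bar\omega}$ the residual factor acting on $\varphi$ (resp.\ $\m{Tr}\,\varphi$) genuinely lands in the dual Sobolev space that pairs with $f \in H^s(U_\ell)$ (resp.\ $k_\ell \in H^{1/2+s}$), uniformly in $\omega$ with constant $\lesssim \m{m}_\omega[1+s]$. This requires knowing that tangential Bessel potentials and tangential $L^2$-multipliers interact cleanly with the anisotropic strip Sobolev spaces and with traces --- precisely the content of Lemma~\ref{lemma on fractional sobolev boundedness of tangential multipliers in the stip} and Corollary A.6 of~\cite{leoni2019traveling} --- so no new analytic input is needed beyond care with the indices. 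Everything else (the strip-by-strip decomposition, summing the $m$ contributions, the independence of the constant from $\omega$) is routine.
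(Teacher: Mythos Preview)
Your proposal is correct and follows essentially the same approach as the paper: for both items you split the multiplier as $\omega = \bigl[\omega(1+|\xi|^2)^{-(1+s)/2}\bigr]\cdot(1+|\xi|^2)^{(1+s)/2}$, bound the bracketed factor in $L^\infty$ by $\m{m}_\omega[1+s]$, and absorb the tangential Bessel potential using Lemma~\ref{lemma on fractional sobolev boundedness of tangential multipliers in the stip}, with item two handled by duality exactly as in the paper. The index arithmetic you flag as the main obstacle is indeed the only remaining work; the paper makes it precise by writing $\int_U f\cdot M_{\bar\omega}\varphi = \int_U \J^s f\cdot M_\varpi\J^1\varphi$ and $\int k_\ell\cdot M_{\bar\omega}\varphi = \langle \J^{1+s}k_\ell,\, M_\varpi\m{Tr}_{\Sigma_\ell}\varphi\rangle_{H^{-1/2},H^{1/2}}$ with $\varpi = (1+|\xi|^2)^{-(1+s)/2}\bar\omega$, which cleans up the slightly off target spaces in your sketch.
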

    \begin{proof}
        For the first item we use the second assertion of Lemma~\ref{lemma on fractional sobolev boundedness of tangential multipliers in the stip} to bound
        \begin{equation}
            \tnorm{M_{\omega}g}_{L^2}^2\le {\m{m}_{\omega}[1+s]}^2\ssum{\ell=1}{m}\tnorm{\J^{1+s}g}^2_{L^2(\Omega_\ell)}\le {c_1}^2{\m{m}_{\omega}[1+s]}^2\ssum{\ell=1}{m}\norm{g}_{H^{1+s}(U_\ell)}^2.
        \end{equation}
        We next prove the second item. Suppose that $\varphi\in{_0}H^1(W;\mathbb{K}^k)$. If $\ell\in\cb{1,\dots,m}$ then by trace theory and the first assertion of Lemma~\ref{lemma on fractional sobolev boundedness of tangential multipliers in the stip},
        \begin{multline}\label{eqquat1}
            \babs{\int_{\R^d\times\tcb{a_\ell}}k_\ell M_{\Bar{\omega}}\varphi}=\tabs{\tbr{\J^{1+s}k_\ell,\J^{-1-s}M_{\Bar{\omega}}\varphi(\cdot,a_\ell)}_{H^{-1/2},H^{1/2}}}\le\tnorm{k_\ell}_{H^{1/2+s}}\norm{M_{\varpi}\varphi(\cdot,a_\ell)}_{H^{1/2}}\\
            \le\tilde{c}\tnorm{k_\ell}_{1/2+s}\norm{M_{\varpi}\varphi}_{{_0}H^1}\le\tilde{c}c_0\m{m}_{\omega}[1+s]\tnorm{k_\ell}_{H^{1+s}}\tnorm{\varphi}_{{_0}H^{1}},
        \end{multline}
        for $\tilde{c}\in\R^+$ a constant from trace theory and the auxiliary multiplier $\varpi(\xi)=(1+\abs{\xi}^2)^{-(s+1)/2}\Bar{\omega(\xi)}$, $\xi\in\R^d$. By again Lemma~\ref{lemma on fractional sobolev boundedness of tangential multipliers in the stip} item one, we finally estimate
        \begin{equation}\label{eqquat2}
            \babs{\int_{U}f\cdot M_{\Bar{\omega}}\varphi}=\babs{\int_{U}\J^sf\cdot\J^{-s}M_{\Bar{\omega}}\varphi}\le c_1\ssum{\ell=1}{m}\tnorm{f}_{H^s(U_\ell)}\tnorm{M_{\varpi}\J^1\varphi}_{L^2}\lesssim \m{m}_{\omega}[1+s]\norm{\varphi}_{{_0}H^1}\ssum{\ell=1}{m}\tnorm{f}_{H^s(U_\ell)}.
        \end{equation}
        Combining~\eqref{eqquat1} and~\eqref{eqquat2} gives the second item.
    \end{proof}
\subsection{Korn's inequality}\label{appendix on Korn's inequality}
We record a version of Korn's inequality stating that the $L^2$-norm of the symmetrized gradient controls the $H^1$ norm on the closed subspace of functions vanishing on the lower boundary.
\begin{prop}\label{Korn's inequality}
Let $a,b \in \R$ with $a<b$. Then there exists a constant $c\in\R^+$, depending only on $b-a$ and $n$, such that for all $f\in H^1(\R^{n-1}\times(a,b);\R^n)$ such that $\m{Tr}_{\R^{n-1}\times\cb{a}}f=0$ we have the inequality:
\begin{equation}
    \norm{f}_{L^2}+\norm{\grad f}_{L^2}\le c\norm{\mathbb{D}f}_{L^2}.
\end{equation}
\end{prop}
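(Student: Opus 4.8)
The plan is to reduce the claimed norm equivalence to the single bound $\norm{\grad f}_{L^2}\le c\norm{\mathbb{D}f}_{L^2}$ and then to prove this by a frequency‑space analysis. After translating we may assume $a=0$. Since $f\in H^1$ with $\m{Tr}_{\R^{n-1}\times\cb{0}}f=0$, the absolute continuity on lines characterization gives $f(x,y)=\int_0^y\pd_n f(x,t)\,\m{d}t$, and hence, by the Cauchy–Schwarz and Tonelli theorems, $\norm{f}_{L^2}\le (b-a)\norm{\pd_n f}_{L^2}\le (b-a)\norm{\grad f}_{L^2}$. Thus it suffices to control $\norm{\grad f}_{L^2}$ by $\norm{\mathbb{D}f}_{L^2}$.

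For that, I would apply the tangential Fourier transform. Writing $g(\xi,\cdot)=\mathscr{F}[f](\xi,\cdot)$ and $\zeta_i=2\pi\ii\xi_i$, the boundary condition becomes $g(\xi,0)=0$ for a.e. $\xi$, and the components of $\mathscr{F}[\mathbb{D}f]$ are $\zeta_ig_j+\zeta_jg_i$ (for $i,j\le n-1$), $\zeta_ig_n+\pd_y g_i$ (for $i\le n-1$), and $2\pd_y g_n$. By Parseval it is enough to bound $\int_0^b(|\pd_y g|^2+|\xi|^2|g|^2)\,\m{d}y$ by $\int_0^b|\mathscr{F}[\mathbb{D}f](\xi,y)|^2\,\m{d}y$ with a constant independent of $\xi$, which I would do by splitting into $|\xi|\le 1$ and $|\xi|\ge 1$. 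For $|\xi|\le 1$ the relations $2\pd_y g_n=\mathscr{F}[(\mathbb{D}f)_{nn}]$ and $\zeta_ig_n+\pd_y g_i=\mathscr{F}[(\mathbb{D}f)_{in}]$, integrated from $y=0$ and combined with the one‑dimensional Poincaré inequality afforded by $g(\xi,0)=0$, control everything because the weight $|\xi|^2\le 1$. For $|\xi|\ge 1$ the purely algebraic identity obtained by contracting the tangential–tangential block of $\mathscr{F}[\mathbb{D}f]$ against $\xi\otimes\xi$ yields the pointwise estimate $|\xi|\,|g_i(\xi,y)|\lesssim|\mathscr{F}[\mathbb{D}f](\xi,y)|$ for $i\le n-1$, which handles the tangential components and their weighted $L^2$ contributions; the vertical derivatives $\pd_y g_i$ and the normal component $g_n$ are then recovered from $\pd_y g_i=\mathscr{F}[(\mathbb{D}f)_{in}]-\zeta_ig_n$ and $2\pd_y g_n=\mathscr{F}[(\mathbb{D}f)_{nn}]$ together with an integration by parts in $y$.

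The main obstacle is exactly the control of the normal component at high frequency: at the level of a single frequency the equation $\zeta_ig_n+\pd_y g_i=\mathscr{F}[(\mathbb{D}f)_{in}]$ does not by itself separate $g_n$ from $\pd_y g_i$, and the integration by parts in $y$ produces a boundary term at $y=b$. Globally this is the identity
\[\norm{\mathbb{D}f}_{L^2}^2=2\norm{\grad f}_{L^2}^2+2\norm{\grad\cdot f}_{L^2}^2-4\int_{\R^{n-1}}f_n(x,b)\,(\grad_\|\cdot f_\top)(x,b)\,\m{d}x,\qquad f_\top=(f_1,\dots,f_{n-1}),\]
and the crux is to dominate the boundary integral. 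The relevant points are that it involves only tangential derivatives of the traces of $f$ on hyperplanes parallel to $\Sigma_0$, that the traces of $f_n$ and of $\grad_\|\cdot f_\top$ vanish on $\R^{n-1}\times\cb{0}$, and that a careful application of the trace theorem together with the Poincaré bound from the first step lets the $\norm{\grad f}_{L^2}^2$ contribution be reabsorbed. An alternative that sidesteps the frequency argument is to take a partition of unity $\cb{\chi_k}$ on $\R^{n-1}$ of bounded overlap, apply the standard Korn inequality (with vanishing Dirichlet data on a relatively open subset of the boundary) on each bounded cylinder $\supp\chi_k\times(0,b)$ — with constant uniform in $k$ by translation invariance — and reassemble, the commutator terms $\grad\chi_k\cdot f$ being controlled by $\norm{f}_{L^2}$ and then reabsorbed via the Poincaré inequality; this is the type of argument carried out in \cite{leoni2019traveling}, to which I would refer for the remaining routine details.
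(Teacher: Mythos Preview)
The paper gives no argument for this result; its entire proof is a citation to Lemma~2.7 of \cite{MR611750}. So there is no ``paper's approach'' to compare against, and your proposal is already more detailed than what appears here.

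That said, your approach (a) has a genuine gap at exactly the point you flag as the crux. Your identity
\[
\tnorm{\mathbb{D}f}_{L^2}^2=2\tnorm{\grad f}_{L^2}^2+2\tnorm{\grad\cdot f}_{L^2}^2-4\int_{\R^{n-1}}f_n(x,b)\,(\grad_\|\cdot f_\top)(x,b)\,\m{d}x
\]
is correct, but the assertion that the boundary term can be ``reabsorbed'' via trace theory and Poincar\'e does not close. If you rewrite the boundary integral as a bulk integral via $\int_{y=b}(\cdots)=\int_0^b\pd_y(\cdots)$ and integrate the tangential divergence by parts, the resulting terms are of the form $\int_\Omega \pd_n f_n\,(\grad_\|\cdot f_\top)$ and $\int_\Omega (\grad_\| f_n)\cdot(\pd_n f_\top)$, each bounded only by $\tnorm{\grad f}_{L^2}^2$ with constant $1$; after multiplying by $4$ this does not absorb into the $2\tnorm{\grad f}_{L^2}^2$ on the other side. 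No amount of trace/Poincar\'e juggling fixes the constants here, because at the free boundary $y=b$ there is no smallness to exploit. The argument in the cited reference does not proceed via this identity; it analyzes, frequency by frequency, the resulting one–parameter family of ODE systems in $y$ and establishes the estimate uniformly in $\xi$ by treating $\abs{\xi}$ small and $\abs{\xi}$ large separately (the latter by rescaling to a half–line problem, where the boundary term \emph{does} vanish).

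Your partition-of-unity alternative (b) is a legitimate route, but be aware that it hides the same difficulty. With cubes of fixed unit size the commutator contribution after Poincar\'e is $C_0\tnorm{\grad\chi}_{L^\infty}^2(b-a)^2\tnorm{\grad f}_{L^2}^2$, with $C_0$ the local Korn constant, and there is no reason this coefficient is less than $1$. Taking large cubes makes the commutator small but forces you to check that the local Korn constant on increasingly flat cylinders stays bounded --- which, after Fourier series, is again the uniform-in-$\xi$ ODE estimate above. So (b) is correct in outline, but the ``routine details'' you defer are precisely the content of the cited lemma.
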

\begin{proof}
We refer the reader to the proof of Lemma 2.7 of~\cite{MR611750}.
\end{proof}

\bibliographystyle{alpha}
\bibliography{abbreviated_layers.bib}
\end{document}